\newcommand{\htop}{h_{\mathrm{top}}}
\newcommand{\ulim}{\varlimsup}
\newcommand{\ph}{\varphi}
\newcommand{\eps}{\epsilon}
\newcommand{\DDD}{\mathcal{D}}
\newcommand{\CCC}{\mathcal{C}}
\newcommand{\Cs}{\mathcal{C}^s}
\newcommand{\Cp}{\mathcal{C}^p}
\newcommand{\GGG}{\mathcal{G}}
\newcommand{\LLL}{\mathcal{L}}
\newcommand{\MMM}{\mathcal{M}}
\newcommand{\Mf}{\mathcal{M}_f}
\newcommand{\RR}{\mathbb{R}}
\newcommand{\ZZ}{\mathbb{Z}}
\newcommand{\NN}{\mathbb{N}}
\newcommand{\TT}{\mathbb{T}}
\newcommand{\zz}{\mathbf{z}}
\newcommand{\NE}{\mathrm{NE}}
\newcommand{\hexp}{h_{\mathrm{exp}}^\perp}
\newcommand{\Pexp}{P_{\mathrm{exp}}^\perp}
\newcommand{\hexpp}{h_{\mathrm{exp^+}}^\perp}
\DeclareMathOperator{\diam}{diam}
\DeclareMathOperator{\CAT}{CAT}
\newcommand{\PPP}{\mathcal{P}}
\newcommand{\SSS}{\mathcal{S}}
\newcommand{\BBB}{\mathcal{B}}
\newcommand{\JJJ}{\mathcal{J}}
\newcommand{\Sing}{\mathrm{Sing}}
\newcommand{\Reg}{\mathrm{Reg}}
\newcommand{\Dim}{\mathrm{Dim}}
\newcommand{\FFF}{\mathcal{F}}
\newcommand{\UUU}{\mathcal{U}}
\newcommand{\Per}{\mathrm{Per}}
\newcommand{\mureg}{\mu^{\mathrm{Reg}}}
\newcommand{\phc}{\ph^c}
\newcommand{\wM}{\widetilde{M}}
\theoremstyle{plain}
\newtheorem{theorem}{Theorem}[section]
\newtheorem{proposition}[theorem]{Proposition}
\newtheorem{lemma}[theorem]{Lemma}
\newtheorem{corollary}[theorem]{Corollary}
\theoremstyle{definition}
\newtheorem{definition}[theorem]{Definition}
\theoremstyle{remark}
\newtheorem{remark}[theorem]{Remark}
\newtheorem{exercise}[theorem]{Exercise}
\numberwithin{equation}{section}
\numberwithin{figure}{section}
\begin{document}
\title[Beyond Bowen's Specification Property]{Beyond Bowen's specification property}
\author{Vaughn Climenhaga and Daniel J.\ Thompson}

\begin{abstract}
A classical result in thermodynamic formalism is that for uniformly hyperbolic systems, every H\"older continuous potential has a unique equilibrium state. One proof of this fact is due to Rufus Bowen and uses the fact that such systems satisfy expansivity and specification properties. In these notes, we survey recent progress that uses generalizations of these properties to extend Bowen's arguments beyond uniform hyperbolicity, including applications to partially hyperbolic systems and geodesic flows beyond negative curvature. We include a new criterion for uniqueness of equilibrium states for partially hyperbolic systems with $1$-dimensional center.
\end{abstract}
\date{\today}
\thanks{V.C.\ is partially supported by NSF DMS-1554794. D.T.\ is partially supported by NSF DMS-1461163 and DMS-1954463.}
\subjclass[2010]{Primary: 37D35. Secondary: 37C40, 37D40}
\maketitle
\setcounter{tocdepth}{1}
\tableofcontents

\section{Introduction}

We survey recent progress in the study of existence and uniqueness of measures of maximal entropy and equilibrium states in settings beyond uniform hyperbolicity using weakened versions of specification and expansivity. Our focus is a long-running joint project initiated by the authors in \cite{CT12}, and extended in a series of papers including \cite{CT16, BCFT18}. This approach is based on the fundamental insights of Rufus Bowen in the 1970's \cite{rB71, rB75}, who identified  and formalized three properties enjoyed by uniformly hyperbolic systems that serve as foundations for the equilibrium state theory: these properties are specification, expansivity, and a regularity condition now known as the Bowen property. We relax all three of these properties in order to study systems exhibiting various types of non-uniform structure. These notes start by recalling the basic mechanisms of Bowen, and then gradually build up in generality, introducing the ideas needed to move to non-uniform versions of Bowen's hypotheses. The generality is motivated by, and illustrated by, examples: we discuss applications in symbolic dynamics, to certain partially hyperbolic systems, and to wide classes of geodesic flows with non-uniform hyperbolicity. This survey has its roots in the authors' 6-part minicourse at the \emph{Dynamics Beyond Uniform Hyperbolicity} conference at CIRM in May 2019. 

Part \ref{part:1} describes Bowen's result for MMEs and the simplest case of our generalization. It begins by recalling the basic ideas of thermodynamic formalism (\S\ref{sec:therm}) and outlining Bowen's original argument in the simplest case: the measure of maximal entropy (MME) for a shift space with specification (\S\ref{sec:principles}). In \S\ref{sec:relax-spec}, we introduce the main idea of our approach, the use of \emph{decompositions} to quantify the idea of ``obstructions to specification'', and we give an application to $\beta$-shifts. Moving beyond the symbolic case requires the notion of expansivity, and in \S\ref{sec:MMEhomeos} we discuss the role this plays in Bowen's argument.

Part \ref{part:2} develops our general results for discrete-time systems. The notion of ``obstructions to expansivity'' is introduced in \S\ref{sec:weakexp}, and an application to partial hyperbolicity (the Ma\~n\'e example) is described in \S\ref{sec:DA}. Combining the notions of obstructions to specification and expansivity
leads to the general result for MMEs in discrete-time in \S\ref{sec:nonuniformMME}, which is applied in \S\ref{sec:ph} to the broader class of partially hyperbolic diffeomorphisms with one-dimensional center. The extension to equilibrium states for nonzero potential functions is given in \S\ref{sec:eq-st}.

Part \ref{part:3} is devoted to equilibrium states for geodesic flows, with particular emphasis on the case of non-positive curvature, which is one of the most widely studied examples of a non-uniformly hyperbolic flow. After recalling some geometric background in \S\ref{sec:geometry}, we give an introduction in \S\ref{sec:geodesic-ES} to the ideas in the paper \cite{BCFT18}, including the main ``pressure gap'' criterion for uniqueness, and how to decompose the space of orbit segments using a function $\lambda$ that measures curvature of horospheres. We also outline recent results for manifolds without conjugate points and $\CAT(-1)$ spaces.  In \S\ref{sec:Kproperty}, we discuss how to improve ergodicity of the equilibrium states in non-positive curvature to the much stronger Kolmogorov $K$-property. Finally, in \S\ref{sec:entropygap}, we describe our proof of Knieper's ``entropy gap'' for geodesic flow on a rank 1 non-positive curvature manifold.

To illustrate the broad utility of the specification-based approach to uniqueness, we mention the following applications of the machinery we describe, which go well beyond what we are able to discuss in detail in this survey.
\begin{itemize}
\item Measures of maximal entropy for symbolic examples: 
$\beta$-shifts, $S$-gap shifts, and their factors \cite{CT12}; 
certain shifts of quasi-finite type \cite{vC18};
S-limited shifts \cite{MS18}; 
 shifts with ``one-sided almost specification'' \cite{CP19};
$(-\beta)$-shifts \cite{SY20}; 
\item Equilibrium states for symbolic examples:
$\beta$-shifts in \cite{CT13}, their factors in \cite{vC18,CC} (in particular, \cite{CC} studies general conditions under which the ``pressure gap'' condition holds);
$S$-gap shifts in \cite{CTY}; certain $\alpha$-$\beta$ shifts \cite{CLR};
applications to Manneville--Pomeau and related interval maps \cite{CT13}.
\item 
Diffeomorphisms beyond uniform hyperbolicity:
Bonatti--Viana examples \cite{CFT18};
Ma\~n\'e examples \cite{CFT19}; Katok examples \cite{tW20};
certain partially hyperbolic attractors \cite{FO20}.
\item Geodesic flows:
non-positive curvature \cite{BCFT18};
no focal points \cite{CKP,CKP-2};
no conjugate points \cite{CKW};
$\CAT(-1)$ geodesic flows \cite{CLT20a}.
\end{itemize}
We also mention two related results: the machinery we describe has recently been used to prove ``denseness of intermediate pressures'' \cite{pS20}; an approach to uniqueness (and non-uniqueness) for equilibrium states using various weak specification properties has been developed by Pavlov \cite{rP16,rP19} for symbolic and expansive systems.

The current literature in the field is vibrant and continually growing. The scope of this article is restricted to the specification approach to equilibrium states, and we largely do not address the literature beyond that.  Other uses for the specification property that we do not discuss include large deviations properties, multifractal analysis, and universality constructions; see e.g.\ \cite{lY90,TV03,PS05, PS07, pV12, QS16, BV17} (among many others). Different variants of the specification property are sometimes more appropriate for these arguments; various definitions are surveyed in \cite{kY09,KLO16}.

We stress that we do not address the use of other techniques to study existence and uniqueness of equilibrium states. These approaches include transfer operator techniques, Margulis-type constructions, symbolic dynamics, and the Patterson-Sullivan approach. We suggest the following recent references as a starting point to delve into the literature: \cite{PPS,CP17,BCS18,FH19,CPZ,vC-dim}. Classic references include \cite{rB08, PP90, Keller}.

We also do not discuss the large and important area of statistical properties for equilibrium states.  If $f$ is a $C^{1+\alpha}$ Anosov diffeomorphism (or if $X$ is an Axiom A attractor) then the unique equilibrium state for the \emph{geometric potential} $\ph(x) = -\log |\det Df|_{E^u(x)}|$ is the physically relevant Sinai--Ruelle--Bowen (SRB) measure. This provides important motivation and application for thermodynamic formalism, and this general setting is one of the major approaches to studying the statistical properties of the SRB measure. References include \cite{rB08,PP90,BS93,vB00,vB00-b,lY02,BDV05,jC15}.

We sometimes adopt a conversational writing style. We hope that the informal style will be helpful for current purposes; we invite the reader to look at our original papers, particularly \cite{CT12, CT16, BCFT18} for a more precise account.

\part{Main ideas: uniqueness of the measure of maximal entropy}\label{part:1} 

We introduce our main ideas in the case of a discrete-time dynamical system $(X, f)$. In this section, we often consider the case when $(X, f)$ is a shift space. We also consider the general topological dynamics setting where $X$ is a compact metric space and $f\colon X \to X$ is continuous. In many of our examples of interest, $X$ is a smooth manifold and $f$ is a diffeomorphism.

\section{Entropy and thermodynamic formalism}\label{sec:therm}

For a \emph{probability vector} $\vec{p} = (p_1,\dots, p_N) \in [0,1]^N$, where $\sum p_i = 1$, the \emph{entropy} of $\vec{p}$ is $H(\vec{p}) = \sum_i -p_i \log p_i$.  The following is an elementary exercise:
\begin{itemize}
\item $\max_{\vec{p}} H(\vec{p}) = \log N$;
\item $H(\vec{p}) = \log N$\quad $\Leftrightarrow$\quad $p_i = \frac 1N$ for all $i$ \quad $\Leftrightarrow$\quad $p_i = p_j$ for all $i,j$.
\end{itemize}
These general principles lie at the heart of thermodynamic formalism for uniformly hyperbolic dynamical systems, with `probability vector' replaced by `invariant probability measure':
\begin{itemize}
\item 
there is a function called `entropy' that we wish to maximize;
\item it is maximized at a unique measure (variational principle and uniqueness);
\item that measure is characterized by an equidistribution (Gibbs) property.
\end{itemize}
Now we recall the formal definitions, referring to \cite{DGS76,pW82,kP89,VO16} for further details and properties.

Let $X$ be a compact metric space and $f\colon X\to X$ a continuous map.  This gives a discrete-time topological dynamical system $(X, f)$. Let $\Mf(X)$ denote the space of Borel $f$-invariant probability measures on $X$.  

When $f$ exhibits some hyperbolic behavior, $\Mf(X)$ is typically extremely large -- an infinite-dimensional simplex -- and it becomes important to identify certain ``distinguished measures'' in $\Mf(X)$.
This includes SRB measures, measures of maximal entropy, and more generally, equilibrium measures.

\begin{definition}[Measure-theoretic Kolmogorov--Sinai entropy]
Fix $\mu\in \Mf(X)$.
Given a countable partition $\alpha$ of $X$ into Borel sets, write
\begin{equation}\label{eqn:Hmu}
H_\mu(\alpha) := \sum_{A\in \alpha} -\mu(A) \log \mu(A)
= \int -\log \mu(\alpha(x)) \,d\mu(x)
\end{equation}
for the \emph{static entropy} of $\alpha$, where we write $\alpha(x)$ for the element of $\alpha$ containing $x$. One can interpret $H_\mu(\alpha)$ as the  expected amount of information gained by observing which partition element a point $x\in X$ lies in.  Given $j\leq k$, the corresponding \emph{dynamical refinement} of $\alpha$ records which elements of $\alpha$ the iterates $f^jx, \dots, f^kx$ lie in:
\begin{equation}\label{eqn:alpha-j-k}
\alpha_j^k = \bigvee_{i=j}^k f^{-i}\alpha
\quad\Leftrightarrow\quad
\alpha_j^k(x) = \bigcap_{i=j}^k f^{-i}(\alpha(f^i x)).
\end{equation}
A standard short argument shows that
\begin{equation}\label{eqn:H-subadd}
H_\mu(\alpha_0^{n+m-1}) \leq H_\mu(\alpha_0^{n-1}) + H_\mu(\alpha_n^{n+m-1})
= H_\mu(\alpha_0^{n-1}) + H_\mu(\alpha_0^{0+m-1}),
\end{equation}
so that the sequence $c_n = H_\mu(\alpha_0^{n-1})$ is subadditive: $c_{n+m} \leq c_n + c_m$.  Thus, by Fekete's lemma \cite{mF23},  $\lim \frac{c_n}n$ exists, and equals $\inf \frac{c_n}n$.  We can therefore define the \emph{dynamical entropy} of $\alpha$ with respect to $f$ to be
\begin{equation}\label{eqn:hmua}
h_\mu(f,\alpha) := \lim_{n\to\infty} \frac 1n H_\mu(\alpha_0^{n-1})
= \inf_{n\in\NN} \frac 1n H_\mu(\alpha_0^{n-1}).
\end{equation}
The \emph{measure-theoretic (Kolmogorov--Sinai) entropy} of $(X,f,\mu)$ is
\begin{equation}\label{eqn:hmu}
h_\mu(f) = \textstyle \sup_\alpha h_\mu(f,\alpha),
\end{equation}
where the supremum is taken over all partitions $\alpha$ as above for which $H_\mu(\alpha) < \infty$.
\end{definition}

The \emph{variational principle} \cite[Theorem 8.6]{pW82} states that
\begin{equation}\label{eqn:vp-0}
\sup_{\mu\in\Mf(X)} h_\mu(f) = \htop(X,f),
\end{equation}
where $\htop(X,f)$ is the \emph{topological entropy} of $f\colon X\to X$, which we will define more carefully below (Definition \ref{def:entropy}). Now we define a central object in our study.

\begin{definition}[MMEs]\label{def:mme}
A measure $\mu\in \Mf(X)$ is a \emph{measure of maximal entropy (MME)} for $(X,f)$ if $h_\mu(f) = \htop(X,f)$; equivalently, if  $h_\nu(f) \leq h_\mu(f)$ for every $\nu\in \Mf(X)$. 
\end{definition}

The following theorem on uniformly hyperbolic systems is classical.

\begin{theorem}[Existence and Uniqueness]\label{thm:classical}
Suppose one of the following is true.
\begin{enumerate}
\item $(X,f=\sigma)$ is a transitive shift of finite type (SFT).
\item $f\colon M\to M$ is a $C^1$ diffeomorphism and $X\subset M$ is a compact $f$-invariant topologically transitive locally maximal hyperbolic set.\footnote{In particular, this holds if $X=M$ is compact and $f$ is a transitive Anosov diffeomorphism.}
\end{enumerate}
Then there exists a unique measure of maximal entropy $\mu$ for $(X,f)$.  
\end{theorem}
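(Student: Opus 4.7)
My plan is to recognize that both cases of the theorem are instances of what will become the paper's central paradigm: a compact dynamical system satisfying expansivity plus the specification property admits a unique measure of maximal entropy. Since we are dealing with MMEs, the potential is zero and the regularity (``Bowen property'') hypothesis is trivially satisfied, so it should suffice to verify expansivity and specification in each case and then invoke the abstract statement that will be developed in \S\ref{sec:principles} and \S\ref{sec:MMEhomeos}. Existence will come almost for free once expansivity is known, since expansivity implies upper semi-continuity of $\mu \mapsto h_\mu(f)$ on the weak-$*$ compact set $\Mf(X)$, so the supremum in the variational principle \eqref{eqn:vp-0} is attained.

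To verify the hypotheses in case (1), expansivity of the shift follows directly from the product metric, because cylinders of length $n$ have diameter tending to zero uniformly. Specification will follow from transitivity of the defining graph: there exists $N$ such that any two states are joined by an admissible path of length at most $N$, so any finite family of admissible words can be concatenated into a single admissible word using connecting blocks of length at most $N$. In case (2) expansivity on a locally maximal hyperbolic set follows from the stable/unstable splitting: two orbits remaining $\eps$-close for all time must have stable coordinate converging to zero in forward time and unstable coordinate converging to zero in backward time, hence coincide. Specification in this setting is Bowen's original theorem \cite{rB71}: the local product structure of $W^s$ and $W^u$, combined with topological transitivity, lets one shadow any finite sequence of orbit segments by a single genuine orbit, with a uniform gap time between consecutive pieces.

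Once specification and expansivity are in place, I would deduce uniqueness via a Gibbs-type estimate. Specification produces, for each $n$, an $(n,\eps)$-separated set of cardinality of order $e^{n \htop(X,f)}$, and the gluing property forces any MME to assign mass comparable to $e^{-n\htop(X,f)}$ to each Bowen ball around its points. If there were two distinct ergodic MMEs $\mu_1,\mu_2$, I would pick Birkhoff-generic points for each, use expansivity to separate them at some definite scale, and then apply the two-sided Gibbs bound to both measures simultaneously to reach a contradiction with $\mu_1\perp\mu_2$. The main obstacle is precisely this translation from the combinatorial specification bound to a multiplicative Gibbs estimate valid for \emph{every} candidate MME, and then the ergodic-theoretic step ruling out two distinct ergodic MMEs carrying such estimates. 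Since this is exactly the mechanism whose successive relaxations drive the rest of the paper, it is worth implementing carefully here rather than black-boxing it.
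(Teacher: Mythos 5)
Your overall architecture is exactly the paper's: verify specification for transitive SFTs, verify expansivity and specification for transitive locally maximal hyperbolic sets (Propositions \ref{prop:uh-exp} and \ref{prop:uh-spec}), and feed both cases into Bowen's abstract theorem (Theorems \ref{thm:bowen-shift} and \ref{thm:exp-spec}). The verifications of expansivity and specification in both cases are correct as sketched, and the existence step is fine (the paper gets it from Proposition \ref{prop:appeared} and Corollary \ref{cor:existence} rather than upper semi-continuity, but these are interchangeable here).

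There is, however, a genuine gap in your uniqueness mechanism. You propose to establish ``a multiplicative Gibbs estimate valid for \emph{every} candidate MME'' and then derive a contradiction by applying the two-sided Gibbs bound to two mutually singular ergodic MMEs $\mu_1,\mu_2$ simultaneously. That step would fail: specification only yields the Gibbs property for the \emph{specific} measure constructed as a weak* limit of the averaged counting measures (Proposition \ref{prop:build-mme}); an arbitrary competing MME $\nu$ is not a priori Gibbs, and the Shannon--McMillan--Breiman theorem only gives $\nu[x_{[1,n]}] = e^{-n h_\nu + o(n)}$ along $\nu$-generic points, which is far from the uniform two-sided bound. In fact, ``every MME is Gibbs'' is a \emph{consequence} of uniqueness, not an available ingredient, so your route is circular at precisely the point you flag as the main obstacle. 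The argument that works (Weiss--Bowen; Proposition \ref{prop:uniqueness} in the paper) is asymmetric: one constructs a single ergodic measure $\mu$ satisfying the \emph{lower} Gibbs bound, reduces via Lebesgue decomposition to a competitor $\nu\perp\mu$ (note also that you should not restrict to ergodic competitors at the outset), chooses cylinder collections $\DDD_n$ with $\nu(\DDD_n)\to 0$ and $\mu(\DDD_n)\to 1$, and then splits the static entropy $H_\nu(\alpha_0^{n-1})$ over $\DDD_n$ and $\DDD_n^c$; the lower Gibbs bound converts $\#\DDD_n$ into $Ke^{nh}\mu(\DDD_n)$, and the resulting estimate forces $h_\nu(f) < h$. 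Replacing your symmetric two-Gibbs-measures contradiction with this one-sided entropy-counting argument (together with adapted partitions in the non-symbolic case, where expansivity guarantees the partitions are generating) closes the gap.
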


\begin{remark}
The unique MME can be thought of as the `most complex' invariant measure for a system, and often encodes dynamically relevant information such as the distribution and asymptotic behavior of the set of periodic points.
\end{remark}

\section{Bowen's original argument: the symbolic case} \label{sec:principles}

\subsection{The specification property in a shift space}\label{sec:symbolic}

Following Bowen \cite{rB75}, we outline a proof of Theorem \ref{thm:classical} in the first case, when $(X,\sigma)$ is a transitive SFT.  The original construction of the MME in this setting is due to Parry and uses the transition matrix.  Bowen's proof works for a broader class of systems, which we now describe.

Fix a finite set $A$ (the \emph{alphabet}), let $\sigma\colon A^\NN\to A^\NN$ be the shift map $\sigma(x_1 x_2 \dots) = x_2 x_3 \dots$, and let $X\subset A^\NN$ be closed and $\sigma$-invariant: $\sigma(X)=X$.  Here $A^\NN$ (and hence $X$) is equipped with the metric $d(x,y) = 2^{-\min \{ n : x_n \neq y_n\}}$.  We refer to $X$ as a \emph{one-sided shift space}. One could just as well consider two-sided shift spaces by replacing $\NN$ with $\ZZ$ (and using $|n|$ in the definition of $d$); all the results below would be the same, with natural modifications to the proofs.  Note that so far we do not assume that $X$ is an SFT or anything of the sort.

Given $x\in A^\NN$ and $i<j$, we write $x_{[i,j]} = x_i x_{i+1} \cdots x_j$ for the \emph{word} that appears in positions $i$ through $j$.  We use similar notation to denote subwords of a word $w\in A^* := \bigcup_n A^n$.
Given $w\in A^n$, we write $|w| = n$ for the \emph{length} of the word, and $[w] = \{x\in X : x_{[1,n]} = w\}$ for the \emph{cylinder} it determines in $X$.  We write
\begin{equation}\label{eqn:L}
\LLL_n := \{ w\in A^n : [w]\neq \emptyset\},
\qquad\qquad
\LLL := \bigcup_{n\geq 0} \LLL_n,
\end{equation}
and refer to $\LLL$ as the \emph{language} of $X$.  

\begin{definition}
The \emph{topological entropy} of $X$ is $\htop(X) = \lim_{n\to\infty} \frac 1n \log \#\LLL_n$. We often write $h(X)$ for brevity.  The limit exists by Fekete's lemma using the fact that $\log\#\LLL_n$ is subadditive, which we prove in Lemma \ref{lem:counting} below.
\end{definition}

It is a simple exercise to verify that every transitive SFT has the following property: there is $\tau\in\NN$ such that for every $v,w\in \LLL$ there is $u\in \LLL$ with $|u|\leq \tau$ such that $vuw\in\LLL$.  Iterating this, we see that \begin{equation}\label{eqn:spec}
\begin{aligned}
&\text{for every }w^1,\dots,w^k\in\LLL
\text{ there are }u^1,\dots,u^{k-1}\in\LLL
 \\
&\text{such that }|u^i| \leq \tau \text{ for all } i,
\text{ and } w^1u^1w^2u^2 \cdots u^{k-1} w^k \in \LLL.
\end{aligned}
\end{equation}
We say that a shift space whose language satisfies \eqref{eqn:spec} has the \emph{specification property}. There are a number of different variants of specification in the literature:\footnote{The terminology in the literature for these different variants (weak specification, almost specification, almost weak specification, transitive orbit gluing, etc.) is not always consistent, and we make no attempt to survey or standardize it here. To keep our terminology as simple as possible, we just use the word \emph{specification} for the version of the definition which is our main focus. In places where a different variant is considered, we take care to emphasize this.}
for example, one might ask that the connecting words $u^i\in \LLL$ satisfy $|u^i| = \tau$, which implies topological mixing, not just transitivity (this stronger property holds for mixing SFTs).
The version in \eqref{eqn:spec} is sufficient for the uniqueness argument, which is the main goal of these notes.\footnote{For other purposes, and especially in the absence of any expansivity property, the difference between $\leq \tau$ and $=\tau$ can be quite substantial, see for example \cite{BTV17,pS20b}.}

\begin{theorem}[Shift spaces with specification]\label{thm:bowen-shift}
Let $(X,\sigma)$ be a shift space with the specification property.  Then there is a unique measure of maximal entropy on $X$.
\end{theorem}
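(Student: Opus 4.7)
The plan is to implement Bowen's classical strategy in this symbolic setting: construct a distinguished $\sigma$-invariant measure $\mu$ by equidistributing over cylinders, upgrade to a two-sided Gibbs estimate $\mu[w]\asymp e^{-h|w|}$ (writing $h:=\htop(X)$), and deduce that $\mu$ is the unique MME.

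First I would upgrade the qualitative limit $\frac{1}{n}\log\#\LLL_n\to h$ to the quantitative comparison $\#\LLL_n\asymp e^{nh}$. The lower bound $\#\LLL_n\geq e^{nh}$ comes for free from subadditivity of $\log\#\LLL_n$ via Fekete's lemma. For the matching upper bound, I would iterate specification: gluing $k$ words from $\LLL_m$ via the connectors provided by \eqref{eqn:spec} and pigeonholing over the lengths of the $k-1$ connectors produces $\#\LLL_N\geq(\#\LLL_m)^k/(\tau+1)^{k-1}$ for some $N\leq km+(k-1)\tau$. Letting $k\to\infty$ and using $\frac{1}{N}\log\#\LLL_N\to h$ then yields $\log\#\LLL_m\leq hm+O(1)$, giving $\#\LLL_n\leq Ke^{nh}$ uniformly.

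Next I would form the empirical measures $\mu_n:=\frac{1}{\#\LLL_n}\sum_{w\in\LLL_n}\delta_{x^w}$ for arbitrary choices $x^w\in[w]$, time-average to $\nu_n:=\frac{1}{n}\sum_{k=0}^{n-1}\sigma^k_*\mu_n$, and take $\mu$ to be any weak-$*$ accumulation point (automatically $\sigma$-invariant). For $v\in\LLL_j$ and $n\gg j$, the mass $\nu_n[v]$ essentially counts pairs $(w,k)$ with $w_{[k+1,k+j]}=v$. The upper estimate $\sum_k\#\LLL_k\cdot\#\LLL_{n-k-j}\lesssim n\,e^{(n-j)h}$, after dividing by $n\#\LLL_n$, yields $\mu[v]\leq K'e^{-hj}$. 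For the matching lower bound, specification is used to splice any word in $\LLL_{n-j}$ around $v$, producing $\gtrsim\#\LLL_{n-j}$ distinct elements of $\LLL_n$ carrying a copy of $v$ near a designated position. Combined with step one, this yields the two-sided Gibbs estimate $K^{-1}e^{-hj}\leq\mu[v]\leq Ke^{-hj}$ for all $v\in\LLL$.

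Once the Gibbs estimate is in hand, the remaining deductions are essentially formal. The lower Gibbs bound together with the formula $h_\mu(\sigma,\alpha)=\lim\frac{1}{n}H_\mu(\alpha_0^{n-1})$ applied to the time-zero cylinder partition $\alpha$ gives $h_\mu(\sigma)\geq h$, so $\mu$ is an MME. The two-sided Gibbs estimate, combined once more with specification, yields a mixing-type estimate $\mu(A\cap\sigma^{-n}B)\geq c\mu(A)\mu(B)$ for finite cylinder unions $A,B$ and all sufficiently large $n$, whence $\mu$ is ergodic. For uniqueness, classical Gibbs-measure arguments combining the two-sided Gibbs property with Shannon--McMillan--Breiman and ergodic decomposition force any other MME to coincide with $\mu$. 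I expect the main obstacle to be the lower Gibbs bound above: specification only supplies connecting words of length $\leq\tau$ rather than exactly $\tau$, so when splicing around $v$ the position of $v$ inside the spliced word floats in a range of width $\tau$. This is the technical reason for working with the time-averaged $\nu_n$ rather than the cruder $\mu_n$, so that the shift averaging absorbs the floating position at the cost of some bookkeeping in the counting step.
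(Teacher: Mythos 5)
Your proposal is correct and follows essentially the same route as the paper: the same counting bounds via iterated gluing with multiplicity at most $\tau+1$ (Lemma \ref{lem:counting}), the same construction of $\mu$ as a weak* limit of time-averaged cylinder-equidistributed measures (Proposition \ref{prop:build-mme}), the same two-sided Gibbs estimate obtained by counting words containing $v$ in a given window, and the same derivation of ergodicity and uniqueness from the lower Gibbs bound (Proposition \ref{prop:uniqueness}). Your closing remark about the floating position of the spliced word, absorbed by the time-averaging, is exactly the bookkeeping point the paper handles with its $(\tau+1)$-to-one multiplicity estimate.
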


In the remainder of this section, we outline the two main steps in the proof of Theorem \ref{thm:bowen-shift}: proving uniqueness using a Gibbs property (\S\ref{sec:lower-gibbs}), and building a measure with the Gibbs property using specification (\S\ref{sec:building-gibbs}).\footnote{The notes at \url{https://vaughnclimenhaga.wordpress.com/2020/06/23/specification-and-the-measure-of-maximal-entropy/} give a slightly more detailed version of this proof.}

\begin{remark}\label{rmk:history}
As mentioned above, the original proof that a transitive SFT has a unique MME is due to Parry \cite{wP64}.  Parry constructed the MME using eigendata of the transition matrix for the SFT, and proved uniqueness by showing that any MME must be a Markov measure, then showing that there is only one MME among Markov measures.

A different proof of uniqueness in the SFT case was given by Adler and Weiss, who gave a more flexible argument based on showing that if $\mu$ is the Parry measure, then every $\nu\perp \mu$ must have smaller entropy. The argument is described in \cite{AW67}, with full details in \cite{AW70}.
A key step in the proof is to consider an arbitrary set $E\subset X$ and relate $\mu(E)$ to the number of $n$-cylinders intersecting $E$.
In extending the uniqueness result to sofic shifts (factors of SFTs), Weiss \cite{bW73} clarified the crucial role of what we refer to below as the ``lower Gibbs bound'' in carrying out this step. This is essentially the proof of uniqueness that we use in all the results in this survey.

The crucial difference between Theorem \ref{thm:bowen-shift} and the results of Parry, Adler, and Weiss is the construction of the MME using the specification property rather than eigendata of a matrix. This is due to Bowen, as is the further generalization to non-symbolic systems and equilibrium states for non-zero potentials \cite{rB75}. Thus we often refer informally to the proof below as ``Bowen's argument''.
\end{remark}

\subsection{The lower Gibbs bound as the mechanism for uniqueness}\label{sec:lower-gibbs}

It follows from the Shannon--McMillan--Breiman theorem that if $\mu$ is an ergodic shift-invariant measure, then for $\mu$-a.e.\ $x$ we have
\begin{equation}\label{eqn:smb}
- \frac 1n \log \mu[x_{[1,n]}] \to h_\mu(\sigma)
\text{ as }n\to\infty.
\end{equation}
This can be rewritten as
\begin{equation}\label{eqn:smb-2}
\frac 1n \log \Big( \frac{\mu[x_{[1,n]}]}{e^{-nh_\mu(\sigma)}}\Big) \to 0
\quad\text{for $\mu$-a.e.\ $x$}.
\end{equation}
In other words, for $\mu$-typical $x$, the measure $\mu[x_{[1,n]}]$ decays like $e^{-nh_\mu(\sigma)}$ in the sense that $\mu[x_{[1,n]}] / e^{-nh_\mu(\sigma)}$ is ``subexponential in $n$". The mechanism for uniqueness in the Parry-Adler-Weiss-Bowen argument is to produce an ergodic measure for which this subexponential growth is strengthened to uniform boundedness\footnote{We will encounter this general principle multiple times: many of our proofs rely on obtaining uniform bounds (away from $0$ and $\infty$) for quantities that \emph{a priori} can grow or decay subexponentially.} and applies for all $x$.

The next proposition makes this \emph{Gibbs property} precise and explain how uniqueness follows; then in \S\ref{sec:building-gibbs} we describe how to construct such a measure. The following argument appears in \cite[Lemma 2]{bW73} (see also \cite{AW67,AW70}); see  \cite{rB74} for a version that works in the nonsymbolic setting, which we will describe in \S\ref{sec:adapted} below.

\begin{proposition}\label{prop:uniqueness}
Let $X \subset A^\NN$ be a shift space and $\mu$ an ergodic $\sigma$-invariant measure on $X$. Suppose that there are $K,h>0$ such that for every $x\in X$ and $n\in\NN$, we have the \emph{Gibbs bounds}
\begin{equation}\label{eqn:gibbs-0}
K^{-1} e^{-nh} \leq \mu[x_{[1,n]}] \leq K e^{-nh}.
\end{equation}
Then $h = h_\mu(\sigma) = \htop(X,\sigma)$, and $\mu$ is the unique MME for $(X,\sigma)$.
\end{proposition}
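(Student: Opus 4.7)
The plan is to establish the three claims in sequence: $h = h_\mu(\sigma)$, $h = \htop(X,\sigma)$, and uniqueness. Let $\alpha$ be the partition by $1$-cylinders, which is a generator for $\sigma$, so $h_\mu(\sigma) = h_\mu(\sigma,\alpha)$ by Sinai's theorem. The Gibbs bounds give the pointwise inequality $|{-\log\mu[x_{[1,n]}]} - nh| \le \log K$ for all $x,n$, so integrating against $\mu$ yields $|H_\mu(\alpha_0^{n-1}) - nh|\le\log K$, and dividing by $n$ and letting $n\to\infty$ gives $h_\mu(\sigma) = h$. For the topological entropy: the variational principle provides $\htop \ge h_\mu = h$; for the reverse, the cylinders $\{[w]:w\in\LLL_n\}$ partition $X$ and each has $\mu$-mass at least $K^{-1}e^{-nh}$ by the lower Gibbs bound, so $1=\sum_w\mu[w]\ge \#\LLL_n\cdot K^{-1}e^{-nh}$ gives $\#\LLL_n\le Ke^{nh}$ and hence $\htop(X,\sigma)\le h$. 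So $\mu$ is an MME.

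For uniqueness, let $\nu$ be another invariant measure with $h_\nu=h$. Ergodic decomposition reduces to $\nu$ ergodic, since each ergodic component has entropy at most $\htop=h$ and the average is $h$, forcing all components to have entropy $h$. Once $\nu$ is ergodic, combined with ergodicity of $\mu$, it suffices to show $\nu \ll \mu$: Birkhoff averages of any continuous $g$ would then converge to $\int g\,d\mu$ on a $\mu$-full (hence $\nu$-full, by absolute continuity) set and also to $\int g\,d\nu$ on a $\nu$-full set, so $\int g\,d\mu=\int g\,d\nu$ for all continuous $g$, giving $\mu=\nu$. To prove absolute continuity, I would introduce the Kullback--Leibler divergence $D_n := \sum_{w\in\LLL_n}\nu[w]\log(\nu[w]/\mu[w])$ between the restrictions of $\nu,\mu$ to the $\sigma$-algebra $\mathcal{F}_n$ generated by $n$-cylinders. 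The Gibbs upper bound $-\log\mu[w]\le nh+\log K$ and the subadditive bound $H_\nu(\alpha_0^{n-1})\ge nh_\nu=nh$ combine to give $D_n = -H_\nu(\alpha_0^{n-1}) - \sum_w\nu[w]\log\mu[w] \le \log K$ uniformly in $n$. On the other hand, $\{\mathcal{F}_n\}$ is an increasing filtration generating the Borel $\sigma$-algebra of $X$, so monotonicity of KL divergence under data processing gives $D_n\uparrow D(\nu\|\mu)$, which is $+\infty$ whenever $\nu\not\ll\mu$. This contradiction forces $\nu\ll\mu$.

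The main obstacle is the uniqueness step. Pointwise Gibbs bounds combined with Shannon--McMillan--Breiman applied to $\nu$ tend to yield estimates of the form $\mu(G_n)\ge cK^{-1}e^{-n\eps}$ on sets $G_n$ with $\nu(G_n)\to 1$, which is entirely consistent with $\mu\perp\nu$ and so not immediately useful for a contradiction. Passing to an integrated quantity like the KL divergence is the decisive move: $D(\nu\|\mu)=+\infty$ exactly characterizes failure of absolute continuity, and the Gibbs bounds together with $h_\nu=h$ are precisely what is needed to control $D_n$ uniformly in $n$.
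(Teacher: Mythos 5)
Your proof is correct, but the uniqueness step takes a genuinely different route from the paper's. The paper follows the Adler--Weiss--Bowen scheme: given an invariant $\nu$, it uses the Lebesgue decomposition to reduce to the case $\nu\perp\mu$, approximates a separating set by cylinder collections $\DDD_n$ with $\mu(\DDD_n)\to 1$ and $\nu(\DDD_n)\to 0$, splits $H_\nu(\alpha_0^{n-1})$ over $\DDD_n$ and $\DDD_n^c$, and uses the lower Gibbs bound to convert $\#\DDD_n$ into $Ke^{nh}\mu(\DDD_n)$; this yields $n(h_\nu-h)\leq 2+\log K+\nu(\DDD_n)\log\mu(\DDD_n)+\nu(\DDD_n^c)\log\mu(\DDD_n^c)\to-\infty$, i.e.\ a strict entropy deficit for \emph{every} $\nu\perp\mu$. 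You instead bound the relative entropy $D_n$ of $\nu$ with respect to $\mu$ on $n$-cylinders by $\log K$ (using $H_\nu(\alpha_0^{n-1})\geq nh_\nu=nh$, which is valid since $\alpha$ generates and $\frac1nH_\nu(\alpha_0^{n-1})$ decreases to $h_\nu(\sigma,\alpha)$), and invoke monotone convergence of relative entropy along a generating filtration to conclude $\nu\ll\mu$ for any ergodic MME. The two computations are essentially dual --- the paper's key display is a two-cell version of your $D_n$ estimate --- and yours is arguably cleaner here, with the bonus quantitative conclusion $D(\nu\Vert\mu)\leq\log K$. What the paper's formulation buys is robustness: it handles non-ergodic $\nu$ without an ergodic decomposition and only needs the lower Gibbs bound on sufficiently many cylinders, which is exactly what lets it survive the passage to the non-uniform Gibbs property \eqref{eqn:nuh-gibbs} (lower constant $K_M$ only uniform on each $\GGG^M$) and to the non-symbolic setting via adapted partitions; your argument needs the uniform lower bound on every word that $\nu$ charges. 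Two small points you should make explicit: the reduction to ergodic $\nu$ uses affinity of entropy over the ergodic decomposition, and the step ``$D_n\uparrow D(\nu\Vert\mu)$, which is $+\infty$ when $\nu\not\ll\mu$'' conceals the same approximation-of-Borel-sets-by-cylinders argument the paper performs when constructing $\DDD_n$, so it deserves a citation (Gelfand--Yaglom--Perez) or a short proof.
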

\begin{proof}
First observe that by the Shannon--McMillan--Breiman theorem, the upper bound in \eqref{eqn:gibbs-0} gives $h_\mu(\sigma) \geq h$, while the lower bound gives $h_\mu(\sigma) \leq h$.\footnote{This requires ergodicity of $\mu$; one can also give a short argument directly from the definition of $h_\mu(\sigma)$ that does not need ergodicity.} Moreover, summing \eqref{eqn:gibbs-0} over all words in $\LLL_n$ gives $K^{-1} e^{nh} \leq \#\LLL_n \leq K e^{nh}$, so $\htop(X,\sigma) = h$.

The remainder of the proof is devoted to using the lower bound to show that 
\begin{equation}\label{eqn:hnu}
h_\nu(\sigma) < h = h_\mu(\sigma) \text{ for all } \nu \in \MMM_\sigma(X) \text{ with } \nu\neq \mu.
\end{equation}
This will show that $\mu$ is the unique MME.

Given $\nu \in \MMM_\sigma(X)$, the Lebesgue decomposition theorem gives $\nu=t\nu_1 + (1-t)\nu_2$ for some $t\in [0,1]$ and  $\nu_1,\nu_2\in \Mf(X)$ with $\nu_1 \perp \mu$ and $\nu_2 \ll \mu$.
By ergodicity, $\nu_2 = \mu$, and thus if $\nu \neq \mu$ we must have $t > 0$.
Since $h_\nu(\sigma) = th_{\nu_1}(\sigma) + (1-t)h_{\nu_2}(\sigma)$ and $h_{\nu_2}(\sigma) = h_\mu(\sigma) \leq h$, we see that to prove \eqref{eqn:hnu}, it suffices to prove that $h_\nu(\sigma) < h$ whenever $\nu\perp \mu$.

Writing $\alpha$ for the (generating) partition into $1$-cylinders, we see that for any $\nu \in \MMM_\sigma(X)$ we have
\begin{equation}\label{eqn:nhnu-0}
nh_\nu(\sigma) = h_\nu(\sigma^n) = h_\nu(\sigma^n,\alpha_0^{n-1}) \leq H_\nu(\alpha_0^{n-1}) = \sum_{w\in \LLL_n} -\nu[w] \log \nu[w].
\end{equation}
When $\nu\perp\mu$, there is a Borel set $D\subset X$ such that $\mu(D)=1$ and $\nu(D)=0$.  Since cylinders generate the $\sigma$-algebra, there is $\DDD \subset \LLL(X)$ such that $\mu(\DDD_n)\to 1$ and $\nu(\DDD_n) \to 0$, where $\mu(\DDD_n) := \mu \big(\bigcup_{w\in \DDD_n} [w]\big)$.  
We break the sum in \eqref{eqn:nhnu-0} into two pieces, one over $\DDD_n$ and one over $\DDD_n^c = \LLL_n \setminus \DDD_n$.  Observe that
\begin{align*}
\sum_{w\in \DDD_n} -\nu[w] \log\nu[w] 
&= \sum_{w\in \DDD_n} -\nu[w] \Big(\log \frac{\nu[w]}{\nu(\DDD_n)} + \log \nu(\DDD_n)\Big) \\
&= \Big( \nu(\DDD_n) \sum_{w\in \DDD_n} -\frac{\nu[w]}{\nu(\DDD_n)} \log \frac{\nu[w]}{\nu(\DDD_n)} \Big) - \nu(\DDD_n) \log \nu(\DDD_n) \\
&\leq (\nu(\DDD_n) \log \#\DDD_n) + 1,
\end{align*}
where the last line uses the fact that $\sum_{i=1}^k -p_i \log p_i \leq \log k$ whenever $p_i\geq 0$, $\sum p_i = 1$, as well as the fact that $-t\log t \leq 1$ for all $t\in [0,1]$.  A similar computation holds for $\DDD_n^c$, and together with \eqref{eqn:nhnu-0} this gives
\begin{equation}\label{eqn:nhnu2}
nh_\nu(\sigma) \leq 2 + \nu(\DDD_n) \log \#\DDD_n + \nu(\DDD_n^c) \log \#\DDD_n^c.
\end{equation}
Using \eqref{eqn:gibbs-0} and summing over $\DDD_n$ gives
\[
\mu(\DDD_n) = \sum_{w\in \DDD_n} \mu[w] \geq K^{-1} e^{-nh} \#\DDD_n
\quad\Rightarrow\quad
\#\DDD_n \leq K e^{nh} \mu(\DDD_n),
\]
and similarly for $\DDD_n^c$, so \eqref{eqn:nhnu2} gives
\begin{align*}
nh_\nu(\sigma) &\leq 2 + \nu(\DDD_n)\big(\log K + nh + \log \mu(\DDD_n)\big) 
 + \nu(\DDD_n^c)\big(\log K + nh + \log \mu(\DDD_n^c)\big) \\
&= 2 + \log K + nh + \nu(\DDD_n) \log \mu(\DDD_n) + \nu(\DDD_n^c) \log \mu(\DDD_n^c).
\end{align*}
Rewriting this as
\[
n(h_\nu(\sigma) - h) \leq 2 + \log K + \nu(\DDD_n) \log \mu(\DDD_n) + \nu(\DDD_n^c) \log \mu(\DDD_n^c),
\]
we see that the right-hand side goes to $-\infty$ as $n\to\infty$, since $\nu(\DDD_n) \to 0$ and $\mu(\DDD_n)\to 1$, so the left-hand side must be negative for large enough $n$, which implies that $h_\nu(\sigma) < h$ and completes the proof.
\end{proof}

\subsection{Building a Gibbs measure}\label{sec:building-gibbs}

Now the question becomes how to build an ergodic measure satisfying the lower Gibbs bound.  There is a standard construction of an MME for a shift space, which proceeds as follows: let $\nu_n$ be any measure on $X$ such that $\nu_n[w]=1/\#\LLL_n$ for every $w\in \LLL_n$, and then consider the measures
\begin{equation}\label{eqn:build}
\mu_n := \frac 1n \sum_{k=0}^{n-1} \sigma_*^k \nu_n = \frac 1n \sum_{k=0}^{n-1} \nu_n \circ \sigma^{-k}.
\end{equation}
A general argument (which appears in the proof of the variational principle, see for example \cite[Theorem 8.6]{pW82}) shows that any weak* limit point of the sequence $\mu_n$ is an MME.  If the shift space satisfies the specification property, one can prove more.

\begin{proposition}\label{prop:build-mme}
Let $(X,\sigma)$ be a shift space with the specification property, let $\mu_n$ be given by \eqref{eqn:build}, and suppose that $\mu_{n_j} \to \mu$ in the weak* topology.  Then $\mu$ is $\sigma$-invariant, ergodic, and there is $K\geq 1$ such that $\mu$ satisfies the following \emph{Gibbs property}:
\begin{equation}\label{eqn:gibbs}
K^{-1} e^{-n\htop(X)} \leq \mu[w] \leq K e^{-n\htop(X)}
\text{ for all } w\in \LLL_n.
\end{equation}
\end{proposition}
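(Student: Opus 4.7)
The plan is to use the specification property to derive matching exponential counts $\#\LLL_n \asymp e^{nh}$ (with $h = \htop(X)$), then to establish the upper and lower Gibbs bounds, and from these to conclude $\sigma$-invariance and ergodicity of any weak-$*$ limit $\mu$. Submultiplicativity $\#\LLL_{n+m}\leq \#\LLL_n\cdot\#\LLL_m$ together with Fekete's lemma gives $\#\LLL_n\geq e^{nh}$. For the reverse inequality, specification supplies for each $(v, w)\in \LLL_n\times\LLL_m$ a connecting word $u$ of length $\leq\tau$ with $vuw\in\LLL$; bucketing such pairs by $|u|\in [0,\tau]$ and using monotonicity $\#\LLL_\ell\leq\#\LLL_{\ell'}$ for $\ell\leq\ell'$ (which holds because restriction $\LLL_{\ell+1}\to\LLL_\ell$ is surjective) yields $\#\LLL_n\cdot\#\LLL_m \leq (\tau+1)\#\LLL_{n+m+\tau}$. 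Iterating $k$ times and sending $k\to\infty$ gives $h \geq \frac{\log\#\LLL_n}{n+\tau}$, so $\#\LLL_n\leq Ce^{nh}$ with $C := e^{h\tau}$. Invariance of $\mu$ is the standard Krylov--Bogolyubov observation: $\sigma_*\mu_N - \mu_N = \tfrac{1}{N}(\sigma_*^N\nu_N - \nu_N)$ has total variation at most $2/N$.

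Since cylinders $[w]$ are clopen, $\mu_{N_j}[w]\to\mu[w]$, so it suffices to bound $\mu_N[w]$ uniformly for large $N$. Writing $\mu_N[w] = \frac{1}{N}\sum_{k=0}^{N-1}\nu_N(\sigma^{-k}[w])$ and noting that for $k+n\leq N$ we have $\nu_N(\sigma^{-k}[w]) = M(N,k,w)/\#\LLL_N$ with $M(N,k,w) := \#\{v\in\LLL_N: v_{[k+1,k+n]} = w\}$, the \emph{upper} bound is combinatorial: each such $v$ is determined by its prefix and suffix around $w$, so $M(N,k,w)\leq \#\LLL_k\cdot\#\LLL_{N-k-n}\leq C^2 e^{(N-n)h}$; combined with $\#\LLL_N\geq e^{Nh}$ and boundary contributions of order $n/N$ vanishing in the limit, this yields $\mu[w]\leq K e^{-nh}$ for $K := C^2$. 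The \emph{lower} bound is where specification is indispensable: for each $(u,u')\in \LLL_a\times\LLL_b$ with $a + b + n + 2\tau = N$, the iterated specification property produces $g_1, g_2$ with $|g_i|\leq \tau$ and $ug_1 w g_2 u'\in\LLL$ of length in $[N-2\tau, N]$; extending to $\LLL_N$ and pigeonholing over the $(\tau+1)^2$ possible pairs $(|g_1|,|g_2|)$ injects at least $\#\LLL_a\cdot\#\LLL_b/(\tau+1)^2$ tuples into $\{(v, k): v\in\LLL_N,\ v_{[k+1,k+n]} = w\}$. Summing over $a\in [0, N-n-2\tau]$ bounds $\sum_k M(N,k,w)$ below by a constant times $N\cdot e^{(N-n)h}$, whence $\mu[w]\geq K^{-1}e^{-nh}$ after enlarging $K$ to absorb the factors $e^{2\tau h}$ and $(\tau+1)^2$.

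Ergodicity then follows from a quasi-independence argument: for cylinders $A = [w^A]$, $B = [w^B]$ of lengths $n, m$ and any $k\geq n+\tau$, specification creates many filler words $v\in\LLL_{k-n}$ with $w^A v w^B\in \LLL$, and applying the Gibbs bounds to the cylinders $[w^A v w^B]$ yields $\mu(A\cap\sigma^{-k}B)\geq K^{-3}\mu(A)\mu(B)$; a standard Ces\`aro argument then forces every $\sigma$-invariant set to have $\mu$-measure $0$ or $1$. The main obstacle is the lower Gibbs bound in the previous step: specification provides connecting words of \emph{variable} length, so one must maintain careful pigeonholing and injectivity bookkeeping across the $(a, |g_1|, |g_2|)$ parameters to extract a lower bound of the correct exponential order with a multiplicative constant that does not depend on the chosen word $w$.
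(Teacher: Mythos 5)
Your proposal is correct and follows essentially the same route as the paper's (sketched) proof: Fekete plus the specification gluing map with $(\tau+1)$-fold pigeonholing for the counting bounds $\#\LLL_n \asymp e^{n\htop(X)}$, the prefix/suffix count $M(N,k,w)\leq \#\LLL_k\,\#\LLL_{N-k-n}$ for the upper Gibbs bound, a specification-based injection of $\LLL_a\times\LLL_b$ into words containing $w$ for the lower Gibbs bound, and cylinder quasi-independence plus approximation for ergodicity. Two cosmetic points only: iterating $\#\LLL_n\#\LLL_m\leq(\tau+1)\#\LLL_{n+m+\tau}$ leaves a residual $\log(\tau+1)$ term, so the correct constant is $C=(\tau+1)e^{\tau\htop(X)}$ rather than $e^{\tau\htop(X)}$; and with gap lengths $\leq\tau$ the quasi-independence estimate is obtained only along a syndetic set of times $k$ rather than all $k\geq n+\tau$, which your Ces\`aro step absorbs anyway.
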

Combining Propositions \ref{prop:uniqueness} and \ref{prop:build-mme} shows that there is a unique MME $\mu$, which is the weak* limit of the sequence $\mu_n$ from \eqref{eqn:build}. Thus to prove Theorem \ref{thm:bowen-shift} it suffices to prove Proposition \ref{prop:build-mme}. We omit the full proof, and highlight only the most important part of the associated counting estimates.

\begin{lemma}\label{lem:counting}
Let $(X,\sigma)$ be a shift space with the specification property, with gap size $\tau$.  Then for every $n\in\NN$, we have
\begin{equation}\label{eqn:counting}
e^{n\htop(X)} \leq \#\LLL_n \leq Q e^{n\htop(X)},
\quad\text{where } Q=(\tau+1) e^{\tau \htop(X)}.
\end{equation}
\end{lemma}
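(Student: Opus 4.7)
The plan is to handle the two inequalities separately. The lower bound $\#\LLL_n \geq e^{n\htop(X)}$ follows immediately by applying Fekete's lemma to $a_n := \log \#\LLL_n$, once we note subadditivity: every $w \in \LLL_{n+m}$ factors as $w = w_{[1,n]} \cdot w_{[n+1,n+m]}$ with each factor forced to lie in $\LLL$ (subwords of admissible words are admissible), giving $\#\LLL_{n+m} \leq \#\LLL_n \#\LLL_m$. Then $\htop(X) = \lim a_n/n = \inf a_n/n$, hence $a_n \geq n\htop(X)$.

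For the upper bound, I would use specification to construct an injection $\Phi \colon \LLL_N \times \LLL_n \to \bigsqcup_{k=0}^{\tau} \LLL_{N+n+k}$: for each pair $(v,w)$, the specification property supplies a connecting word $u(v,w) \in \LLL$ of length at most $\tau$ with $v \, u(v,w) \, w \in \LLL$, and I set $\Phi(v,w) := v \, u(v,w) \, w$. Injectivity is automatic, since the length of the output word records $|u(v,w)|$ and then $v, w$ are recovered as the first $N$ and last $n$ letters. This yields
\[
\#\LLL_N \cdot \#\LLL_n \leq \sum_{k=0}^{\tau} \#\LLL_{N+n+k} \leq (\tau+1) \max_{0 \leq k \leq \tau} \#\LLL_{N+n+k}.
\]

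The main obstacle is converting this ``super-multiplicative-up-to-a-constant'' inequality into the claimed uniform upper bound. I would introduce $c_n := \#\LLL_n \, e^{-n\htop(X)}$, which satisfies $c_n \geq 1$ by the lower bound, and rewrite the estimate as $c_N c_n \leq Q \max_{0 \leq k \leq \tau} c_{N+n+k}$ with $Q = (\tau+1) e^{\tau \htop(X)}$ (the discrepancy $e^{kh}$ for $k \leq \tau$ is absorbed into $Q$). Then I run a bootstrap-by-squaring argument: suppose for contradiction that $c_{n_0} > Q$ for some $n_0$. Applying the inequality with $N = n = n_j$ at each stage produces $n_{j+1} \in \{2n_j, \ldots, 2n_j + \tau\}$ with $c_{n_{j+1}} \geq c_{n_j}^2 / Q$; inductively $c_{n_j} \geq Q (c_{n_0}/Q)^{2^j}$ and $n_j \leq 2^j(n_0 + \tau)$. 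Together these give
\[
\frac{a_{n_j}}{n_j} - \htop(X) = \frac{\log c_{n_j}}{n_j} \geq \frac{\log(c_{n_0}/Q)}{n_0 + \tau} + o(1),
\]
a positive constant in the limit, which contradicts $a_n/n \to \htop(X)$. Hence $c_n \leq Q$ for every $n$, which is precisely $\#\LLL_n \leq Q e^{n\htop(X)}$.
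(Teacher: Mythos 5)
Your proof is correct. The lower bound is identical to the paper's (Fekete via the splitting injection $\LLL_{m+n}\to\LLL_m\times\LLL_n$). For the upper bound, the combinatorial input is essentially the same -- specification turns a pair $(v,w)$ into a glued word at the cost of bounded multiplicity -- but you package it differently: you inject into the disjoint union $\bigsqcup_{k=0}^{\tau}\LLL_{N+n+k}$, whereas the paper pads the glue to exact length $\tau$ with an arbitrary legal continuation $u'$ and observes the resulting map $\LLL_m\times\LLL_n\to\LLL_{m+n+\tau}$ is at most $(\tau+1)$-to-one. Your version has the small advantage of not needing the (easy, but unstated) fact that every word in $\LLL$ extends on the right. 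Where you genuinely diverge is in converting approximate supermultiplicativity into the uniform bound: the paper glues $k$ copies of length-$n$ words, obtains $\#\LLL_{k(n+\tau)}\geq(\#\LLL_n/(\tau+1))^k$, and sends $k\to\infty$ to compare directly with $\htop(X)$ -- two lines, and it reads off the constant $Q$ immediately. You instead normalize to $c_n=\#\LLL_n e^{-n\htop(X)}$ and run a doubling bootstrap by contradiction, using $c_{n_{j+1}}\geq c_{n_j}^2/Q$ together with $n_j+\tau\leq 2^j(n_0+\tau)$ to force $\liminf a_{n_j}/n_j>\htop(X)$ if some $c_{n_0}>Q$. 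Both arguments ultimately lean on the same fact, namely that $\lim a_n/n$ exists and equals $\htop(X)$; the paper's iteration is shorter, while your bootstrap is a standard and equally valid alternative that some readers find more transparent because it isolates exactly what fails if the bound is ever violated. All the estimates in your bootstrap check out (the induction is cleanest on $n_j+\tau$ rather than $n_j$, but your stated bound $n_j\leq 2^j(n_0+\tau)$ is correct).
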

\begin{proof}
For every $m,n\in \NN$, there is an injective map $\LLL_{m+n} \to \LLL_m \times \LLL_n$ defined by $w\mapsto (w_{[1,m]}, w_{[m+1,m+n]})$, so $\#\LLL_{m+n} \leq \#\LLL_m \#\LLL_n$.  Iterating this gives
\[
\#\LLL_{kn} \leq (\#\LLL_n)^k\quad\Rightarrow\quad
\frac 1{kn} \log \#\LLL_{kn} \leq \frac 1n \log \#\LLL_n,
\]
and sending $k\to\infty$ we get $\htop(X) \leq \frac 1n \log \#\LLL_n$ for all $n$, which proves the lower bound.  For the upper bound we observe that specification gives a map $\LLL_m \times \LLL_n \to \LLL_{m+n+\tau}$ defined by mapping $(v,w)$ to $vuwu'$, where $u=u(v,w)\in \LLL$ with $|u|\leq \tau$ is the `gluing word' provided by the specification property, and $u'$ is \emph{any} word of length $\tau-|u|$ that can legally follow $vuw$.  This map may not be injective because $w$ can appear in different positions, but each word in $\LLL_{m+n}$ can have at most $(\tau+1)$ preimages, since $v,w$ are completely determined by $vuwu'$ and the length of $u$.  This shows that
\[
\#\LLL_{m+n+\tau} \geq \frac 1{\tau+1} \#\LLL_m \#\LLL_n
\quad\Rightarrow\quad
\#\LLL_{k(n+\tau)} \geq \Big( \frac{\#\LLL_n}{\tau+1}\Big)^k.
\]
Taking logs and dividing by $k(n+\tau)$ gives
\[
\frac 1{k(n+\tau)} \#\LLL_{k(n+\tau)} \geq \frac 1{n+\tau} \big( \log \#\LLL_n - \log(\tau+1)\big).
\]
Sending $k\to\infty$ and rearranging gives $\log \#\LLL_n \leq \log(\tau+1) + (n+\tau) \htop(X)$.  Taking an exponential proves the upper bound.
\end{proof}

With Lemma \ref{lem:counting} in hand, the idea of Proposition \ref{prop:build-mme} is to first prove the bounds on $\mu[w]$ by estimating, for each $n\gg |w|$ and $k\in \{1,\dots, n-|w|\}$, the number of words $u\in \LLL_n$ for which $w$ appears in position $k$; see Figure \ref{fig:nun-w}.
By considering the subwords of $u$ lying before and after $w$, one sees that there are at most $(\#\LLL_k)(\#\LLL_{n-k-|w|})$ such words, as in the proof of Lemma \ref{lem:counting}, and thus the bounds from that lemma give
\[
\nu_n(\sigma^{-k}[w]) \leq \frac{(\#\LLL_k)(\#\LLL_{n-k-|w|})}{\#\LLL_n}
\leq \frac{Q e^{k\htop(X)} Q e^{(n-k-|w|)\htop(X)}}{e^{n\htop(X)}}
= Q^2 e^{-|w| \htop(X,\sigma)};
\]
averaging over $k$ gives the upper Gibbs bound, and the lower Gibbs bound follows from a similar estimate that uses the specification property.

\begin{figure}[htbp]
\begin{tikzpicture}
\draw (0,0)--(6,0);
\foreach \x in {0,2,3,6}
{ \draw (\x,-.1)--(\x,.1); }
\begin{scope}[decoration={brace,amplitude=10,raise=4pt}]
\draw[decorate,blue] (6,0)--(0,0);
\draw[decorate,red] (0,0)--(2,0);
\draw[decorate,red] (3,0)--(6,0); \end{scope}
\node at (2.5,0)[above] {$w$};
\node[blue] at (3,-.8){$\#\LLL_n$};
\node[red] at (1,.8){$\#\LLL_k$};
\node[red] at (4.5,.8){$\#\LLL_{n-k-|w|}$};
\end{tikzpicture}
\caption{Estimating $\nu_n(\sigma^{-k}[w])$.}
\label{fig:nun-w}
\end{figure}
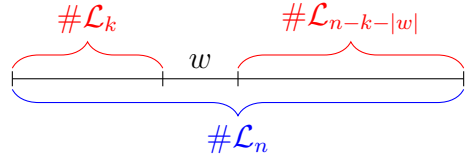

Next, one can use similar arguments to produce $c>0$ such that, for each pair of words $v,w$, there are arbitrarily large $j\in\NN$ such that $\mu([v] \cap \sigma^{-j}[w]) \geq c \mu[v] \mu[w]$; this is once again done by counting the number of long words that have $v,w$ in the appropriate positions.  

Since any measurable sets $V$ and $W$ can be approximated by unions of cylinders, one can use this to prove that
$\ulim_n \mu(V\cap \sigma^{-n}W) \geq c \mu(V) \mu(W)$.  Considering the case when $V=W$ is $\sigma$-invariant demonstrates that $\mu$ is ergodic.

\section{Relaxing specification: decompositions of the language}\label{sec:relax-spec}

\subsection{Decompositions}\label{sec:decomp}

There are many shift spaces that can be shown to have a unique MME despite not having the specification property; see \S\ref{sec:beta} below for the example that motivated the present work. We want to consider shift spaces for which the specification property holds if we restrict our attention to ``good words'', and will see that the uniqueness result in Theorem \ref{thm:bowen-shift} can be extended to this setting provided the collection of ``good words'' is ``large enough'' in an appropriate sense.

To make this more precise, let $X$ be a shift space on a finite alphabet, and $\LLL$ its language.  We consider the following more general version of \eqref{eqn:spec}.

\begin{definition}\label{def:symb-spec}
A collection of words $\GGG\subset \LLL$ has \emph{specification} if there exists $\tau\in \NN$ such that for every finite set of words $w^1,\dots, w^k \in \GGG$, there are $u^1,\dots, u^{k-1} \in \LLL$ with $|u^i| \leq \tau$ such that $w^1 u^1 w^2 u^2 \cdots u^{k-1} w^k \in \LLL$.  
\end{definition}

The only difference between this definition and \eqref{eqn:spec} is that here we only require the gluing property to hold for words in $\GGG$, not for all words.

\begin{remark}
In particular, $\GGG$ has specification if there is $\tau\in\NN$ such that for every $v,w\in \GGG$, there is $u\in \LLL$ with $|u|\leq \tau$ and $vuw\in\GGG$, because iterating this property gives the one stated above.  The property above, which is sufficient for our uniqueness results, is a priori more general because the concatenated word is not required to lie in $\GGG$.
\end{remark}

Now we need a way to say that a collection $\GGG$ on which specification holds is sufficiently large.

\begin{definition}\label{def:decomp}
A \emph{decomposition} of the language $\LLL$ consists of three collections of words $\Cp,\GGG,\Cs \subset \LLL$ with the property that
\begin{equation}\label{eqn:decomp}
\text{for every } w\in \LLL,\text{ there are } u^p\in \Cp, v\in\GGG, u^s\in \Cs \text{ such that } w=u^p v u^s.
\end{equation}
Given a decomposition of $\LLL$, we also consider for each $M\in\NN$ the collection of words
\begin{equation}\label{eqn:GM}
\GGG^M := \{ u^p v u^s \in \LLL : u^p \in \Cp, v\in \GGG, u^s \in \Cs, |u^p|, |u^s|\leq M \}.
\end{equation}
\end{definition}

If each $\GGG^M$ has specification, then the set $\Cp \cup\Cs$ can be thought of as the set of \emph{obstructions} to the specification property.

\begin{definition}
The \emph{entropy} of a collection of words $\CCC\subset \LLL$ is
\begin{equation}\label{eqn:entropy-C}
h(\CCC) = \ulim_{n\to\infty} \frac 1n \log \#\CCC_n.
\end{equation}
\end{definition}

\begin{theorem}[Uniqueness using a decomposition \cite{CT12}]\label{thm:symbolic}
Let $X$ be a shift space on a finite alphabet, and suppose that the language $\LLL$ of $X$ admits a decomposition $\Cp \GGG \Cs$ such that
\begin{enumerate}[label=\upshape{(\Roman{*})}]
\item\label{GM-spec} every collection $\GGG^M$ has specification, and
\item\label{hC<h} $h(\Cp \cup \Cs) < h(X)$.
\end{enumerate}
Then $(X,\sigma)$ has a unique MME $\mu$.  
\end{theorem}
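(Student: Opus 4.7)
The plan is to adapt Bowen's two-step strategy (Propositions \ref{prop:uniqueness} and \ref{prop:build-mme}) so that $\GGG^M$, for a sufficiently large but fixed $M$, plays the role of the full language $\LLL$. Hypothesis \ref{GM-spec} lets us carry out specification-based counting with $\GGG^M$ in place of $\LLL$, while hypothesis \ref{hC<h} ensures that the ``bad'' words $B_n^M := \LLL_n \setminus \GGG^M_n$ form a negligible part of the language, so that estimates on $\GGG^M$ transfer to estimates on $\LLL$.

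The first step is a counting estimate. Applying the reasoning of Lemma \ref{lem:counting} to $\GGG^M$ produces $c e^{n h(\GGG^M)} \leq \#\GGG^M_n \leq Q(M) e^{n h(\GGG^M)}$. The decomposition \eqref{eqn:decomp} and hypothesis \ref{hC<h} yield
\[
\#B_n^M \leq 2 \sum_{i > M} \#(\Cp \cup \Cs)_i \cdot \#\LLL_{n-i},
\]
which becomes negligible compared to $e^{nh(X)}$ once $M$ is large. From here one obtains $h(\GGG^M) = h(X)$ and a uniform upper bound $\#\LLL_n \leq Q' e^{nh(X)}$.

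Next I build an MME via \eqref{eqn:build} and pass to a weak* limit $\mu = \lim \mu_{n_k}$. The upper Gibbs bound $\mu[w] \leq K e^{-n h(X)}$ for every $w \in \LLL_n$ follows from these uniform counting bounds by the argument sketched after Lemma \ref{lem:counting}. For the lower bound I glue $w \in \GGG^M_n$ on either side to words from $\GGG^M$ using the specification property of $\GGG^M$, and count elements of $\LLL_N$ (for $N \gg n$) containing $w$ at a prescribed position; combined with the counting bounds this produces the \emph{partial Gibbs bound}
\[
\mu[w] \geq K(M)^{-1} e^{-n h(X)} \quad \text{for every } w \in \GGG^M_n.
\]
Ergodicity of $\mu$ follows from specification on $\GGG^M$ by the approximation argument at the end of \S\ref{sec:building-gibbs}, once one checks that cylinders on $\GGG^M$-words suffice to generate the relevant sigma-algebra up to $\mu$-null sets.

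To conclude uniqueness, I adapt Proposition \ref{prop:uniqueness} to the partial Gibbs property. For ergodic $\nu \perp \mu$, take $\DDD_n \subset \LLL_n$ with $\mu(\DDD_n) \to 1$ and $\nu(\DDD_n) \to 0$, and split $\DDD_n = (\DDD_n \cap \GGG^M_n) \sqcup (\DDD_n \cap B_n^M)$ (and similarly for $\DDD_n^c$). The partial lower Gibbs bound controls the cardinalities of the good parts by $K(M) e^{nh(X)}$ times their $\mu$-measure, while \ref{hC<h} controls the cardinalities of the bad parts. Running the bookkeeping of Proposition \ref{prop:uniqueness} with this split yields $n(h_\nu(\sigma) - h(X)) \to -\infty$, hence $h_\nu(\sigma) < h(X)$. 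The main obstacle, and the point where hypotheses \ref{GM-spec} and \ref{hC<h} must be balanced, is the partial lower Gibbs bound: $\GGG^M$ only has \emph{internal} specification, so one must show that the $\GGG^M$-words flanking a given $w$ are abundant enough to dominate the count in $\LLL_N$, which forces $M$ to be large enough for \ref{hC<h} to make $B_n^M$ negligible.
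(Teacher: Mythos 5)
Your overall architecture is exactly the paper's: uniform counting bounds derived from the decomposition, a Gibbs property that is uniform from above but only uniform from below on each fixed $\GGG^M$, ergodicity from specification on $\GGG^M$, and an Adler--Weiss-type uniqueness argument run with the partial lower Gibbs bound. The one place where your write-up does not go through as stated is the counting step. First, the lower bound $\#\GGG^M_n \geq c\, e^{nh(\GGG^M)}$ does not follow from ``the reasoning of Lemma \ref{lem:counting}'': that lower bound rests on submultiplicativity of $n\mapsto \#\LLL_n$, i.e.\ on the fact that subwords of words in $\LLL$ lie in $\LLL$, and $\GGG^M$ is not closed under passing to subwords. (The gluing half of the argument does give the upper bound, but in the form $\#\GGG^M_n \leq Q_M e^{nh(X)}$, since the glued words land in $\LLL$.) Second, your estimate $\#B_n^M \leq 2\sum_{i>M}\#(\Cp\cup\Cs)_i\,\#\LLL_{n-i}$ is circular as used: to conclude that the right-hand side is $o(e^{nh(X)})$ uniformly in $n$ you need $\#\LLL_{n-i}\leq C e^{(n-i)h(X)}$ with $C$ independent of $n-i$, which is precisely the uniform upper bound on $\#\LLL$ you are trying to deduce from it.

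The repair is the paper's Lemma \ref{lem:most-in-G}: write each word not in $\GGG^M_n$ as $u^p v u^s$ with $u^p\in\Cp_i$, $v\in\GGG_j$, $u^s\in\Cs_k$ and $\max(i,k)>M$, and bound the \emph{middle} factor by $\#\GGG_j\leq Q e^{jh(X)}$, which comes from the specification/gluing argument alone and requires no prior control of $\#\LLL$. Setting $a_i=\#(\Cp\cup\Cs)_i e^{-ih(X)}$, which is summable by \ref{hC<h}, one gets $\#\LLL_n\leq \#\GGG^M_n+(1-r)\#\LLL_n$ for $M$ large, hence $\#\GGG^M_n\geq r\#\LLL_n\geq r e^{nh(X)}$ and then $\#\LLL_n\leq r^{-1}Q_M e^{nh(X)}$. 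With these bounds in hand, everything downstream in your proposal proceeds as you describe (and as in \cite{CT12}); in the final entropy comparison you should split each of $\DDD_n$ and $\DDD_n^c$ into its intersections with $\GGG^M_n$ and with $B_n^M$, using the uniform upper Gibbs bound to show the $B_n^M$ pieces have small $\mu$-mass and small cardinality relative to $e^{nh(X)}$, so that the $\GGG^M_n$ pieces carry the term that diverges to $-\infty$.
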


\begin{remark}\label{rmk:pesin-sets}
Note that $\LLL = \bigcup_{M\in \NN} \GGG^M$; the sets $\GGG^M$ play a similar role to the regular level sets that appear in Pesin theory.\footnote{Since $\GGG^M$ corresponds to a collection of orbit segments rather than a subset of the space, the most accurate analogy might be to think of $\GGG^M$ as corresponding to orbit segments that start and end in a given regular level set.}
The gap size $\tau$ appearing in the specification property for $\GGG^M$ is allowed to depend on $M$, just as the constants appearing in the definition of hyperbolicity are allowed to depend on which regular level set a point lies in.  Similarly, for the unique MME $\mu$ one can prove that $\lim_{M\to\infty} \mu(\GGG^M) = 1$, which mirrors a standard result for hyperbolic measures and Pesin sets.
\end{remark}

\begin{remark}
In fact we do not quite need \emph{every} $w\in \LLL$ to admit a decomposition as in \eqref{eqn:decomp}.  It is enough to have $\Cp,\GGG,\Cs\subset \LLL$ such that $h(\LLL\setminus(\Cp\GGG\Cs)) < h(X)$, in addition to the conditions above \cite{vC18}.
\end{remark}

We outline the proof of Theorem \ref{thm:symbolic}.  The idea is to mimic Bowen's proof using Propositions \ref{prop:uniqueness} and \ref{prop:build-mme} by completing the following steps.
\begin{enumerate}
\item Prove uniform counting bounds as in Lemma \ref{lem:counting}.
\item Use these to establish the following \emph{non-uniform} Gibbs property for any limit point $\mu$ of the sequence of measures in \eqref{eqn:build}: there are constants $K,K_M\geq 1$ such that
\begin{equation}\label{eqn:nuh-gibbs}
K_M^{-1} e^{-|w|\htop(X)} \leq \mu[w] \leq K e^{-|w|\htop(X)} \text{ for all } M\in\NN \text{  and }w\in \GGG^M.
\end{equation}
We emphasize that the Gibbs property is non-uniform in the sense that the lower Gibbs constant depends on $M$.\footnote{The constant $K_M$ increases exponentially with the transition time in the specification property for $\GGG^M$, so we do not expect any explicit relationship between $M$ and $K_M$ in general. Examples of $S$-gap shifts (see Remark \ref{Sgap}) can be easily constructed to make the constants $K_M^{-1}$ decay fast.} The upper bound that we will obtain from our hypotheses is uniform in $M$. On a fixed $\GGG^M$, we have uniform Gibbs estimates.
\item Give a similar argument for ergodicity, and then prove that the non-uniform lower Gibbs bound in \eqref{eqn:nuh-gibbs} still gives uniqueness as in Proposition \ref{prop:uniqueness}.
\end{enumerate}
Once the uniform counting bounds are established, the proof of \eqref{eqn:nuh-gibbs} follows the same approach as before.  We do not discuss the third step at this level of generality except to emphasize that it follows the approach  given in Proposition \ref{prop:uniqueness}.

For the counting bounds in the first step, we start by observing that the bound $\#\LLL_n \geq e^{n\htop(X)}$ did not require any hypotheses on the symbolic space $X$ and thus continues to hold.  The argument for the upper bound in Lemma \ref{lem:counting} can be easily adapted to show that there is a constant $Q$ such that $\#\GGG_n \leq Q e^{n\htop(X)}$ for all $n$.  Then the desired upper bound for $\#\LLL_n$ is a consequence of the following.

\begin{lemma}\label{lem:most-in-G}
For any $r\in (0,1)$, there is $M$ such that $\#\GGG^M_n \geq r \#\LLL_n$ for all $n$.
\end{lemma}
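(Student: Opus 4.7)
The plan is to use the decomposition property~\eqref{eqn:decomp} to control the complement $\LLL_n \setminus \GGG^M_n$. Since every $w \in \LLL_n$ admits at least one factorization $w = u^p v u^s$ with $u^p \in \Cp$, $v \in \GGG$, $u^s \in \Cs$, failure of $w$ to lie in $\GGG^M_n$ forces every such factorization to have $|u^p| > M$ or $|u^s| > M$. Fixing one factorization per word, this means
$$\LLL_n \setminus \GGG^M_n \subseteq A_n^M \cup B_n^M,$$
where $A_n^M$ is the set of $w \in \LLL_n$ admitting a prefix $u^p \in \Cp$ with $|u^p| > M$, and $B_n^M$ is defined symmetrically for $\Cs$-suffixes. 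The elementary counting bound then gives
$$\#A_n^M \leq \sum_{k=M+1}^{n} \#\Cp_k \cdot \#\LLL_{n-k},$$
and analogously for $\#B_n^M$.

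The second step is to convert the entropy hypothesis~\ref{hC<h} into a genuine uniform exponential majorant. Writing $h := \htop(X)$, we have $h(\Cp) \leq h(\Cp \cup \Cs) < h$, so I may fix $h' \in (h(\Cp), h)$; the $\ulim$ definition of $h(\Cp)$ together with absorbing a finite number of pre-asymptotic terms yields a constant $C$ with $\#\Cp_k \leq Ce^{kh'}$ for all $k \in \NN$. Combined with the upper count $\#\LLL_m \leq Q e^{mh}$ from Lemma~\ref{lem:counting}, I obtain
$$\#A_n^M \leq CQ\, e^{nh} \sum_{k=M+1}^{\infty} e^{-k(h-h')} \leq \epsilon_M\, e^{nh},$$
where $\epsilon_M \to 0$ as $M \to \infty$, uniformly in $n$. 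The analogous bound for $B_n^M$ follows from the symmetric choice of $h'' \in (h(\Cs), h)$. Dividing by the lower count $\#\LLL_n \geq e^{nh}$ (Lemma~\ref{lem:counting} again) yields
$$\frac{\#(\LLL_n \setminus \GGG^M_n)}{\#\LLL_n} \leq 2\epsilon_M,$$
so given $r \in (0,1)$ it suffices to choose $M$ large enough that $2\epsilon_M \leq 1-r$.

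I do not foresee any serious obstacles. The only subtle point is the conversion from the $\ulim$ definition of $h(\Cp)$ to a pointwise exponential bound of the form $\#\Cp_k \leq Ce^{kh'}$ valid for \emph{every} $k$, but this is standard and plays an essential role because we need the geometric tail $\sum_{k>M} e^{-k(h-h')}$ to decay, uniformly in $n$, in order to convert the entropy gap into a density statement. It is worth noting that this argument uses only the decomposition~\eqref{eqn:decomp} and the entropy gap~\ref{hC<h}, and is independent of the specification hypothesis~\ref{GM-spec}; this matches the intuition that specification on $\GGG^M$ is needed for the Gibbs and counting estimates \emph{within} $\GGG^M$, but not for the statement that $\GGG^M$ dominates $\LLL$.
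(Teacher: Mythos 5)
Your reduction of the problem to bounding $\#A_n^M$ and $\#B_n^M$, and your conversion of the $\ulim$ in hypothesis \ref{hC<h} into a pointwise bound $\#\Cp_k \leq Ce^{kh'}$ with $h' < h$, are both fine. The genuine gap is your invocation of the upper bound $\#\LLL_m \leq Qe^{mh}$ ``from Lemma~\ref{lem:counting}.'' That lemma assumes the specification property for the \emph{whole} language $\LLL$, which is exactly what is \emph{not} assumed in Theorem~\ref{thm:symbolic} (beta shifts being the motivating example where it fails). In the decomposition setting, the only counting bounds available before Lemma~\ref{lem:most-in-G} are the unconditional lower bound $\#\LLL_n \geq e^{nh}$ (subadditivity) and the upper bound $\#\GGG_j \leq Qe^{jh}$, which comes from specification on $\GGG$. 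The uniform upper bound $\#\LLL_n \leq Ae^{nh}$ is derived in the paper as a \emph{corollary} of Lemma~\ref{lem:most-in-G}, so using it inside the proof is circular. Without specification, $\#\LLL_{n-k}/e^{(n-k)h}$ could a priori grow subexponentially, and then your tail sum $\sum_{k>M}\#\Cp_k\,\#\LLL_{n-k}$ is not uniformly $o(e^{nh})$.

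The repair is to not stop after peeling off the long prefix: the remainder of the word lies in $\GGG\Cs$, not in an arbitrary element of $\LLL_{n-k}$, so you should bound it by $\sum_j \#\GGG_j\,\#\Cs_{n-k-j} \leq \sum_j Qe^{jh}\,\#\Cs_{n-k-j}$, which is $O(e^{(n-k)h})$ by the summability of $\#\Cs_i e^{-ih}$. This is precisely the paper's argument: writing $a_i = \#(\Cp_i\cup\Cs_i)e^{-ih}$, one gets $\#\LLL_n \leq \#\GGG_n^M + \sum_{i+j+k=n,\ \max(i,k)>M} a_i a_k Qe^{nh}$, and chooses $M$ so the tail is at most $(1-r)e^{nh} \leq (1-r)\#\LLL_n$. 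Note also that your closing remark is incorrect: the lemma is \emph{not} independent of hypothesis~\ref{GM-spec}; specification on $\GGG$ enters essentially through the bound $\#\GGG_j \leq Qe^{jh}$, which is the only way to control the ``middle'' of the word in the absence of global specification.
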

\begin{proof}
Let $a_i = \#(\Cp_i \cup \Cs_i) e^{-i\htop(X)}$, so that in particular $\sum a_i < \infty$ by \ref{hC<h}.  Since any $w\in \LLL_n$ can be written as $w=u^p v u^s$ for some $u\in \Cp_i$, $v\in \GGG_j$, and $w\in \Cs_k$ with $i+j+k=n$, we have
\[
\#\LLL_n \leq \#\GGG^M_n + \sum_{\substack{i+j+k=n \\ \max(i,k) > M}} (\#\Cp_i) (\#\GGG_j) (\#\Cs_k)
\leq \#\GGG_n^M + \sum_{\substack{i+j+k=n \\ \max(i,k) > M}} a_i a_k Q e^{n\htop(X)},
\]
where the second inequality uses the upper bound $\#\GGG_j \leq Q e^{j\htop(X)}$.
Since $\sum a_i < \infty$, there is $M$ such that 
\[
\sum_{\substack{i+j+k=n \\ \max(i,k) > M}} a_i a_k Q e^{n\htop(X)} <(1-r) e^{n\htop(X)} \leq (1-r) \#\LLL_n,
\]
where the second inequality uses the lower bound $\#\LLL_n \geq e^{n\htop(X)}$.  Combining these estimates gives $\#\LLL_n \leq \#\GGG_n^M + (1-r)\#\LLL_n$, which proves the lemma.
\end{proof}

The same specification argument that gives the upper bound on $\#\GGG_n$ gives a corresponding upper bound on $\GGG_n^M$ (with a different constant), and thus we deduce the following consequence of Lemma \ref{lem:most-in-G}.

\begin{corollary}
There are constants $a,A>0$ and $M\in\NN$ such that
\[
e^{n\htop(X)} \leq \#\LLL_n \leq A e^{n\htop(X)}
\quad\text{and}\quad
\#\GGG_n^M \geq a e^{n\htop(X)} \text{ for all } n\in\NN.
\]
\end{corollary}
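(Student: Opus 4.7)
The plan is to extract both bounds from Lemma \ref{lem:most-in-G} combined with the specification-based counting argument of Lemma \ref{lem:counting}. First, I would fix any $r \in (0,1)$ (say $r=1/2$) and apply Lemma \ref{lem:most-in-G} to choose $M \in \NN$ such that $\#\GGG_n^M \geq r \#\LLL_n$ for all $n$. The lower bound $\#\LLL_n \geq e^{n\htop(X)}$ follows purely from subadditivity of $\log \#\LLL_n$ (as in the proof of Lemma \ref{lem:counting}, which for this bound required no specification hypothesis at all). Chaining these two inequalities immediately gives $\#\GGG_n^M \geq r e^{n\htop(X)}$, so we may take $a = r$.

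For the upper bound on $\#\LLL_n$, I would adapt the upper-bound half of the proof of Lemma \ref{lem:counting}, applying it to $\GGG^M$ in place of $\LLL$. By hypothesis \ref{GM-spec}, $\GGG^M$ has specification with some gap $\tau_M$, so for each pair $(v,w) \in \GGG_m^M \times \GGG_n^M$ there is a gluing word $u$ with $|u| \leq \tau_M$ and any padding $u'$ chosen so that $vuwu' \in \LLL_{m+n+\tau_M}$, and the resulting map $\GGG_m^M \times \GGG_n^M \to \LLL_{m+n+\tau_M}$ has at most $\tau_M+1$ preimages per image. Thus
\[
\#\LLL_{m+n+\tau_M} \geq \frac{1}{\tau_M+1} \#\GGG_m^M \#\GGG_n^M,
\]
and iterating yields $\#\LLL_{k(n+\tau_M)} \geq (\tau_M+1)^{-k} (\#\GGG_n^M)^k$. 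Taking logarithms, dividing by $k(n+\tau_M)$, and sending $k\to\infty$ gives
\[
\htop(X) \geq \frac{1}{n+\tau_M}\bigl(\log \#\GGG_n^M - \log(\tau_M+1)\bigr),
\]
so $\#\GGG_n^M \leq Q_M e^{n\htop(X)}$ with $Q_M := (\tau_M+1) e^{\tau_M \htop(X)}$.

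Finally, combining this upper bound with Lemma \ref{lem:most-in-G} gives $\#\LLL_n \leq r^{-1} \#\GGG_n^M \leq r^{-1} Q_M e^{n\htop(X)}$, so the second bound follows with $A = r^{-1} Q_M$. The argument is essentially a bookkeeping exercise once Lemma \ref{lem:most-in-G} is in hand; there is no genuine obstacle. The only subtlety worth flagging is that the specification property for $\GGG^M$ produces concatenated words in $\LLL$ rather than back in $\GGG^M$, but this is harmless (and in fact exactly what is needed) because the argument is used to bound $\#\LLL$ below by $\#\GGG^M$, not to close $\GGG^M$ under concatenation.
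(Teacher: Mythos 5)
Your proposal is correct and follows essentially the same route as the paper: the lower bounds come from subadditivity plus Lemma \ref{lem:most-in-G}, and the upper bound on $\#\LLL_n$ comes from running the specification-based counting argument of Lemma \ref{lem:counting} on $\GGG^M$ and then transferring it to $\LLL$ via Lemma \ref{lem:most-in-G}. The only cosmetic difference is that the paper first records the bound $\#\GGG_n \leq Q e^{n\htop(X)}$ (needed inside the proof of Lemma \ref{lem:most-in-G}) and then notes the same argument applies to $\GGG^M$, whereas you work with $\GGG^M$ directly; and, as you correctly flag, the ``iteration'' step should be read as an application of the $k$-word form of specification in Definition \ref{def:symb-spec}, since the glued words land in $\LLL$ rather than $\GGG^M$.
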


\begin{remark}
In fact, the proof of Lemma \ref{lem:most-in-G} can easily be adapted to show a stronger result: given any $\gamma>0$ and $r\in (0,1)$, there is $M$ such that if $\DDD_n\subset \LLL_n$ has $\#\DDD_n \geq \gamma e^{n\htop(X)}$, then $\#(\DDD_n \cap \GGG^M_n) \geq r \#\DDD_n$.  These types of estimates are what lie behind the claim in Remark \ref{rmk:pesin-sets} that the (non-uniform) Gibbs property implies $\mu(\GGG^M) \to 1$ as $M\to\infty$.
\end{remark}

\subsection{An example: beta shifts}\label{sec:beta}

Given a real number $\beta>1$, the corresponding $\beta$-transformation $f\colon [0,1) \to [0,1)$ is $f(x) = \beta x \pmod 1$.  Let $A = \{0,1,\dots, \lceil \beta \rceil - 1\}$; then every $x\in [0,1)$ admits a coding $y = \pi(x) \in A^\NN$ defined by $y_n = \lfloor \beta f^{n-1}(x) \rfloor$, and we have $\pi\circ f = \sigma\circ \pi$, where $\sigma\colon A^\NN \to A^\NN$ is the left shift.  Observe that $\pi(x)_n = a$ if and only if $f^{n-1}(x) \in I_a$, where the intervals $I_a$ are as shown in Figure \ref{fig:beta}.\footnote{Formally,
$I_a = \{ x\in [0,1) : \lfloor\beta x\rfloor = a\}$, so $I_a = [\frac a\beta, 1)$ if $a= \lceil\beta\rceil - 1$, and $[\frac a\beta, \frac{a+1}\beta)$ otherwise.}  
Given $n\in\NN$ and $w\in A^n$, let
\[
I(w) := \bigcap_{k=1}^{n} f^{-(k-1)}(I_{w_k})
\]
be the interval in $[0,1)$ containing all points $x$ for which the first $n$ iterates are coded by $w$.  The figure shows an example for which $f^n(I(w))$ is not the whole interval $[0,1)$; it is worth checking some other examples and seeing if you can tell for which words $f^n(I(w))$ \emph{is} equal to the whole interval.  Observe that if $\beta$ is an integer then this is true for every word.

\begin{figure}[htbp]
\begin{tikzpicture}[scale=4]
\def\b{2.525};
\draw (0,0) rectangle (1,1);
\draw[blue,dotted,thick] (0,0) -- (1,1);
\draw (0,0) -- ({1/\b},1);
\draw ({1/\b},0) -- ({2/\b},1);
\draw ({2/\b},0) -- (1,\b-2);
\foreach \i in {0,1,2}
{ \draw ({\i/\b},-.05) -- +(0,.1); }
\node at ({1/(2*\b)},0)[below] {$I_0$};
\node at ({3/(2*\b)},0)[below] {$I_1$};
\node at ({(1+2/\b)/2},0)[below] {$I_2$};
\pgfmathsetmacro{\x}{1};
\pgfmathfrac{\b*\x};
\pgfmathsetmacro{\y}{\pgfmathresult};
\foreach \i in {1,...,6}
{
\draw[red] (\x,\y) -- (\y,\y);
\xdef\x{\y};
\pgfmathfrac{\b*\x};
\pgfmathsetmacro{\yy}{\pgfmathresult};
\xdef\y{\yy};
\draw[red] (\x,\x) -- (\x,\y);
}
\end{tikzpicture}
\hspace{6em}
\begin{tikzpicture}[scale=4]
\def\b{2.525};
\draw (0,0) rectangle (1,1);
\draw[blue,dotted,thick] (0,0) -- (1,1);
\draw (0,0) -- ({1/\b},1);
\draw ({1/\b},0) -- ({2/\b},1);
\draw ({2/\b},0) -- (1,\b-2);
\foreach \i in {0,1,2}
{ \draw ({\i/\b},-.05) -- +(0,.1); }
\pgfmathsetmacro{\x}{1/((\b)^2) + 2/\b};
\draw[red] ({1/\b},0) -- ({1/\b},{1/\b})--(\x,{1/\b}) -- (\x,0);
\draw[red,ultra thick] (\x,0)--(1,0);
\node[red] at ({(1+\x)/2},0)[below] {$I(21)$};
\draw[red] (1,\b-2)--(\b-2,\b-2)--(\b-2,0);
\draw[red,ultra thick] ({1/\b},0)--({\b-2},0)
node[midway,below] {$f(I(21))$};
\pgfmathsetmacro{\x}{(\b)^2 - 2*\b - 1};
\draw[red] (\b-2,\x)--(\x,\x)--(\x,0);
\draw[red,ultra thick] (\x,0) -- (0,0)
node[midway,below left]{$f^2(I(21))$};
\end{tikzpicture}
\caption{Coding a $\beta$-transformation.}
\label{fig:beta}
\end{figure}
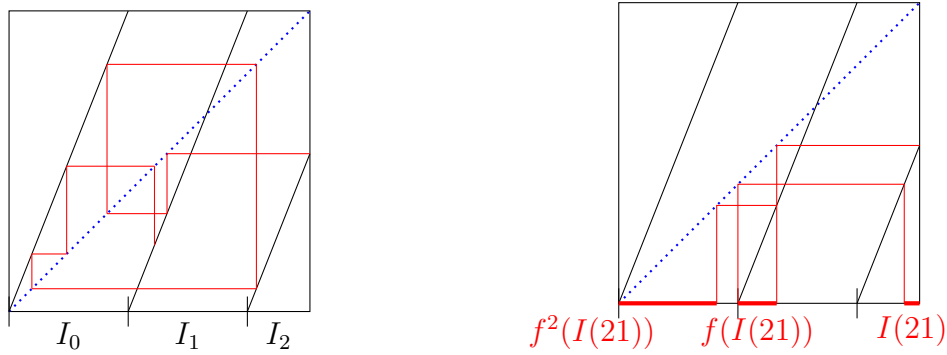

\begin{definition}
The \emph{$\beta$-shift} $X_\beta$ is the closure of the image of $\pi$, and is $\sigma$-invariant.  Equivalently, $X_\beta$ is the shift space whose language $\LLL$ is the set of all $w\in A^*$ such that $I(w) \neq \emptyset$; thus $y\in A^\NN$ is in $X_\beta$ if and only if $I(y_1 \cdots y_n) \neq\emptyset$ for all $n\in\NN$.
\end{definition}

For further background on the $\beta$-shifts, see \cite{aR57,wP60,fB89}.  We summarize the properties relevant for our purposes.

Write $\preceq$ for the lexicographic order on $A^\NN$ and observe that $\pi$ is order-preserving.  Let $\zz = \lim_{x\nearrow 1} \pi(x)$ denote the supremum of $X_\beta$ in this ordering.  It will be convenient to extend $\preceq$ to $A^*$, writing $v\preceq w$ if for $n=\min(|v|,|w|)$ we have $v_{[1,n]} \preceq w_{[1,n]}$.

\begin{remark}
Observe that on $A^* \cup A^\NN$, $\preceq$ is only a pre-order, because there are $v\neq w$ such that $v\preceq w$ and $w\preceq v$; this occurs whenever one of $v,w$ is a prefix of the other.
\end{remark}

The $\beta$-shift can be described in terms of the lexicographic ordering, or in terms of the following countable-state graph:
\begin{itemize}
\item the vertex set is $\NN_0 = \{0,1,2,3,\dots\}$;
\item the vertex $n$ has $1 + \zz_{n+1}$ outgoing edges, labeled with $\{0,1,\dots, \zz_{n+1}\}$; the edge labeled $\zz_{n+1}$ goes to $n+1$, and the rest go to the `base' vertex $0$.
\end{itemize}
Figure \ref{fig:beta-graph} shows (part of) the graph when $\zz = 2102001\dots$, as in Figure \ref{fig:beta}.

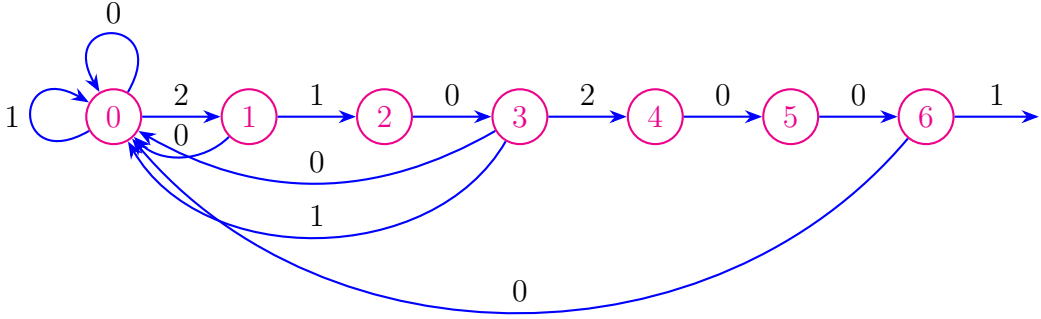
\begin{figure}[htbp]
\begin{tikzpicture}[scale=.6]
\begin{scope}[every node/.style={circle,thick,draw,color=magenta}]
	\def\x{3};
	\foreach \i in {0,1,...,6}
{	\node (\i) at ({\i*\x},0) {\i}; }
\end{scope}
\begin{scope}[>={Stealth},
              every node/.style={above},
              every edge/.style={draw=blue,thick}]
    \path [->] (0) edge node {$2$} (1);
    \path [->] (1) edge node {$1$} (2);
    \path [->] (2) edge node {$0$} (3);
    \path [->] (3) edge node {$2$} (4);
    \path [->] (4) edge node {$0$} (5);
    \path [->] (5) edge node {$0$} (6);
    \path [->] (6) edge node {$1$} (20.5,0);
    \path [->] (0) edge[out=60,in=120,looseness=8] node{$0$} (0); 
    \path [->] (0) edge[out=210,in=150,looseness=8] node[left]{$1$} (0); 
    \path [->] (1) edge[bend left=45] node {$0$} (0);
    \path [->] (3) edge[bend left=60] node {$1$} (0);  
    \path [->] (3) edge[bend left=30] node {$0$} (0);
	\path [->] (6) edge[bend left=50] node {$0$} (0);
\end{scope}
\end{tikzpicture}
\caption{A graph representation of $X_\beta$.}
\label{fig:beta-graph}
\end{figure}

\begin{proposition}
Given $n\in\NN$ and $w\in A^n$, the following are equivalent.
\begin{enumerate}
\item $I(w) \neq \emptyset$ (which is equivalent to $w\in \LLL(X_\beta)$ by definition).
\item $w_{[j,n]} \preceq \zz$ for every $1\leq j\leq n$.
\item $w$ labels the edges of a path on the graph that starts at the base vertex $0$.
\end{enumerate}
\end{proposition}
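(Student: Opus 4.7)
The plan is to close the cycle $(1) \Rightarrow (2) \Rightarrow (3) \Rightarrow (1)$, each implication using a different facet of the $\beta$-transformation's symbolic coding.

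For $(1) \Rightarrow (2)$, I would fix any $x\in I(w)$. Then $f^{j-1}(x) \in I(w_{[j,n]})$, so the coding $\pi(f^{j-1}(x))$ begins with $w_{[j,n]}$. Since $\zz$ is the $\preceq$-supremum of $X_\beta$, we have $\pi(f^{j-1}(x)) \preceq \zz$, and comparing the first $n-j+1$ entries of each side yields $w_{[j,n]} \preceq \zz$ in the sense of the extended pre-order.

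For $(2) \Rightarrow (3)$, I would induct on $i$ and track the vertex $k_i$ that the candidate path reaches after reading $w_{[1,i]}$. A direct induction shows that whenever $k_i > 0$,
\[
w_{[i-k_i+1,\,i]} = \zz_{[1,k_i]},
\]
because the graph increments $k_i$ only upon reading $\zz_{k_i+1}$ and resets to $0$ otherwise. Given this identity together with Parry's condition $w_{[i-k_i+1,\,n]} \preceq \zz$, the first $k_i$ symbols of both sides already agree, which forces $w_{i+1} \leq \zz_{k_i+1}$; this is exactly the graph's constraint for a legal outgoing edge at vertex $k_i$. Iterating over $i=0,\dots,n-1$ confirms that $w$ labels a valid path.

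For $(3) \Rightarrow (1)$, I would prove by induction on $i$ the geometric identity
\[
f^i(I(w_{[1,i]})) = [0,\xi_{k_i}),
\]
where $\xi_k\in(0,1]$ denotes the point whose $\beta$-expansion is $\sigma^k\zz$ (so $\xi_0=1$). The base case follows from $I(\emptyset) = [0,1)$. For the inductive step, one intersects $[0,\xi_{k_i})$ with $I_{w_{i+1}}$: if $w_{i+1} < \zz_{k_i+1}$ (so $k_{i+1}=0$), then $I_{w_{i+1}} \subset [0,\xi_{k_i})$ and $f$ maps it onto $[0,1)=[0,\xi_0)$; if $w_{i+1} = \zz_{k_i+1}$ (so $k_{i+1} = k_i+1$), the intersection is $[\zz_{k_i+1}/\beta,\,\xi_{k_i})$, which the affine branch $x\mapsto\beta x - \zz_{k_i+1}$ sends onto $[0,\xi_{k_i+1})$. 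Both output intervals are nonempty (one uses $\xi_k>0$ for all $k$, valid since $\zz$ has infinitely many nonzero digits whenever $\beta>1$), so $I(w)\neq\emptyset$.

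The main obstacle is the geometric bookkeeping in $(3)\Rightarrow(1)$: one must verify that $f^i(I(w_{[1,i]}))$ really admits the closed-form description $[0,\xi_{k_i})$ through both ``reset'' and ``advance'' moves, and handle the boundary case where $\zz_{k_i+1}$ equals the maximal letter $\lfloor\beta\rfloor$, which alters the shape of $I_{w_{i+1}}$. Isolating the displayed geometric identity as its own lemma is the cleanest way to contain this bookkeeping.
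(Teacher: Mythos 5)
Your argument is correct. The paper's own proof runs a single induction establishing three \emph{strengthened} statements simultaneously, each tracking the vertex $k$ reached after reading $w$: namely $f^n(I(w)) = f^k(I(\zz_{[1,k]}))$, the lexicographic condition together with $k$ maximal such that $w_{[n-k+1,n]} = \zz_{[1,k]}$, and the path ending at vertex $k$. Your cycle $(1)\Rightarrow(2)\Rightarrow(3)\Rightarrow(1)$ contains the same key identity --- your $[0,\xi_{k_i})$ is exactly $f^{k_i}(I(\zz_{[1,k_i]}))$ --- but confines the geometric induction to the leg $(3)\Rightarrow(1)$, handles $(2)\Rightarrow(3)$ by a purely combinatorial induction on the suffix-matching counter, and dispatches $(1)\Rightarrow(2)$ by monotonicity of $\pi$ and the definition $\zz = \lim_{x\nearrow 1}\pi(x)$. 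The only point to watch in that last leg is that ``every coding sequence is $\preceq \zz$'' is close in spirit to what is being proved; it is legitimate here because the paper asserts it beforehand (and it follows from order-preservation of $\pi$ together with closedness of $\{u : u \preceq \zz\}$ under limits), but a self-contained write-up should record that one-line derivation. Net effect: the two proofs establish the same facts; the paper's is more compact, while yours separates the three mechanisms (geometry of the branches, combinatorics of the walk, monotonicity of the coding) so that each implication uses only one of them.
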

\begin{proof}[Idea of proof]
Using induction, check that the following are equivalent for every $n\in\NN$, $0\leq k\leq n$, and $w\in A^n$.
\begin{enumerate}
\item $f^n(I(w)) = f^k(I(\zz_{[1,k]})$, where we write $I(\zz_{[1,0]}) := [0,1)$.
\item $w_{[j,n]} \preceq \zz$ for every $1\leq j\leq n$, and $k$ is maximal such that $w_{[n-k+1,n]} = \zz_{[1,k]}$.
\item $w$ labels the edges of a path on the graph that starts at the base vertex $0$ and ends at the vertex $k$.\qedhere
\end{enumerate}
\end{proof}

\begin{corollary}
Given $x\in A^\NN$, the following are equivalent.
\begin{enumerate}
\item $x\in X_\beta$.
\item $\sigma^n(x) \preceq \zz$ for every $n$.
\item $x$ labels the edges of an infinite path of the graph starting at the vertex $0$.
\end{enumerate}
\end{corollary}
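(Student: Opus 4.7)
The plan is to reduce each of the three equivalences (1) $\Leftrightarrow$ (2) and (1) $\Leftrightarrow$ (3) to the preceding Proposition applied separately to every finite prefix of $x$. The key preliminary observation is that $x \in X_\beta$ if and only if $x_{[1,n]} \in \LLL(X_\beta)$ for every $n \in \NN$, which is immediate from the characterization of $X_\beta$ as the shift space whose language is $\LLL$ (equivalently, as the set of $y$ for which $I(y_{[1,n]}) \neq \emptyset$ for all $n$).

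For (1) $\Leftrightarrow$ (2), I would first apply the Proposition to each prefix $x_{[1,n]}$ to obtain that $x \in X_\beta$ iff for every $n \in \NN$ and every $1 \leq j \leq n$ the pre-order inequality $x_{[j,n]} \preceq \zz$ holds (noting that the Proposition's condition $w_{[j,n]}\preceq\zz$ is comparison of a finite word with the infinite sequence $\zz$ via the pre-order extended in the statement right before the Proposition). Fixing $j$ and letting $n$ range over $\{j, j+1, \ldots\}$, this amounts to $x_{[j,j+k-1]} \preceq \zz_{[1,k]}$ for all $k \geq 1$. The routine fact that lexicographic order on $A^\NN$ is detected pointwise on finite prefixes -- either two sequences agree forever (and so all prefixes agree) or there is a first disagreement $m$ at which the smaller sequence has the smaller symbol -- then gives that these countably many conditions on prefixes are equivalent to the single infinite comparison $\sigma^{j-1}(x) \preceq \zz$. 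Taking the conjunction over all $j \geq 1$ (and reindexing $n = j-1 \geq 0$) yields condition (2).

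For (1) $\Leftrightarrow$ (3), I would again apply the Proposition to each prefix to obtain that $x \in X_\beta$ iff for every $n$, the finite word $x_{[1,n]}$ labels a path on the graph starting at the base vertex $0$. The graph constructed from $\zz$ is \emph{forward-deterministic}: at each vertex $n$, the outgoing edges carry distinct labels from $\{0, 1, \ldots, \zz_{n+1}\}$, so given a starting vertex and a label, the next vertex (if it exists) is uniquely determined. Consequently, the sequence of vertices visited along any finite or infinite path starting at $0$ is completely determined by the sequence of edge-labels traversed. In particular, the finite paths labeled by successive prefixes of $x$ are nested (each extending the previous), and so an infinite path starting at $0$ labeled by $x$ exists if and only if every finite prefix $x_{[1,n]}$ labels a finite path from $0$. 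This gives the equivalence of (1) and (3).

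The argument is essentially routine; the only real point to be careful about is the translation in (1) $\Leftrightarrow$ (2) between finite pre-order conditions and lex comparison on $A^\NN$, which is where the pre-order warning in the Remark is relevant. No obstacle arises from this because $\zz$ (as an infinite sequence) is never a prefix of a finite word and the comparisons $x_{[j,n]} \preceq \zz$ all ultimately come down to the lex comparison of $x_{[j,n]}$ with $\zz_{[1,n-j+1]}$.
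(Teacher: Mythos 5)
Your proof is correct and is exactly the intended derivation: the paper states this corollary without proof as an immediate consequence of the preceding Proposition applied to every finite prefix of $x$. Your two supporting observations — that lexicographic comparison on $A^\NN$ is detected on finite prefixes, and that the graph is deterministic so the finite paths are nested — are precisely the (routine) details the paper leaves implicit.
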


\begin{exercise}
Prove that $X_\beta$ has the specification property if and only if $\zz$ does not contain arbitrarily long strings of $0$s.
\end{exercise}

In fact, Schmeling showed \cite{jS97} that for Lebesgue-a.e.\ $\beta>1$, the $\beta$-shift $X_\beta$ does \emph{not} have the specification property.  Nevertheless, every $\beta$-shift has a unique MME.  This was originally proved by Hofbauer \cite{fH78} and Walters \cite{pW78} using techniques not based on specification.  Theorem \ref{thm:symbolic} gives an alternate proof: writing $\GGG$ for the set of words that label a path starting \emph{and ending} at the base vertex, and $\Cs$ for the set or words that label a path starting at the base vertex \emph{and never returning to it}, one quickly deduces the following.
\begin{itemize}
\item $\GGG \Cs$ is a decomposition of $\LLL$.
\item $\GGG^M$ is the set of words labeling a path starting at the base vertex and ending somewhere in the first $M$ vertices; writing $\tau$ for the maximum graph distance from such a vertex to the base vertex, $\GGG^M$ has specification with gap size $\tau$.
\item $\#\Cs_n = 1$ for every $n$, and thus $h(\Cs)=0 < \htop(X_\beta) = \log \beta$.
\end{itemize}
This verifies the conditions of Theorem \ref{thm:symbolic} and thus provides another proof of uniqueness of the MME.

\begin{remark}
Because the earlier proofs of uniqueness did not pass to subshift factors of $\beta$-shifts, it was for several years an open problem (posed by Klaus Thomsen) whether such factors still had a unique MME.  The inclusion of this problem in Mike Boyle's article ``Open problems in symbolic dynamics'' \cite{mB08} was our original motivation for studying uniqueness using non-uniform versions of the specification property, which led us to formulate the conditions in Theorem \ref{thm:symbolic};  these can be shown to pass to factors, providing a positive answer to Thomsen's question \cite{CT12}.
\end{remark}

\begin{remark} \label{Sgap}
Theorem \ref{thm:symbolic} can be applied to other symbolic examples as well, including $S$-gap shifts \cite{CT12}. The $S$-gap shifts are a family of subshifts of $\{0, 1\}^{\mathbb Z}$ defined by the property that the number of $0$'s that appear between any two $1$'s is an element of a prescribed set $S \subset \mathbb Z$. A specific example is the prime gap shift, where $S$ is taken to be the prime numbers. The theorem also admits an extension to equilibrium states for nonzero potential functions along the lines described in \S\ref{sec:eq-st} below, which has been applied to $\beta$-shifts \cite{CT13}, $S$-gap shifts \cite{CTY}, shifts of quasi-finite type \cite{vC18}, and $\alpha$-$\beta$ shifts (which code $x\mapsto \alpha + \beta x \pmod 1$) \cite{CLR}.
\end{remark}

\subsection{Periodic points}\label{sec:periodic-points}

It is often the case that one can prove a stronger version of specification, for example, when $X$ is a mixing SFT.

\begin{definition}
Say that $\GGG\subset \LLL$ has \emph{periodic strong specification} if there exists $\tau\in\NN$ such that for all $w^1,\dots, w^k\in \GGG$, there are $u^1,\dots, u^k \in \LLL_\tau$ such that $v := w^1 u^1 \cdots w^k u^k \in \LLL$, and moreover $x=vvvvv\dots \in X$.
\end{definition}

There are two strengthenings of specification, in the sense of \eqref{eqn:spec}, here: first, we assume that the gap size is equal to $\tau$, not just $\leq\tau$, and second, we assume that the ``glued word'' can be extended periodically after adding $\tau$ more symbols.

If we replace specification in Theorem \ref{thm:symbolic} with periodic strong specification for each $\GGG^M$, then the counting estimates in Lemma \ref{lem:counting} immediately lead to the following estimates on the number of periodic points: writing $\Per_n = \{x \in X : \sigma^n x = x\}$, we have
\begin{equation}\label{eqn:periodic}
C^{-1} e^{n\htop(X)}
\leq \#\Per_n
\leq C e^{n\htop(X)}.
\end{equation}
Using this fact and the construction of the unique MME given just before Proposition \ref{prop:build-mme}, one can also conclude that the unique MME $\mu$ is the limiting distribution of periodic orbits in the following sense: 
\begin{equation}\label{eqn:Pern}
\frac 1{\#\Per_n} \sum_{x\in \Per_n} \delta_x \xrightarrow{\text{weak*}} \mu
\text{ as } n\to\infty.
\end{equation}
This argument holds true in the classical Theorem \ref{thm:bowen-shift}, and for $\beta$-shifts. It also extends beyond the symbolic setting, and a natural analogue of the argument holds for regular closed geodesics on rank one non-positive curvature manifolds.

\section{Beyond shift spaces: expansivity in Bowen's argument} \label{sec:MMEhomeos}

Now we move to the non-symbolic setting and describe how Bowen's approach works for a continuous map on a compact metric space. In particular, his assumptions apply to and were inspired by the case when $X$ is a transitive locally maximal hyperbolic set for a diffeomorphism $f$.  First we recall some basic definitions.

\subsection{Topological entropy}\label{sec:topological-entropy}

\begin{definition}
Given $n\in\NN$, the $n$th \emph{dynamical metric} on $X$ is
\begin{equation}\label{eqn:dn}
d_n(x,y) := \max \{d(f^k x, f^k y) : 0\leq k < n\}.
\end{equation}
The \emph{Bowen ball of order $n$ and radius $\eps>0$ centered at $x\in X$} is
\begin{equation}\label{eqn:Bn}
B_n(x,\eps) := \{ y\in X : d_n(x,y) < \eps\}.
\end{equation}
A set $E\subset X$ is called \emph{$(n,\eps)$-separated} if $d_n(x,y) > \eps$ for all $x,y\in E$ with $x\neq y$; equivalently, if $y\notin B_n(x,\eps)$ for all such $x,y$.
\end{definition}

We define entropy in a more general way than is standard, reflecting our focus on the space of finite-length orbit segments $X\times \NN$ as the relevant object of study; this replaces the language $\LLL$ that we used in the symbolic setting.  We interpret $(x,n)\in X\times \NN$ as representing the orbit segment $(x,fx,f^2x,\dots, f^{n-1}x)$. Then the analogy is that a cylinder $[w]$ for a word in the language corresponds to a Bowen ball $B_n(x,\eps)$ associated to an orbit segment $(x,n) \in X\times \NN$. Given a collection of orbit segments $\DDD\subset X\times \NN$, for each $n\in\NN$ we write
\begin{equation}\label{eqn:Dn}
\DDD_n := \{x\in X : (x,n) \in \DDD\}
\end{equation}
for the collection of points that begin a length-$n$ orbit segment in $\DDD$.

\begin{definition}[Topological entropy]\label{def:entropy}
Given a collection of orbit segments $\DDD \subset X\times \NN$, for each $\eps>0$ and $n\in\NN$ we write
\begin{equation}\label{eqn:Lambda}
\Lambda(\DDD,\eps,n)
:= \max \{ \# E : E\subset \DDD_n \text{ is $(n,\eps)$-separated}\}.
\end{equation}
The \emph{entropy} of $\DDD$ at scale $\eps>0$ is
\begin{equation}\label{eqn:hDe}
h(\DDD,\eps) := \ulim_{n\to\infty} \frac 1n \log \Lambda(\DDD,\epsilon,n),
\end{equation}
and the entropy of $\DDD$ is
\begin{equation}\label{eqn:hD}
h(\DDD) := \lim_{\eps\to 0} h(\DDD,\eps).
\end{equation}
When $\DDD = Y\times \NN$ for some $Y\subset X$, 
we write $\Lambda(Y,\eps,n) = \Lambda(Y\times \NN,\eps,n)$, $\htop(Y,\eps) = h(Y\times \NN,\eps)$ and $\htop(Y) = \lim_{\eps \to 0} \htop(Y,\eps)$.
In particular, when $\DDD = X\times \NN$ we write
$\htop(X,f) = \htop(X) = h(X\times \NN)$ for the \emph{topological entropy} of $f\colon X\to X$.
\end{definition}

When different orbit segments in $\DDD$ are given weights according to their ergodic sum w.r.t.\ a given potential $\ph$, we obtain a notion of \emph{topological pressure}, which we will discuss in \S\ref{sec:eq-st}.

\begin{theorem}[Variational principle]
Let $X$ be a compact metric space and $f\colon X\to X$ a continuous map.  Then
\begin{equation}\label{eqn:vp}
\htop(X,f) = \sup_{\mu\in \Mf(X)} h_\mu(f).
\end{equation}
\end{theorem}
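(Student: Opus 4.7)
The plan is to establish $\htop(X,f) \geq \sup_\mu h_\mu(f)$ and the reverse inequality separately. For the direction $h_\mu(f) \leq \htop(X,f)$ for every $\mu \in \Mf(X)$, I would use the standard partition-vs-open-cover argument: given a finite Borel partition $\alpha$ with $H_\mu(\alpha)<\infty$, approximate the cells of $\alpha$ from inside by compact subsets $B_i$ to produce a nearby partition $\beta=\{B_0,B_1,\dots,B_k\}$ (with $B_0=X\setminus\bigcup B_i$) and a finite open cover $\UUU=\{B_0\cup B_i\}_{i=1}^k$. A counting argument gives $H_\mu(\beta_0^{n-1}) \leq n\log 2 + \log N(\UUU_0^{n-1})$, where $N(\cdot)$ is the minimal subcover cardinality, and dividing by $n$ yields $h_\mu(f,\beta) \leq \log 2 + \htop(X,f,\UUU)$; refining the cover absorbs the $\log 2$ term, and taking $\sup$ over $\alpha$ proves this direction.

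For the reverse inequality I would apply Misiurewicz's construction. Fix $\eps>0$, let $E_n\subset X$ be a maximal $(n,\eps)$-separated set so that $\#E_n=\Lambda(X,\eps,n)$, and in analogy with \eqref{eqn:build} form
\[
\nu_n := \frac{1}{\#E_n}\sum_{x\in E_n}\delta_x,\qquad \mu_n := \frac{1}{n}\sum_{k=0}^{n-1}f_*^k\nu_n.
\]
Extract a subsequence $n_j$ along which $\mu_{n_j}\to\mu$ in the weak* topology; $\mu$ is $f$-invariant by a standard telescoping estimate. The goal then becomes to show $h_\mu(f)\geq h(X\times\NN,\eps)$, since combined with the easy direction, sending $\eps\to 0$ will finish the proof.

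To that end, choose a finite Borel partition $\beta$ with $\diam(\beta)<\eps$ and $\mu(\partial\beta)=0$; such $\beta$ exists because each point of $X$ admits arbitrarily small balls whose boundaries have $\mu$-null measure (only countably many concentric radii can fail this). Since $\diam(\beta)<\eps$ forces $d_n$-distance strictly less than $\eps$ within any cell of $\beta_0^{n-1}$, no two distinct points of $E_n$ share a cell, and therefore $H_{\nu_n}(\beta_0^{n-1})=\log\#E_n$. The principal obstacle is to transfer this estimate from $\nu_n$ to $\mu$, which I would handle by the classical Misiurewicz blocking trick: fix $q\in\NN$ and for each $0\leq j<q$ decompose $\{0,\dots,n-1\}$ into $\lfloor(n-j)/q\rfloor$ blocks of length $q$ starting at $j$ together with at most $2q$ leftover indices; subadditivity of static entropy then yields
\[
H_{\nu_n}(\beta_0^{n-1}) \leq \sum_{i=0}^{\lfloor(n-j)/q\rfloor-1} H_{f^{j+iq}_*\nu_n}(\beta_0^{q-1}) + 2q\log\#\beta.
\]
Averaging over $j$ and using concavity of $\nu\mapsto H_\nu(\beta_0^{q-1})$ produces
\[
\frac{q}{n}\log\#E_n \leq H_{\mu_n}(\beta_0^{q-1}) + \frac{2q^2\log\#\beta}{n}.
\]
Letting $n=n_j\to\infty$ and invoking upper semicontinuity of $\nu\mapsto H_\nu(\beta_0^{q-1})$ at $\mu$---which is exactly where $\mu(\partial\beta)=0$ enters, since it implies $\mu(\partial\beta_0^{q-1})=0$ by $f$-invariance---yields $q\,h(X\times\NN,\eps)\leq H_\mu(\beta_0^{q-1})$. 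Dividing by $q$ and sending $q\to\infty$ gives $h(X\times\NN,\eps)\leq h_\mu(f,\beta)\leq h_\mu(f)$, and $\eps\to 0$ completes the argument.
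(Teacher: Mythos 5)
Your proposal is correct and is precisely the standard Misiurewicz argument that the paper itself relies on: the variational principle is stated here without proof, with a citation to \cite[Theorem 8.6]{pW82}, and the hard half of that proof is recorded separately as Proposition \ref{prop:build} --- which is exactly your second step (the measures $\nu_n,\mu_n$ from \eqref{eqn:mun}, a boundary-null partition of diameter $<\eps$, and the blocking/concavity estimate). Two minor imprecisions, neither fatal: the $\log 2$ in the easy direction is not removed by ``refining the cover'' (refining only increases the cover quantity); the standard fix is to apply the whole estimate to $f^m$ and $\alpha_0^{m-1}$, use $h_\mu(f^m)=mh_\mu(f)$ and $\htop(X,f^m)=m\,\htop(X,f)$, and let $m\to\infty$. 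Also, to conclude $h_\mu(f)\geq h(X\times\NN,\eps)$ you should first pass to a subsequence along which $\frac1n\log\#E_n$ realizes its $\varlimsup$, and only then extract the weak* limit $\mu$.
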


The following construction forms one half of the proof of the variational principle.

\begin{proposition}[Building a measure of almost maximal entropy]\label{prop:build}
With $X,f$ as above, fix $\eps>0$, and for each $n\in\NN$, let $E_n\subset X$ be an $(n,\eps)$-separated set.
Consider the Borel probability measures
\begin{equation}\label{eqn:mun}
\nu_n := \frac 1{\#E_n} \sum_{x\in E_n} \delta_x,
\quad
\mu_n := \frac 1n \sum_{k=0}^{n-1} f_*^k \nu_n
= \frac 1n \sum_{k=0}^{n-1} \nu_n \circ f^{-k}.
\end{equation}
Let $\mu_{n_j}$ be any subsequence that converges in the weak*-topology to a limiting measure $\mu$.  Then $\mu\in \Mf(X)$ and
\begin{equation}\label{eqn:entropy}
h_\mu(f)
\geq \ulim_{j\to\infty} \frac 1{n_j} \log \#E_{n_j}.
\end{equation} 
In particular, for every $\delta>0$ there exists $\mu\in \Mf(X)$ such that $h_\mu(f)\geq \htop(X,f,\delta)$.
\end{proposition}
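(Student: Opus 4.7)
The plan follows the classical proof of the variational principle. First I would verify that the weak-$*$ limit $\mu$ is $f$-invariant: for any continuous $g\colon X\to\mathbb{R}$, a telescoping identity gives
\[
\int (g\circ f - g)\,d\mu_n = \tfrac{1}{n}\Big(\int g\circ f^n\,d\nu_n - \int g\,d\nu_n\Big),
\]
which is $O(1/n)$ since $g$ is bounded, so passing to the limit along $n_j$ shows $\int g\circ f\,d\mu = \int g\,d\mu$, i.e., $\mu \in \Mf(X)$.

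For the entropy inequality, the strategy is to bound $\tfrac{1}{n}\log\#E_n$ above by a quantity computed with $\mu_n$ that passes nicely to the weak-$*$ limit. I would pick a finite Borel partition $\alpha$ of $X$ with $\diam(\alpha) < \eps$ and $\mu(\partial A) = 0$ for every $A\in\alpha$; such a partition exists by covering $X$ with finitely many small balls and perturbing their radii slightly so that each boundary is $\mu$-null, then taking the associated Voronoi-type partition. Since $\diam(\alpha)<\eps$, any two distinct points in a single atom of $\alpha_0^{n-1}$ would be $d_n$-close, contradicting $(n,\eps)$-separation. Hence each atom of $\alpha_0^{n-1}$ contains at most one point of $E_n$, and since $\nu_n$ is uniform on $E_n$ we get $H_{\nu_n}(\alpha_0^{n-1}) = \log\#E_n$.

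Next comes the standard blocking argument. Fix $q\in\NN$. For each $0\leq j<q$ I would decompose $\{0,\ldots,n-1\}$ into a prefix of length $j$, consecutive blocks $[j+kq,\,j+(k+1)q-1]$ for $0\leq k<a(j)$, and a suffix of length $<q$. Subadditivity of $H_{\nu_n}$ together with the identity $H_\nu(f^{-\ell}\beta) = H_{f_*^\ell\nu}(\beta)$, followed by summing over $j=0,\ldots,q-1$, yields
\[
q\cdot H_{\nu_n}(\alpha_0^{n-1}) \leq 2q^2 \log\#\alpha + \sum_{m=0}^{n-1} H_{f_*^m\nu_n}(\alpha_0^{q-1}),
\]
where the $2q^2\log\#\alpha$ term absorbs the contributions from the prefixes and suffixes across all $j$. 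Concavity of $t\mapsto -t\log t$ and the definition of $\mu_n$ bound the sum on the right by $n\cdot H_{\mu_n}(\alpha_0^{q-1})$, giving
\[
\tfrac{1}{n}\log\#E_n \leq \tfrac{2q\log\#\alpha}{n} + \tfrac{1}{q}H_{\mu_n}(\alpha_0^{q-1}).
\]

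Finally, each atom $A\in\alpha_0^{q-1}$ is a finite intersection of sets $f^{-i}(A_i)$ with $A_i\in\alpha$; continuity of $f$ and $f$-invariance of $\mu$ ensure that $\partial(f^{-i}A_i) \subseteq f^{-i}(\partial A_i)$ has $\mu$-measure zero, so $A$ is a $\mu$-continuity set. By the Portmanteau theorem $\mu_{n_j}(A)\to\mu(A)$, hence $H_{\mu_{n_j}}(\alpha_0^{q-1}) \to H_\mu(\alpha_0^{q-1})$. Taking $\ulim_{j\to\infty}$ of the displayed inequality yields $\ulim_j \tfrac{1}{n_j}\log\#E_{n_j} \leq \tfrac{1}{q}H_\mu(\alpha_0^{q-1})$, and letting $q\to\infty$ gives the claimed bound $h_\mu(f,\alpha)\leq h_\mu(f)$. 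The final assertion follows by choosing $\eps=\delta$ and taking each $E_n$ to realize $\Lambda(X,\delta,n)$. The main obstacle is the blocking estimate, whose bookkeeping has to be done carefully; once that is in place and $\alpha$ has been chosen to respect $\mu$'s boundary behavior, the passage to the limit is routine.
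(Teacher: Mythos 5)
Your proof is correct and is precisely the classical Misiurewicz-style argument (separated set gives $H_{\nu_n}(\alpha_0^{n-1})=\log\#E_n$, blocking, concavity, Portmanteau on $\mu$-continuity atoms) that the paper itself invokes by citing \cite[Theorem 8.6]{pW82}. No substantive differences.
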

\begin{proof}
See \cite[Theorem 8.6]{pW82}.
\end{proof}

\begin{corollary}\label{cor:existence}
Let $X,f$ be as above, and suppose that there is $\delta>0$ such that $\htop(X,f,\delta) = \htop(X,f)$. 
Then there exists a measure of maximal entropy for $(X,f)$.  Indeed, given any sequence $\{E_n\subset X\}_{n=1}^\infty$ of maximal $(n,\delta)$-separated sets, every weak*-limit point of the sequence $\mu_n$ from \eqref{eqn:mun} is an MME.
\end{corollary}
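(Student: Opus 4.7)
The plan is to assemble the corollary as a short consequence of Proposition \ref{prop:build} applied with $\eps=\delta$ and $E_n$ chosen to be maximal $(n,\delta)$-separated sets, so that $\#E_n = \Lambda(X,\delta,n)$. The hypothesis rewrites as
\[
\varlimsup_{n\to\infty} \frac{1}{n}\log \#E_n \;=\; \htop(X,f,\delta) \;=\; \htop(X,f).
\]
The guiding idea is simply to combine the lower bound on entropy produced by the construction \eqref{eqn:mun} with the upper bound furnished by the variational principle, and to squeeze.

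First I would pick a subsequence $(n_k)$ along which the $\varlimsup$ is attained, i.e.\ $\frac{1}{n_k}\log \#E_{n_k} \to \htop(X,f)$. Since $\Mf(X)$ (in fact, the full space of Borel probability measures) is weak*-compact, I can pass to a further subsequence $(n_{k_j})$ so that $\mu_{n_{k_j}} \to \mu$ for some Borel probability measure $\mu$. Proposition \ref{prop:build} then guarantees $\mu\in\Mf(X)$ together with
\[
h_\mu(f) \;\geq\; \varlimsup_{j\to\infty}\frac{1}{n_{k_j}}\log \#E_{n_{k_j}} \;=\; \htop(X,f).
\]
The variational principle \eqref{eqn:vp} gives the reverse inequality $h_\mu(f) \leq \htop(X,f)$, so in fact $h_\mu(f) = \htop(X,f)$, which establishes the existence of an MME.

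For the second assertion, suppose $\mu$ is an arbitrary weak*-limit point of the sequence $\{\mu_n\}$, arising as the limit of some convergent subsequence $\mu_{n_j}\to\mu$. Proposition \ref{prop:build} yields $h_\mu(f) \geq \varlimsup_j \tfrac{1}{n_j}\log\#E_{n_j}$, while the variational principle gives $h_\mu(f)\leq \htop(X,f)$. Combining these with the hypothesis identifies both sides with $\htop(X,f)$ provided the $\varlimsup$ along the chosen subsequence is tight. Where I expect the main (only) subtle point to lie: one must use that the measures $\mu_n$ are constructed from \emph{maximal} $(n,\delta)$-separated sets, so that the full-sequence growth rate of $\log\#E_n$ is pinned by $\htop(X,f,\delta) = \htop(X,f)$; one then applies Proposition \ref{prop:build} along any convergent sub-subsequence selected to realize the limsup, to conclude that the limit $\mu$ achieves $h_\mu(f) = \htop(X,f)$ and is therefore an MME.

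Overall this is a short glueing argument rather than a substantial proof; the mechanism is already inside Proposition \ref{prop:build}, and Corollary \ref{cor:existence} is simply an assembly of that mechanism with the variational principle and the hypothesis identifying the scaled entropy with the topological entropy.
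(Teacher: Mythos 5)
Your treatment of the existence assertion is correct and is exactly the route the paper intends: choose a subsequence along which $\frac1n\log\#E_n \to \htop(X,f,\delta)=\htop(X,f)$, extract a weak*-convergent sub-subsequence, and combine the entropy inequality of Proposition \ref{prop:build} with the variational principle. This is precisely the ``in particular'' clause of Proposition \ref{prop:build}.

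The second assertion is where you correctly sense the subtlety, but your fix does not close it. Proposition \ref{prop:build} bounds $h_\mu(f)$ below by $\ulim_j \frac{1}{n_j}\log\#E_{n_j}$ computed along the \emph{particular} subsequence converging to $\mu$, whereas the hypothesis pins only the $\ulim$ over the full sequence. Passing to ``a convergent sub-subsequence selected to realize the limsup'' cannot help: every sub-subsequence of $(n_j)$ has limsup at most that of $(n_j)$, and the times along which the full-sequence limsup is attained need not belong to $(n_j)$ at all. What is actually needed is a lower bound on $\liminf_n \frac1n\log\#E_n$, and this does not follow formally from the stated hypothesis, since $\frac1n\log\Lambda(X,\delta,n)$ is not known to converge at a fixed scale. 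The missing ingredient is supplied by the two-scale chopping argument of \S\ref{sec:general-counting}: the injection behind \eqref{eqn:counting*} gives $\Lambda(X,2\delta,kn)\leq \Lambda(X,\delta,n)^k$, hence $\#E_n=\Lambda(X,\delta,n)\geq e^{n\htop(X,f,2\delta)}$ for \emph{every} $n$. Consequently every weak* limit point $\mu$ satisfies $h_\mu(f)\geq \htop(X,f,2\delta)$, and the second assertion follows once $\htop(X,f,2\delta)=\htop(X,f)$ --- which is how the corollary is used in practice, since the scale condition is always verified via (almost) expansivity at a scale much larger than $\delta$. Note that monotonicity of $\eps\mapsto\htop(X,f,\eps)$ gives the hypothesis for free at all scales \emph{smaller} than $\delta$, but $2\delta$ goes the wrong way, so under the hypothesis at the single scale $\delta$ your argument delivers existence but not that every limit point is an MME.
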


In our applications, it will often be relatively easy to verify that $\htop(X,f,\delta)=\htop(X,f)$ for some $\delta>0$, and so Corollary \ref{cor:existence} establishes existence of a measure of maximal entropy.  Thus the real challenge is to prove uniqueness, and this will be our focus.

\subsection{Expansivity}

In Bowen's general result, the assumption that $X$ is a shift space is replaced by the following condition.

\begin{definition}[Expansivity]
Given $x\in X$ and $\eps>0$, let
\begin{equation}\label{eqn:Gamma}
\Gamma_\eps^+(x) := \{ y \in X : d(f^n y, f^n x) < \eps \text{ for all } n\geq 0 \}
= \bigcap_{n\in\NN} B_n(x,\eps)
\end{equation}
be the forward infinite Bowen ball.
If $f$ is invertible, let
\begin{equation}\label{eqn:Gamma-1}
\Gamma_\eps^-(x) := \{ y \in X : d(f^n y, f^n x) < \eps \text{ for all } n\geq 0 \}
\end{equation}
be the backward infinite Bowen ball, and let
\begin{equation}\label{eqn:Gamma-2}
\Gamma_\eps(x) := \Gamma_\eps^+(x) \cap \Gamma_\eps^-(x) = \{ y \in X : d(f^n y, f^n x) < \eps \text{ for all } n\in \ZZ \}
\end{equation}
be the bi-infinite Bowen ball.
The system $(X,f)$ is \emph{positively expansive at scale $\eps>0$} if $\Gamma_\eps^+(x) = \{x\}$ for all $x\in X$, and \emph{(two-sided) expansive at scale $\eps>0$} if $\Gamma_\eps(x) = \{x\}$.  The system is \emph{(positively) expansive} if there exists $\eps>0$ such that it is (positively) expansive at scale $\eps$.
\end{definition}

It is an easy exercise to check that one-sided shift spaces are positively expansive.
A system $(X, f)$ is \emph{uniformly expanding} if there are $\epsilon,\lambda>0$ such that $d(fy, fx) \geq e^{\lambda} d(y,x)$ whenever $x,y\in X$ have $d(x,y) < \epsilon$. Iterating this property gives $\diam B_n(x,\epsilon) \leq \epsilon e^{-\lambda n}$ for all $n$, and thus $\Gamma_\epsilon^+(x) = \{x\}$, so $(X,f)$ is positively expansive.

Two-sided shift spaces can easily be checked to be (two-sided) expansive, and we also have the following.

\begin{proposition}\label{prop:uh-exp}
If $X$ is a hyperbolic set for a diffeomorphism $f$, then $(X,f)$ is expansive.
\end{proposition}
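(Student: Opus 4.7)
The plan is to invoke the stable manifold theorem for hyperbolic sets and then use transversality of $E^s$ and $E^u$. Recall that a hyperbolic set $X$ admits a continuous, $Df$-invariant splitting $T_X M = E^s \oplus E^u$ on which $Df$ is uniformly contracting/expanding. The key consequence is that there exist $\eps_0>0$ and $\lambda\in(0,1)$ such that for every $x\in X$ one has local stable and unstable manifolds $W^s_{\eps_0}(x)$ and $W^u_{\eps_0}(x)$, tangent at $x$ to $E^s(x)$ and $E^u(x)$ respectively, and satisfying the dynamical characterizations
\[
W^s_{\eps_0}(x) = \{ y \in M : d(f^n x, f^n y) \leq \eps_0 \text{ for all } n\geq 0\},
\]
\[
W^u_{\eps_0}(x) = \{ y \in M : d(f^{-n} x, f^{-n} y) \leq \eps_0 \text{ for all } n\geq 0\},
\]
with exponential decay of these distances at rate $\lambda^n$.

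The first step is to choose an expansivity constant. Because $E^s \cap E^u = \{0\}$ and the splitting is continuous on the compact set $X$, the angle between $E^s(x)$ and $E^u(x)$ is bounded below uniformly in $x$. Using this and the fact that the local invariant manifolds depend continuously on the base point and are $C^1$-close to their tangent spaces, one picks $\eps \in (0, \eps_0)$ small enough that, for every $x \in X$, $W^s_{\eps_0}(x) \cap W^u_{\eps_0}(x) \cap B(x, 2\eps) = \{x\}$.

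Now suppose $y \in \Gamma_\eps(x)$, so $d(f^n y, f^n x) < \eps < \eps_0$ for every $n \in \ZZ$. The forward inequality and the characterization above put $y \in W^s_{\eps_0}(x)$; the backward inequality puts $y \in W^u_{\eps_0}(x)$. Since also $d(x,y)<\eps$, the choice of $\eps$ forces $y = x$, so $(X,f)$ is expansive at scale $\eps$.

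The main obstacle here is not logical but expository: the entire argument rests on the existence, continuity, and dynamical characterization of the local invariant manifolds for a hyperbolic set, which is a nontrivial piece of hyperbolic theory. In a survey it is legitimate to quote the stable manifold theorem as a black box; the only genuine work left is the transversality estimate that produces the final $\eps$, and this is a short continuity-plus-compactness argument once one knows the local manifolds vary continuously in the $C^1$ topology.
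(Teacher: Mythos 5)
Your proof is correct and takes essentially the same route as the paper's: both arguments reduce expansivity to the local hyperbolic structure at a uniform scale, the paper by introducing the bracket $[x,y]=W^s(x)\cap W^u(y)$ and using the uniform expansion estimates to force $d^u(x,y)=d^s(x,y)=0$, and you by quoting the dynamical characterization of $W^s_{\eps_0}(x)$ and $W^u_{\eps_0}(x)$ and then using uniform transversality of the two leaves through $x$. The only difference is which standard piece of hyperbolic theory is treated as the black box, and both versions are sound.
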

\begin{proof}[Sketch of proof]
Choose $\eps>0$ small enough that given any $x,y\in X$ with $d(x,y)<\eps$, the local leaves $W^s(x)$ and $W^u(y)$ intersect in a unique point $[x,y]$ (we do not require that this point is in $X$).  Write
\[
d^u(x,y) = d(x,[x,y])
\quad\text{and}\quad
d^s(x,y) = d(y,[x,y]).
\]
Passing to an adapted metric if necessary, hyperbolicity gives $\lambda>0$ such that 
\begin{gather}
\label{eqn:du}
d^u(f^nx,f^ny) \geq e^{\lambda n} d^u(x,y) \text{ if } d(f^kx,f^ky) < \eps \text{ for all } 0\leq k\leq n, \\
\label{eqn:ds}
d^s(f^{-n}x,f^{-n}y) \geq e^{\lambda n} d^s(x,y) \text{ if } d(f^{-k}x,f^{-k}y) < \eps \text{ for all } 0\leq k\leq n.
\end{gather}
In particular, if $y\in \Gamma_\eps(x)$ then $d^u(f^nx,f^ny)$ is uniformly bounded for all $n$, so $d^u(x,y) = 0$, and similarly for $d^s$, which implies that $x=[x,y]=y$.
\end{proof}

One important consequence of expansivity is the following.

\begin{proposition}\label{prop:appeared}
If $(X,f)$ is expansive at scale $\eps$, then $\htop(X,f,\eps) = \htop(X,f)$.
\end{proposition}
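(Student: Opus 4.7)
The inequality $\htop(X,f,\eps) \leq \htop(X,f)$ is immediate from the definition, since $\htop(X,f,\cdot)$ is non-increasing in the scale parameter and $\htop(X,f)=\lim_{\delta\to 0}\htop(X,f,\delta)$ is its supremum. So the plan is to establish $\htop(X,f,\delta)\leq \htop(X,f,\eps)$ for every $\delta\in(0,\eps)$, then let $\delta\to 0$.

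The key auxiliary step is a uniform shrinking lemma for Bowen balls: expansivity at scale $\eps$ implies that for every $\delta>0$ there exists $N=N(\delta)\in\NN$ such that $\diam B_N(x,\eps)<\delta$ for all $x\in X$. I would prove this by contradiction and compactness. If it failed, one could find sequences $x_k,y_k\in X$ with $y_k\in B_{N_k}(x_k,\eps)$, $N_k\to\infty$, and $d(x_k,y_k)\geq \delta/2$. Extracting a convergent subsequence $x_k\to x$, $y_k\to y$, continuity of $f^j$ for each fixed $j$ gives $d(f^jx,f^jy)\leq\eps$ for all $j\geq 0$ while $x\neq y$, contradicting expansivity at scale $\eps$. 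The minor discrepancy between the non-strict inequality produced by the limit and the strict inequality in the definition of $\Gamma^+_\eps(x)$ is handled by first shrinking $\eps$ slightly, or equivalently by formulating expansivity in terms of closed Bowen balls; this is harmless since the result of the proposition is a statement about limits.

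Armed with this shrinking lemma, I claim that any $(n,\delta)$-separated set $E\subset X$ is automatically $(n+N,\eps)$-separated. Indeed, if $x,y\in E$ are distinct then $d_n(x,y)>\delta$, so there exists $0\leq k<n$ with $d(f^kx,f^ky)>\delta$. The inclusion $B_N(f^kx,\eps)\subset B(f^kx,\delta)$ forces $f^ky\notin B_N(f^kx,\eps)$, producing some $0\leq j<N$ with $d(f^{k+j}x,f^{k+j}y)\geq \eps$; since $k+j<n+N$, we conclude $d_{n+N}(x,y)\geq \eps$. Thus $\Lambda(X,\delta,n)\leq \Lambda(X,\eps,n+N)$, and taking $\frac{1}{n}\log$ followed by $\limsup$ as $n\to\infty$ yields $\htop(X,f,\delta)\leq \htop(X,f,\eps)$, as desired.

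The main obstacle is the shrinking lemma; once it is in hand the counting comparison is routine. The subtle point is the strict/non-strict issue in the compactness argument, which is the standard reason such results are sometimes phrased with closed Bowen balls. In the two-sided expansive case one runs the same strategy with a two-sided shrinking estimate applied at indices $k$ in the interior of $[0,n)$, leading to the same counting inequality (with a larger shift than $N$) and the same conclusion.
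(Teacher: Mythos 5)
Your argument is correct and is precisely the first of the two proof ideas the paper sketches for this proposition: a compactness argument giving the uniform shrinking $B_N(x,\eps)\subset B(x,\delta)$, followed by the comparison showing every $(n,\delta)$-separated set is $(n+N,\eps)$-separated, so that $\htop(X,f,\delta)\leq\htop(X,f,\eps)$. The strict-versus-closed-ball wrinkle you flag is present at the same level in the paper's own sketch and is absorbed by the usual conventions; the only thing you omit is the paper's second, partition-based argument (any partition of $d_n$-diameter $<\eps$ is generating, hence $h_\mu(f)\leq\frac1n\log\#E_n$), which the authors prefer because it is the version that survives when expansivity is later weakened to almost expansivity.
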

\begin{proof}[Two proof ideas]
We outline two proofs in the positively expansive case.

One argument uses a compactness argument to show that for every $0 < \delta < \eps$, there is $N\in\NN$ such that $B_N(x,\eps) \subset B(x,\delta)$ for all $x\in X$.  This implies that
$B_{n+N}(x,\eps) \subset B_n(x,\delta)$ for all $x$, and then one can show that the definition of topological entropy via $(n,\eps)$-separated sets gives the same value at $\delta$ as at $\eps$.

Another method, which is better for our purposes, is to observe that since $\eps$-expansivity gives $\bigcap_n B_n(x,\eps) = \{x\}$ for all $x$, one can easily show that for every $\nu\in\Mf(X)$, we have:
\begin{multline}\label{eqn:generating}
\text{if $\beta$ is a partition with $d_n$-diameter $<\eps$, then $\beta$ is generating for $(f^n,\nu)$.}
\end{multline}
Given a maximal $(n,\eps)$-separated set $E_n$, we can choose a partition $\beta_n$ such that each element of $\beta_n$ is contained in $B_n(x,\eps)$ for some $x\in E_n$, so $\beta_n$ has exactly $\#E_n$ elements.  Then we have
\begin{equation}\label{eqn:nhmu}
h_\mu(f) = \frac 1n h_\mu(f^n)
= \frac 1n h_\mu(f^n,\beta_n)
\leq \frac 1n H_\mu(\beta_n)
\leq \frac 1n \log \# E_n.
\end{equation}
Sending $n\to\infty$ gives $h_\mu(f) \leq \htop(X,f,\eps)$, and taking a supremum over all $\mu\in \Mf(X)$ proves that $\htop(X,f,\eps) = \htop(X,f)$.
\end{proof}

\subsection{Specification}\label{sec:specification}

The following formulation of the specification property is given for a collection of orbit segments $\DDD\subset X\times \NN$, and thus is not quite the classical one, but reduces to (a version of) the classical definition when we take $\DDD = X\times \NN$.  Observe that when $X$ is a shift space and we associate to each $(x,n)$ the word $x_{[1,n]} \in \LLL(X)$, the following agrees with the definition from \eqref{eqn:spec}.

\begin{figure}[htbp]
\includegraphics[width=\textwidth]{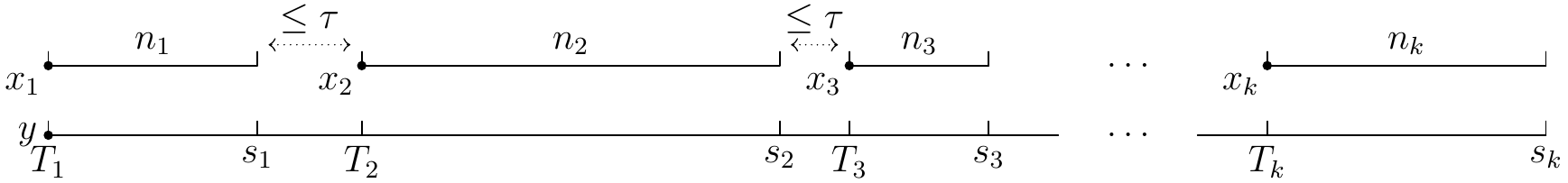}
\caption{Bookkeeping in the specification property.}
\label{fig:spec}
\end{figure}

\begin{definition}[Specification]\label{def:spec}
A collection of orbit segments $\DDD\subset X\times\NN$ has the \emph{specification property at scale $\delta>0$} if there exists $\tau\in\NN$ (the \emph{gap size} or \emph{transition time}) such that for every $(x_1,n_1),\dots, (x_k,n_k) \in \DDD$, there exist $0 = T_1 < T_2 < \cdots < T_k \in \NN$ and $y\in X$ such that
\[
f^{T_i}(y) \in B_{n_i}(x_i,\delta) 
\text{ and }
T_i - (T_{i-1} + n_{i-1}) \in [0,\tau]
\text{ for all } 1\leq i\leq k,
\]
see Figure \ref{fig:spec}. That is, starting from time $T_i$ the orbit of $y$ shadows the orbit of $x_i$, and moreover, writing $s_i = T_i + n_i$ for the time at which this shadowing ends, we have
\[
s_i \leq T_{i+1} \leq s_i + \tau \text{ for all } 1\leq i < k.
\]
We say that $\DDD$ has the \emph{specification property} if the above holds for every $\delta>0$.
We say that $(X,f)$ has the specification property if $X\times \NN$ does. We say that $\DDD$ has periodic specification if $y$ can be chosen to be periodic with period in $[s_k, s_k+\tau]$.
\end{definition}

First we explain how specification (for the whole system) is established in the uniformly hyperbolic case.
Recall from \eqref{eqn:spec} and the paragraph preceding it that in the symbolic case, one can establish specification by verifying it in the case $k=2$ and then iterating. In the non-symbolic case, the proof of specification usually follows this same approach, but one needs to verify a mildly stronger property for $k=2$ to allow the iteration step; one possible version of this property is formulated in the next lemma.

\begin{lemma}\label{lem:one-step-spec}
Given $f\colon X\to X$, suppose that $\delta_1>0$, $\delta_2\geq 0$, $\chi \in (0,1)$, and $\tau\in \NN$ are such that for every $(x_1,n_1), (x_2,n_2) \in X\times \NN$, there are $t\in \{0,1,\dots, \tau\}$ and $y\in X$ such that
\begin{equation}\label{eqn:one-step}
d(f^k y, f^k x_1) \leq \delta_1 \chi^{n_1-k} \text{ for all } 0\leq k < n_1
\text{ and }
d_{n_2}(f^{n_1+t} y, x_2) \leq \delta_2.
\end{equation}
Then $(X,f)$ has the specification property at scale $\delta_2 + \delta_1/(1-\chi)$ with gap size $\tau$.
\end{lemma}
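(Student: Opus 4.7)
\emph{Strategy.} The plan is to construct $y$ by iteratively applying the one-step hypothesis, attaching one new segment to the end of the current orbit at each step. Set $T_1 = 0$ and $y^{(1)} = x_1$; then, given $y^{(i)} \in X$ and $T_i \in \NN$, apply~\eqref{eqn:one-step} to the pair $(y^{(i)}, T_i + n_i)$ and $(x_{i+1}, n_{i+1})$ to produce $t_i \in \{0, 1, \ldots, \tau\}$ and $y^{(i+1)} \in X$ with
\[
d(f^s y^{(i+1)}, f^s y^{(i)}) \leq \delta_1 \chi^{T_i + n_i - s} \text{ for all } 0 \leq s < T_i + n_i
\]
and $d_{n_{i+1}}(f^{T_i + n_i + t_i} y^{(i+1)}, x_{i+1}) \leq \delta_2$. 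Set $T_{i+1} = T_i + n_i + t_i$ and let $y = y^{(k)}$. The gap conditions $T_{i+1} - (T_i + n_i) = t_i \in [0, \tau]$ are immediate, so it remains to bound $d(f^{T_j + \ell} y, f^\ell x_j)$ for each $1 \leq j \leq k$ and $0 \leq \ell < n_j$.

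\emph{Key estimate.} Fix such $j$ and $\ell$ and apply the triangle inequality along the chain $y^{(j)}, y^{(j+1)}, \ldots, y^{(k)} = y$:
\[
d(f^{T_j + \ell} y, f^\ell x_j) \leq d(f^{T_j + \ell} y^{(j)}, f^\ell x_j) + \sum_{i=j}^{k-1} d(f^{T_j + \ell} y^{(i+1)}, f^{T_j + \ell} y^{(i)}).
\]
The first term is $\leq \delta_2$ by the construction of $y^{(j)}$ (and vanishes when $j = 1$). For each summand, the identity $T_i + n_i - T_j = n_i + \sum_{m=j}^{i-1}(n_m + t_m) \geq \sum_{m=j}^{i} n_m \geq n_j + (i - j)$, valid since $t_m \geq 0$ and $n_m \geq 1$, yields
\[
T_i + n_i - T_j - \ell \geq (n_j - \ell) + (i - j) \geq 1,
\]
so the tightening bound applies at $s = T_j + \ell$ and the resulting geometric series sums to
\[
\sum_{i=j}^{k-1} \delta_1 \chi^{T_i + n_i - T_j - \ell} \leq \delta_1 \chi^{n_j - \ell} \sum_{m=0}^{\infty} \chi^m = \frac{\delta_1 \chi^{n_j - \ell}}{1-\chi} \leq \frac{\delta_1}{1-\chi}.
\]
Combining the two bounds gives $d(f^{T_j + \ell} y, f^\ell x_j) \leq \delta_2 + \delta_1/(1-\chi)$, which is exactly the required shadowing precision.

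\emph{Main obstacle.} The crucial design choice is the direction of the iteration. A naive backward construction --- attaching segments at the front instead of at the end --- would incur a fresh $\delta_2$ error each time the shadowing orbit crosses a previously created gap, producing an unacceptable $k \delta_2$ dependence. The forward construction succeeds because the exponential tightening in~\eqref{eqn:one-step} is tightest at early times: evaluating $d(f^{T_j + \ell} y^{(i+1)}, f^{T_j + \ell} y^{(i)})$ at the fixed early time $T_j + \ell$ produces errors $\delta_1 \chi^{T_i + n_i - T_j - \ell}$ that decay geometrically as $i$ (and hence $T_i$) increases, so the accumulated shadowing error is a convergent geometric series rather than a linearly growing sum. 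The only mildly subtle bookkeeping step is the lower bound $T_i + n_i - T_j - \ell \geq (n_j - \ell) + (i - j)$, which drives the geometric decay.
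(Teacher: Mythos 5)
Your proposal is correct and follows essentially the same route as the paper's proof: the same forward iteration applying the one-step property to the pair $\bigl(y^{(i)}, T_i+n_i\bigr)$ and $(x_{i+1},n_{i+1})$, the same telescoping triangle inequality along the chain $y^{(j)},\dots,y^{(k)}$, and the same geometric-series bound driven by $T_i+n_i-T_j \geq n_j+(i-j)$ (the paper's $s_\ell - s_i \geq \ell - i$). The only difference is expository: you spell out the index bookkeeping and the forward-versus-backward design choice a bit more explicitly.
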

\begin{proof}
Given $(x_1,n_1), \dots, (x_k, n_k) \in X\times \NN$, we will apply \eqref{eqn:one-step} iteratively to produce $y_1,\dots, y_k$ and $T_1,\dots, T_k \in \NN$ such that writing $\delta' = \delta_2 + \delta_1/(1-\chi)$, we have
\begin{equation}\label{eqn:fij}
f^{T_i}(y_j) \in B_{n_i}(x_i, \delta') \text{ for all } 1 \leq i \leq j \leq k.
\end{equation}
Once this is done, $y_k$ is the desired shadowing point. See Figure \ref{fig:spec-2} for an illustration of the following procedure and estimates.

Along with $y_i,T_i$, we will produce $s_i = T_i + n_i$ and $t_i \in \{0,\dots, \tau\}$ such that $T_{i+1} = s_i + t_i$.
Start by putting $y_1 = x_1$, $T_1 = 0$, and $s_1 = n_1$. Then apply \eqref{eqn:one-step} to $(y_1,s_1)$ and $(x_2,n_2)$ to get $y_2\in X$ and $t_1 \in \{0,1,\dots, \tau\}$ such that writing $T_2 = s_1 + t_1$, we have
\[
d(f^k y_2, f^k y_1) \leq \delta_1 \chi^{s_1-k} \text{ for all } 0\leq k < s_1
\text{ and }
d_{n_2}(f^{T_2} y_2, x_2) \leq \delta_2.
\]
In general, once $y_i, s_i$ are determined (with $T_i = s_i - n_i$), we apply \eqref{eqn:one-step} to $(y_i,s_i)$ and $(x_{i+1}, n_{i+1})$ to get $t_i \in \{0,1,\dots, \tau\}$ and $y_{i+1} \in X$ such that writing $T_{i+1} = s_i + t_i$, we have
\[
d(f^k y_{i+1}, f^k y_i) \leq \delta_1 \chi^{s_i-k} \text{ for all } 0\leq k < s_i
\text{ and }
d_{n_{i+1}}(f^{T_{i+1}} y_{i+1}, x_{i+1}) \leq \delta_2.
\]
Now we can verify \eqref{eqn:fij} by observing that for all $1\leq i\leq j \leq k$, we have
\begin{equation} \label{eqn:specdist}
d_{n_i}(f^{T_i}(y_j), f^{T_i}(y_i)) \leq \sum_{\ell=i}^{j-1} d_{n_i}(f^{T_i}(y_{\ell+1}), f^{T_i}(y_{\ell}))
\leq \sum_{\ell=i}^{j-1} \delta_1 \chi^{s_\ell - s_i} 
< \frac{\delta_1}{1-\chi},
\end{equation}
(the last inequality uses the fact that $s_\ell - s_i \geq \ell - i$), and also $d_{n_i}(f^{T_i}(y_i), x_i) \leq \delta_2$, so
\[
d_{n_i}(x_i,f^{T_i}y_j)
\leq d_{n_i}(x_i,f^{T_i}y_i) + d_{n_i}(f^{T_i}y_i,f^{T_i}y_j)
\leq \delta_2 + \frac{\delta_1}{1-\chi} = \delta'.\qedhere
\]
\end{proof}

\begin{figure}[htbp]
\includegraphics[width=.8\textwidth]{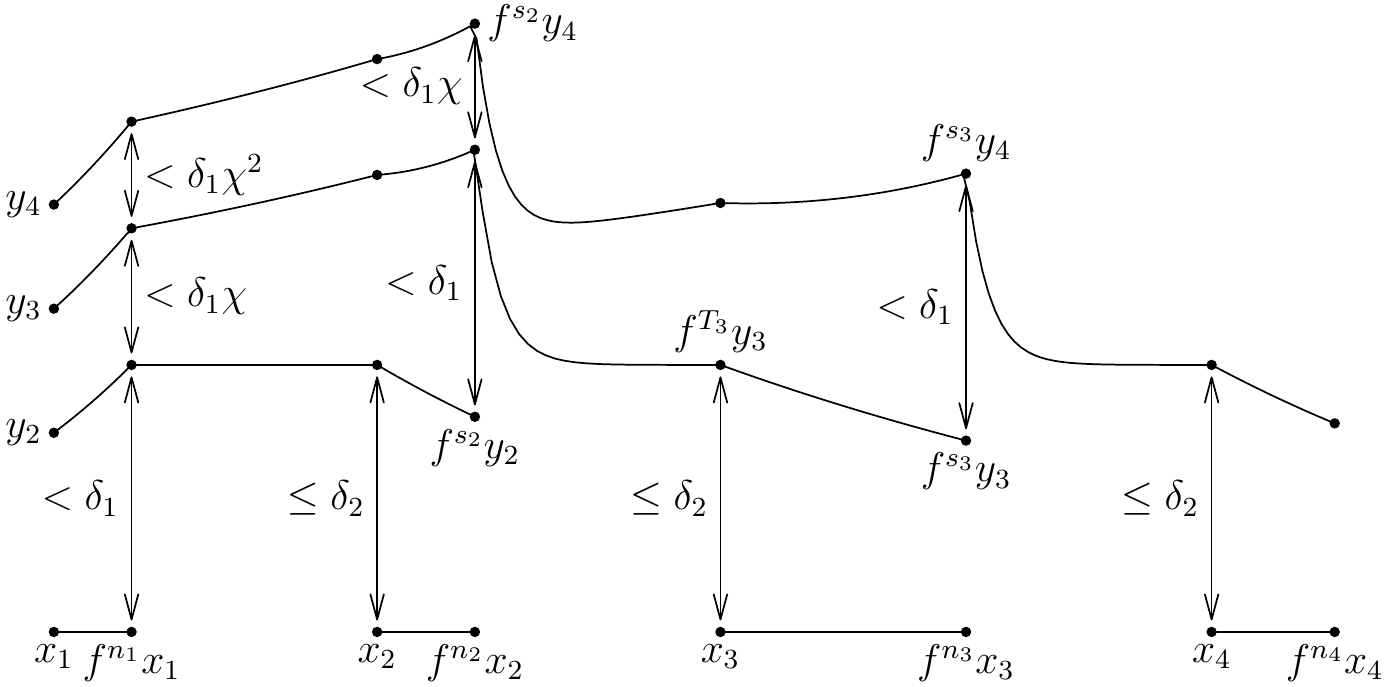}
\caption{Proving specification using a one-step property.}
\label{fig:spec-2}
\end{figure}

\begin{proposition}\label{prop:uh-spec}
If $X$ is a topologically transitive locally maximal hyperbolic set for a diffeomorphism $f$, then $(X,f)$ has the specification property.
\end{proposition}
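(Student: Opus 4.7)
The plan is to reduce to Lemma~\ref{lem:one-step-spec}: it suffices to construct, for each pair $(x_1,n_1), (x_2,n_2) \in X \times \NN$, a single point $y \in X$ and a gap $t \in \{0,\dots,\tau\}$ satisfying the one-step bounds. The two ingredients are a uniform transit time (from topological transitivity and compactness) and the local product structure of a locally maximal hyperbolic set, which provides the bracket $[u,v] := W^s_{\mathrm{loc}}(u) \cap W^u_{\mathrm{loc}}(v) \in X$ for $u,v \in X$ with $d(u,v) < \epsilon_0$, together with exponential contraction at rate $\chi \in (0,1)$ along $W^s_{\mathrm{loc}}$ under $f$ and along $W^u_{\mathrm{loc}}$ under $f^{-1}$ (cf.\ \eqref{eqn:du}--\eqref{eqn:ds}).

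First I would fix $\eta < \epsilon_0$ and use transitivity: a point with dense forward orbit together with a finite cover of $X$ by $\eta/2$-balls yields $\tau = \tau(\eta) \in \NN$ such that any two points of $X$ can be joined by an orbit segment of length at most $\tau$ up to $\eta$-error. Applying this with $p = f^{n_1}(x_1)$ and $q = x_2$ produces $z \in X$ and $t \in \{0,\dots,\tau\}$ with $d(z,p) < \eta$ and $d(f^t z, q) < \eta$. Then I would set
\[
z^* := [x_2, f^t z] \in W^s_{\mathrm{loc}}(x_2), \qquad Y := [f^{-t}(z^*), f^{n_1}(x_1)] \in W^u_{\mathrm{loc}}(f^{n_1}(x_1)), \qquad y := f^{-n_1}(Y).
\]
Backward $W^u$-contraction places $f^{-t}(z^*)$ within $O(\eta)$ of $f^{n_1}(x_1)$, so both brackets are defined and land in $X$ by local maximality. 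Backward $W^u$-contraction applied to $Y \in W^u_{\mathrm{loc}}(f^{n_1}(x_1))$ gives $d(f^k y, f^k x_1) \leq O(\eta)\chi^{n_1-k}$ for $0 \leq k < n_1$; and since $f^t Y \in W^s_{\mathrm{loc}}(z^*) \subset W^s_{\mathrm{loc}}(x_2)$ at distance $O(\eta)$ from $x_2$, forward $W^s$-contraction gives $d_{n_2}(f^{n_1+t}y, x_2) = O(\eta)$. These are precisely the hypotheses of Lemma~\ref{lem:one-step-spec}, so specification holds at scale $O(\eta)$; letting $\eta \to 0$ gives specification at every scale.

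The main difficulty is the correct nesting of brackets. A naive single bracket $Y := [z, f^{n_1}(x_1)]$ produces a $y$ whose orbit shadows $x_1$ correctly, but then $f^{n_1+t}y$ would only be close to $x_2$ in the ambient metric without lying on its local stable leaf, so the second error bound would degrade over the $n_2$ forward iterates. The inner bracket $z^* = [x_2, f^t z]$ is what corrects $f^t z$ onto $W^s_{\mathrm{loc}}(x_2)$, and the outer bracket preserves this correction while also pulling $Y$ onto $W^u_{\mathrm{loc}}(f^{n_1}(x_1))$; balancing the forward/backward contraction estimates on the two segments through the two brackets is the delicate bookkeeping, and invoking local maximality at each bracket operation is what keeps the whole construction inside $X$.
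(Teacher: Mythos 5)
Your proof is correct and follows essentially the same route as the paper: reduce to Lemma \ref{lem:one-step-spec}, extract a uniform transit time from transitivity and compactness, and use the local product structure (with local maximality keeping the brackets inside $X$) to build the shadowing point. The only difference is bookkeeping: the paper applies transitivity directly to the endpoint $f^{n_1}(x_1)$, so the single bracket $z = W^s_\delta(x_2)\cap W^u_\delta(f^{t+n_1}x_1)$ already lies on $W^s_\delta(x_2)$ and no correction is needed, whereas your auxiliary point $z$ is only near $f^{n_1}(x_1)$ and $f^tz$ only near $x_2$, which is what forces your second, inner bracket --- both versions work.
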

\begin{proof}
By Lemma \ref{lem:one-step-spec}, it suffices to show that for every sufficiently small $\delta>0$, there are $\chi \in (0,1)$ and $\tau \in \NN$ such that for every $(x_1,n_1), (x_2, n_2) \in X\times \NN$, there are $t\in \{0,1,\dots, \tau\}$ and $y\in X$ such that \eqref{eqn:one-step} holds. To prove this, let $\delta,\rho>0$ be such that
\begin{itemize}
\item every $x\in X$ has local stable and unstable leaves $W_\delta^s(x)$ and $W_\delta^u(x)$ with diameter $<\delta$, and
\item for every $x,y\in X$ with $d(x,y) < \rho$, the intersection $W_\delta^s(x) \cap W_\delta^u(y)$ is a single point, which lies in $X$.
\end{itemize}
By topological transitivity and compactness, there is $\tau\in \NN$ such that for every $x,y\in X$ there is $t\in \{0,1,\dots, \tau\}$ with $d(f^t x, y) < \rho$, and thus
$f^t(W_\delta^u(x)) \cap W_\delta^s(y) \neq\emptyset$.   

Using this fact, given $(x_1,n_1), (x_2,n_2) \in X\times \NN$, we can let $t\in \{0,1,\dots, \tau\}$ be such that $f^t(W_\delta^u(f^{n_1}x_1))$ intersects $W_\delta^s(x_2)$. Choosing $z$ in this intersection and putting $y = f^{-(t+n_1)}(z)$, we see that $y$ satisfies \eqref{eqn:one-step} with $\delta_1 = \delta_2 = \delta$, and thus Lemma \ref{lem:one-step-spec} proves the proposition.
\end{proof}

\begin{remark}\label{rmk:Wcs-spec} Uniform contraction of $f$ along $W^s$ is not used; to prove specification at scale $\delta'$, it would suffice to know that if $x,y$ lie on the same local stable leaf and $d(x,y) \leq \delta_2$, then the same is true of $f(x),f(y)$, which still gives the second half of \eqref{eqn:one-step}. In particular, this follows as soon as $\|Df|_{E^s}\| \leq 1$. The same idea can also be applied to obtain specification on suitable collections $\GGG \subset X \times \NN$, and can be extended naturally to the continuous-time case. 

We also emphasize that the exponential contraction asked for in the first half of \eqref{eqn:one-step}, which is obtained from uniform backwards contraction along $W^u$, can be significantly weakened. What is really essential for the argument is backwards contraction in the local unstables by a fixed amount in each of the orbit segments (not necessarily proportional to length), and this is enough to obtain a uniform distance estimate analogous to \eqref{eqn:specdist}. We carried out the details of this argument in \cite[\S 4]{BCFT18} in the non-uniformly hyperbolic setting of rank one geodesic flow for the family of orbit segments $\CCC(\eta)$, which are defined in this survey in \S \ref{sec:bcftproofidea}.
\end{remark}

The following gives the corresponding result in the non-invertible case.

\begin{proposition}\label{prop:expanding}
Suppose that $f\colon X\to X$ is topologically transitive and has the following properties.
\begin{itemize}
\item \emph{Uniformly expanding:} $d(fx,fy) \geq e^{\lambda} d(x,y)$ whenever $d(x,y) < \delta$.
\item \emph{Locally onto:} 
For every $x\in X$, we have $f(B(x,\delta)) \supset B(fx,\delta)$.\footnote{In the symbolic setting, this corresponds to $X$ being a subshift of finite type.}
\end{itemize}
Then $(X,f)$ has the specification property at scale $\delta/(1-e^{-\lambda})$.
\end{proposition}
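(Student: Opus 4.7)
The plan is to verify the hypotheses of Lemma \ref{lem:one-step-spec} with $\delta_1 = \delta$, $\delta_2 = 0$, and $\chi = e^{-\lambda}$, since then the conclusion gives specification at scale $0 + \delta/(1-e^{-\lambda})$, which matches the claim. This means that given any $(x_1,n_1), (x_2,n_2)$, I need to produce $t \in \{0,\ldots,\tau\}$ and $y \in X$ such that $d(f^k y, f^k x_1) \leq \delta e^{-\lambda(n_1-k)}$ for $0 \leq k < n_1$ and $f^{n_1 + t}(y) = x_2$ (the condition $\delta_2 = 0$).

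The key tool is that uniform expansion plus locally onto make $f\colon B(x,\delta) \to B(fx,\delta)$ a bijection whose inverse $g_x$ is an $e^{-\lambda}$-contraction; this produces genuine preimages inside $\delta$-balls rather than merely shadowing. The workflow has three steps. First, use topological transitivity together with compactness of $X$ to extract a uniform $\tau \in \NN$: cover $X$ by finitely many balls $B_i$ of radius $\delta/2$, and for each ordered pair $(i,j)$ take $n_{ij} \leq \tau$ with $f^{n_{ij}}(B_i) \cap B_j \neq \emptyset$. This yields, for any $u,v \in X$, some $t \leq \tau$ and $z \in B(v,\delta/2)$ with $d(f^t z, u) < \delta/2$. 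Second, promote this approximate transition to an exact one: pull $u$ back through the orbit of $z$ using the local inverses $g_{f^k z}$ on $B(f^k z, \delta)$, which is well-defined at each stage because the pullback contracts by $e^{-\lambda}$ per step and starts at distance $<\delta/2 < \delta$. The resulting point $z' \in B(z, \delta/2)$ satisfies $f^t(z') = u$, and the triangle inequality gives $z' \in B(v,\delta)$. Applying this with $u = x_2$ and $v = f^{n_1}(x_1)$ furnishes $t \leq \tau$ and $z' \in B(f^{n_1}(x_1),\delta)$ with $f^t(z') = x_2$.

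Third, pull $z'$ back along the orbit of $x_1$. Setting $y_{n_1} = z'$ and $y_k = g_{f^k x_1}(y_{k+1})$ for $k = n_1-1, \ldots, 0$, an easy induction using the contraction estimate for $g$ gives $d(y_k, f^k x_1) \leq e^{-\lambda(n_1-k)} \delta < \delta$, so each step stays within the domain of the local inverse. Taking $y = y_0$, we get $f^k(y) = y_k$ with the required exponential estimate, and $f^{n_1+t}(y) = f^t(z') = x_2$. This verifies \eqref{eqn:one-step}, and Lemma \ref{lem:one-step-spec} finishes the proof.

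The main subtlety I expect is in Step 1, specifically in correctly balancing the radii so that the combined errors from the transitivity cover and from the pullback both fit inside $B(v,\delta)$; using radius $\delta/2$ for the cover (and noting that the pullback contributes at most $e^{-\lambda t}\cdot \delta/2 \leq \delta/2$) is just enough. Beyond that, the proof is a mechanical assembly: transitivity plus compactness supplies the time $\tau$, locally onto plus expansion converts approximate statements into exact preimages, and Lemma \ref{lem:one-step-spec} packages the two estimates into the specification property with the claimed scale.
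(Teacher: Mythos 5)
Your proposal is correct and follows essentially the same route as the paper: both reduce to Lemma \ref{lem:one-step-spec} with $\delta_1=\delta$, $\delta_2=0$, $\chi=e^{-\lambda}$, use transitivity plus compactness to get a uniform transition time $\tau$, and use the locally onto property to produce an exact preimage of $x_2$ inside $B_{n_1}(x_1,\delta)$ (the paper phrases this via the pushforward inclusion \eqref{eqn:fnBn}, you via iterated local inverse branches, which is the same mechanism). Two small remarks: your cover radius should be $\delta/4$ rather than $\delta/2$ (two points in a common ball of radius $\delta/2$ are only within $\delta$ of each other, so the claimed $z\in B(v,\delta/2)$ and $d(f^tz,u)<\delta/2$ do not follow as written, though the fix is exactly the rebalancing you flag), and your open-set formulation of transitivity followed by a pullback correction is in fact the careful version of the paper's pointwise claim that $f^t(f^{n_1}x_1)$ itself lands in $B(x_2,\delta)$.
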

\begin{proof}
It suffices to verify \eqref{eqn:one-step} with $\delta_1 = \delta$, $\delta_2=0$, and $\chi = e^{-\lambda}$; then we can apply Lemma \ref{lem:one-step-spec}. We need the following consequence of the locally onto property: 
\begin{equation}\label{eqn:fnBn}
\text{for every $x\in X$ and $n\in\NN$, we have $f^n(B_n(x,\delta)) \supset B(f^n x, \delta)$}.
\end{equation}
As in the previous proposition, we use the following consequence of topological transitivity and compactness: given $\delta>0$, there is $\tau\in\NN$ such that for every $x,y\in X$ there is $t\in \{0,1,\dots, \tau\}$ with $f^t(x) \in B(y,\delta)$. 
Now given $(x_1,n_1), (x_2,n_2) \in X\times \NN$, there is $t\in \{0,1,\dots, \tau\}$ such that $f^t(f^{n_1}(x_1)) \in B(x_2,\delta)$, and thus \eqref{eqn:fnBn} gives
\[
f^t(f^{n_1} B_{n_1}(x_1,\delta)) \supset f^t B(f^{n_1} x_1, \delta)
\supset B(f^{n_1 + t} x_1, \delta) \ni x_2.
\]
Thus there is $y \in B_{n_1}(x_1,\delta)$ such that $f^{n_1 + t}(y) = x_2$, which verifies \eqref{eqn:one-step}; Lemma \ref{lem:one-step-spec} completes the proof.
\end{proof}

\subsection{Bowen's proof revisited}\label{sec:revisit}

Bowen's original uniqueness result \cite{rB75}, which we outlined in \S\ref{sec:principles}, was actually given not for shift spaces, but for more general expansive systems.

\begin{theorem}[Expansivity and specification (Bowen)] \label{thm:exp-spec} 
Let $X$ be a compact metric space and $f\colon X\to X$ a continuous map.  Suppose that $\eps>40\delta>0$ are such that $f$ has expansivity at scale $\eps$ and the specification property at scale $\delta$.  Then $(X,f)$ has a unique measure of maximal entropy.
\end{theorem}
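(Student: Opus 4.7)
The plan is to adapt Bowen's symbolic argument (Propositions \ref{prop:uniqueness} and \ref{prop:build-mme}) to the non-symbolic setting, with Bowen balls $B_n(x,\delta)$ playing the role of cylinders $[w]$ and maximal $(n,\eps)$-separated sets playing the role of the language $\LLL_n$. The role of the specification gap $\tau$ is the same as before, and expansivity at scale $\eps$ is what ensures counting at scale $\eps$ actually computes the topological entropy (Proposition \ref{prop:appeared}).

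First I would establish counting estimates analogous to Lemma \ref{lem:counting}: writing $\Lambda_n := \Lambda(X,\eps,n)$, one has the lower bound $\Lambda_n \geq e^{n\htop(X,f)}$ essentially by definition of entropy (since $\htop(X,f,\eps)=\htop(X,f)$ by expansivity). The upper bound $\Lambda_n \leq Q e^{n\htop(X,f)}$ follows by using specification at scale $\delta$ to concatenate a maximal $(m,\eps)$-separated set with a maximal $(n,\eps)$-separated set, producing an $(m+n+\tau,\alpha)$-separated set (for some intermediate scale $\alpha$ comfortably smaller than $\eps$ but larger than the $2\delta$ needed to distinguish shadowing points by the triangle inequality). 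This is the step where the ratio $\eps/\delta$ has to be large enough to make all triangle-inequality estimates go through.

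Second, I would build the candidate MME exactly as in \eqref{eqn:mun}: pick maximal $(n,\eps)$-separated sets $E_n$, form $\nu_n = (\#E_n)^{-1}\sum_{x\in E_n}\delta_x$, average along the orbit to get $\mu_n = \tfrac{1}{n}\sum_{k=0}^{n-1} f_*^k\nu_n$, and extract a weak* subsequential limit $\mu$. By Proposition \ref{prop:build} and the lower counting bound, $\mu$ is an MME. The main task is to show it satisfies a \emph{Gibbs property}: there is $K\geq 1$ with
\begin{equation*}
K^{-1} e^{-n\htop(X,f)} \leq \mu\bigl(\overline{B_n(x,\alpha)}\bigr) \leq K e^{-n\htop(X,f)} \qquad \text{for all } x\in X,\ n\in\NN,
\end{equation*}
for a suitable scale $\alpha$ (closed Bowen balls ensure weak* upper semi-continuity). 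The upper bound mirrors the symbolic argument sketched after Lemma \ref{lem:counting}: to estimate $\nu_N(f^{-k}B_n(x,\alpha))$ we bound the number of $y\in E_N$ whose orbit visits $B_n(x,\alpha)$ at time $k$; the prefixes $(y,k)$ and suffixes $(f^{k+n}y,N-n-k)$ of any two such $y$'s remain separated at an appropriate scale (again using $\eps\gg\delta$), and hence are bounded by $Q^2 e^{(N-n)\htop(X,f)}$. The lower bound uses specification: for any $(x,n)$, glue arbitrary $(z_1,k)$ and $(z_2,N-n-k-2\tau)$ from $E_k$ and $E_{N-n-k-2\tau}$ to $(x,n)$ to produce shadowing points in $X$ that sit inside $f^{-(k+\tau)}B_n(x,\alpha)$, so this set intersects $\gtrsim \Lambda_k \Lambda_{N-n-k-2\tau} \gtrsim e^{(N-n)\htop(X,f)}$ many Bowen balls of radius $\eps/2$, yielding the lower Gibbs bound.

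Third, I would prove ergodicity of $\mu$ and deduce uniqueness. Ergodicity follows by the same counting scheme: specification gives $\mu(U\cap f^{-j}V) \geq c\,\mu(U)\mu(V)$ for open sets and arbitrarily large $j$, hence $\mu$ is ergodic (in fact mixing up to bounded multiplicative constants). For uniqueness, expansivity at scale $\eps$ gives a partition $\beta$ whose elements have $d_n$-diameter $<\eps$, hence $\beta$ is generating and the Bowen balls at scale $\alpha$ behave, modulo bounded overlap, like partition elements; one then runs the proof of Proposition \ref{prop:uniqueness} verbatim, replacing cylinders by Bowen balls and using the Gibbs bounds to convert a singular measure $\nu\perp\mu$ into the strict inequality $h_\nu(f)<\htop(X,f)$.

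The main obstacle is the bookkeeping of scales. Specification shadows at scale $\delta$, two orbits coming from distinct shadowing operations are $2\delta$-close, counting is done at scale $\eps$, and one needs intermediate scales $\alpha\in(2\delta,\eps)$ at which both the ``specification-glued points are separated'' and ``the Bowen ball is small enough to be counted by expansivity'' arguments go through simultaneously. The hypothesis $\eps>40\delta$ is a robust quantitative choice that leaves room for all these triangle inequalities; a secondary difficulty, present also in the symbolic proof, is verifying ergodicity and then implementing the Adler--Weiss--Bowen singularity argument with Bowen balls in place of cylinders, which requires approximating arbitrary Borel sets by unions of Bowen balls and controlling boundary effects via upper semi-continuity.
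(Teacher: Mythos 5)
Your proposal is correct and follows essentially the same route as the paper's own treatment in \S\ref{sec:revisit}: uniform counting bounds at carefully chosen intermediate scales, construction of a Gibbs measure as a weak* limit of averaged empirical measures on maximal separated sets, ergodicity via the partial-mixing estimate, and uniqueness via adapted partitions (made generating by expansivity) combined with the lower Gibbs bound. The scale bookkeeping you flag as the main obstacle is exactly the point the paper emphasizes (cf.\ Remark~\ref{rmk:scales} and the ranges of $\rho$ in Propositions~\ref{prop:build-mme-2} and~\ref{prop:uniqueness-2}).
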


\begin{remark}
Bowen's original paper assumed expansivity and periodic specification at all scales.  We relax the proof mildly so that it does not use periodic orbits and only uses specification at a fixed scale, small relative to an expansivity constant.\footnote{The statements in \cite{CT14} used $\eps\geq 28\delta$ but this must be corrected to $\eps>40\delta$; see \cite[\S5.7]{CT16}.}
We will see examples later where this additional generality is beneficial. 
\end{remark}

The proof of Theorem \ref{thm:exp-spec} extends the strategy in the symbolic case:
\begin{enumerate}
\item establish uniform counting bounds;
\item show that the usual construction of an MME gives an ergodic Gibbs measure;
\item prove that an ergodic Gibbs measure must be the unique MME.
\end{enumerate}
Now we examine the role played by expansivity.

\subsubsection{Uniform counting bounds}\label{sec:general-counting}

In the symbolic setting, the first step was to prove the \emph{counting bounds} on $\#\LLL_n$ given in \eqref{eqn:counting}. In the general setting, $\#\LLL_n$ is replaced with $\Lambda(X,\eps,n)$ from Definition \ref{def:entropy}, and mimicking the arguments in Lemma \ref{lem:counting} leads to the estimates
\begin{equation}\label{eqn:counting*}
e^{n\htop(X,f,6\delta)} \leq \Lambda(X,3\delta,n) \leq Q e^{n\htop(X,f,\delta)},
\quad\text{where } Q = (\tau + 1) e^{\tau \htop(X,f,\delta)}.
\end{equation}
Observe that the lower and upper bounds in \eqref{eqn:counting*} involve the entropy of $f$ at different scales, a phenomenon which did not appear in \eqref{eqn:counting}. 
To see why this occurs, recall that in the proof of Lemma \ref{lem:counting} we used an injective map
\begin{equation}\label{eqn:chop}
\LLL_{m+n} \to \LLL_m \times \LLL_n,
\qquad w \mapsto (w_{[1,m]}, w_{[m+1,m+n]}),
\end{equation}
as well as an at-most-$(\tau+1)$-to-1 map given by specification:
\begin{equation}\label{eqn:glue}
\LLL_m\times \LLL_n \to \LLL_{m+n+\tau},
\qquad (v,w) \mapsto vuwu'.
\end{equation}
In a general metric space, to generalize \eqref{eqn:chop} one might first attempt the following: 
\begin{itemize}
\item fixing $\rho>0$, let $E_k^\rho\subset X$ be a maximal $(k,\rho)$-separated set for each $k\in \NN$;
\item by maximality, for every $x\in X$ and $k\in \NN$ there is $\pi_k(x) \in E_k^\rho$ such that $x\in B_k(\pi_k(x),\rho)$;
\item then consider the map $E_{m+n}^\rho \to E_m^\rho \times E_n^\rho$ given by $x \mapsto (\pi_m(x), \pi_{n-m}(f^m x))$.
\end{itemize}
The problem is that injectivity may fail: there could be $z\in E_m$ such that $B_m(z,\rho)$ contains two distinct points $x,y\in E_{m+n}$, even though $d_m(x,y) \geq d_{m+n}(x,y) \geq \rho$. This possibility can be ruled out by considering a map $E_{m+n}^{2\rho} \to E_m^\rho \times E_n^\rho$; note the use of two different scales. With $\rho = 3\delta$, this leads to the lower bound in \eqref{eqn:counting*}. See \cite[\S3.1]{CT14} for details.

For the upper bound in \eqref{eqn:counting*}, one must play a similar game with \eqref{eqn:glue}. With $E_k^\rho$ as above, specification (at scale $\delta$) gives a ``gluing map'' $\pi\colon E_m^\rho \times E_n^\rho \to X$. As long as $\rho \geq \delta$, the multiplicity of this map is at most $\tau+1$ for the same reasons as in Lemma \ref{lem:counting}. However, since the gluing process in specification can move orbit segments by up to $\delta$, the image set $\pi(E_m^\rho\times E_n^\rho)$ can only be guaranteed to be $(\rho-2\delta,m+n+\tau)$-separated. Again, taking $\rho = 3\delta$ gives \eqref{eqn:counting*}; see \cite[\S3.2]{CT14} for details.

\begin{remark}\label{rmk:scales}
The reason that these issues do not arise in the symbolic setting is that there, if $\delta=\frac14$ and $y\in B_n(x,\delta)$, then $B_n(y,\delta) = [y_{[1,n]}] = [x_{[1,n]}] = B_n(x,\delta)$. In other words, in a shift space, each $d_n$ is an \emph{ultrametric}, for which the triangle inequality is strengthened to $d_n(x,z) \leq \max\{d_n(x,y),d_n(y,z)\}$.
In the non-symbolic setting, if $y\in B_n(x,\delta)$ then the most we can say is that $B_n(y,\delta) \subset B_n(x,2\delta)$, and vice versa. This leads to the ``changing scales'' aspect of the arguments above, which appears at several other places in the general proofs.
\end{remark}

\subsubsection{Construction of a Gibbs measure}\label{sec:build-gibbs}

With the counting bounds established as in \eqref{eqn:counting} and \eqref{eqn:counting*}, the next step in the symbolic proof was to consider measures $\nu_n$ giving equal weight to every $n$-cylinder, and prove a Gibbs property for any limit point of the measures $\mu_n = \frac 1n \sum_{k=0}^{n-1} \sigma_*^k \nu_n$.  
For non-symbolic systems, one replaces the collection of $n$-cylinders with a maximal $(n,\delta)$-separated set, and proves the following.

\begin{proposition}\label{prop:build-mme-2}
Let $X$ be a compact metric space and $f\colon X\to X$ a continuous map with the specification property at scale $\delta>0$ and expansivity at scale $\epsilon$, with $\epsilon>40 \delta$, and let $\rho \in (5\delta, \epsilon/8]$. Let $E_n\subset X$ be a maximal $(n,\rho-\delta)$-separated set for each $n$, and consider the measures
\begin{equation}\label{eqn:build-2}
\mu_n := \frac 1{\#E_n} \sum_{x\in E_n} \frac 1n \sum_{k=0}^{n-1} \delta_{f^k x}.
\end{equation}
Then there is $K\geq 1$ such that every weak* limit point $\mu$ of the sequence $\mu_n$ is $f$-invariant and satisfies the Gibbs property
\begin{equation}\label{eqn:gibbs2}
K^{-1} e^{-n\htop(X,f)}
\leq \mu(B_n(x,\rho)) \leq K e^{-n\htop(X,f)}
\quad\text{for all }x\in X, n\in\NN.
\end{equation}
\end{proposition}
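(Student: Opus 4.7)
My plan is to mirror the three-step strategy from the symbolic case (Proposition \ref{prop:build-mme}), with Bowen balls $B_n(x,\rho)$ replacing $n$-cylinders and the separated-set quantities $\Lambda(X,\rho',n)$ replacing $\#\LLL_n$, while carefully tracking the ``changing scales'' phenomenon flagged in Remark \ref{rmk:scales}. First, $f$-invariance of any weak$^*$ limit $\mu$ of $\mu_{n_j}$ is automatic from the averaging in \eqref{eqn:build-2}: for any continuous $\phi$, $|\int\phi\,d\mu_n-\int\phi\circ f\,d\mu_n|\leq 2\|\phi\|/n\to 0$. Since $\epsilon>40\delta$ and $\rho-\delta>4\delta$, Proposition \ref{prop:appeared} gives $\htop(X,f,\delta')=\htop(X,f)$ for every $\delta'\leq\epsilon$, so applying the general counting estimate \eqref{eqn:counting*} at appropriate scales yields constants $a,A>0$ with $a\,e^{n\htop(X)}\leq\#E_n\leq A\,e^{n\htop(X)}$ for all $n$.

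For the upper Gibbs bound, I would write
\[
\mu_m(B_n(x,\rho))=\frac{1}{m\cdot\#E_m}\sum_{k=0}^{m-1}\#\{y\in E_m:f^k y\in B_n(x,\rho)\},
\]
and bound the inner count for fixed $k$ by splitting the orbit of $y$ into the three pieces of lengths $k$, $n$, $m-k-n$. Any two $y,y'\in E_m$ with $f^k y,f^k y'\in B_n(x,\rho)$ agree to within $2\rho$ on the middle piece, so $(m,\rho-\delta)$-separation of $E_m$ forces them to be $(\rho-\delta)$-separated either on the first $k$ iterates or on the last $m-k-n$ iterates. This gives a bound $\Lambda(X,\rho-\delta,k)\cdot\Lambda(X,\rho-\delta,m-k-n)$ times an $O(1)$ factor for the middle piece, and applying the upper counting bound and dividing by $\#E_m$ gives $\mu_m(B_n(x,\rho))\leq K\,e^{-n\htop(X)}$ uniformly in $m\geq n$. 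Passing to the limit via the usual approximation of $B_n(x,\rho)$ from outside by open sets yields the upper bound in \eqref{eqn:gibbs2}.

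For the lower Gibbs bound, I would use specification at scale $\delta$ to inject orbit segments through $B_n(x,\rho)$: pick maximal $(k,\rho')$- and $(m-k-n-\tau,\rho')$-separated sets $E'$ and $E''$, and for each pair $(y',y'')\in E'\times E''$ apply the specification property to the triple of orbit segments $(y',k),(x,n),(y'',m-k-n-\tau)$ to produce a shadowing point $z\in X$ whose orbit lies within $\delta$ of $x$ throughout $[k,k+n)$, hence in $B_n(x,\rho)$ for our choice of $\rho$. Choosing $\rho'$ large enough that two distinct input pairs give distinct images in $E_m$ (this is where the $\rho>5\delta$ hypothesis enters, as it provides the slack needed so that $\delta$-shadowing preserves enough separation to match an element of $E_m$, modulo the multiplicity $\leq\tau+1$ inherited from the gluing-time ambiguity), we obtain
\[
\#\{y\in E_m:f^k y\in B_n(x,\rho)\}\geq c\,\#E'\,\#E''/(\tau+1)\geq c'\,e^{(m-n)\htop(X)}.
\]
Averaging over $k$ and dividing by $\#E_m$ gives $\mu_m(B_n(x,\rho))\geq K^{-1}\,e^{-n\htop(X)}$; a standard $\liminf$ argument on Bowen balls (using that any ball of radius $\rho$ contains an open ball of slightly smaller radius in $d_n$) passes this to $\mu$.

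The main obstacle is the scale bookkeeping inherited from the non-ultrametric nature of $d_n$: one must simultaneously arrange that (i) the $\delta$-shadowing supplied by specification still lands inside $B_n(x,\rho)$, (ii) the $(m,\rho-\delta)$-separation of $E_m$ survives the two gluings needed for the lower bound, and (iii) the splitting argument for the upper bound produces separated sets at a scale at which the counting estimates \eqref{eqn:counting*} apply. The chain $5\delta<\rho\leq\epsilon/8$ together with $40\delta<\epsilon$ is engineered precisely to leave enough triangle-inequality slack in both directions; grinding through these inequalities to pin down the explicit constant $K$ is tedious but routine once the comparisons are set up correctly, and Bowen's original role for expansivity at a sufficiently fine scale relative to specification is exactly to guarantee that such a window of scales $\rho$ exists.
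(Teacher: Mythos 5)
Your overall architecture --- invariance from the time-averaging, counting bounds via \eqref{eqn:counting*} with expansivity equating $\htop(X,f,c\rho)$ to $\htop(X,f)$, an upper bound by projecting the initial and terminal windows onto separated sets, and a lower bound by gluing through $B_n(x,\delta)$ with specification and then matching the shadowing point to $E_m$ via maximality --- is the route the paper takes (the survey itself defers the details to \cite{rB75}, \cite{CT14} and \cite[\S 6]{CT16}). Your lower-bound sketch, including the role of the $(m,\rho-\delta)$-spanning property of $E_m$ and the $\tau+1$ gluing-time multiplicity, is essentially correct.

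There is, however, a genuine gap in your upper bound. You assert that if $y,y'\in E_m$ both satisfy $f^ky,f^ky'\in B_n(x,\rho)$, then the $(m,\rho-\delta)$-separation of $E_m$ ``forces them to be $(\rho-\delta)$-separated either on the first $k$ iterates or on the last $m-k-n$ iterates.'' This does not follow: on the middle window you only know $d_n(f^ky,f^ky')<2\rho$, and since $2\rho>\rho-\delta$ the required separation can perfectly well occur there. (Contrast with the symbolic case, where the middle words are literally equal; this is exactly the ultrametric issue of Remark \ref{rmk:scales}.) Consequently your ``$O(1)$ factor for the middle piece'' is not a triviality but the crux: you must bound, uniformly in $n$ and $x$, the number of $(n,\rho-\delta)$-separated points contained in a single Bowen ball $B_n(x,\rho)$. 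A priori this is a tail-entropy-type quantity that can grow exponentially in $n$, which would destroy the upper Gibbs bound. It is controlled here by expansivity: since $2\rho\leq \eps/4$, the compactness consequence of expansivity used in the proof of Proposition \ref{prop:appeared} gives a uniform $N$ with $B_N(z,\eps)\subset B(z,(\rho-\delta)/2)$ for all $z$ (or its two-sided analogue), so any two points of $B_n(x,\rho)$ are within $\rho-\delta$ of each other in $d_{n-N+1}$, their separation must occur in the final $N$ iterates, and the middle multiplicity is at most $\Lambda(X,\rho-\delta,N-1)$, a constant. You need to supply some such argument (or rearrange scales so that the separation scale of $E_m$ exceeds $2\rho$ --- which conflicts with the matching step in your lower bound); without it the constant $K$ is not under control, and the point is worth making explicit because it shows that expansivity enters the upper bound through more than just the identification of entropies at different scales that you cite.
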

This statement is a mild extension of the argument in \cite{rB75}, which is simplified by having periodic specification at all scales and constructing $\mu_n$ using periodic orbits. Proposition \ref{prop:build-mme-2} is proven, with the same level of detail on the choice of scales in \cite[\S6]{CT16}. For the purposes of this survey, the main point is simply that the expansivity scale is a suitably large multiple of the specification scale. However, we state the exact range of scales carefully for consistency with \cite[\S6]{CT16}. See also \cite{CT14} for a proof of the lower Gibbs bound. In that paper, many of the intermediate statements and bounds are given in terms of $\htop(X, f, c\rho)$, with $c\in \{1,2,3,4\}$. It is thus crucial that $\htop(X, f, c \rho)= \htop(X,f)$, which is provided in this statement by the expansivity assumption. This is the only way in which expansivity is used in the above proposition.


\subsubsection{Ergodicity}\label{sec:ergodic}

Observe that we have not yet claimed anything about ergodicity of the Gibbs measure $\mu$.  In the symbolic case, the argument for the Gibbs property can be used to deduce that there is $c>0$ and $k\in\NN$ such that for every $v,w\in\LLL$ and $\ell\geq |v|$, there is $j\in [\ell, \ell + k)$ such that
\[
\mu([v] \cap \sigma^{-j}[w]) \geq c\mu[v]\mu[w].
\]
Since any Borel set can be approximated (w.r.t.\ $\mu$) by unions of cylinders, this can be used to deduce that
\[
\ulim_{j\to\infty} \mu(V\cap \sigma^{-j} W) \geq \frac ck \mu(V)\mu(W)
\]
for all $V,W\subset X$, which gives ergodicity.  In the non-symbolic setting, one can still mimic the Gibbs argument to produce $c>0$ and $k\in\NN$ such that for every $(x,n),(y,m) \in X\times \NN$ and any $\ell \geq n$, there is $j\in [\ell, \ell + k)$ such that
\begin{equation}\label{eqn:partial-mix}
\mu(B_n(x,\rho) \cap f^{-j} B_m(y,\rho))
\geq c \mu(B_n(x,\rho)) \mu(B_m(y,\rho)).
\end{equation}
To establish ergodicity from this one needs to approximate arbitrary Borel sets by sets whose $\mu$-measure we control; this can be done by using a sequence of \emph{adapted partitions} $\beta_n$, for which each element of $\beta_n$ contains a Bowen ball $B_n(x,\rho)$ and is contained inside a Bowen ball $B_n(x,2\rho)$.  Expansivity implies that this sequence of partitions is generating w.r.t.\ $\mu$, so the rest of the argument goes through as before, and establishes ergodicity.  As we saw in the proof of Proposition \ref{prop:appeared}, this is also enough to guarantee that $\htop(X,f,\epsilon) = \htop(X,f)$.  We summarize our conclusions as follows.

\begin{proposition}\label{prop:ergodic}
Let $X,f,\delta,\mu$ be as in Proposition \ref{prop:build-mme-2}.  Suppose that $f$ is expansive at scale $40\delta$.  Then $\mu$ is ergodic and satisfies the Gibbs property \eqref{eqn:gibbs2}.
\end{proposition}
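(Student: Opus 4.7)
The Gibbs property is already furnished by Proposition \ref{prop:build-mme-2}; the only new content is ergodicity. My plan is to carry out the three-step program sketched in the paragraph just above the proposition: first, prove a ``partial mixing'' lower bound for intersections of Bowen balls under $f^{-j}$; second, use $40\delta$-expansivity to build adapted partitions that let finite unions of Bowen balls approximate arbitrary Borel sets; third, apply the resulting inequality to $W=X\setminus V$ for an $f$-invariant $V$ to conclude triviality of the invariant $\sigma$-algebra.

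For the first step, I would mimic the construction of $\mu$ as a weak-$*$ limit of the empirical measures \eqref{eqn:build-2}, but keep track only of those separated points whose orbits shadow \emph{two} prescribed segments. Given $(x,n),(y,m)\in X\times\NN$ and $\ell\geq n$, specification produces some $j=\ell+t$ with $t\in\{0,\dots,\tau\}$ together with an $(n+j+m,\rho-\delta)$-separated collection of orbits lying in $B_n(x,\rho)$ at time $0$ and in $B_m(y,\rho)$ at time $j$. The counting bounds \eqref{eqn:counting*} from \S\ref{sec:general-counting} give a lower bound on the cardinality of this collection of roughly $\Lambda(X,\rho-\delta,n)\Lambda(X,\rho-\delta,m)e^{(j-n)\htop(X,f)}$, and combining with the Gibbs bounds \eqref{eqn:gibbs2} converts this into
\[
\mu\bigl(B_n(x,\rho)\cap f^{-j}B_m(y,\rho)\bigr)\geq c\,\mu(B_n(x,\rho))\,\mu(B_m(y,\rho))
\]
for a uniform constant $c>0$ and some $j\in[\ell,\ell+\tau]$.

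For the second step, I would build adapted partitions $\beta_n$ by fixing a maximal $(n,\rho)$-separated set $F_n$ and assigning each point of $X$ to the $d_n$-nearest element of $F_n$, with Borel tiebreaking. Each atom then sandwiches between Bowen balls: $B_n(x,\rho)\subseteq\beta_n(x)\subseteq B_n(x,2\rho)$. Since $f$ is $40\delta$-expansive and $\rho\leq\eps/8$, property \eqref{eqn:generating} shows that the refinements of $\beta_n$ are generating for $\mu$, so every Borel set is $\mu$-approximable by unions of atoms of $\beta_n$. Applying the estimate from step one to such unions and summing over the $\tau+1$ admissible values of $j$ yields
\[
\ulim_{j\to\infty}\mu(V\cap f^{-j}W)\geq\frac{c}{\tau+1}\mu(V)\mu(W)
\]
for all Borel $V,W\subseteq X$. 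Specializing to $W=X\setminus V$ for an $f$-invariant $V$ forces $\mu(V)\mu(X\setminus V)=0$, so $\mu$ is ergodic.

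The main technical obstacle is the bookkeeping of scales in step one: after specification moves orbits by up to $\delta$, one must recover a lower bound on the measure of a \emph{closed} Bowen ball of radius $\rho$ from a $(\rho-\delta)$-separated count, and verify that a definite fraction of the separated points used to build $\mu_{n+j+m}$ really do lie in both balls simultaneously. A mild nuisance is that $\mu$ could a priori charge the topological boundary of $B_n(x,\rho)$; this is standardly bypassed by perturbing $\rho$ within the admissible range $(5\delta,\eps/8]$ so that the finitely many relevant boundaries are $\mu$-null. Once these scale choices are coherent, everything else is a repetition of the Gibbs/counting argument already used in Proposition \ref{prop:build-mme-2}.
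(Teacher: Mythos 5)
Your proposal is correct and follows essentially the same route as the paper's own (sketched) argument in \S\ref{sec:ergodic}: a specification-plus-Gibbs ``partial mixing'' bound as in \eqref{eqn:partial-mix}, adapted partitions sandwiched between Bowen balls at two scales that are generating by expansivity, and the conclusion via $W=X\setminus V$ for invariant $V$. The only quibble is a scale bookkeeping detail in your adapted partition construction (a maximal $(n,\rho)$-separated set with nearest-point assignment gives atoms between $B_n(x,\rho/2)$ and $B_n(x,\rho)$, whereas the paper uses a maximal $(n,2\rho)$-separated set to get atoms between $B_n(x,\rho)$ and $\overline B_n(x,2\rho)$), which does not affect the argument.
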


\subsubsection{Adapted partitions and uniqueness}\label{sec:adapted}

The proof that an ergodic Gibbs measure is the unique MME (Proposition \ref{prop:uniqueness}) has the following generalization to the non-symbolic setting.

\begin{proposition}\label{prop:uniqueness-2}
Let $X$ be a compact metric space, $f\colon X\to X$ a continuous map, and $\mu$ an ergodic $f$-invariant measure on $X$.  Suppose $\rho>0$ is such that
\begin{itemize}
\item $f$ is expansive (or positively expansive) at scale $4\rho$;
\item there are $K,h>0$ such that $\mu$ satisfies the Gibbs bound
\begin{equation}\label{eqn:gibbs-rho}
K^{-1} e^{-nh} \leq \mu(B_n(x,\rho)) \leq K e^{-nh}
\text{ for every } x\in X \text{ and } n\in\NN.
\end{equation}
\end{itemize}
Then $h=h_\mu(f) = \htop(X,f)$, and $\mu$ is the unique MME for $(X,f)$.
\end{proposition}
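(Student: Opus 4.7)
The plan is to follow the template of Proposition \ref{prop:uniqueness}, replacing cylinders $[w]$ with Bowen balls $B_n(x,\rho)$ and the fixed generating partition with a sequence of adapted partitions $\beta_n$; expansivity at scale $4\rho$ supplies the ingredients needed to pass between these partitions and arbitrary Borel sets and to certify a generating property for $f^n$.

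First I would verify $h = h_\mu(f) = \htop(X,f)$. For $\htop(X,f) \leq h$, take a maximal $(n,2\rho)$-separated set $E_n$: the Bowen balls $B_n(x,\rho)$ for $x\in E_n$ are pairwise disjoint, so summing the lower Gibbs bound gives $\#E_n \leq K e^{nh}$, hence $\htop(X,f,2\rho)\leq h$, and expansivity at scale $4\rho$ upgrades this to $\htop(X,f) = \htop(X,f,4\rho) \leq h$. For $h_\mu(f) \geq h$, the upper Gibbs bound combined with the monotonicity $\mu(B_n(x,\rho')) \leq \mu(B_n(x,\rho))$ for $\rho' \leq \rho$ yields $\liminf_n -\tfrac{1}{n}\log\mu(B_n(x,\rho')) \geq h$ for every $x\in X$ and every $\rho' \leq \rho$; the Brin--Katok entropy formula applied to the ergodic $\mu$ then gives $h_\mu(f) \geq h$, and the variational principle closes the loop.

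For uniqueness, reduce first to $\nu$ ergodic with $\nu\perp\mu$: since $\mu$ is ergodic, each ergodic component of $\nu$ either equals $\mu$ or is singular to it, and $\nu\neq\mu$ forces a positive $\nu$-measure of the latter components, so it is enough to prove $h_\nu(f) < h$ for such a component. Fix such a $\nu$ and a Borel $D$ with $\mu(D)=1, \nu(D)=0$. Construct $\beta_n$ from a maximal $(n,2\rho)$-separated set $E_n$ by assigning each $y\in X$ to some $x\in E_n$ with $y\in B_n(x,2\rho)$, subject to the rule that if $y\in B_n(x,\rho)$ then $y$ is assigned to that $x$ (consistent since the balls $B_n(x,\rho)$ are disjoint). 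This gives $B_n(x,\rho)\subset \beta_n(x)\subset B_n(x,2\rho)$; the lower Gibbs bound then yields $\mu(\beta_n(x)) \geq K^{-1}e^{-nh}$ and $\#\beta_n \leq K e^{nh}$. Expansivity at scale $4\rho$ supplies two further properties of $\beta_n$: a compactness argument as in the second proof of Proposition \ref{prop:appeared} gives $\sup_x \diam(\beta_n(x)) \to 0$ in $d$, so inner regularity produces closed disjoint $F\subset D$ and $F'\subset X\setminus D$ with $\mu(F), \nu(F')$ arbitrarily close to $1$, and for $n$ large enough the subcollection $\DDD_n = \{P\in\beta_n : P\cap F\neq \emptyset\}$ satisfies $\mu(\bigcup\DDD_n) \geq \mu(F)$ and $\nu(\bigcup\DDD_n) \leq \nu(X\setminus F')$; and the containment $\beta_n(y)\subset B_n(\cdot, 2\rho)$ implies that two points in the same element of every $(f^n)^{-k}\beta_n$ satisfy $d(f^m x, f^m y) < 4\rho$ for all $m$, hence are equal, so $\beta_n$ is generating for $f^n$ and $nh_\nu(f) = h_\nu(f^n,\beta_n) \leq H_\nu(\beta_n)$.

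The remainder mirrors Proposition \ref{prop:uniqueness}: splitting $H_\nu(\beta_n) = \sum_{P\in\beta_n} -\nu(P)\log\nu(P)$ over $\DDD_n$ and $\DDD_n^c$, applying $\sum_{i=1}^k -p_i\log p_i \leq \log k + 1$ after renormalization, and using the counting consequence $\#\DDD_n \leq Ke^{nh}\mu(\bigcup\DDD_n)$ of the lower Gibbs bound, yields
\[
n(h_\nu(f) - h) \leq 2 + \log K + \nu(\bigcup\DDD_n)\log\mu(\bigcup\DDD_n) + \nu(\bigcup\DDD_n^c)\log\mu(\bigcup\DDD_n^c),
\]
whose right-hand side tends to $-\infty$ along a subsequence obtained by letting $\mu(F),\nu(F') \to 1$, because $\nu(\bigcup\DDD_n^c)\log\mu(\bigcup\DDD_n^c) \to 1\cdot(-\infty)$ while the other terms remain bounded. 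This forces $h_\nu(f) < h$ and completes the uniqueness argument. The main obstacle I anticipate is the scale bookkeeping in constructing $\beta_n$: the three required features — double containment between $B_n(x,\rho)$ and $B_n(x,2\rho)$ so that the Gibbs bound at scale $\rho$ controls $\mu(\beta_n(x))$, the vanishing $d$-diameter of elements, and the generating property for $f^n$ — must coexist, and the gap between the expansivity scale $4\rho$ and the outer containment radius $2\rho$ is precisely what supplies room to achieve all three, echoing the discussion in Remark \ref{rmk:scales}.
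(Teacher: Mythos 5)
Your argument follows the paper's route essentially step for step: adapted partitions $\beta_n$ with $B_n(x,\rho)\subset\beta_n(x)\subset \overline{B_n(x,2\rho)}$ built from a maximal $(n,2\rho)$-separated set, the lower Gibbs bound converted into the counting estimate $\#\DDD_n\leq Ke^{nh}\mu\big(\bigcup\DDD_n\big)$, and the same entropy-splitting computation as in Proposition \ref{prop:uniqueness}. The identification $h=h_\mu(f)=\htop(X,f)$ via disjointness of the $\rho$-balls, Brin--Katok, and Proposition \ref{prop:appeared} is sound, as is the reduction to ergodic $\nu\perp\mu$.

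The one step that fails as written is the claim that expansivity at scale $4\rho$ forces $\sup_x\diam(\beta_n(x))\to0$. That compactness argument only works under \emph{positive} expansivity, where $\bigcap_n B_n(x,4\rho)=\{x\}$. For a two-sided expansive system that is not positively expansive --- an Anosov diffeomorphism, say --- the forward Bowen ball $B_n(x,2\rho)$ contains a local stable disc of definite size for every $n$, so the elements of your $\beta_n$ do not shrink, and unions of them can only approximate sets in the ``future'' $\sigma$-algebra, not an arbitrary Borel set $D$ separating $\mu$ from $\nu$. (Your generating claim for $\bigvee_{k\in\ZZ}(f^n)^{-k}\beta_n$ survives; it is specifically the approximation of $D$ that breaks.) The repair is the one recorded in the remark following the proposition in the paper: in the invertible case, replace $d_n$ and $B_n$ throughout by their two-sided versions $d_{[-n,n]}$ and $B_{[-n,n]}$. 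The Gibbs bound \eqref{eqn:gibbs-rho} transfers to these balls because $B_{[-n,n]}(x,\rho)=f^n\big(B_{2n+1}(f^{-n}x,\rho)\big)$ and $\mu$ is $f$-invariant, and with that substitution your diameter, approximation, and generating arguments all go through as you wrote them.
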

\begin{proof}[Outline of proof]
As before, one starts by using general arguments to prove that $h=h_\mu(f) = \htop(X,f)$ and to reduce to the case of considering an invariant measure $\nu\perp\mu$, for which we must show $h_\nu(f) < h_\mu(f)$; this is unchanged from the symbolic case.  The next step there was to choose $D\subset X$ with $\mu(D) = 1$ and $\nu(D)=0$, and approximate $D$ by a union of cylinders; then similar to \eqref{eqn:nhmu}, writing
\begin{equation}\label{eqn:nhnu}
nh_\nu(f) = h_\nu(f^n)
= h_\nu(f^n,\alpha_0^{n-1}) \leq H_\nu(\alpha_0^{n-1})
= \sum_{w\in \LLL_n} -\nu[w] \log\nu[w],
\end{equation}
and splitting the sum between cylinders in $\DDD_n$ and those in $\DDD_n^c$, one eventually proves that $h_\nu(f) < h_\mu(f)$ by using the Gibbs bound $\mu[w] \geq K^{-1} e^{-|w|\htop(X)}$.

In the non-symbolic setting, the approximation of $D$ follows just as in the paragraph after \eqref{eqn:partial-mix}.  Moreover, we can obtain an analogue of \eqref{eqn:nhnu} by replacing $\alpha_0^{n-1}$ with a partition $\beta_n$ such that every element of $\beta_n$ is contained in $B_n(x,2\rho)$ for some point $x$ in a maximal $(n,2\rho)$-separated set $E_n$.  Finally, as long as we also arrange that each element of $\beta_n$ contain $B_n(x,\rho)$, we can use the lower Gibbs bound to complete the proof just as in the symbolic case. 
\end{proof}

\begin{remark}
The partition $\beta_n$ which appears in the above proof is called an \emph{adapted partition} for $E_n$. Adapted partitions exist for any $(n, 2\rho)$-separated set of maximal cardinality since the sets $B_n(x, \rho)$ are disjoint and the sets $\overline B_n (x, 2 \rho)$ cover $X$.
\end{remark}

\begin{remark}
In the two-sided expansive case, the same argument works, provided we replace $d_n$ and $B_n$ with their two-sided versions. That is, we consider  balls in the metric $d_{[-n, n]}(x,y) = \max \{d(f^k x, f^k y) : -n \leq k \leq n \}$ in place of $B_n$. Then one uses adapted partitions and proceeds as in the positively expansive case.
\end{remark}

\part{Non-uniform Bowen hypotheses and equilibrium states}\label{part:2} In \S \ref{sec:weakexp}, we recall the role played by expansivity in Bowen's proof of uniqueness, and formulate a uniqueness result using a weaker version of expansivity. Then in \S\ref{sec:DA} we describe an explicit class of partially hyperbolic diffeomorphisms where expansivity fails but this result still applies. In \S\ref{sec:nonuniformMME} we combine the weakened versions of expansivity and specification to formulate our most general result on MMEs for discrete-time systems, which we apply in \S\ref{sec:ph} to a more general partially hyperbolic setting. Finally, in \S\ref{sec:eq-st} we describe how this theory extends to equilibrium states for nonzero potential functions. 

\section{Relaxing the expansivity hypothesis} \label{sec:weakexp}

In this section, we describe how we relax the expansivity property.  Our motivating examples are diffeomorphisms for which expansivity fails, but for which the failure of expansivity is ``invisible'' to the MME. In these examples, the failure of expansivity is a lower entropy phenomenon, and this  leaves room for us to develop a version of Bowen's argument for the MME.

As explained in the previous section, Bowen's proof of uniqueness uses expansivity to guarantee that certain sequences of partitions are generating with respect to every invariant $\nu$.  In fact, in every place where this property is used, it is enough to know that this holds for all $\nu$ with sufficiently large entropy.

More precisely, at the end of the proof, in (the analogue of) \eqref{eqn:nhnu}, it suffices to know that $\alpha_0^{n-1}$ is generating for $(f^n,\nu)$ when $\nu$ is an arbitrary MME, because if $\nu$ is not an MME then we already have $h_\nu < h_\mu$, which was the goal.  This is also sufficient for the approximation of $D$ by elements of the partitions $\beta_n$, and thus Proposition \ref{prop:uniqueness-2} remains true if we replace expansivity with the assumption that for every MME $\nu$, we have $\Gamma_\eps(x) = \{x\}$ for $\nu$-a.e.\ $x$.

In Proposition \ref{prop:build-mme-2}, the argument for ergodicity required a similar generating property.  Finally, in Proposition \ref{prop:appeared}, it suffices to have this generating property w.r.t.\ a family of measures $\nu$ over which $\sup_\nu h_\nu(f) = \htop(X,f)$.

With these observations in mind, we make the following definitions.

\begin{definition}[\cite{BF13}]\label{def:almost-expansive}
An $f$-invariant measure $\mu$ is \emph{almost expansive at scale $\eps$} if $\Gamma_\eps(x) = \{x\}$ for $\mu$-a.e.\ $x$; equivalently, if the \emph{non-expansive set}  $\NE(\eps) = \{x\in X : \Gamma_\eps(x) \neq \{x\}\}$ has $\mu(\NE(\eps))=0$.  Replacing $\Gamma_\eps$ by $\Gamma_\eps^+$ gives $\NE^+$ and a notion of \emph{almost positively expansive}.
\end{definition}

\begin{definition}[\cite{CT14}]\label{def:hexp}
The \emph{entropy of obstructions to expansivity at scale $\eps$} is
\begin{align*}
\hexp(X,f,\eps) &:= \sup \{h_\mu(f) : \mu\in\Mf^e(X) \text{ is not almost expansive at scale } \eps\} \\
&= \sup \{h_\mu(f) : \mu\in\Mf^e(X) \text{ and } \mu(\NE(\eps))>0\}.
\end{align*}
We write $\hexp(X,f) = \lim_{\eps\to0} \hexp(X,f,\eps)$ for the \emph{entropy of obstructions to expansivity}, without reference to scale.  The entropy of obstructions to positive expansivity $\hexpp$ is defined analogously.
\end{definition}

From the discussion after Proposition \ref{prop:build-mme-2}, we see that we can replace the assumption of expansivity with the assumption that $\hexp(X,f,\rho) < \htop(X,f)$, since then every ergodic $\nu$ with $h_\nu(f) > \hexp(X,f,\rho)$ is almost expansive, so the Proposition goes through.\footnote{See \cite[Proposition 2.7]{CT14} for a detailed proof that $\htop(X,f,\rho)=\htop(X,f)$ in this case.}  Similarly in Proposition \ref{prop:ergodic}and Proposition \ref{prop:uniqueness-2}, it suffices to assume that $\hexp(X,f,4\rho) < \htop(X,f)$.  

Now we have all the pieces for a uniqueness result using non-uniform expansivity.

\begin{theorem}[Unique MME with non-uniform expansivity \cite{CT14}]\label{thm:hexp}
Let $X$ be a compact metric space and $f\colon X\to X$ a continuous map.  Suppose that $\eps>40\delta>0$ are such that $\hexp(X,f,\eps) < \htop(X,f)$, and that $f$ has the specification property at scale $\delta$.  Then $(X,f)$ has a unique measure of maximal entropy.
\end{theorem}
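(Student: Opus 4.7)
The plan is to retrace the four-step structure of Bowen's proof of Theorem \ref{thm:exp-spec} (uniform counting, construction of a Gibbs measure, ergodicity, uniqueness), keeping the specification arguments untouched and replacing every use of expansivity by the weaker hypothesis $\hexp(X,f,\eps)<\htop(X,f)$. The key conceptual point is the observation made just after Definition \ref{def:hexp}: any ergodic $\nu$ with $h_\nu(f)>\hexp(X,f,\eps)$ is almost expansive at scale $\eps$, and therefore the proof of Proposition \ref{prop:appeared} shows that any partition of $d_n$-diameter less than $\eps$ is generating for $(f^n,\nu)$. I fix $\rho\in(5\delta,\eps/8]$, as in Proposition \ref{prop:build-mme-2}, so that all auxiliary scales $c\rho$ ($c\in\{1,2,3,4\}$) appearing in the proof lie below $\eps$.

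First, I would carry out the counting estimates of \eqref{eqn:counting*} using only specification at scale $\delta$; these are unchanged. To collapse the various scales $c\rho$ to $\htop(X,f)$, I apply the variational principle: taking a supremum over ergodic $\nu$ with $h_\nu(f)>\hexp(X,f,\eps)$ shows $\htop(X,f,\eps)=\htop(X,f)$, and hence $\htop(X,f,c\rho)=\htop(X,f)$ for $c\rho\leq\eps$. Next, following Proposition \ref{prop:build-mme-2}, I take a weak* limit $\mu$ of the measures in \eqref{eqn:build-2}; the Gibbs property \eqref{eqn:gibbs2} is extracted purely from specification and the uniform counting bounds. For ergodicity (Proposition \ref{prop:ergodic}), I reproduce the partial mixing estimate \eqref{eqn:partial-mix} and approximate Borel sets by adapted partitions $\beta_n$ whose elements sit between $B_n(x,\rho)$ and $B_n(x,2\rho)$. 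Expansivity was used to make such $\beta_n$ generating; in our setting $\mu$ is a Gibbs MME, so $h_\mu(f)=\htop(X,f)>\hexp(X,f,\eps)$, so $\mu$ is almost expansive at scale $\eps\geq 4\rho$ and the partitions are generating mod $\mu$, which is all that is needed.

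Finally, for uniqueness I follow Proposition \ref{prop:uniqueness-2}. Given any invariant $\nu$, Lebesgue decomposition against $\mu$ and ergodicity of $\mu$ reduce matters to showing $h_\nu(f)<\htop(X,f)$ for every ergodic $\nu\perp\mu$. If $h_\nu(f)\leq\hexp(X,f,\eps)$, the conclusion is immediate from the hypothesis; otherwise $\nu$ is almost expansive at scale $\eps\geq 4\rho$, so an adapted partition $\beta_n$ of $d_n$-diameter less than $4\rho$ is generating for $(f^n,\nu)$, giving $nh_\nu(f)\leq H_\nu(\beta_n)$. Choosing $D$ with $\mu(D)=1,\ \nu(D)=0$ and approximating $D$ by unions of elements of $\beta_n$, I split $H_\nu(\beta_n)$ into contributions from elements that intersect versus miss a good approximation to $D$, and combine with the lower Gibbs bound for $\mu$ exactly as in the symbolic calculation following \eqref{eqn:nhnu-0} to conclude $h_\nu(f)<h_\mu(f)$.

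The main obstacle is bookkeeping the scales: the Gibbs bounds, the ergodicity mixing estimate, and the uniqueness computation each require the adapted partitions to be generating at a slightly enlarged scale (up to $4\rho$), and each appeal to the non-uniform expansivity condition must be at a scale that is still bounded by $\eps$. The gap $\eps>40\delta$ is tuned precisely so that every such enlargement remains admissible; any tighter relationship forces one of the intermediate steps to fail. Apart from this scale tracking, the proof is essentially a faithful transcription of the expansive case, with ``expansive'' read everywhere as ``almost expansive on the class of measures currently under consideration''.
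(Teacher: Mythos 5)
Your proposal is correct and follows essentially the same route as the paper: the paper's own argument for Theorem \ref{thm:hexp} is precisely to rerun the Bowen scheme (counting bounds from specification, Gibbs construction, ergodicity via adapted partitions, uniqueness), observing at each step that the generating-partition property is only ever needed for measures of entropy exceeding $\hexp(X,f,\eps)$, which are almost expansive at scale $\eps$ and hence at the smaller scales $c\rho\leq\eps$ where the partitions live. Your scale bookkeeping ($\rho\in(5\delta,\eps/8]$, generating at diameter $4\rho\leq\eps$) matches the paper's.
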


\section{Derived-from-Anosov systems}\label{sec:DA}

We describe a class of smooth systems for which expansivity fails but the entropy of obstructions to expansivity is small. The following example is due to Ma\~n\'e \cite{rM78}; we primarily follow the discussion in \cite{CFT19}, and refer to that paper for further details and references.

\subsection{Construction of the Ma\~n\'e example}

Fix a matrix $A\in SL(3,\ZZ)$ with simple real eigenvalues $\lambda_u > 1 > \lambda_s > \lambda_{ss} > 0$, and corresponding eigenspaces $F^{u,s,ss} \subset \RR^3$.
Let $f_0\colon \TT^3\to \TT^3$ be the hyperbolic toral automorphism defined by $A$, and let $\mathcal{F}^{u,s,ss}$ be the corresponding foliations of $\TT^3$.
Define a perturbation $f$ of $f_0$ as follows.

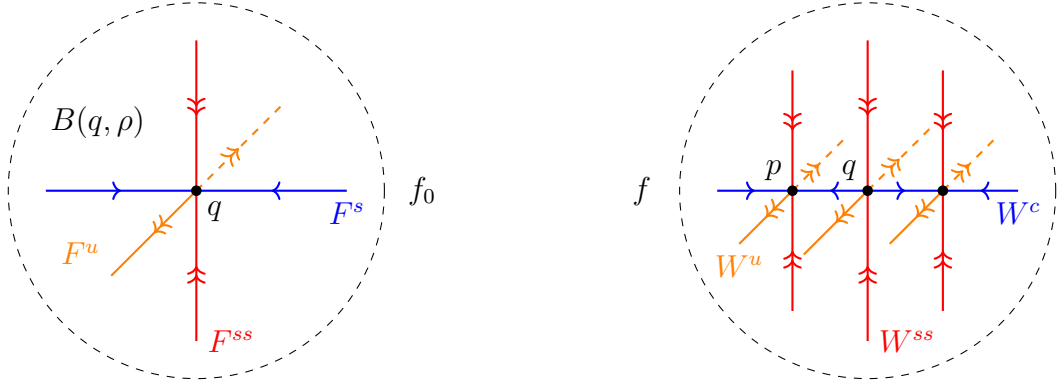
\begin{figure}[htbp]
\begin{tikzpicture}
\coordinate (q) at (0,0);
\def\r{2};
\draw[dashed] (q) circle (2.5);
\begin{scope}[thick,decoration={markings,
					mark=at position 0.5 with {\arrow{>}}}]
\draw[blue,postaction={decorate}] (0:\r) node[below]{$F^s$} -- (q);
\draw[blue,postaction={decorate}] (180:\r) -- (q);
\end{scope}
\begin{scope}[thick,decoration={markings,
					mark=at position 0.5 with {\arrow{>>}}}]
\draw[red,postaction={decorate}] (90:\r) -- (q);
\draw[red,postaction={decorate}] (270:\r) node[right]{$F^{ss}$} -- (q);
\draw[orange,postaction={decorate}] (q) -- (225:{.8*\r}) node[above left]{$F^u$};
\draw[orange,dashed,postaction={decorate}] (q) -- (45:{.8*\r});
\end{scope}
\fill (q) node[below right]{$q$} circle(2pt);
\draw (145:.8*\r) node{$B(q,\rho)$};
\draw (3,0) node{$f_0$};
\end{tikzpicture}
\hspace{5em}
\begin{tikzpicture}
\coordinate (q) at (0,0);
\draw[dashed] (q) circle (2.5);
\def\r{2};
\coordinate (s) at (-1,0);
\coordinate (t) at (1,0);
\begin{scope}[thick,decoration={markings,
					mark=at position 0.5 with {\arrow{>}}}]
\draw[blue,postaction={decorate}] (0:\r) node[below]{$W^c$} -- (t);
\draw[blue,postaction={decorate}] (180:\r) -- (s);
\draw[blue,postaction={decorate}] (q) -- (s);
\draw[blue,postaction={decorate}] (q) -- (t);
\end{scope}
\begin{scope}[thick,decoration={markings,
					mark=at position 0.5 with {\arrow{>>}}}]
\draw[red,postaction={decorate}] (90:\r) -- (q);
\draw[red,postaction={decorate}] (270:\r) node[right]{$W^{ss}$} -- (q);
\draw[orange,postaction={decorate}] (q) -- (225:{.6*\r});
\draw[orange,dashed,postaction={decorate}] (q) -- (45:{.6*\r});
\draw[red,postaction={decorate}] (s) +(90:.8*\r) -- (s);
\draw[red,postaction={decorate}] (t) +(90:.8*\r) -- (t);
\draw[red,postaction={decorate}] (s) +(270:.8*\r) -- (s);
\draw[red,postaction={decorate}] (t) +(270:.8*\r) -- (t);
\draw[orange,postaction={decorate}] (s) -- +(225:{.5*\r}) node[below]{$W^u$};
\draw[orange,dashed,postaction={decorate}] (s) -- +(45:{.5*\r});
\draw[orange,postaction={decorate}] (t) -- +(225:{.5*\r});
\draw[orange,dashed,postaction={decorate}] (t) -- +(45:{.5*\r});
\end{scope}
\fill (q) node[above left]{$q$} circle(2pt)  (t) circle(2pt) (s) node[above left]{$p$} circle(2pt);
\draw (-3,0) node{$f$};
\end{tikzpicture}
\caption{Ma\~n\'e's construction.}
\label{fig:mane}
\end{figure}

Fix $\rho>\rho'>0$ such that $f_0$ is expansive at scale $\rho$.  Let $q\in\TT^3$ be a fixed point of $f$, and set $f=f_0$ outside of $B(q,\rho)$.  Inside $B(q,\rho)$, perform a pitchfork bifurcation in the center direction as shown in Figure \ref{fig:mane}, in such a way that
\begin{itemize}
\item the foliation $W^c := \mathcal{F}^s$ remains $f$-invariant, and we write $E^c = TW^c$;
\item the cones around $F^u$ and $F^{ss}$ remain invariant and uniformly expanding for $Df$ and $Df^{-1}$, respectively, so they contain $Df$-invariant distributions $E^{u,ss}$ that integrate to $f$-invariant foliations $W^{u,ss}$;
\item $E^{cs} = E^c \oplus E^{ss}$ integrates to a foliation $W^{cs}$;
\item outside of $B(q,\rho')$, we have $\|Df|_{E^{cs}}\| \leq \lambda_s < 1$.
\end{itemize}
Thus $f$ is partially hyperbolic with $T \TT^3 = E^u \oplus E^c \oplus E^{ss} = E^u \oplus E^{cs}$.  Observe that
\begin{equation}\label{eqn:lcf}
\lambda_c(f) := \sup \{ \|Df|_{E^{cs}(x)}\| : x\in \TT^3 \} > 1
\end{equation}
because the center direction is expanding at $q$.

Now consider a diffeomorphism $g\colon \TT^3\to \TT^3$ that is $C^1$-close to $f$.  Such a $g$ remains partially hyperbolic, with
\begin{equation}\label{eqn:lcls}
\lambda_c(g) > 1 > \lambda_s(g) := \sup\{ \|Df|_{E^{cs}(x)}\| : x\in \TT^3\setminus B(q,\rho') \}.
\end{equation}
Existence of a unique MME was proved for such $g$ by Ures \cite{rU12} and by Buzzi, Fisher, Sambarino, and V\'asquez \cite{BFSV12}, using the fact that there is a semiconjugacy from $g$ back to the hyperbolic toral automorphism $f_0$.  We outline an alternate proof using Theorem \ref{thm:hexp}, which has the benefit of extending to a class of nonzero H\"older continuous potential functions \cite{CFT19}.

\subsection{Estimating the entropy of obstructions}
Although the map $g$ behaves as if it is uniformly hyperbolic outside of $B(q,\rho)$, the presence of fixed points with different indices inside this ball causes expansivity to fail.  Indeed, let $p$ denote one of the two fixed points created via the pitchfork bifurcation, and let $x$ be any point on the  leaf of $W^c$ that connects $p$ to $q$.  Then for every $\eps>0$, the bi-infinite Bowen ball $\Gamma_\eps(x)$ is a non-trivial curve in $W^c$, rather than a single point. However, we can give a simple mild criterion on the orbit of a point $x$ which rules out $\Gamma_\eps(x)$ being non-trivial, and we can argue that this criterion is satisfied for most points in our examples.

\begin{lemma}\label{lem:ph-ne}
Let $g$ be a partially hyperbolic diffeomorphism with a splitting $E^u \oplus E^c \oplus E^s$ such that $E^c$ is 1-dimensional and integrable.  Then there is $\eps_0>0$ such that $\Gamma_{\eps_0}(x) \subset W^c(x)$ for every $x$. Moreover, for every $\lambda>0$ there is $\eps>0$ such that 
\begin{equation}\label{eqn:lyap-exp}
\ulim_{n\to\infty} \frac 1n \log \|Dg^{-n}|_{E^c(x)}\| > \lambda
\quad\Rightarrow\quad
\Gamma_\eps(x) = \{x\}.
\end{equation}
\end{lemma}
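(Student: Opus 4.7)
The plan is to treat the two assertions in order; the first is a standard consequence of invariant manifold theory for partial hyperbolicity, and the second reduces to a one-dimensional distortion estimate along $W^c$ made possible by the first.

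For the inclusion $\Gamma_{\eps_0}(x)\subset W^c(x)$, I would apply the Hirsch--Pugh--Shub theorem to produce, for each $x\in M$, locally invariant $C^1$ disks $W^{cs}_{\mathrm{loc}}(x)$ and $W^{cu}_{\mathrm{loc}}(x)$ tangent to $E^{cs}(x)$ and $E^{cu}(x)$. Uniform $Dg$-expansion on an invariant unstable cone field means that any $y\in B(x,\eps_0)$ with a nontrivial $E^u$-component would see that component expanded exponentially under forward iteration, forcing the orbit out of the $\eps_0$-Bowen ball; hence $\Gamma_{\eps_0}^+(x)\subset W^{cs}_{\mathrm{loc}}(x)$ for a sufficiently small $\eps_0$ independent of $x$, and dually $\Gamma_{\eps_0}^-(x)\subset W^{cu}_{\mathrm{loc}}(x)$. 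The intersection $W^{cs}_{\mathrm{loc}}(x)\cap W^{cu}_{\mathrm{loc}}(x)$ is a $C^1$ curve tangent to $E^{cs}\cap E^{cu}=E^c$, and integrability of $E^c$ forces this intersection to coincide locally with $W^c(x)$.

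For the moreover clause, suppose for contradiction that some $y\in\Gamma_\eps(x)\setminus\{x\}$ exists with $\eps\le\eps_0$; by the first part one has $y\in W^c(x)$. Let $\gamma_0\subset W^c(x)$ be the arc from $x$ to $y$ of intrinsic length $\ell_0>0$, and set
\[
s_n := \int_{\gamma_0}\|Dg^{-n}|_{E^c(z)}\|\, dz,
\]
which is the intrinsic length of $g^{-n}\gamma_0\subset W^c(g^{-n}x)$. Because $W^c$ is a $C^1$ one-dimensional foliation of a compact manifold, I can arrange, for $\eps$ small enough, that (i) $g^{-n}y\in W^c_{\mathrm{loc}}(g^{-n}x)$ with leaf and ambient distances uniformly comparable, so $s_n\le C\eps$ for all $n$; and (ii) every intermediate point $g^{-k}z$ of $g^{-k}\gamma_0$ satisfies $d(g^{-k}z,g^{-k}x)\le C\eps$.

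The quantitative input is a distortion bound: compactness of $M$ and $C^1$-smoothness of $g$ make $z\mapsto \log\|Dg^{-1}|_{E^c(z)}\|$ uniformly continuous, so for any $c>0$ there is $\delta>0$ such that $d(z_1,z_2)<\delta$ implies $\bigl|\log\|Dg^{-1}|_{E^c(z_1)}\|-\log\|Dg^{-1}|_{E^c(z_2)}\|\bigr|<c$. I fix $c\in(0,\lambda)$ and then shrink $\eps$ so that $C\eps<\delta$; telescoping along the orbit of $\gamma_0$ gives $\|Dg^{-n}|_{E^c(z)}\|\ge e^{-cn}\|Dg^{-n}|_{E^c(x)}\|$ for every $z\in\gamma_0$, whence $s_n\ge e^{-cn}\|Dg^{-n}|_{E^c(x)}\|\,\ell_0$. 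Combined with $s_n\le C\eps$ and $\ell_0>0$, this yields $\varlimsup_n \tfrac1n\log\|Dg^{-n}|_{E^c(x)}\|\le c<\lambda$, contradicting the hypothesis. The hard part is the careful bookkeeping of the scales $\lambda\to c\to\delta\to\eps$, together with the routine but essential technicalities behind (i) and (ii), which use only continuity of the center foliation and uniform continuity of $Dg^{-1}$ on the compact manifold, so no extra regularity such as $C^{1+\alpha}$ is needed.
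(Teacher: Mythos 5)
Your proposal is correct and follows essentially the same route as the paper's sketch: kill the stable and unstable components of $y\in\Gamma_{\eps_0}(x)$ by the standard forward/backward expansion argument so that $y\in W^c(x)$, then use uniform continuity of $z\mapsto\log\|Dg^{-1}|_{E^c(z)}\|$ to compare the backward center derivative along the arc from $x$ to $y$ with that at $x$, so that a backward center exponent exceeding $\lambda$ forces the arc (equivalently, the distance $d(g^{-n}x,g^{-n}y)$) to blow up. The one caveat, which the paper's own sketch shares, is that your step (ii) --- keeping all of $g^{-k}\gamma_0$ within $\delta$ of $g^{-k}x$ so the telescoped distortion bound applies --- does not follow from the endpoints being $\eps$-close alone and needs a small bootstrapping argument (e.g.\ stop at the first $k$ for which $s_k$ exceeds the foliation-chart scale; there, near-straightness of center plaques forces the endpoints of $g^{-k}\gamma_0$ to be ambient-far apart, contradicting $y\in\Gamma_\eps(x)$).
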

\begin{proof}[Sketch of proof]
Following the argument for expansivity in the uniformly hyperbolic setting, we choose $\eps_0$ such that whenever $d(x,y) < \eps_0$, we can get from $x$ to $y$ by moving a distance $d^s$ along a leaf of $W^s$, then a distance $d^c$ along a leaf of $W^c$, then a distance $d^u$ along a leaf of $W^u$.  The argument given there shows that if $y\in \Gamma_{\eps_0}(x)$ then we must have $d^s(x,y) = d^u(x,y)=0$, which implies that $y\in W^c(x)$.  For \eqref{eqn:lyap-exp}, we observe that if the condition on $Dg^{-n}$ is satisfied, then there are arbitrarily large $n$ such that
\begin{equation}\label{eqn:back-lyap}
\|Dg^{-n}|_{E^c(x)}\| > c e^{\lambda n}.
\end{equation}
Choosing $\eps>0$ sufficiently small that $|\log \|Dg|_{E^c(z)}\| - \log\|Dg|_{E^c(z')}\|| < \lambda/2$ whenever $d(z,z')<\eps$, we see that any $y\in \Gamma_\eps(x)$ satisfies
\begin{equation}\label{eqn:center-hyp}
d(g^{-n}x, g^{-n}y) \geq c e^{\lambda n/2} d(x,y)
\end{equation}
for all $n$ satisfying \eqref{eqn:back-lyap}.  Since $n$ can become arbitrarily large, this implies that $d(x,y)=0$.
\end{proof}

\begin{remark}\label{rmk:back}
Replacing backwards time with forwards time, the analogous result for positive Lyapunov exponents is also true: $\ulim \frac 1n \log \|Dg^n|_{E^c(x)}\| > \lambda$ implies that $\Gamma_\eps(x) = \{x\}$.
\end{remark}

For the Ma\~n\'e examples, we can use \eqref{eqn:lcls} to control $\|Dg^{-n}|_{E^c(x)}\|$ in terms of how much time the orbit of $x$ spends outside $B(q,\rho)$; together with Lemma \ref{lem:ph-ne}, this allows us to estimate the entropy of $\NE(\eps)$.
To formalize this, we write $\chi = \mathbf{1}_{\TT^3 \setminus B(q,\rho)}$ and observe that by the definition of $\lambda_c(g)$ and $\lambda_s(g)$ in \eqref{eqn:lcf} and \eqref{eqn:lcls}, we have
\[
\|Dg^{-n}|_{E^c(x)}\|
\geq \lambda_s(g)^{-s_n(x)} \lambda_c(g)^{-(n-s_n(x))} 
\quad\text{ where } s_n(x) := \sum_{k=0}^{n-1} \chi(g^{-k}x).
\]
It follows that
\begin{equation}\label{eqn:ulimg}
\ulim_{n\to\infty} \frac 1n \log\|Dg^{-n}|_{E^c(x)}\|
\geq -(\overline{r}(x) \log \lambda_s(g) + (1-\overline{r}(x)) \log \lambda_c(g))
\end{equation}
where we write
\[
\overline{r}(x) = \ulim_{n\to\infty} \frac 1n s_n(x) = \ulim_{n\to\infty} \frac 1n \sum_{k=0}^{n-1} \chi(g^{-k}x).
\]
Fix $\lambda\in (0, -\log \lambda_s(g))$ and let $r>0$ satisfy $-(r\log\lambda_s(g) + (1-r)\log\lambda_c(g)) > \lambda$.  Then Lemma \ref{lem:ph-ne} and \eqref{eqn:ulimg} show that for a sufficiently small $\eps>0$, we have
\begin{equation}\label{eqn:NE-eps}
\NE(\eps) \subset \{ x : \overline{r}(x) < r \}.
\end{equation}
Since $f_0$ is Anosov, the uniform counting bounds in \eqref{eqn:counting*} give a constant $Q$ such that $\Lambda(X,f_0,\eps,n) \leq Q e^{n\htop(X,f_0)}$ for all $n$. Using this together with \eqref{eqn:NE-eps} one can prove the following.

\begin{lemma}[{\cite[\S3.4]{CFT18}}]\label{lem:hexp}
Writing $H(t) = -t\log t - (1-t)\log(1-t)$ for the usual bipartite entropy function, the Ma\~n\'e examples satisfy
\[
\hexp(g,\eps) < r(\htop(X,f_0) + \log Q) + H(2r).
\]
\end{lemma}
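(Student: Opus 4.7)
The plan is to bound $h_\mu(g)$ for every ergodic $g$-invariant measure $\mu$ charging $\NE(\eps)$ and then take the supremum. By invariance and ergodicity any such $\mu$ satisfies $\mu(\NE(\eps)) = 1$, so combining \eqref{eqn:NE-eps} with the Birkhoff ergodic theorem applied to $\chi := \mathbf{1}_{\TT^3 \setminus B(q,\rho)}$ forces $\int \chi \, d\mu < r$. This converts the qualitative obstruction-to-expansivity information into a quantitative control on the proportion of time that $\mu$-typical orbits spend outside $B(q,\rho)$.

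Next, I relate $h_\mu(g)$ to counts of $(n,\eps)$-separated subsets of the approximately well-behaved set
\[
Y_n := \Big\{ x : \sum_{k=0}^{n-1} \chi(g^k x) \leq 2rn\Big\}.
\]
Birkhoff ensures $\mu(Y_n) \to 1$, and a Katok-style argument yields $h_\mu(g) \leq \ulim_{n\to\infty} \frac{1}{n} \log \Lambda(Y_n, \eps, n)$. I then decompose $Y_n$ by itinerary, assigning each $x \in Y_n$ the word $\iota(x) = (\chi(x), \chi(gx), \ldots, \chi(g^{n-1}x)) \in \{0,1\}^n$ whose entries sum to at most $2rn$; the number of such itineraries is at most $\sum_{k \leq 2rn} \binom{n}{k} \leq e^{nH(2r)}$, which supplies the $H(2r)$ term.

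For each fixed itinerary $\iota$ with $k \leq 2rn$ ones, I bound the $(n,\eps)$-separated orbit segments realising $\iota$ by invoking the uniform Anosov counting estimate $\Lambda(X, f_0, \eps, m) \leq Q e^{m \htop(X, f_0)}$ at the ``outside'' positions, where $g$ agrees with (or is $C^1$-close to) $f_0$; this yields a factor of at most $Q^k e^{k\htop(X,f_0)}$. The ``inside'' positions contribute no additional exponential count at scale $\eps$ because, by Lemma \ref{lem:ph-ne}, any two orbits staying $\eps$-close must lie along a single local centre leaf, and the controlled centre behaviour inside $B(q,\rho)$ cannot $\eps$-separate them during their sojourn there. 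A sharpening of the Birkhoff step, using that typical itineraries have approximately $(\int \chi\, d\mu)n < rn$ ones rather than the worst-case $2rn$, reduces the effective coefficient of $\htop(X, f_0) + \log Q$ from $2r$ to $r$. Combining and taking exponential growth rates then gives the claimed inequality.

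The main obstacle is the per-itinerary counting step: one must justify rigorously that the ``inside'' blocks do not contribute to $(n,\eps)$-separation and that the ``outside'' blocks are faithfully counted by the $f_0$-bound. This rests on the semiconjugacy from $g$ to $f_0$ (whose fibres sit inside centre leaves of $g$), on the partial hyperbolicity of $g$, and on Lemma \ref{lem:ph-ne}; once this counting dichotomy is in place, the remainder of the proof is routine bookkeeping with entropy and combinatorial inequalities.
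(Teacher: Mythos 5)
Your overall strategy is the same as the paper's: use \eqref{eqn:NE-eps} together with a Birkhoff/Katok argument to reduce the bound on $h_\mu(g)$ to a counting estimate over orbit segments spending at most (roughly) $rn$ of their first $n$ iterates outside $B(q,\rho)$, then decompose by itinerary, bound the number of admissible itineraries by $e^{H(2r)n}$, and bound the separated sets realising a fixed itinerary by the Anosov estimate $Qe^{k\htop(X,f_0)}$ on each outside block while arguing that the inside blocks contribute nothing. (Your detour through $2rn$ in the definition of $Y_n$ followed by a ``sharpening'' is unnecessary: since $\int\chi\,d\mu<r$, Birkhoff already gives $\mu\{x: S_n\chi(x)\le rn\}\to 1$, which is exactly the collection $\CCC$ from \eqref{eqn:C} that the paper works with; the factor $H(2r)$ then arises from counting the transition times between inside and outside, of which there are at most $2rn$.)

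The one step whose stated justification would fail is the claim that inside blocks produce no $(n,\eps)$-separation ``by Lemma \ref{lem:ph-ne}.'' That lemma controls the bi-infinite Bowen ball $\Gamma_{\eps_0}(x)$, i.e.\ pairs of orbits that stay close for \emph{all} time; it gives no information about two orbits that merely share an itinerary, and in particular during a finite sojourn in $B(q,\rho)$ two nearby points can still separate along $E^u$, which remains uniformly expanding inside the ball. The reason the inside blocks are harmless is purely metric: every point of $B(q,\rho)$ is within $2\rho$ of every other point, so a time spent inside the ball cannot witness $(n,\eps)$-separation once the scales are arranged so that $\eps$ dominates $2\rho$ (this is how \cite{CFT18,CFT19} set things up; neither the semiconjugacy to $f_0$ nor the structure of centre leaves plays a role in this step). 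With that substitution your argument matches the paper's, at the same ``idea of proof'' level of rigour.
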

\begin{proof}[Idea of proof]
Given an ergodic measure $\mu$ that satisfies $\mu(\NE(\eps))$ and thus satisfies $\ulim \frac 1n S_n\chi(g^{-n} x) \leq r$ for $\mu$-a.e.\ $x$, the Katok entropy formula \cite{aK80} can be used to show that $h_\mu(f) \leq h(\CCC)$, where
\begin{equation}\label{eqn:C}
\CCC := \{(x,n) \in \TT^3\times \NN : S_n\chi(x) \leq rn \}.
\end{equation}
To estimate $h(\CCC)$, the idea is to partition an orbit segment $(x,n)\in\CCC$ into pieces lying entirely inside or outside of $B(q,\rho)$.  There can be at most $rn$ pieces lying outside, so the number of transition times between inside and outside is at most $2rn$.  The number of ways of choosing these transition times is thus at most
\[
\binom{n}{2rn} = \frac{n!}{(2rn)!((1-2r)n)!} \approx e^{H(2r)n},
\]
where the approximation can be made more precise using Stirling's formula or a rougher elementary integral estimate.  This contributes the $H(2r)$ term to the estimate; the remaining terms are roughly due to the observation that given a pattern of transition times for which the segments lying outside $B(q,\rho)$ have lengths $k_1,\dots, k_m$, the number of $\eps$-separated orbit segments in $\CCC$ associated to this pattern is at most
\[
\prod_{j=1}^m \Lambda(X,f_0,\eps,k_i)
\leq \prod_{j=1}^m Q e^{k_i \htop(X,f_0)}
\leq Q^m e^{rn\htop(X,f_0)}
\leq (Q e^{\htop(X,f_0)})^{rn},
\]
since no entropy is produced by the sojourns inside $B(q,\rho)$.
\end{proof}

Since there is a semi-conjugacy from $g$ to $f_0$, we have $\htop(X,g) \geq \htop(X,f_0)$.  Thus we have $\hexp(g) < \htop(g)$ whenever $r$ satisfies
\begin{equation}\label{eqn:rh}
r(\htop(X,f_0) + \log Q) + H(2r) < \htop(X,f_0).
\end{equation}
Recall that $r$ must be chosen large enough such that $\lambda_s(g)^r \lambda_c(g)^{1-r} < 1$.  Equivalently, for a given value of $r$, the perturbation must be chosen small enough for this to hold (that is, $\lambda_c$ must be close enough to $1$).
Thus given $f_0$, we can find $r$ small enough such that \eqref{eqn:rh} holds, and then for any sufficiently small perturbation the above argument guarantees that $\hexp(X,g) < \htop(X,g)$.

\begin{remark}\label{rmk:BV}
Since $\Gamma_\eps(x) \subset W^c(x)$, which is one-dimensional, it is not hard to show that $\htop(W^c(x)) = 0$, and thus $\htop(\Gamma_\eps(x))=0$ \cite{CY05,CFT19}; in other words, $f$ is \emph{entropy expansive}.
Entropy expansivity implies that $\htop(X,f,\epsilon) = \htop(X,f)$ \cite{rB72-h}, which for systems with (coarse) specification is sufficient for the construction of a Gibbs measure in Proposition \ref{prop:build-mme-2}.  However, there does not seem to be any way to use entropy expansivity to carry out the arguments for ergodicity and uniqueness. The issue is that we need to use Bowen balls to construct adapted partitions which approximate Borel sets. When $\Gamma_\eps(x)$ is a point, the two-sided Bowen ball at $x$ is a neighborhood of the point, which is key to the approximation argument. The analysis is significantly more difficult even when $\Gamma_\eps(x) \neq \{x\}$ has a simple explicit characterization, see \S \ref{sec:flow-unique} for more details in the flow case. If all we know about $\Gamma_\eps(x)$ is that $h(\Gamma_\eps(x))=0$ it is unclear how to proceed. On the other hand, for the Bonatti--Viana examples introduced in \cite{BV00}, entropy expansivity can fail \cite {BF13} even while the condition $\hexp < \htop$ is satisfied \cite{CFT18}. The Bonatti--Viana examples are 4-dimensional analogues of the Ma\~n\'e examples that involve two separate perturbations and have a dominated splitting $T\TT^4 = E^{cu} \oplus E^{cs}$ but are not partially hyperbolic. We were able to study their thermodynamic formalism in \cite{CFT18} despite these difficulties.
\end{remark}

\subsection{Specification for Ma\~n\'e examples}
In order to apply Theorem \ref{thm:hexp} to the Ma\~n\'e examples, one must investigate the specification property. Globally, specification at all scales certainly fails.  
Two approaches to deal with this are possible, and it is instructive to consider both -- our choice is to work with a \emph{coarse} specification property globally, or specification at all scales on a `good collection of orbit segments'. 

The key ingredient we are missing from the uniformly hyperbolic case is uniform contraction along $W^{cs}$, which is replacing $W^s$.  We explain why we can obtain coarse specification globally. As explained in Remark \ref{rmk:Wcs-spec}, uniform contraction is not needed for the proof of specification; it suffices to know that 
\begin{equation}\label{eqn:Wcs-in-Bowen}
W_\delta^{cs}(x) \subset B_n(x,\delta) \text{ for all } x.
\end{equation}
Since contraction in $W^{cs}$ can fail for the Ma\~n\'e example only in $B(q,\rho')$, one can easily show that \eqref{eqn:Wcs-in-Bowen} continues to hold as long as $\delta > 2\rho'$, and thus $g$ has specification at these scales.  Choosing $\rho'$ to be small enough relative to $\rho$, Theorem \ref{thm:hexp} applies and establishes existence of a unique MME.

To see that the Ma\~n\'e example does not have the specification property at all scales, we sketch a short argument which appears in much greater generality in \cite{SVY16}. Observe that for sufficiently small $\delta>0$, the forward infinite Bowen ball $\Gamma_\delta^+(q)$ is the 1-dimensional local stable leaf $W_\delta^{ss}(q)$.  Suppose that $g$ has specification at scale $\delta$ with gap size $\tau$, and let $x$ be any point whose orbit never enters $B(q,\rho)$.  Specification gives $y\in W^u_\delta(x)$ and $0\leq k\leq \tau$ such that $f^k(y) \in W^{ss}_\delta(q)$;\footnote{Use specification to get $y_n \in f^n(B_n(x,\delta)) \cap f^{-k_n}(B_n(q,\delta))$ for $0\leq k_n\leq \tau$, choose $k$ such that $k_n = k$ for infinitely many values of $n$, and let $y$ be a limit point of the corresponding $y_n$.}
In other words, $f^{-\tau}(W^{ss}_\delta(q))$ intersects \emph{every} local unstable leaf associated to an orbit that avoids $B(q,\rho)$.
But this is impossible because the dimensions are wrong.\footnote{Note that $f^{-\tau}(W^{ss}_\delta(q))$ intersects a local leaf of $W^{cu}$ in at most finitely many points, and thus thus intersects at most finitely many of the corresponding local leaves of $W^u$; however, there are uncountably many of these corresponding to points that never enter $B(q,\rho)$.}

Thus, if we want a global specification property, we must work at a fixed coarse scale, as described above. We explore the other option of returning to the ideas from \S\ref{sec:relax-spec} and recovering specification at all scales by restricting to a ``good collection of orbit segments'' in the next section.

\section{The general result for MMEs in discrete-time} \label{sec:nonuniformMME}

Now we formulate a general result that combines the symbolic result using decompositions with Theorem \ref{thm:hexp} by allowing both expansivity and specification to fail, provided the obstructions have small entropy.  This allows us to cover some new classes of examples, as we will see later, and is also important in dealing with nonzero potential functions.

Recall from \S\ref{sec:relax-spec} that a decomposition of the language $\LLL$ of a shift space consists of $\Cp,\GGG,\Cs\subset \LLL$ such that every $w\in \LLL$ can be written as $w=u^pvu^s$ where $u^p\in\Cp$, $v\in\GGG$, and $u^s\in\Cs$.  As discussed in \S\ref{sec:topological-entropy}, for non-symbolic systems we replace $\LLL$ with the \emph{space of orbit segments} $X\times\NN$, where $(x,n)$ corresponds to the orbit segment $x, f(x), f^2(x), \dots, f^{n-1}(x)$.

\begin{definition}\label{def:decomp-2}
A \emph{decomposition} for $X\times \NN$ consists of three collections $\Cp, \GGG, \Cs \subset X\times \NN_0$ for which there exist three functions $p,g,s\colon X\times \NN \to \NN_0$ such that for every $(x,n) \in X\times \NN$, the values $p=p(x,n)$, $g=g(x,n)$, and $s=s(x,n)$ satisfy $p+g+s=n$, and
\[
(x,p) \in \Cp,
\quad (f^p x,g) \in \GGG,
\quad (f^{p+s} x, s) \in \Cs.
\]
Given a decomposition, for each $M\in\NN$ we write
\[
\GGG^M := \{(x,n) \in X\times \NN : p(x,n) \leq M
\text{ and } s(x,n) \leq M \}.
\]
\end{definition}

\begin{theorem}[Non-uniform Bowen hypotheses for maps (MME case)]\label{thm:hexp-decomp}
Let $X$ be a compact metric space and $f\colon X\to X$ a continuous map.  Suppose that $\eps>40\delta>0$ are such that $\hexp(X,f,\eps) < \htop(X,f)$, and that the space of orbit segments $X\times \NN$ admits a decomposition $\Cp\GGG\Cs$ such that
\begin{enumerate}[label=\upshape{(\Roman{*})}]
\item\label{GM-spec-2} every collection $\GGG^M$ has specification at scale $\delta$, and
\item\label{hC<h-2} $h(\Cp \cup \Cs,\delta) < \htop(X,f)$.
\end{enumerate}
Then $(X,f)$ has a unique measure of maximal entropy.
\end{theorem}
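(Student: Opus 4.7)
My plan is to combine the strategy of Theorem \ref{thm:symbolic} (decompositions in the symbolic case) with that of Theorem \ref{thm:hexp} (non-uniform expansivity), following the three-step template from \S\ref{sec:revisit}: (i) uniform counting bounds, (ii) construction of an ergodic measure satisfying a non-uniform Gibbs property on each $\GGG^M$, and (iii) uniqueness via the entropy gap $\hexp(X,f,\eps) < \htop(X,f)$.

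For (i), the lower bound $\Lambda(X,6\delta,n) \geq e^{n\htop(X,f)}$ is automatic from the hypothesis $\hexp(X,f,\eps) < \htop(X,f)$, which in particular gives $\htop(X,f,6\delta) = \htop(X,f)$. The matching upper bound requires more work. Specification on $\GGG^M$ yields $\Lambda(\GGG^M,\delta,n) \leq Q_M e^{n\htop(X,f)}$ by adapting Lemma \ref{lem:counting} to the metric setting as in \S\ref{sec:general-counting}. A covering argument parallel to Lemma \ref{lem:most-in-G}, suitably translated to the metric setting, then shows that for each $r \in (0,1)$ there is $M$ with $\Lambda(\GGG^M,\delta,n) \geq r \Lambda(X,\delta,n)$ for all $n$, and hence $\Lambda(X,\delta,n) \leq A e^{n\htop(X,f)}$; this step crucially uses the entropy gap $h(\Cp \cup \Cs,\delta) < \htop(X,f)$ to bound the contribution of orbit segments with long prefix or suffix.

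For (ii), let $E_n$ be a maximal $(n,\rho-\delta)$-separated set for some $\rho \in (5\delta,\eps/8]$, and let $\mu$ be a weak-$*$ limit of the averaged measures $\mu_n$ of \eqref{eqn:build-2}. A direct adaptation of Proposition \ref{prop:build-mme-2} combined with the counting estimates above yields the uniform upper Gibbs bound $\mu(B_n(x,\rho)) \leq K e^{-n\htop(X,f)}$ and the \emph{non-uniform} lower Gibbs bound $\mu(B_n(x,\rho)) \geq K_M^{-1} e^{-n\htop(X,f)}$ whenever $(x,n) \in \GGG^M$. Since the upper bound forces $h_\mu(f) = \htop(X,f) > \hexp(X,f,\eps)$, the measure $\mu$ is almost expansive, so adapted partitions are $\mu$-generating; combined with partial-mixing estimates on Bowen balls supported on $\GGG^M$-segments, this yields ergodicity of $\mu$ together with the important fact that $\mu(\GGG^M) \to 1$ as $M \to \infty$.

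For (iii), I would follow Proposition \ref{prop:uniqueness-2}: given an ergodic $\nu \perp \mu$, show $h_\nu(f) < h_\mu(f)$. We may assume $\nu$ is almost expansive at scale $\eps$, since otherwise $h_\nu(f) \leq \hexp(X,f,\eps) < h_\mu(f)$. Then the partitions $\beta_n$ adapted to maximal $(n,2\rho)$-separated sets are generating for $(f^n,\nu)$ and we are reduced to estimating the sum $\sum_{B \in \beta_n} -\nu(B)\log\nu(B)$. The main obstacle is that the lower Gibbs bound holds only on $\GGG^M$, with a constant $K_M$ that may blow up as $M \to \infty$. The fix is to fix $M$ large enough that $\mu(\GGG^M) > 1 - \eta$ and split the entropy sum into three pieces according to whether the cell $B \in \beta_n$ is centered on an orbit segment $(x,n) \in \GGG^M$ meeting a chosen $\mu$-full/$\nu$-null set $D$, centered on $\GGG^M$ but meeting $D^c$, or not centered on $\GGG^M$ at all. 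The first two are controlled by the now-uniform Gibbs bound on $\GGG^M$ exactly as in Proposition \ref{prop:uniqueness}, while the third is controlled by the entropy gap $h(\Cp \cup \Cs,\delta) < \htop(X,f)$, which prevents orbit segments outside $\GGG^M$ from carrying entropy $\htop(X,f)$. Balancing the three error terms to conclude $h_\nu(f) < \htop(X,f)$ is the most delicate piece of bookkeeping in the proof.
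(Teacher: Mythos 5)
Your three-step architecture --- counting bounds derived from the decomposition, a limit measure with a uniform upper Gibbs bound and an $M$-dependent lower Gibbs bound on $\GGG^M$, almost expansivity of high-entropy measures standing in for expansivity, and an adapted-partition entropy comparison against an ergodic $\nu\perp\mu$ --- is exactly the route the paper takes (the paper itself only gestures at this, deferring to \cite{CT14,CT16}), and steps (i) and (ii) are sound as sketched, modulo the scale bookkeeping you already flag.

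There is, however, one concrete gap in step (iii), at the point where you say the third piece (cells not centered on $\GGG^M$) is ``controlled by the entropy gap $h(\Cp\cup\Cs,\delta)<\htop(X,f)$, which prevents orbit segments outside $\GGG^M$ from carrying entropy $\htop(X,f)$.'' For a \emph{fixed} $M$ this is not true: the estimate that comes out of the metric-space version of Lemma \ref{lem:most-in-G} is $\Lambda\bigl((X\times\NN)\setminus\GGG^M,\delta,n\bigr)\leq \epsilon_M e^{n\htop(X,f)}$ with $\epsilon_M\to 0$ only as $M\to\infty$; for each fixed $M$ the exponential growth rate of the bad collection is still $\htop(X,f)$. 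So if $\nu$ happens to give asymptotically full weight to the bad cells for every fixed $M$, your bound on the third sum yields only $n(h_\nu(f)-\htop(X,f))\leq \nu(\mathrm{bad})\log\epsilon_M+O(1)$, i.e.\ $h_\nu(f)\leq\htop(X,f)$ without strictness, and the first two sums contribute nothing because $\nu(\GGG^M\text{-cells})\to 0$ kills the term $\nu(G\cap D^c)\log\mu(G\cap D^c)$ that is supposed to drive the right-hand side to $-\infty$. The missing ingredient is a lemma asserting that \emph{every} ergodic measure with entropy above $h(\Cp\cup\Cs,\delta)$ must concentrate on $\GGG^M$-cells for large $M$ --- the analogue, for an arbitrary high-entropy $\nu$ rather than the constructed $\mu$, of the claim $\mu(\GGG^M)\to 1$ in Remark \ref{rmk:pesin-sets}. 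This is proved by stratifying the bad orbit segments according to the lengths $i,k$ of prefix and suffix and using $\#\Cp_i\,\#\Cs_k\lesssim e^{(i+k)(h(\Cp\cup\Cs)+\gamma)}$ with $h(\Cp\cup\Cs)+\gamma<\htop(X,f)$: a $\nu$-typical prefix or suffix of length proportional to $n$ forces a linear-in-$n$ entropy deficit, while sublinear prefix and suffix lengths place $\nu$-most segments in $\GGG^M$. Once that dichotomy is in place, your three-way split closes exactly as you describe.
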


The proof of Theorem \ref{thm:hexp-decomp} requires an extension of the counting arguments for decompositions (\S\ref{sec:decomp}) to the general metric space setting, following similar ideas to those outlined in \S\ref{sec:general-counting}. Similarly, the construction of a Gibbs measure in \S\ref{sec:build-gibbs} and the proofs of ergodicity and uniqueness in \S\S\ref{sec:ergodic}--\ref{sec:adapted} must be modified to reflect the fact that uniform lower bounds can only be obtained on $\GGG^M$. As in \S\ref{sec:decomp}, we omit further discussion of these more technical aspects, referring to \cite{CT14,CT16} for complete details.

\begin{remark}
If $\GGG$ has specification at all scales, then a short continuity argument \cite[Lemma 2.10]{CT16} proves that every $\GGG^M$ does as well, which establishes \ref{GM-spec-2}.
\end{remark}

\section{Partially hyperbolic systems with one-dimensional center}\label{sec:ph}

Theorem \ref{thm:hexp-decomp} can be applied to a broad class of partially hyperbolic systems, which includes the Ma\~n\'e examples. This result has not previously appeared elsewhere. We give an outline of the proof. Further details are analogous to the case of the Ma\~n\'e examples, and we emphasize the key new points.

\begin{theorem} \label{thm:phd}
Let $f\colon M\to M$ be a partially hyperbolic diffeomorphism with $TM = E^u \oplus E^c \oplus E^s$.  Assume that $\dim E^c=1$ and that every leaf of the foliations $W^s$ and $W^u$ is dense in $M$.

Let $\phc(x) = \log \|Df|_{E^c(x)}\|$, and given $\mu\in \Mf^e(M)$, let $\lambda^c(\mu) = \int\phc\,d\mu$ be the \emph{center Lyapunov exponent} of $\mu$.  Consider the quantities
\begin{equation}\label{eqn:h+h-}
\begin{aligned}
h^+ &:= \sup \{h_\mu(f) : \mu\in\Mf^e(M), \lambda^c(\mu) \geq 0\}, \\
h^- &:= \sup \{h_\mu(f) : \mu\in\Mf^e(M), \lambda^c(\mu) \leq 0\}. 
\end{aligned}
\end{equation}
Suppose that $h^+ \neq h^-$.  Then $f$ has a unique MME.
\end{theorem}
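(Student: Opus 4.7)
Without loss of generality assume $h^+ > h^-$; since every ergodic measure has $\lambda^c(\mu) \geq 0$ or $\lambda^c(\mu) \leq 0$, the variational principle yields $\htop(f) = \max(h^+, h^-) = h^+$. The plan is to apply Theorem~\ref{thm:hexp-decomp} by establishing (a) $\hexp(f, \eps) < \htop(f)$ for some small $\eps > 0$, and (b) a decomposition $\Cp \GGG \Cs$ of $M \times \NN$ with specification on each $\GGG^M$ and with $h(\Cp \cup \Cs, \delta) < \htop(f)$.

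For (a): Lemma~\ref{lem:ph-ne} and Remark~\ref{rmk:back} give that for every $\lambda > 0$ there is $\eps_\lambda > 0$ such that any ergodic $\mu$ with $|\lambda^c(\mu)| > \lambda$ is almost expansive at scale $\eps_\lambda$. Hence any ergodic $\mu$ that fails to be almost expansive at arbitrarily small scales must have $\lambda^c(\mu) = 0$, giving $h_\mu(f) \leq \min(h^+, h^-) = h^-$. Since $\hexp(f, \eps)$ is non-increasing as $\eps \downarrow 0$, it follows that $\hexp(f, \eps) \leq h^- < h^+ = \htop(f)$ for all sufficiently small $\eps > 0$.

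For (b): fix $\eta > 0$ (to be chosen later). Put $\Cp := M \times \{0\}$ and set
\[
\GGG(\eta) := \{(x, n) : S_k \phc(f^{n-k} x) \geq \eta k \text{ for all } 1 \leq k \leq n\},
\]
\[
\Cs(\eta) := \{(x, n) : S_k \phc(x) \leq \eta k \text{ for all } 1 \leq k \leq n\}.
\]
Given $(x, n)$, let $q = q(x, n) \in \{0, 1, \ldots, n\}$ be a maximizer of $V_k := S_k \phc(x) - \eta k$. Since $V_q \geq V_{q-k}$ for $1 \leq k \leq q$, we obtain $S_k \phc(f^{q-k} x) = V_q - V_{q-k} + \eta k \geq \eta k$, so $(x, q) \in \GGG(\eta)$; symmetrically $V_q \geq V_{q+k}$ yields $(f^q x, n - q) \in \Cs(\eta)$. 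This provides the decomposition with $p \equiv 0$, $g = q$, $s = n - q$. For the entropy bound, every $(x, n) \in \Cs(\eta)$ has empirical measure $\delta_{x, n}$ satisfying $\int \phc \, d\delta_{x, n} \leq \eta$. Using upper semicontinuity of $h_\mu(f)$ (which holds here since $f$ is entropy expansive by Remark~\ref{rmk:BV} and the $1$-dimensionality of $E^c$), a standard Katok-type argument bounds $h(\Cs(\eta), \eps)$ by the supremum of $h_\mu(f)$ over invariant $\mu$ with $\int \phc \, d\mu \leq \eta$. By ergodic decomposition and the hypothesis $h^+ > h^-$, this supremum converges to $h^-$ as $\eta \downarrow 0$; indeed, a would-be limit measure with $\lambda^c = 0$ has entropy bounded by $\min(h^+, h^-) = h^-$. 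For $\eta$ small enough we thus have $h(\Cp \cup \Cs(\eta), \delta) < \htop(f)$.

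Specification for $\GGG^M(\eta)$ is the main step and the principal expected obstacle. On $\GGG(\eta)$, $Df^{-k}$ contracts $E^c$ uniformly from the endpoint: $\|Df^{-k}|_{E^c(f^n x)}\| = \exp(-S_k \phc(f^{n-k} x)) \leq e^{-\eta k}$. Combined with uniform backward contraction on $E^u$, this yields uniform backward contraction along the center-unstable leaves $W^{cu}$ anchored at endpoints of $\GGG(\eta)$-segments. One then adapts Proposition~\ref{prop:uh-spec}, replacing $W^u$ by $W^{cu}$ and using density of $W^u$-leaves plus compactness of $M$ to obtain a uniform transition time $\tau$, and invokes Lemma~\ref{lem:one-step-spec} to conclude specification at all scales on $\GGG(\eta)$, hence on each $\GGG^M(\eta)$ (cf.\ the remark after Theorem~\ref{thm:hexp-decomp}). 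The delicate point is establishing the local product structure between $W^{cu}$ and $W^s$ needed for the shadowing-point construction; this is clean when $f$ is dynamically coherent (known for $1$-dimensional center in $3$-manifolds by Brin--Burago--Ivanov, and in many other standard settings), but requires care in full generality, where one may need to use ``fake foliations'' in the spirit of Burns--Wilkinson, or to work on orbit segments in $\GGG(\eta)$ where Pesin-style local unstable manifolds along $E^{cu}$ can be constructed directly from the uniform backward contraction. Modulo this specification step, Theorem~\ref{thm:hexp-decomp} then delivers the unique MME.
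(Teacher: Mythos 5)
Your proposal is, up to time reversal, the same proof as the paper's: the paper treats the case $h^+<\htop(f)$, sets $\Cs=\emptyset$ and $\Cp=\{(x,n):S_n\phc(x)\geq -rn\}$, strips off the maximal bad prefix to land in a good collection with \emph{forward} contraction of $E^{cs}$, and bounds $h(\Cp)$ by exactly the empirical-measure/variational-principle argument you use (see \S\ref{sec:Cp-small} and \S\ref{sec:ph-good}); you treat the mirror case $h^+>h^-$, split at a maximizer of $S_k\phc(x)-\eta k$ to get a good prefix with \emph{backward} contraction of $E^{cu}$ from its endpoint, followed by a bad suffix. Your verification of the decomposition identity and the entropy bound on $\Cs(\eta)$ are correct and match the paper's.

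The place where the mirroring actually bites is the specification step you flag as delicate, and the problem is not (only) integrability of $E^{cu}$. In the paper's arrangement the goodness hypothesis sits on the \emph{second} slot of the one-step gluing \eqref{eqn:one-step}: the shadowing point is taken in $f^t(W^u_\delta(f^{n_1}x_1))\cap W^{cs}_\delta(x_2)$, the first half of \eqref{eqn:one-step} uses only the uniform backward contraction of $W^u$ (so $(x_1,n_1)$ may be arbitrary), and only the second half needs $(x_2,n_2)\in\GGG$. This is exactly what the iteration in Lemma \ref{lem:one-step-spec} requires, since the first slot there is occupied by the already-glued segment $(y_i,s_i)$, which is \emph{not} in $\GGG$. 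In your arrangement the roles are reversed: the backward $W^{cu}$-contraction needed for the first half of \eqref{eqn:one-step} is a property of good segments only, so applying the one-step property with $(y_i,s_i)$ in the first slot is not licensed by your construction. One can repair this — concatenations of good segments with gaps of length $\leq\tau$ still produce a definite, summable amount of backward contraction along $W^{cu}$, which is enough for the estimate \eqref{eqn:specdist}; this is precisely the point of Remark \ref{rmk:Wcs-spec} and is carried out in \cite[\S 4]{BCFT18} — but it is an extra argument you have not supplied. The cleanest fix is to replace $f$ by $f^{-1}$ at the outset (this swaps $h^+$ with $h^-$ and $E^u$ with $E^s$), which reduces your case to the paper's and lets Proposition \ref{prop:uh-spec} and Lemma \ref{lem:one-step-spec} apply verbatim. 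Separately, your claim that $\sup\{h_\mu(f):\mu\in\Mf(M),\ \int\phc\,d\mu\leq\eta\}$ converges to $h^-$ is false as stated: a convex combination of an ergodic measure with large positive center exponent and full entropy with one of negative exponent can have $\lambda^c\leq 0$ and entropy far above $h^-$. The conclusion you actually need — that this supremum is $<\htop(f)$ for small $\eta$ — is nevertheless correct, but the argument must go through upper semicontinuity plus ergodic decomposition of a limiting measure of maximal entropy (almost every component would be an MME, and $h^+\neq h^-$ forbids their exponents from having the required signs), not through the ergodic case alone.
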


\begin{remark}
Since $\htop(X,f) = \max(h^+,h^-)$, the condition $h^+ \neq h^-$ is equivalent to the condition that either $h^+ < \htop(X,f)$ or $h^- < \htop(X,f)$. It would be interesting to investigate how typical this condition is. The only way for this condition to fail is if there is an ergodic MME with $\lambda^c=0$, or if there are (at least) two ergodic MMEs for which $\lambda^c$ takes both signs. See \S\ref{sec:ph-pressure} for an interpretation of this condition in terms of topological pressure, and an extension of Theorem \ref{thm:phd} to equilibrium states for nonzero potentials.
\end{remark}

\begin{remark}
For 3-dimensional partially hyperbolic diffeomorphisms homotopic to Anosov, Ures \cite{rU12} showed that there is a unique measure of maximal entropy. In this setting, Crisostomo and Tahzibi \cite{Crisostomo-Tahzibi} gave some interesting criteria for uniqueness (and in some case finiteness) of equilibrium states. We note that our setting is a complementary regime to that of \cite{RHRHTU12}, which assumes compact center leaves, and in which non-uniqueness of the MME is typical.
\end{remark}

First observe that arguments similar to those given for the Ma\~n\'e example in Lemma \ref{lem:ph-ne} and Remark \ref{rmk:back} 
show that $\hexp(f) \leq \min(h^+,h^-)$, so the condition $\hexp(f) < \htop(f)$ is satisfied whenever $h^+\neq h^-$.

\begin{remark}
The upper bound on $\hexp$ for the Ma\~n\'e examples in Lemma \ref{lem:hexp} is actually an upper bound on $h^+$ in that setting, verifying that $h^+(g) < \htop(g)$ whenever the perturbation is small enough.
Moreover, the leaves of $W^u$ are all dense for these examples \cite{PS06}, so Theorem \ref{thm:phd} applies to the Ma\~n\'e examples.
\end{remark}

The rest of the proof of Theorem \ref{thm:phd} consists of finding a decomposition $\Cp,\GGG,\Cs$ for $X\times \NN$ such that $\GGG$ has specification at all scales and $h(\Cp \cup \Cs) < \htop(X,f)$.  We describe the general argument in the case when $h^+ < \htop(f)$, so intuitively, all of the large entropy parts of the system have negative central Lyapunov exponents.

\subsection{A small collection of obstructions}\label{sec:Cp-small}

We take $\Cs=\emptyset$.  To describe $\Cp$, we first observe that the condition $h^+ < \htop(f)$ implies that
\[
\sup \{h_\mu(f) : \mu\in \Mf, \lambda^c(\mu) \geq 0 \} < \htop(f),
\]
where the difference is that now the supremum allows non-ergodic measures as well, and then a weak*-continuity argument gives $r>0$ such that
\begin{equation}\label{eqn:hmu<r}
\sup \{h_\mu(f) : \mu\in \Mf, \lambda^c(\mu) \geq -r \} < \htop(f).
\end{equation}
We can relate the left-hand side of \eqref{eqn:hmu<r} to $h(\Cp)$, where
\[
\Cp := \{(x,n) \in M\times \NN : S_n\phc(x) \geq -rn \}.
\]
One relationship between these was mentioned 
when we bounded $\hexp$ for the Ma\~n\'e example
(though the function being summed there was different).  Here we want to go the other way and obtain an upper bound on $h(\Cp)$.  For this we observe that if we let $E_n\subset \Cp_n$ be any $(n,\eps)$-separated set, $\nu_n$ the equidistributed atomic measure on $E_n$, and $\mu_n = \frac 1n \sum_{k=0}^{n-1} f_*^k \nu_n$, then 
half of the proof of the variational principle \cite[Theorem 8.6]{pW82} 
shows that any limit point of $\mu_n$ is $f$-invariant and has
\[
h_\mu(f) \geq h(\Cp,\eps).
\]
Moreover, $\lambda^c(\mu) = \int\phc\,d\mu(x) \geq -r$ by weak*-convergence and the definition of $\Cp$.  Together with \eqref{eqn:hmu<r}, we conclude that $h(\Cp) < \htop(f)$.

\subsection{A good collection with specification}\label{sec:ph-good}
We now describe a `good' collection of orbit segments $\GGG$, and define a decomposition. To this end, take an arbitrary orbit segment $(x,n)\in M\times \NN$, and remove the longest possible element of $\Cp$ from its beginning.  That is, let $p=p(x,n)$ be maximal with the property that $(x,p) \in \Cp$.  Then we have
\[
S_p \phc(x) \geq -rp
\text{ and }
S_k \phc(x) < -rk \text{ for all } p<k\leq n.
\]
Subtracting the first from the second gives
\[
S_{k-p} \phc(f^px)
= S_k \phc(x) - S_p\phc(x) < -r(k-p),
\]
which we can rewrite as
\[
S_j \phc(f^jx) < -rj \text{ for all } 0\leq j \leq n-p.
\]
In other words, as shown in Figure \ref{fig:Ec-decomposition}, we have\footnote{There is a clear analogy between what we are doing here and the notion of \emph{hyperbolic time} introduced by Alves \cite{jA00}, and developed by Alves, Bonatti and Viana \cite{ABV00}.}
\begin{equation}\label{eqn:ph-G}
(f^px,n-p) \in \GGG := \{(y,m) : S_j \phc(y) < -rj \text{ for all } 0\leq j \leq m\}.
\end{equation}

\begin{figure}[htbp]
\includegraphics{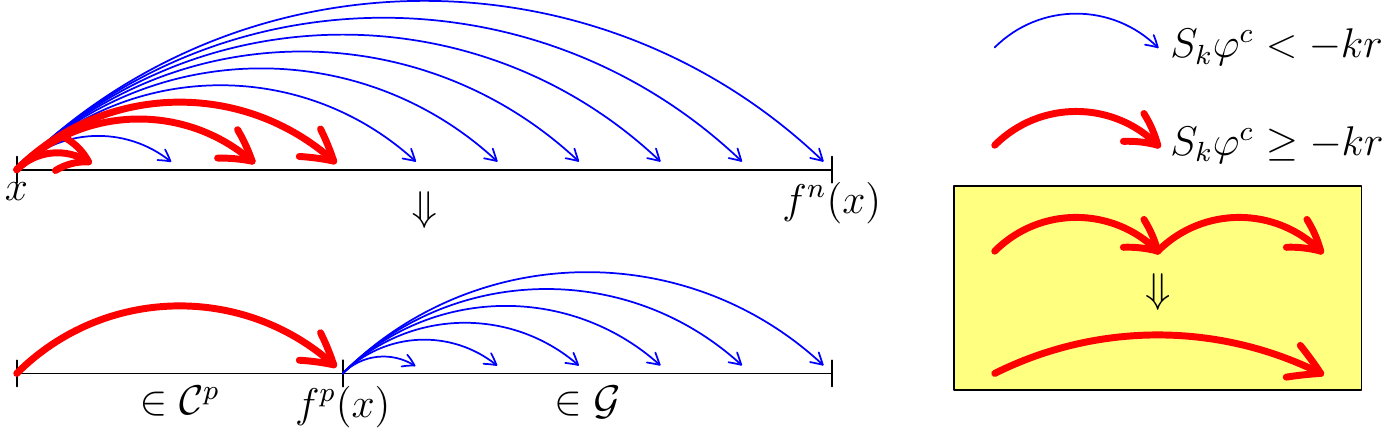}
\caption{A decomposition $\Cp \GGG$ of the space of orbit segments.}
\label{fig:Ec-decomposition}
\end{figure}

Moreover, by choosing $\delta>0$ sufficiently small that $|\phc(y)-\phc(z)| < r/2$ whenever $d(y,z)<\delta$, we see that if $(y,m) \in \GGG$ and $z\in B_m(y,\delta)$, then
\begin{equation}\label{eqn:Dfcs}
\|Df^j|_{E^{cs}(z)}\| \leq e^{-rj/2}
\text{ for all } 0\leq j\leq m.
\end{equation}
This is enough to prove the specification property for $\GGG$.  If $E^{cs}$ is integrable, then one can simply use the proof from the uniformly hyperbolic case verbatim, using \eqref{eqn:Dfcs} to guarantee that
\begin{equation}\label{eqn:Wcs}
W_\delta^{cs}(x) \subset B_n(x,\delta) \text{ whenever }(x,n)\in \GGG.
\end{equation}
Since questions of integrability in partial hyperbolicity can be subtle \cite{RHRHU16}, we point out that one can still establish the specification property without assuming integrability of $E^{cs}$. To do this, fix $\theta>0$ and consider the \emph{center-stable cone}
\[
K^{cs}(x) := \{ v + w : v\in E^{cs}, w\in E^u, \|w\| < \theta \|v\| \} \subset T_x M;
\]
then when establishing the ``one-step specification'' property in \eqref{eqn:one-step}, one can take an \emph{admissible} manifold $W \ni f^{n_2}(x_2)$ that has $T_y W \subset K^{cs}(x)$ at each $y\in W$, and replace $W_\delta^{cs}(x)$ with $f^{-n_2}(W) \cap B(x_2,\delta)$ in the argument. As long as $\theta>0$ is sufficiently small, there will still be enough contraction along $(x_2,n_2)$ for vectors in $K^{cs}$ to guarantee that \eqref{eqn:Wcs} holds.

\section{Unique equilibrium states}\label{sec:eq-st}

For the sake of simplicity, we have so far restricted our attention to measures of maximal entropy. However, the entire apparatus developed above works equally well for equilibrium states associated to ``sufficiently regular'' potential functions.

\subsection{Topological pressure}\label{sec:topological-pressure}

First we recall the notion of \emph{topological pressure}. As with topological entropy in \S\ref{sec:topological-entropy}, we give a more general definition than is standard, defining pressure for collections of orbit segments $\DDD \subset X\times \NN$; our definition reduces to the standard one when $\DDD = X\times \NN$.

\begin{definition}\label{def:pressure}
Given a continuous potential function $\ph\colon X\to \RR$ and a collection of orbit segments $\DDD \subset X\times \NN$, for each $\eps>0$ and $n\in \NN$ we consider the \emph{partition sum}
\begin{equation}\label{eqn:L-pressure}
\Lambda(\DDD,\ph,\eps,n)
:= \sup \Big \{ \sum_{x\in E} e^{S_n \ph(x)} : E \subset \DDD_n \text{ is $(n,\eps)$-separated} \Big \},
\end{equation}
where $S_n\ph(x) = \sum_{k=0}^{n-1} \ph(f^k x)$ is the $n$th Birkhoff sum. The \emph{pressure} of $\ph$ on the collection $\DDD$ at scale $\eps>0$ is
\begin{equation}\label{eqn:P-eps}
P(\DDD,\ph,\eps) := \ulim_{n\to\infty} \frac 1n \log \Lambda(\DDD,\ph,\eps,n),
\end{equation}
and the pressure of $\ph$ on the collection $\DDD$ is
\begin{equation}\label{eqn:P}
P(\DDD,\ph) := \lim_{\eps\to 0} P(\DDD,\ph,\eps).
\end{equation}
As with entropy, in the case when $\DDD = Y\times \NN$ we write $\Lambda(Y,\ph,\eps,n)$, etc.
\end{definition}

The variational principle for topological pressure states that
\begin{equation}\label{eqn:var-princ}
P(X,\ph) = \sup_{\mu \in \MMM_f(X)} \Big( h_\mu(f) + \int \ph\,d\mu \Big).
\end{equation}
A measure that achieves the supremum is called an \emph{equilibrium state} for $(X,f,\ph)$.

As was the case with the MME, there is a standard construction from the proof of the variational principle that establishes existence of an equilibrium state in many cases: we have the following generalization of Proposition \ref{prop:build} and Corollary \ref{cor:existence}.

\begin{proposition}[Building approximate equilibrium states]\label{prop:build-es}
With $X,f,\ph$ as above, fix $\eps>0$, and for each $n\in\NN$, let $E_n\subset X$ be an $(n,\eps)$-separated set.
Consider the Borel probability measures
\begin{equation}\label{eqn:mun-es}
\nu_n := \frac 1{\sum_{x\in E^n} e^{S_n\ph(x)}} \sum_{x\in E_n} \delta_x e^{S_n\ph(x)},
\quad
\mu_n := \frac 1n \sum_{k=0}^{n-1} f_*^k \nu_n
= \frac 1n \sum_{k=0}^{n-1} \nu_n \circ f^{-k}.
\end{equation}
Let $\mu_{n_j}$ be any subsequence that converges in the weak*-topology to a limiting measure $\mu$.  Then $\mu\in \Mf(X)$ and
\begin{equation}\label{eqn:pressure}
h_\mu(f) + \int \ph\,d\mu
\geq \ulim_{j\to\infty} \frac 1{n_j} \log \sum_{x\in E_{n_j}} e^{S_{n_j}\ph(x)}.
\end{equation} 
In particular, for every $\delta>0$ there exists $\mu\in \Mf(X)$ such that $h_\mu(f) + \int \ph\,d\mu \geq P(X,f,\ph,\delta)$.
\end{proposition}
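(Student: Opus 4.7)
The plan is to follow the classical proof of the variational principle for pressure, e.g.\ \cite[Theorem 9.10]{pW82}, adapting the measure-of-maximal-entropy argument of Proposition \ref{prop:build} by weighting the atomic measures $\nu_n$ with the factors $e^{S_n\ph(x)}$. There are three ingredients: verifying invariance of any weak* limit $\mu$, establishing an exact entropy identity for $\nu_n$, and a convexity/subadditivity step that transfers the estimate from $\nu_n$ to $\mu_n$ and then to $\mu$.

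Invariance is immediate: for any continuous $g$, the Ces\`aro definition of $\mu_n$ gives $|\int g\,d\mu_n - \int g\circ f\,d\mu_n| \le 2\|g\|_\infty/n$. For the identity, fix a finite Borel partition $\alpha$ with $\diam \alpha < \eps$ and $\mu(\partial A)=0$ for every $A\in\alpha$. Since $E_n$ is $(n,\eps)$-separated, each element of $\alpha_0^{n-1}$ meets $E_n$ in at most one point; writing $Z_n := \sum_{x\in E_n} e^{S_n\ph(x)}$ so that $\nu_n(\{x\}) = e^{S_n\ph(x)}/Z_n$, a direct calculation gives
\[
H_{\nu_n}(\alpha_0^{n-1}) + \int S_n\ph\,d\nu_n = \log Z_n,
\]
while $\int \ph\,d\mu_n = \tfrac{1}{n}\int S_n\ph\,d\nu_n$ by the definition of $\mu_n$.

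To transfer this to $\mu_n$, I would apply the standard subadditivity lemma: for each fixed $q\in\NN$, partitioning $\{0,\dots,n-1\}$ into blocks of length $q$ (shifted by $j\in\{0,\dots,q-1\}$ and averaged), together with concavity of $\nu\mapsto H_\nu(\alpha_0^{q-1})$, yields
\[
\tfrac{1}{n} H_{\nu_n}(\alpha_0^{n-1}) \le \tfrac{1}{q} H_{\mu_n}(\alpha_0^{q-1}) + \tfrac{2q\log\#\alpha}{n}.
\]
Combining this with the identity above gives
\[
\tfrac{1}{n}\log Z_n \le \tfrac{1}{q} H_{\mu_n}(\alpha_0^{q-1}) + \tfrac{2q\log\#\alpha}{n} + \int\ph\,d\mu_n.
\]
The choice $\mu(\partial A)=0$ makes $\nu\mapsto H_\nu(\alpha_0^{q-1})$ continuous at $\mu$, and weak* convergence gives $\int\ph\,d\mu_{n_j}\to \int\ph\,d\mu$, so taking $n=n_j\to\infty$ and then $q\to\infty$ produces $h_\mu(f,\alpha) + \int\ph\,d\mu \le h_\mu(f) + \int\ph\,d\mu$, which is \eqref{eqn:pressure}. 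The final assertion then follows by choosing $E_n$ so that $Z_n$ is within a bounded factor of $\Lambda(X,\ph,\delta,n)$.

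I expect the main bookkeeping subtlety to be the standard shifting-and-averaging trick used to derive the subadditivity estimate, together with arranging that $\alpha$ is chosen so that $H_\cdot(\alpha_0^{q-1})$ is continuous at $\mu$ for every $q$; neither step presents any genuine difficulty beyond what already appears in the classical variational principle proof, and the weighting by $e^{S_n\ph}$ enters only through the clean identity in the second paragraph above.
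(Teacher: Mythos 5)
Your proposal is correct and is essentially the paper's proof: the paper simply cites \cite[Theorem 9.10]{pW82}, and your argument (the entropy identity for $\nu_n$ on a partition with small diameter and $\mu$-null boundaries, the shifting-and-averaging subadditivity step, and passage to the weak* limit) is exactly that classical construction from the variational principle.
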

\begin{proof}
See \cite[Theorem 9.10]{pW82}.
\end{proof}

\begin{corollary}\label{cor:existence-es}
Let $X,f$ be as above, and suppose that there is $\delta>0$ such that $P(X,\ph,\delta) = P(X,\ph)$. 
Then there exists an equilibrium state for $(X,f,\ph)$.  Indeed, given any sequence $\{E_n\subset X\}_{n=1}^\infty$ of maximal $(n,\delta)$-separated sets, every weak*-limit point of the sequence $\mu_n$ from \eqref{eqn:mun-es} is an equilibrium state.
\end{corollary}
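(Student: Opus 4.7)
The plan is to deduce the corollary directly from Proposition \ref{prop:build-es} together with weak-$*$ compactness of $\Mf(X)$ and the variational principle \eqref{eqn:var-princ}. Concretely, I will extract a convergent subsequence of the $\mu_n$, apply the proposition to control the entropy-plus-integral of the limit from below, invoke the hypothesis $P(X,\ph,\delta)=P(X,\ph)$ to upgrade this lower bound, and finally use the variational principle to force equality.

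The steps are as follows. First, fix a sequence $\{E_n\}$ of maximal $(n,\delta)$-separated sets and form the measures $\nu_n, \mu_n$ as in \eqref{eqn:mun-es}. Since $\MMM(X)$ is weak-$*$ compact, some subsequence $\mu_{n_j}$ converges to a Borel probability measure $\mu$. Second, apply Proposition \ref{prop:build-es} to this subsequence: it ensures that $\mu \in \Mf(X)$ and that
\begin{equation*}
h_\mu(f) + \int \ph\,d\mu \;\geq\; \ulim_{j\to\infty} \frac{1}{n_j} \log \sum_{x\in E_{n_j}} e^{S_{n_j}\ph(x)}.
\end{equation*}
Third, verify that maximality of each $E_n$ makes the right-hand side equal to $P(X,\ph,\delta)$, which by hypothesis equals $P(X,\ph)$, so $h_\mu(f) + \int\ph\,d\mu \geq P(X,\ph)$. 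Fourth, the variational principle \eqref{eqn:var-princ} provides the reverse inequality, so $\mu$ attains the supremum and is an equilibrium state.

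The only genuinely nontrivial point I expect to encounter is the third step, namely that the partition sum $\sum_{x\in E_n} e^{S_n\ph(x)}$ computed on a maximal (i.e.\ maximum-cardinality, hence inclusion-maximal) $(n,\delta)$-separated set actually grows at rate $P(X,\ph,\delta)$, rather than merely achieving some value bounded above by $\Lambda(X,\ph,\delta,n)$. The standard remedy is the Walters-style comparison: maximality forces $E_n$ to be $(n,\delta)$-spanning, so for any other $(n,\delta)$-separated set $F$ one can project $\pi\colon F\to E_n$ with $d_n(x,\pi(x))\leq \delta$, and then use uniform continuity of $\ph$ to bound $|S_n\ph(x)-S_n\ph(\pi(x))|\leq n\,\mathrm{var}(\ph,\delta)$. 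With a routine multiplicity estimate on $\pi$ (controlled by the local packing number of Bowen balls, or avoided entirely by passing to a slightly smaller scale at the cost of a subadditive correction that disappears in the limit), one obtains $\sum_{x\in E_n} e^{S_n\ph(x)} \geq C^{-1} \Lambda(X,\ph,\delta',n)$ for some $\delta' > \delta$, and letting $n\to\infty$ recovers $P(X,\ph,\delta')$; then sending $\delta'\searrow\delta$ and using $P(X,\ph,\delta)=P(X,\ph)$ closes the argument. An equivalent and cleaner route, mirroring the usage in Corollary \ref{cor:existence}, is to choose each $E_n$ to realize (or approximate within a factor $\tfrac12$) the supremum in \eqref{eqn:L-pressure}, which makes the growth-rate identification immediate; this is the version I would present in a formal write-up.
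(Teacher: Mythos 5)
Your proposal is correct and is essentially the paper's (implicit) argument: the corollary is meant to follow immediately from Proposition \ref{prop:build-es} via weak* compactness and the variational principle, exactly as in your first four steps. One caution on your side discussion of what ``maximal'' buys you: the spanning-set comparison does not yield $\sum_{x\in E_n}e^{S_n\ph(x)}\geq C^{-1}\Lambda(X,\ph,\delta',n)$ with a constant $C$ -- the uniform-continuity step costs a factor $e^{-n\,\mathrm{var}(\ph,\delta)}$, so at a \emph{fixed} scale $\delta$ the resulting growth rate is only $P(X,\ph,\delta')-\mathrm{var}(\ph,\delta)$, which does not close the argument. Your ``cleaner route'' -- reading maximal $E_n$ as (near-)realizing the supremum in \eqref{eqn:L-pressure}, in analogy with the maximal-cardinality sets of Corollary \ref{cor:existence} -- is the correct interpretation and the one to present.
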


There is an analogue of Proposition \ref{prop:appeared} for pressure: if $(X,f)$ is expansive at scale $\eps$, then $P(X,\ph,\eps) = P(X,\ph)$, so Corollary \ref{cor:existence-es} establishes existence of an equilibrium state, as well as a way to construct one. Then the goal becomes to prove uniqueness.

\subsection{Regularity of the potential function: the Bowen property}

Even for uniformly hyperbolic systems, one should not expect every continuous potential function to have a unique equilibrium state. Indeed, for the full shift it is possible to show that given any finite set $E$ of ergodic measures, there is a continuous potential function $\ph$ whose set of equilibrium states is precisely the convex hull of $E$; see \cite[p.\ 117]{rI79} and \cite[p.\ 52]{dR78-book}.

For expansive systems $(X,f)$ with specification, uniqueness of the equilibrium state can be guaranteed by the following regularity condition on the potential.

\begin{definition}\label{def:Bowen}
A continuous function $\ph\colon X\to \RR$ has the \emph{Bowen property} at scale $\eps>0$ if there is a constant $V>0$ such that for every $(x,n) \in X\times \NN$ and $y\in B_n(x,\eps)$, we have $|S_n \ph(y) - S_n\ph(x)| \leq V$.
\end{definition}

The following generalization of Theorems \ref{thm:bowen-shift} and \ref{thm:exp-spec} is the full statement of Bowen's original result from \cite{rB75}, with the slight modification that we make the scales explicit.

\begin{theorem}\label{thm:general-bowen}
Let $X$ be a compact metric space and $f\colon X\to X$ a continuous map. Suppose that there are $\eps > 40\delta > 0$ such that $f$ is expansive or positively expansive at scale $\eps$ and has the specification property at scale $\delta$. Then every continuous potential function $\ph\colon X\to \RR$ with the Bowen property at scale $\eps$ has a unique equilibrium state.
\end{theorem}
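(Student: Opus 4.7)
The plan is to follow Bowen's strategy as developed for the MME in Theorems \ref{thm:bowen-shift} and \ref{thm:exp-spec}, upgrading every counting argument to a partition-sum argument, with the Bowen property playing the role of a bounded-distortion condition that makes this upgrade uniform. The structure parallels \S\ref{sec:revisit}: (i) establish uniform partition-sum bounds, (ii) construct a Gibbs measure adapted to $\ph$, (iii) prove ergodicity, (iv) deduce uniqueness. Throughout, I fix $\rho$ in a range like $(5\delta, \eps/8]$ so that specification, Bowen, and Gibbs estimates are all simultaneously available at compatible scales.

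First I would prove uniform bounds of the form $C^{-1} e^{nP} \leq \Lambda(X,\ph,\rho,n) \leq C e^{nP}$, generalizing the counting estimate \eqref{eqn:counting*}. The lower bound in \eqref{eqn:counting*} came from the trivial sub-multiplicativity $\#\LLL_{m+n}\leq \#\LLL_m\#\LLL_n$, and its pressure analogue $\Lambda(X,\ph,\rho,m+n)\leq C'\Lambda(X,\ph,\rho,m)\Lambda(X,\ph,\rho,n)$ still holds after one uses the Bowen property to identify $S_{m+n}\ph(x)$ with $S_m\ph(x)+S_n\ph(f^m x)$ up to an $O(V)$ error on each Bowen ball. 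The upper bound uses specification exactly as in Lemma \ref{lem:counting}: a $(\tau{+}1)$-to-one gluing map $\LLL_m^{\mathrm{sep}}\times\LLL_n^{\mathrm{sep}}\to\LLL_{m+n+\tau}^{\mathrm{sep}}$ applied to $(n,\rho)$-separated sets, with the Bowen property ensuring that the weights $e^{S_\cdot\ph}$ of the glued segment and of its constituents differ by at most $2V+\tau\|\ph\|$. This is precisely the point at which the Bowen hypothesis is essential: it converts the combinatorial submultiplicativity used for MMEs into multiplicativity of partition sums up to uniform constants.

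Next I would construct the Gibbs measure, generalizing Proposition \ref{prop:build-mme-2}. Take any weak* limit $\mu$ of the sequence $\mu_n$ in \eqref{eqn:mun-es} built from maximal $(n,\rho-\delta)$-separated sets. Using the partition-sum bounds from the previous step and the Bowen property (to pull the factor $e^{S_n\ph(x)}$ out of sums over points $y\in B_n(x,\rho)$), one obtains $K\geq 1$ with
\begin{equation*}
K^{-1} e^{-nP+S_n\ph(x)} \leq \mu(B_n(x,\rho)) \leq K e^{-nP+S_n\ph(x)}\quad\text{for all }x\in X, n\in\NN.
\end{equation*}
Ergodicity of $\mu$ then follows by the same argument as in \S\ref{sec:ergodic}: specification combined with the two-sided Gibbs bound yields a quantitative partial-mixing estimate for Bowen balls as in \eqref{eqn:partial-mix} (now with the extra weight $e^{S_n\ph}$ on both sides, which cancels), and adapted partitions — which are generating by expansivity at scale $\eps$ — let one transfer the estimate to arbitrary Borel sets.

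Finally, uniqueness generalizes Proposition \ref{prop:uniqueness-2}. Given any ergodic $\nu\neq\mu$, the Lebesgue decomposition reduces us to the case $\nu\perp\mu$. Using an adapted partition $\beta_n$ for a maximal $(n,2\rho)$-separated set and a Borel set $D$ with $\mu(D)=1, \nu(D)=0$, one splits the sum $H_\nu(\beta_n)+\int S_n\ph\,d\nu$ into pieces over elements of $\beta_n$ meeting $D$ and not meeting $D$, and applies the elementary bound $\sum_i -p_i\log p_i\leq \log k+\max_i\log(1/q_i)+\text{const}$ type inequality after re-weighting by $e^{S_n\ph}$; the lower Gibbs bound on $\mu(B_n(x,\rho))$, combined with the Bowen property to identify $e^{S_n\ph}$ on each adapted partition element, yields
\begin{equation*}
n\bigl(h_\nu(f)+\textstyle\int\ph\,d\nu-P\bigr)\leq \text{const}+\nu(\DDD_n)\log\mu(\DDD_n)+\nu(\DDD_n^c)\log\mu(\DDD_n^c),
\end{equation*}
whose right-hand side tends to $-\infty$, forcing $h_\nu(f)+\int\ph\,d\nu<P$. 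The main obstacle is scale bookkeeping rather than any new conceptual difficulty: one must verify that the interplay of $\eps, 40\delta, \rho, 2\rho, 3\delta,\ldots$ in the expansivity, specification, Bowen, and Gibbs estimates is genuinely consistent, which is exactly the role played by the numerical gap $\eps>40\delta$ (see \cite[\S5.7]{CT16}). Once the scales are chosen compatibly, the Bowen property transports the MME argument verbatim to the equilibrium-state setting.
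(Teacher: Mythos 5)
Your proposal follows exactly the route the paper takes (and that Bowen's original argument takes): upgrade the counting bounds of Lemma~\ref{lem:counting}/\eqref{eqn:counting*} to partition-sum bounds using the Bowen property as bounded distortion, build an ergodic Gibbs measure satisfying \eqref{eqn:general-gibbs} from the weighted measures \eqref{eqn:mun-es}, and run the adapted-partition uniqueness argument of Proposition~\ref{prop:uniqueness-2} with Birkhoff sums inserted. The steps, the role assigned to each hypothesis, and the scale bookkeeping all match the paper's outline, so this is correct and essentially the same proof.
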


The proof of Theorem \ref{thm:general-bowen} follows the argument outlined earlier for Theorems \ref{thm:bowen-shift} and \ref{thm:exp-spec} in \S\ref{sec:principles} and \S\ref{sec:revisit}. The main difference is that now the computations involve Birkhoff sums. For example, if we consider the symbolic setting for a moment and recall the motivation from \S\ref{sec:lower-gibbs} for the Gibbs bound as the mechanism for uniqueness, we see that in addition to the use of the Shannon--McMillan--Breiman theorem in \eqref{eqn:smb}, it is natural to use the Birkhoff ergodic theorem and get
\[
h_\mu(\sigma) + \int \ph\,d\mu = \lim_{n\to\infty} \frac 1n \big( -\log \mu [x_{[1,n]}] + S_n\ph(x)\big).
\]
For an equilibrium state, the left-hand side is $P(\ph)$, and this can be rewritten as
$P(\ph) + \lim_{n\to\infty} \frac 1n (\log \mu[x_{[1,n]}] - S_n\ph(x)) = 0$, or equivalently,
\[
\lim_{n\to\infty} \frac 1n \log \Big( \frac{ \mu[x_{[1,n]}] } { e^{-nP(\ph) + S_n\ph(x)}}\Big) = 0.
\]
As with the Gibbs property for the MME, uniqueness of the equilibrium state can be guaranteed by requiring that the quantity inside the logarithm be bounded away from $0$ and $\infty$.\footnote{Observe that this is impossible if $\ph$ does not satisfy the Bowen property.}
Generalizing to arbitrary compact metric spaces by replacing cylinders with Bowen balls, we say that a measure $\mu$ has the \emph{Gibbs property} for a potential $\ph$ at scale $\eps$ if there are constants $K>0$ and $P\in\RR$ such that for every $x\in X$ and $n\in \NN$, we have
\begin{equation}\label{eqn:general-gibbs}
K^{-1} e^{-nP + S_n\ph(x)} \leq \mu(B_n(x,\eps)) \leq K e^{-nP + S_n\ph(x)}.
\end{equation}
If it is known that every equilibrium measure is almost expansive at scale $\eps$ (recall Definition \ref{def:almost-expansive}) -- in particular, if $(X,f)$ is expansive at scale $\eps$ -- and if $\mu$ is an ergodic Gibbs measure for $\ph$, then the analogue of Proposition \ref{prop:uniqueness} holds: we have $P = P(\ph) = h_\mu(f) + \int \ph\,d\mu$, and $\mu$ is the unique equilibrium state for $(X,f,\ph)$. The proof is essentially the same, although now the computations involve Birkhoff sums.

Similarly, in the proof of the uniform counting bounds and the construction of an ergodic Gibbs measure using the procedure in Proposition \ref{prop:build-es}, one encounters multiple steps where a Birkhoff sum $S_n\ph(x)$ must be replaced with $S_n\ph(y)$ for some $y$ in the Bowen ball around $x$, and the Bowen property is required at these steps to guarantee ``bounded distortion'' in the estimates. 

Recalling that topologically transitive locally maximal hyperbolic sets have expansivity and specification, it is natural to ask which potential functions have the Bowen property: how much does Theorem \ref{thm:general-bowen} extend Theorem \ref{thm:classical}?

\begin{proposition}\label{prop:holder-bowen}
If $X$ is a locally maximal hyperbolic set for a diffeomorphism $f$, then every H\"older continuous function $\ph\colon X\to\RR$ has the Bowen property at scale $\eps$, where $\eps$ is the scale of the local product structure.
\end{proposition}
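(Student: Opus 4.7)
Fix $(x,n) \in X\times\NN$ and $y\in B_n(x,\eps)$; in particular $d(x,y)<\eps$, so by the local product structure at scale $\eps$ there is a unique point $z \in W^s_\eps(x) \cap W^u_\eps(y)$ lying in $X$. My plan is to split
\[
|S_n\ph(y) - S_n\ph(x)| \leq \sum_{k=0}^{n-1}|\ph(f^k x) - \ph(f^k z)| + \sum_{k=0}^{n-1}|\ph(f^k z) - \ph(f^k y)|
\]
and show that, after applying a H\"older estimate, the two inner sums each become geometric series (in $\lambda^{\alpha k}$ and $\lambda^{\alpha(n-1-k)}$ respectively, for some $\lambda\in(0,1)$), hence are uniformly bounded in $n$.

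For the first sum, hyperbolicity supplies (in an adapted metric) constants $C>0$ and $\lambda\in(0,1)$ such that forward iteration contracts $W^s$: since $z \in W^s_\eps(x)$,
\[
d(f^k z, f^k x) \leq C\lambda^k d(z,x) \leq C\lambda^k \eps \quad\text{for all } k\geq 0,
\]
which is the forward-time analogue of \eqref{eqn:ds}. This is the easy half.

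For the second sum, the catch is that $W^u$ \emph{expands} in forward time, so I would use backward contraction instead: since $f^k z \in W^u(f^k y)$,
\[
d(f^k z, f^k y) \leq C\lambda^{n-1-k}\, d(f^{n-1} z, f^{n-1} y)
\quad\text{for } 0\leq k \leq n-1.
\]
To make this useful I just need a uniform bound on $d(f^{n-1} z, f^{n-1} y)$, which comes for free from the Bowen-ball hypothesis:
\[
d(f^{n-1} z, f^{n-1} y) \leq d(f^{n-1} z, f^{n-1} x) + d(f^{n-1} x, f^{n-1} y) \leq C\lambda^{n-1}\eps + \eps \leq (C+1)\eps,
\]
using the first leg for the first term and $y\in B_n(x,\eps)$ for the second.

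Finally, if $\ph$ is $\alpha$-H\"older with constant $L$, then
\[
|S_n\ph(y) - S_n\ph(x)| \leq L\sum_{k=0}^{n-1}\bigl[(C\lambda^k\eps)^\alpha + (C(C+1)\lambda^{n-1-k}\eps)^\alpha\bigr] \leq \frac{2 L\bigl(C(C+1)\eps\bigr)^\alpha}{1-\lambda^\alpha} =: V,
\]
independent of $n, x, y$. No step looks genuinely delicate; the only point requiring care is the asymmetry between forward and backward time that forces the use of $z = W^s_\eps(x)\cap W^u_\eps(y)$ rather than, say, $W^u_\eps(x)\cap W^s_\eps(y)$, since only the former combination allows both legs to be geometric.
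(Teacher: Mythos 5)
Your proof is correct and is essentially the paper's argument: your bracket point $z=W^s_\eps(x)\cap W^u_\eps(y)$ is exactly the point $[x,y]$ in the paper's proof of Proposition \ref{prop:uh-exp}, your two legs are the paper's $d^s$- and $d^u$-estimates (forward contraction along the stable leg, backward contraction along the unstable leg anchored by the Bowen-ball bound at time $n-1$), and both proofs conclude by summing the resulting geometric series of H\"older increments. The only (shared, harmless) point of care is that the backward contraction along $W^u$ is conditional on the orbits staying within the hyperbolicity scale, which your triangle-inequality bound $d(f^jz,f^jy)\leq(C+1)\eps$ supplies up to a constant factor in $\eps$.
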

\begin{proof}
Recalling the estimates \eqref{eqn:du} and \eqref{eqn:ds} in the proof of Proposition \ref{prop:uh-exp}, we see that for every $y\in B_n(x,\eps)$ and every $k\in \{0,1,\dots, n-1\}$, we have
\[
d^u(f^k x, f^k y) \leq e^{-\lambda(n-k)} \eps
\quad\text{and}\quad
d^s(f^k x, f^k y) \leq e^{-\lambda k} \eps.
\]
Writing $C$ for the H\"older constant and $\gamma$ for the H\"older exponent, we obtain
\begin{align*}
|\ph(f^k x) - \ph(f^k y)|
&\leq C d(f^k x, f^k y)^\gamma
\leq C \big(2\max (d^u(f^k x, f^k y), d^s(f^k x, f^k y)) \big)^\gamma \\
&\leq C (2\eps)^\gamma \max( e^{-\lambda(n-k)\gamma}, e^{-\lambda k\gamma}),
\end{align*}
and summing over $0\leq k < n$ gives
\begin{align*}
|S_n\ph(x) - S_n\ph(y)| &\leq \sum_{k=0}^{n-1} C (2\eps)^\gamma \max( e^{-\lambda(n-k)\gamma}, e^{-\lambda k\gamma}) \\
&\leq C(2\eps)^\gamma \sum_{k=0}^{n-1} e^{-\lambda\gamma(n-k)} + e^{-\lambda\gamma k}
\leq 2C(2\eps)^\gamma \sum_{k=0}^\infty e^{-\lambda\gamma k} =:V.
\end{align*}
This last quantity is finite and independent of $x,y,n$, which establishes the Bowen property for $\ph$.
\end{proof}

\begin{remark}
The theorem ``H\"older potentials for uniformly hyperbolic systems have unique equilibrium states'' is well-entrenched enough that it is worth stressing the following point: it is the dynamical Bowen property (bounded distortion), rather than the metric H\"older property, that is truly important here. In particular, if we consider a non-uniformly hyperbolic system that is conjugate to a uniformly hyperbolic one, such as the Manneville--Pomeau interval map or Katok map of the torus, then every potential with the Bowen property continues to have a unique equilibrium state, but there may be H\"older potentials with multiple equilibrium states. However, determining which potentials have the Bowen property may be a nontrivial task.
\end{remark}

\subsection{The most general discrete-time result}

Recalling the weakened versions of expansivity and specification used in Theorem \ref{thm:hexp-decomp}, it is natural to ask for a uniqueness result for equilibrium states that uses a weakened version of the Bowen property. Observe that the Bowen property can be formulated for a collection of orbit segments (rather than the entire system) by replacing $X\times \NN$ in Definition \ref{def:Bowen} with $\GGG \subset X\times \NN$.

\begin{definition}\label{def:Bowen-G}
A continuous function $\ph\colon X\to \RR$ has the \emph{Bowen property} at scale $\eps>0$ on a collection of orbit segments $\GGG \subset X\times \NN$ if there is a constant $V>0$ such that for every $(x,n) \in \GGG$ and $y\in B_n(x,\eps)$, we have $|S_n \ph(y) - S_n\ph(x)| \leq V$.
\end{definition}

To formulate our most general discrete-time result on uniqueness of equilibrium states, we replace the entropy of obstructions to expansivity from Definition \ref{def:hexp} with the \emph{pressure of obstructions to expansivity at scale $\eps$}:
\begin{equation}\label{eqn:Pexp}
\Pexp(\phi,\eps) := \sup \Big \{ h_\mu(f) + \int \ph\,d\mu : \mu \in \MMM_f^e(X) \text{ and } \mu(\NE(\eps)) > 0 \Big \}.
\end{equation}

\begin{theorem}[{\cite[Theorem 5.6]{CT16}}]\label{thm:general-f}
Let $X$ be a compact metric space, $f\colon X\to X$ a homeomorphism, and $\ph\colon X\to \RR$ a continuous potential function. Suppose that there are $\eps > 40\delta > 0$ such that $\Pexp(\ph,\eps) < P(\ph)$ and there exists a decomposition $(\Cp,\GGG,\Cs)$ for $X\times \NN$ with the following properties:
\begin{enumerate}[label=\upshape{(\Roman{*})}]
\item every collection $\GGG^M$ has specification at scale $\delta$,
\item $\ph$ has the Bowen property on $\GGG$ at scale $\eps$, and
\item\label{pressure-gap} $P(\Cp \cup \Cs,\ph,\delta) < P(\ph)$.
\end{enumerate}
Then $(X,f,\ph)$ has a unique equilibrium state.
\end{theorem}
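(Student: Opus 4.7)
The strategy is to replicate Bowen's three-step argument (counting bounds, Gibbs measure, uniqueness via lower Gibbs), but with every step accommodating the three simultaneous weakenings: the decomposition $\Cp\GGG\Cs$, the non-uniform expansivity encoded by $\Pexp(\ph,\eps)<P(\ph)$, and the Bowen property restricted to $\GGG$. Fix a scale $\rho$ with $5\delta < \rho \le \eps/8$; this is the scale at which all Bowen balls will be measured. First I would prove uniform pressure bounds analogous to those in \S\ref{sec:general-counting}. Imitating Lemma \ref{lem:counting} in the weighted setting, specification on $\GGG^M$ together with the Bowen constant $V$ on $\GGG$ yields an upper bound $\Lambda(\GGG^M,\ph,3\delta,n)\le Q_M e^{nP(\ph)}$ with $Q_M$ depending on the specification gap $\tau(M)$. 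Conversely, a straightforward adaptation of Lemma \ref{lem:most-in-G} to the pressure setting (which is where hypothesis \ref{pressure-gap} is used, together with the Bowen property on $\GGG$ to estimate partition sums over concatenations $u^pvu^s$) produces, for every $\gamma<1$, some $M$ with $\Lambda(\GGG^M,\ph,\delta,n)\ge \gamma\, \Lambda(X,\ph,\delta,n)$ and in particular $\Lambda(\GGG^M,\ph,\delta,n)\ge a e^{nP(\ph)}$ for some $a>0$.

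Next I would build the candidate equilibrium state. Let $E_n\subset X$ be a maximal $(n,\rho-\delta)$-separated set, form the weighted measures $\nu_n,\mu_n$ as in \eqref{eqn:mun-es}, and pass to a weak$^*$-limit $\mu$. Using the upper pressure bound above combined with an injective-restriction argument analogous to \eqref{eqn:chop} (chopping an orbit segment into pre-$w$, $w$, post-$w$ pieces and controlling the middle piece's Birkhoff sum via the Bowen property), one obtains the uniform \emph{upper} Gibbs bound
\begin{equation*}
\mu(B_n(x,\rho)) \le K e^{-nP(\ph)+S_n\ph(x)} \qquad\text{for all }x\in X,\ n\in\NN.
\end{equation*}
The matching \emph{lower} bound follows by the specification gluing argument, but only on $\GGG^M$ and with an $M$-dependent constant $K_M$:
\begin{equation*}
\mu(B_n(x,\rho)) \ge K_M^{-1} e^{-nP(\ph)+S_n\ph(x)} \qquad\text{for all }(x,n)\in\GGG^M.
\end{equation*}
An $M$-indexed version of the partial mixing estimate \eqref{eqn:partial-mix} for pairs of orbit segments in $\GGG^M$ yields ergodicity of $\mu$ by the standard adapted-partition approximation argument; here one uses that $\Pexp(\ph,\eps)<P(\ph)$ implies that $\mu$, which has $h_\mu(f)+\int\ph\,d\mu = P(\ph)$, gives zero mass to $\NE(\eps)$, so the adapted partitions (with element diameter in the two-sided $d_n$-metric at scale $\rho$) are generating.

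Finally, to deduce uniqueness, I would extend Proposition \ref{prop:uniqueness-2} to the non-uniform Gibbs/pressure setting. Let $\nu$ be any ergodic $f$-invariant measure distinct from $\mu$; we must show $h_\nu(f)+\int\ph\,d\nu<P(\ph)$. The case $\nu(\NE(\eps))>0$ is immediate from $\Pexp(\ph,\eps)<P(\ph)$, so we may assume $\nu$ is almost expansive at scale $\eps$, which also forces $\nu\perp\mu$ (since any ergodic measure absolutely continuous with respect to $\mu$ must equal $\mu$). Pick a Borel set $D$ with $\mu(D)=1$, $\nu(D)=0$; approximate $D$ by a union of elements of an adapted partition $\beta_n$ built from a maximal $(n,2\rho)$-separated set, each element containing a Bowen ball $B_n(x,\rho)$ and contained in $B_n(x,2\rho)$. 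Split the entropy-plus-integral sum analogous to \eqref{eqn:nhnu} into pieces $\DDD_n$ and $\DDD_n^c$ with $\mu(\DDD_n)\to 1$ and $\nu(\DDD_n)\to 0$. The lower Gibbs bound applied on $\DDD_n\cap\GGG^M$ (for suitable $M$), together with the counting lower bound on $\GGG^M$, lets one bound $\#\DDD_n\le K_M e^{nP(\ph)-S_n\ph}\mu(\DDD_n)$ in the weighted sense, and the standard convexity manipulation (using $\sum -p_i\log p_i\le\log \#$) produces, after combining $\mu(\DDD_n)\to 1$ and $\nu(\DDD_n)\to 0$, an $n$-asymptotic inequality
\[
n\bigl(h_\nu(f)+\textstyle\int\ph\,d\nu - P(\ph)\bigr) \le O(1) + \nu(\DDD_n)\log\mu(\DDD_n) + \nu(\DDD_n^c)\log\mu(\DDD_n^c)\to -\infty,
\]
yielding the strict inequality.

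The main obstacle is the last step: the lower Gibbs bound only holds on $\GGG^M$, yet the Borel set $D$ carrying the mutual singularity has no a priori relation to $\GGG^M$. The fix is the observation (implicit in Remark \ref{rmk:pesin-sets}) that $\mu(\GGG^M)\to 1$, which follows by applying the non-uniform Gibbs bounds to the strengthened version of Lemma \ref{lem:most-in-G} mentioned in the remark after that lemma; consequently, for each fixed $r<1$ we may choose $M$ so that $\#(\DDD_n\cap\GGG_n^M)\ge r\#\DDD_n$ for all large $n$, which lets the $M$-dependent constants $K_M$ be absorbed into the $O(1)$ term above and completes the uniqueness argument.
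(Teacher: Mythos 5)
Your plan is correct and follows essentially the same route as the paper (and as the detailed proof in \cite{CT16}): uniform partition-sum bounds via specification on $\GGG^M$ and the decomposition counting lemma, a weak$^*$-limit Gibbs measure with a uniform upper bound and an $M$-dependent lower bound on $\GGG^M$, ergodicity via partial mixing and adapted partitions made generating by $\Pexp(\ph,\eps)<P(\ph)$, and uniqueness by the Adler--Weiss--Bowen singularity argument with the $\mu(\GGG^M)\to 1$ observation absorbing the constants $K_M$. No essential step is missing relative to the paper's outline.
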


\begin{remark}\label{rmk:bowen-G}
In applications to non-uniformly hyperbolic systems, it is very often the case that there is a natural collection of orbit segments $\GGG$ along which the dynamics is uniformly hyperbolic; this is the most common way of establishing specification for $\GGG$, as we saw in \S\ref{sec:DA}. In this case the proof of Proposition \ref{prop:holder-bowen} shows that every H\"older potential $\ph$ has the Bowen property on $\GGG$. Then the question of uniqueness boils down to determining which H\"older potentials have the pressure gap properties \ref{pressure-gap} and $\Pexp(\ph,\eps) < P(\ph)$. It is often the case that one or both of these conditions fails for some H\"older potentials, as in the Manneville--Pomeau example.
\end{remark}

\subsection{Partial hyperbolicity}\label{sec:ph-pressure}

For partially hyperbolic systems with one-dimensional center as in \S\ref{sec:ph}, Theorem \ref{thm:general-f} can be used to extend Theorem \ref{thm:phd}.

\begin{theorem}\label{thm:ph-es}
Let $M,f,\phc$ be as in Theorem \ref{thm:phd}. Given a H\"older continuous potential function $\ph \colon M\to \RR$, consider the quantities
\begin{align*}
P^+ &:= \sup \Big \{ h_\mu(f) + \int \ph \,d\mu : \mu \in \MMM_f^e(M), \lambda^c(\mu) \geq 0 \Big \}, \\
P^- &:= \sup \Big \{ h_\mu(f) + \int \ph \,d\mu : \mu \in \MMM_f^e(M), \lambda^c(\mu) \leq 0 \Big \}.
\end{align*}
If $P^+ \neq P^-$, then $(M,f,\ph)$ has a unique equilibrium state.
\end{theorem}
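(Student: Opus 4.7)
The plan is to apply Theorem \ref{thm:general-f} using the same decomposition scheme as in \S\ref{sec:ph}. Without loss of generality assume $P^+ < P(\ph)$ (otherwise replace $f$ by $f^{-1}$, which interchanges $E^u$ and $E^s$ and negates $\phc$ but preserves all the hypotheses). The first step is to produce $r > 0$ such that
\begin{equation*}
\sup\Bigl\{h_\mu(f) + \textstyle\int \ph\,d\mu : \mu \in \Mf(M),\ \textstyle\int \phc\,d\mu \geq -r \Bigr\} < P(\ph).
\end{equation*}
For this I would combine weak-*\ continuity of $\mu \mapsto \int \phc\,d\mu$ (making the constraint set closed), upper semi-continuity of $\mu \mapsto h_\mu(f) + \int \ph\,d\mu$ (which holds because partial hyperbolicity with $1$-dimensional center is entropy-expansive: Lemma \ref{lem:ph-ne} yields $\Gamma_{\eps_0}(x) \subset W^c(x)$, which has zero topological entropy since $W^c$ is $1$-dimensional; cf.\ Remark \ref{rmk:BV}), and the following ergodic-decomposition argument at the boundary case $r=0$: if the supremum there equalled $P(\ph)$, a maximizer $\nu^*$ would be an equilibrium state, hence $\nu^*$-a.e.\ ergodic component $\nu_x$ would itself be an ergodic equilibrium state (since $h_\mu + \int \ph\,d\mu$ is affine); the constraint $\int \lambda^c(\nu_x)\,d\nu^*(x) = \int \phc\,d\nu^* \geq 0$ then forces $\nu^*(\{x : \lambda^c(\nu_x) \geq 0\}) > 0$, producing an ergodic equilibrium state with $\lambda^c \geq 0$ and contradicting $P^+ < P(\ph)$.

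With $r$ fixed, I take the same decomposition as in \S\ref{sec:ph-good}: set $\Cs = \emptyset$ and
\begin{equation*}
\Cp := \{(x,n) : S_n \phc(x) \geq -rn\}, \quad \GGG := \{(y,m) : S_j \phc(y) < -rj \text{ for all } 0 \leq j \leq m\},
\end{equation*}
with the decomposition property holding by taking $p(x,n)$ maximal with $(x,p) \in \Cp$. For small $\delta > 0$, every $(y,m) \in \GGG$ and every $z \in B_m(y,\delta)$ satisfy $\|Df^j|_{E^{cs}(z)}\| \leq e^{-rj/2}$ for $0 \leq j \leq m$; combining this with density of $W^u$-leaves, backward expansion on $E^u$, and either integrability of $E^{cs}$ or the cone-field / admissible-manifold substitute in Remark \ref{rmk:Wcs-spec}, I can drive the one-step specification in Lemma \ref{lem:one-step-spec}, yielding specification at scale $\delta$ for every $\GGG^M$. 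For the Bowen property of $\ph$ on $\GGG$, I adapt the proof of Proposition \ref{prop:holder-bowen} (see Remark \ref{rmk:bowen-G}): uniform backward expansion on $E^u$ plus uniform forward contraction on $E^{cs}$ along $\GGG$-segments give exponential decay of $d(f^k x, f^k y)$ toward the appropriate ends of the segment for $y \in B_n(x,\eps)$, and summing the Hölder estimate telescopes to a uniform constant $V$.

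The pressure-gap conditions of Theorem \ref{thm:general-f} then both reduce to the first paragraph. For $P(\Cp, \ph, \delta) < P(\ph)$, take $(n, \delta)$-separated sets $E_n \subset \Cp_n$ maximizing the partition sum, form $\mu_n$ as in Proposition \ref{prop:build-es}, and pass to a weak-*\ limit $\mu$; continuity of $\int \phc$ gives $\int \phc\,d\mu \geq -r$, the inequality \eqref{eqn:pressure} gives $h_\mu(f) + \int \ph\,d\mu \geq P(\Cp, \ph, \delta)$, and the gap follows. For $\Pexp(\ph, \eps) < P(\ph)$, let $\mu$ be ergodic with $\mu(\NE(\eps)) > 0$; Lemma \ref{lem:ph-ne} and Remark \ref{rmk:back} combined with Birkhoff's theorem force $|\lambda^c(\mu)| \leq \lambda_0(\eps)$, with $\lambda_0(\eps) \to 0$ as $\eps \to 0$. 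Choosing $\eps$ small enough that $\lambda_0(\eps) \leq r$ (and simultaneously $\eps > 40\delta$, arranged by first fixing $r$, then $\eps$, then $\delta$) completes the verification, and Theorem \ref{thm:general-f} delivers uniqueness.

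The hardest step is the ergodic-decomposition argument in the first paragraph: without it one only obtains the convex-combination estimate $h_\nu + \int \ph\,d\nu \leq \nu(A) P^+ + \nu(B) P^- \leq P(\ph)$, which fails to be strict. Exploiting that any maximizer must be an equilibrium state, so all of its ergodic components are themselves equilibrium states, is what upgrades the inequality from weak to strict under the hypothesis $P^+ \neq P^-$. After this, the remaining steps are straightforward adaptations of \S\ref{sec:ph}, with Birkhoff sums $S_n \ph$ replacing iteration counts throughout.
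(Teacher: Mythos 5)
Your proof is correct and follows essentially the same route as the paper: the same $\Cp\,\GGG$ decomposition from \S\ref{sec:ph} with $\Cs=\emptyset$, specification and the Bowen property on $\GGG$ via the contraction estimate \eqref{eqn:Dfcs} and Remark \ref{rmk:bowen-G}, and the two pressure-gap verifications feeding into Theorem \ref{thm:general-f}. The ergodic-decomposition-at-the-maximizer argument in your first paragraph correctly supplies the detail that the paper compresses into ``a weak*-continuity argument gives $r>0$,'' and you rightly flag that the naive convex-combination bound would not be strict there.
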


Beyond the properties from \S\ref{sec:ph}, the only additional ingredient required for Theorem \ref{thm:ph-es} is the fact that $\ph$ has the Bowen property on the collection of orbit segments $\GGG$ defined in \eqref{eqn:ph-G}, which follows from Remark \ref{rmk:bowen-G} and the hyperbolicity estimate in \eqref{eqn:Dfcs}; then uniqueness follows from Theorem \ref{thm:general-f}.

It is worth noting that the condition $P^+ \neq P^-$ (and thus the condition $h^+ \neq h^-$) can be formulated in terms of the topological pressure function. The function $t\mapsto P(\ph + t\phc)$ is convex, being the supremum of the affine functions
\[
P_\mu \colon t \mapsto h_\mu(f) + \int \ph \,d\mu + t\lambda^c(\mu)
\]
over all $\mu \in \MMM_f^e(M)$. Some of its possible shapes are shown in Figure \ref{fig:possible-graphs}. 

\begin{figure}[htbp]
\includegraphics[width=\textwidth]{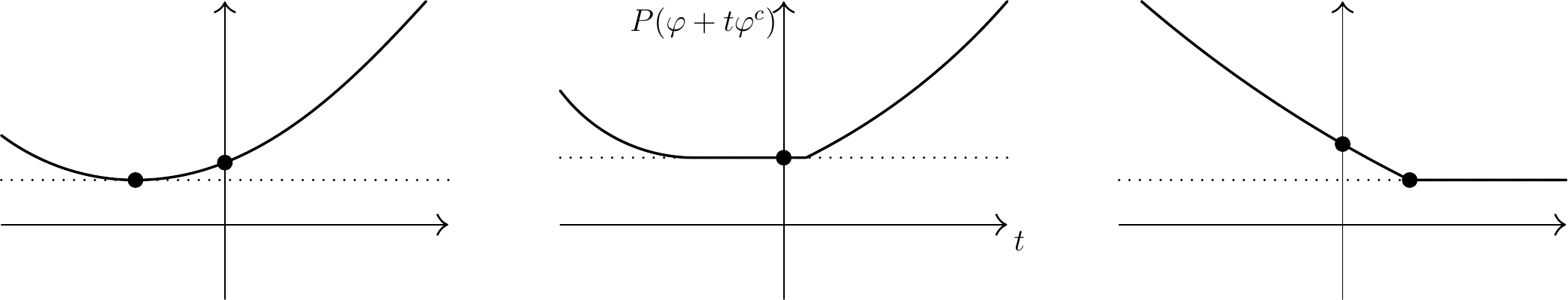}
\caption{Some possible graphs of $t\mapsto P(\ph+t\phc)$.}
\label{fig:possible-graphs}
\end{figure}

Suppose there is $t>0$ such that $P(\ph + t\phc) < P(\ph)$, as in the third graph in Figure \ref{fig:possible-graphs}. Then given any $\mu \in \MMM_f^e(M)$ with $\lambda^c(\mu) \geq 0$, we have
\begin{equation}\label{eqn:Pmu-leq}
h_\mu(f) + \int\ph\,d\mu = P_\mu(0) \leq P_\mu(t) \leq P(\ph + t\phc) < P(\ph),
\end{equation}
and taking a supremum over all such $\mu$ gives $P^+ \leq P(\ph + t\phc) < P(\ph)$, so that the condition of Theorem \ref{thm:ph-es} is satisfied and $(M,f,\ph)$ has a unique equilibrium state, which has negative center Lyapunov exponent. 

A similar argument holds if there is $t<0$ such that $P(\ph + t\phc) < P(\ph)$, as in the first graph in Figure \ref{fig:possible-graphs}; \eqref{eqn:Pmu-leq} applies to all $\mu \in \MMM_f^e(M)$ with $\lambda^c(\mu) \leq 0$, so that $P^- < P(\ph) = P^+$, and there is a unique equilibrium state, which has positive center Lyapunov exponent.

We see that the only way to have $P^+ = P^-$ is if the function $t\mapsto P(\ph + t\phc)$ has a global minimum at $t=0$. Thus one could restate the last line of Theorem \ref{thm:ph-es} as the conclusion that $(M,f,\ph)$ has a unique equilibrium state if there is $t\neq 0$ such that $P(\ph + t\phc) < P(\ph)$. In particular, returning to Theorem \ref{thm:phd},  $f$ has a unique MME if there is $t\neq 0$ such that $P(t\phc) < P(0) = \htop(f)$.

\part{Geodesic flows}\label{part:3}
In this part, we focus on our geometric applications. In \S \ref{sec:geometry}, we introduce some geometric background, and in \S\ref{sec:geodesic-ES} we describe the main results and some of the key ideas from the paper \cite{BCFT18}. In \S\ref{sec:Kproperty}, we discuss our approach to the Kolmogorov $K$-property. In \S\ref{sec:entropygap}, we give the main ideas of proof  for the ``pressure gap'' for a wide class of potentials for  geodesic flow on a rank 1 non-positive curvature manifold.

\section{Geometric preliminaries}\label{sec:geometry}

\subsection{Overview}

Let $M= (M^n,g)$ be a closed connected $C^\infty$ Riemannian manifold with dimension $n$, and $F = (f_t)_{t\in\mathbb{R}}$ denote the geodesic flow on the unit tangent bundle $X=T^1M$. The geodesic flow is defined by picking a point and a direction (i.e. an element of $T^1M$), and walking at unit speed along the geodesic determined by that data. More precisely, $f_t(v) = \dot{c}_v(t)$, where $c_v\colon \RR\to M$ is the unique unit speed geodesic with $\dot{c}_v(0) = v$. Geodesic flows are of central importance in the theory of dynamical systems, and encode many important features of the geometry and topology of the underlying manifold $M$. For general background on geodesic flows, we refer to \cite{jL18, BG05}. 

If all sectional curvatures of $M$ are negative at every point, then $F$ is a transitive Anosov flow. In particular, the thermodynamic formalism is very well understood. To go beyond negative curvature, one generally needs the tools of non-uniform hyperbolicity. There are three further classes of manifolds that generally exhibit some kind of non-uniformly hyperbolic behaviour: nonpositive curvature; no focal points; and no conjugate points.  The relationships are as follows: 
\[
\text{negative curv.}
\Rightarrow
\text{nonpositive curv.}
\Rightarrow
\text{no focal points}
\Rightarrow
\text{no conjugate points}.
\]
The reverse implications all fail in general.

The definition of nonpositive curvature is easy: all sectional curvatures are $\leq 0$ at every point.  No focal points and no conjugate points are defined in terms of Jacobi fields, which we will introduce shortly, but can be understood in terms of the growth of distance between geodesics which pass through the same point. If we work in the universal cover $\wM$ and consider arbitrary geodesics $c_1,c_2$ with $c_1(0) = c_2(0)$, then non-positive curvature implies that $t\mapsto d(c_1(t), c_2(t))$ is convex, while no focal points is equivalent to the condition that $t\mapsto d(c_1(t), c_2(t))$ be nondecreasing for all such $c_1,c_2$, and no conjugate points is equivalent to the condition that this function never vanish for $t>0$; in other words, there is at most one geodesic connecting any two points in $\wM$. In \S \ref{s.cat}, we will also briefly discuss  geodesic flow on some classes of spaces beyond the Riemannian case: namely, $\CAT(-1)$ spaces (which generalize negative curvature) and $\CAT(0)$ spaces (which generalize non-positive curvature).

For intuition, negative curvature has the effect of spreading out geodesics which pass through the same point (think of a saddle), while positive curvature has the effect of bringing them back together after a finite amount of time (think of a sphere). As described in \cite{rG75}, one can imagine starting with a negatively curved surface and then ``raising a bump of positive curvature''; at first the positive curvature effect is weak enough that the geodesic flow remains Anosov, but eventually the Anosov property is destroyed, and raising the bump far enough creates conjugate points.

In these notes, we focus on the case of equilibrium states for manifolds with nonpositive curvature using specification-based techniques as in \cite{BCFT18}; this relies on a continuous-time version of Theorem \ref{thm:general-f}, which we formulate in \S\ref{sec:flow-unique}.
This approach has been extended to manifolds without focal points by Chen, Kao, and Park \cite{CKP, CKP-2}\footnote{Another specification-based proof of uniqueness of the MME on surfaces without focal points was given by Gelfert and Ruggiero \cite{GR}}.  We also state and sketch recent results by the first-named author, Knieper and War for the MME to surfaces with no conjugate points, and survey some relevant recent results for $\CAT(-1)$ and $\CAT(0)$ spaces.

In the remainder of this section we collect some geometric preliminaries. Some of the definitions are taken verbatim from \cite{BCFT18} for notational consistency. For more details, we recommend recent works \cite{BCFT18, GS14}, and more classical references \cite{wB95, pE99, Ebe:96}.

\subsection{Surfaces}\label{sec:surfaces}

For purposes of exposition, we will often think about the surface case $n=2$, although our approach applies in higher dimension too. By the Gauss--Bonnet theorem, the sphere has no metric of nonpositive curvature, and the only such metrics on the torus are flat everywhere; it can be easily verified that the corresponding geodesic flows have zero topological entropy and are not topologically transitive. Thus we are interested in studying surfaces of genus at least $2$.

As a first example, we can think about a surface of genus $2$ with an embedded flat cylinder, and negative curvature elsewhere. We could also consider the case where the flat cylinder collapses to a single closed geodesic on which the curvature vanishes, with strictly negative curvature elsewhere.  In higher dimensions, much more complicated examples exist, such as the $3$-dimensional \emph{Gromov example} that we describe in \S\ref{sec:entropygap}.

Geodesic flow in non-positive curvature is a primary example of non-uniform hyperbolicity. The basic example of a surface containing a flat cylinder illustrates the primary difficulty: the co-existence of trajectories displaying hyperbolic behavior (geodesics in the negatively curved part of the surface) with trajectories displaying non-hyperbolic behavior (geodesics in the flat cylinder). More precisely, given a surface $M$ of genus at least $2$ with non-positive curvature, we let $K\colon M\to (-\infty,0]$ be the Gaussian curvature, and $\pi\colon T^1M \to M$ the natural projection of a tangent vector to its footpoint. Then we define the \emph{singular set} to be
\begin{equation}\label{eqn:sing}
\Sing := \{v \in T^1M : K(\pi(f_t v)) = 0 \text{ for all } t\in \RR \}.
\end{equation}
That is, $\Sing$ is the set of $v$ for which the corresponding geodesic $\gamma_v$ experiences $0$ curvature for all time. All other vectors are called \emph{regular}:
\begin{equation}\label{eqn:reg}
\Reg := T^1M \setminus \Sing = \{v\in T^1M : K(\pi(f_t v)) < 0 \text{ for some } t\in \RR \}.
\end{equation}
Although the negative curvature encountered along regular geodesics guarantees some expansion/contraction, this may be arbitrarily weak because the geodesic can be arranged to experience $0$ curvature for a long time (e.g., wrapping round an embedded flat cylinder) before hitting any negative curvature.

The set $\Sing$ is closed and flow-invariant, while the set $\Reg$ is open. The regular set is nonempty because $M$ has genus at least $2$, and in fact $\Reg$ is dense in $T^1M$.

In higher dimensions one has a similar dichotomy between singular and regular vectors, which we will describe in the next section. This gives a partition of $T^1M$ as $\Reg \sqcup \Sing$, where $\Sing$ is closed and flow-invariant. As with surfaces, we will restrict our attention to the case when $\Reg \neq \emptyset$; this \emph{rank 1} assumption rules out examples such as direct products, and is the typical situation, as demonstrated by the higher rank rigidity theorem of Ballmann and Burns--Spatzier \cite{wB85,BS87a,BS87b}.

\subsection{Invariant foliations via horospheres}

Now let the dimension of $M$ be any $n\geq 2$. We describe invariant stable and unstable foliations $W^s$ and $W^u$ of $X = T^1M$ that are tangent to invariant subbundles $E^s$ and $E^u$ in $TX = TT^1M$ along which we will eventually obtain the contraction and expansion estimates necessary to study uniqueness of equilibrium states.

We must be a little careful in defining these foliations: we cannot ask that $W^s(v)$ is the set of $w\in T^1M$ so that $d(f_tv, f_tw) \to 0$ as $t \to \infty$ like we can in the uniformly hyperbolic setting. We must allow points that stay bounded distance apart (in the universal cover) for all forward time. However, this does not work as the definition of $W^s$ because it does not distinguish the stable from the flow direction. To do things properly, there are two approaches.
\begin{itemize}
\item \emph{Local approach:} Use stable and unstable orthogonal Jacobi fields to define $E^s$ and $E^u$ locally; see \S\ref{sec:jacobi-fields} below.
\item \emph{Global approach:}
Define stable and unstable horospheres $H^s$ and $H^u$ in the universal cover $\wM$ (this is typically done using Busemann functions) and use these to get $W^s,W^u$.
\end{itemize}
We outline this second approach here. Given $v\in T^1M$, let $\tilde v \in T^1\wM$ be a lift of $v$, and construct $H^s(\tilde v)$ as follows: for each $r>0$ let
\[
S^r(\tilde v, +) = \{ x \in \wM : d_{\wM}(x, \pi(f_r \tilde v)) = r\}
\]
denote the set of points at distance $r$ from $\pi(f_r \tilde v) = c_{\tilde v}(r)$, and let $H^s(\tilde v)$ be the limit of $S^r(v,+)$ as $r\to\infty$. 
This defines a hypersurface that contains the point $\pi \tilde v$. Writing $W^s(\tilde v)$ for the unit normal vector field to $H^s(\tilde v)$ on the same side as $\tilde v$, the stable manifold $W^s(v)$ is the image of $W^s(\tilde v)$ under the canonical projection $T^1 \wM\to T^1M$.

The unstable horosphere $H^u(\tilde v)$ and the unstable manifold $W^u(v)$ are defined analogously, replacing $S^r(\tilde v, +)$ with
\[
S^r(\tilde v, -) = \{ x \in \wM : d_{\wM}(x, \pi(f_{-r} \tilde v)) = r\}.
\] 
The horospheres are $C^2$ manifolds, so $W^s(v)$ and $W^u(v)$ are $C^1$ manifolds, and we can define  the stable and unstable subspaces $E^s(v), E^u(v) \subset T_vT^1 M$ to be the tangent spaces of $W^s(v), W^u(v)$ respectively.  The bundles $E^s, E^u$, which are both globally defined in this way, are respectively called the stable and unstable bundles. 
They are invariant and depend continuously on $v$; see \cite{pE99, GW99}.  

The following is equivalent to the standard definition of the regular set via Jacobi fields, which we will give in the next section.

\begin{definition}\label{def:sing-reg} A vector $v\in T^1M$ is \emph{regular} if $E^s(v) \cap E^u(v)$ is trivial (contains only the $0$ vector in $T_v T^1M$), and \emph{singular} otherwise. Write $\Reg \subset T^1M$ for the set of regular vectors, and $\Sing \subset T^1M$ for the set of singular vectors.
\end{definition}

On $\Reg$, we obtain the expected splitting $T_vT^1M = E^s(v) \oplus E^u(v) \oplus E^c(v)$, where $E^c(v)$ is the flow direction. This splitting degenerates on $\Sing$.

\begin{definition}\label{def:rank-1}
The manifold $M$ is \emph{rank 1} if $\Reg \neq \emptyset$.
\end{definition}

Finally, we define a function which is of great importance in thermodynamic formalism. The \emph{geometric potential} is the function that measures infinitesimal volume growth in the unstable distribution:
\[
\ph^u(v)=-\lim_{t\to 0} \frac{1}{t}\log \det(df_t|_{E^u(v)})
= -\frac d{dt}\Big|_{t=0} \log \det(df_t|_{E^u(v)}).
\]
The potential $\ph^u$ is continuous and globally defined. When $M$ has dimension $2$, $\ph^u$ is H\"older along unstable leaves \cite{GW99}. It is not known whether $\ph^u$ is H\"older along stable leaves.  In higher dimensions, it is not known whether $\ph^u$ is H\"older continuous on either stable or unstable leaves. An advantage of our approach is that we sidestep the question of H\"older regularity for $\ph^u$.

\subsection{Jacobi fields and local construction of stables/unstables}\label{sec:jacobi-fields}

Now we give an alternate description of the stable and unstable subbundles and foliations, which can be shown to agree with the definitions in the previous section.

A \emph{Jacobi field} along a geodesic $\gamma$ is a vector field along $\gamma$ obtained by taking a one-parameter family of geodesics that includes $\gamma$ and differentiating in the parameter coordinate; equivalently, it is a vector field along $\gamma$ satisfying
\begin{equation}\label{eqn:Jacobi}
J''(t) + R(J(t), \dot{\gamma}(t))\dot{\gamma}(t)=0,
\end{equation}
where $R$ is the Riemannian curvature tensor on $M$ and $'$ represents covariant differentiation along $\gamma$.

We often  want to remove the variations through geodesics in the flow direction from consideration. If $J(t)$ is a Jacobi field along a geodesic $\gamma$ and both  $J(t_0)$ and $J'(t_0)$ are orthogonal to $\dot{\gamma}(t_0)$ for some $t_0$, then $J(t)$ and $J'(t)$ are orthogonal to $\dot{\gamma}(t)$ for all $t$.  Such a Jacobi field is an \emph{orthogonal Jacobi field}.

A Jacobi field $J(t)$ along a geodesic $\gamma$ is \emph{parallel at $t_0$} if $J'(t_0)=0$. A Jacobi field $J(t)$ is parallel if it is parallel for all $t \in \RR$.

\begin{definition}\label{def:sing-reg-2}
A geodesic $\gamma$ is \emph{singular} if it admits a nonzero parallel orthogonal Jacobi field, and \emph{regular} otherwise.
\end{definition}

If $\gamma$ is singular in the sense of Definition \ref{def:sing-reg-2}, then every $\dot\gamma(t) \in T^1M$ is singular in the sense of Definition \ref{def:sing-reg}, and similarly for regular.

We  write $\JJJ(\gamma)$ for the space of orthogonal Jacobi fields for $\gamma$; given $v\in T^1 M$ there is a natural isomorphism $\xi \mapsto J_\xi$ between $T_vT^1M$ and $\JJJ(\gamma_v)$, which has the property that
\begin{equation} \label{compare}
\|df_t(\xi)\|^2= \|J_\xi(t)\|^2+\|J'_\xi(t)\|^2.
\end{equation}
An orthogonal Jacobi field $J$ along a geodesic $\gamma$ is \emph{stable} if $\|J(t)\|$ is bounded for $t\geq 0$, and \emph{unstable} if it is bounded for $t\leq 0$.  The stable and the unstable Jacobi fields each form linear subspaces of 
$\JJJ(\gamma)$, which we denote by $\JJJ^s(\gamma)$ and $\JJJ^u(\gamma)$, respectively.
The corresponding stable and unstable subbundles of $TT^1M$ are
\begin{align*}
E^u(v)&=\{ \xi \in T_v(T^1M) : J_\xi \in \JJJ^u(\gamma_v) \}, \\
E^s(v)&=\{ \xi \in T_v(T^1M) : J_\xi \in \JJJ^s(\gamma_v) \}.
\end{align*}
The bundle $E^c$ is spanned by the vector field that generates the flow $F$.  We also write $E^{cu} = E^c\oplus E^u$ and $E^{cs} = E^c\oplus E^s$.
The subbundles have the following properties (see \cite{pE99} for details):
\begin{itemize}
\item $\dim(E^u)=\dim(E^s)= n-1$, and $\dim(E^c)=1$;
\item the subbundles are invariant under the geodesic flow;
\item the subbundles depend continuously on $v$, see \cite{pE99, GW99};
\item $E^u$ and $E^s$ are both orthogonal to $E^c$;
\item $E^u$ and $E^{s}$ intersect non-trivially if and only if $v \in \Sing$;
\item $E^\sigma$ is integrable to a foliation $W^\sigma$ for each $\sigma\in \{u,s,cs,cu\}$.
\end{itemize}
It is proved in \cite[Theorem 3.7]{wB82} that the foliation $W^s$ is minimal in the sense that $W^s(v)$ is dense in $T^1M$ for every $v \in T^1M$. Analogously, the foliation $W^u$ is also minimal.

\section{Equilibrium states for geodesic flows}\label{sec:geodesic-ES}

\subsection{The general uniqueness result for flows}\label{sec:flow-unique}

We recall the general definitions of topological pressure, variational principle, and equilibrium states for flows, which are analogous to the discrete-time definitions from \S\ref{sec:topological-pressure}.

Given a compact metric space $X$ and a continuous flow $F = (f_t)$ on $X$, we write $\MMM_F(X) = \bigcap_{t\in \RR} \MMM_{f_t}(X)$ for the space of flow-invariant Borel probability measures on $X$, and $\MMM_F^e(X) \subset \MMM_F(X)$ for the set of ergodic measures.

For $\epsilon>0$, $t>0$, and $x\in X$, the \emph{Bowen ball of radius $\epsilon$ and order $t$} is
\[
B_t(x,\epsilon)= \{ y\in X\mid d(f_s x, f_s y)<\epsilon\text{ for all }0\leq s\leq t\}.
\]
A set $E\subset X$ is \emph{$(t, \epsilon)$-separated} if for all distinct $x, y\in E$ we have $y\notin \overline{B_t(x, \epsilon)}$.

Given a continuous potential function $\ph\colon X\to \RR$, we write $\Phi(x,t) = \int_{0}^{t} \varphi(f_sx)\,ds$ for the integral of $\varphi$ along an orbit segment of length $t$.  We interpret $\DDD \subset X\times [0,\infty)$ as a collection of finite-length orbit segments by identifying $(x,t)$ with the orbit segment starting at $x$ and lasting for time $t$. Writing $\DDD_t := \{x\in X : (x,t) \in \DDD\}$, the partition sums associated to $\DDD$ and $\ph$ are
\begin{equation}\label{eqn:Lambda-sep}
\Lambda(\DDD,\varphi,\epsilon, t) = \sup
\Big\{ \sum_{x\in E} e^{\Phi(x, t)} : E\subset \DDD_t \text{ is $(t,\epsilon)$-separated} \Big\}.
\end{equation}
The pressure of $\ph$ on the collection $\DDD$ is given by \eqref{eqn:P-eps}--\eqref{eqn:P}, replacing $n$ with $t$:
\[
P(\DDD,\ph) = \lim_{\eps\to 0} P(\DDD,\ph,\eps),
\qquad
P(\DDD,\ph,\eps) = \ulim_{t\to\infty} \frac 1t \log \Lambda(\DDD,\ph,\eps,t).
\]
We continue to write $P(Y,\ph) = P(Y\times [0,\infty),\ph)$ for $Y\subset X$, and often abbreviate $P(\ph) = P(X,\ph)$.
The \emph{variational principle for pressure} states that
\[
P(\ph) = \sup_{\mu \in \MMM_F(X)} \Big( h_\mu(f_1) + \int \ph\,d\mu \Big).
\]
A measure that achieves the supremum is an \emph{equilibrium state} for $(X,f,\ph)$. When $\ph=0$, we recover the topological entropy $h(F)$, and an equilibrium state for $\varphi=0$ is called a \emph{measure of maximal entropy}.

\begin{remark}\label{rmk:usc}
As in the discrete-time case, if the entropy map $\mu \mapsto h_\mu$ is upper semi-continuous then equilibrium states exist for each continuous potential function.
Geodesic flows in non-positive curvature are entropy-expansive due to the flat strip theorem \cite{gK98}; this guarantees upper semi-continuity and thus existence.
\end{remark}

In light of Remark \ref{rmk:usc}, the real question is once again uniqueness. Our main tool will be a continuous-time analogue of Theorem \ref{thm:general-f}, which gives non-uniform versions of specification, expansivity, and the Bowen property that are sufficient to give uniqueness.

The main novelty compared with the discrete-time case is the expansivity condition.
For an expansive map, the set of points that stay close to $x$ for all time is only the point $x$ itself.
For an expansive flow, this set is an orbit segment of $x$. Our set of non-expansive points for a flow is defined accordingly. For $x\in X$ and $\epsilon>0$, we let the \emph{bi-infinite Bowen ball} be
\[
\Gamma_\epsilon(x)=\{y\in X \, :\, d(f_tx, f_ty)\leq \epsilon \text{ for all }t\in\mathbb{R}\}.
\]
The \emph{set of non-expansive points at scale $\eps$} is (compare this to Definition \ref{def:almost-expansive})
\begin{equation}\label{eqn:NE-flow}
\NE(\epsilon, F):=\{ x\in X \mid \Gamma_\epsilon(x)\not\subset  f_{[-s,s]}(x) \text{ for any }s>0 \},
\end{equation}
where $f_{[a,b]}(x) = \{f_tx : a \leq t \leq b\}$.\footnote{We note that the original formulation of expansivity for flows by Bowen and Walters \cite{BW72} allows reparametrizations, which suggests that one might consider a potentially larger set in place of $\Gamma_\eps$ for expansive flows. The main motivation for allowing reparametrizations is to give a definition that is preserved under orbit equivalence. However, this is not relevant for our purposes.  
In our setup, the natural notion of expansivity would be to ask that there exists $\epsilon$ so that $\NE(\epsilon, \FFF)= \emptyset$. This definition is sufficient for the uniqueness results, and strictly weaker than Bowen--Walters expansivity, although it is not an invariant under orbit equivalence. See the discussion of \emph{kinematic expansivity} in \cite{FH19}.}
The \emph{pressure of obstructions to expansivity} is
\[
\Pexp(\varphi):=\lim_{\eps\to 0} P^\perp_{\mathrm{exp}}(\varphi, \eps),
\]
where
\[
\Pexp(\varphi, \epsilon)=\sup_{\mu\in \MMM^e_F(X)}\Big\{
h_\mu(f_1) + \int\varphi\, d\mu\, :\, \mu(\NE(\eps, \FFF))=1\Big\}.
\]

\begin{remark}\label{rmk:Pexp-sing}
For rank $1$ geodesic flow, a simple argument using the flat strip theorem guarantees that $\NE(\eps,F) \subset \Sing$, so we have $\Pexp(\varphi) \leq P(\Sing, \ph)$.
\end{remark}

Our definitions of specification and the Bowen property are completely analogous to Definitions \ref{def:spec} and \ref{def:Bowen-G} from the discrete-time case. The specification property for flows was defined by Bowen in \cite{rB72}, and was used to prove uniqueness of equilibrium states by Franco \cite{eF77}.

\begin{definition}\label{def:spec-flow}
A collection of orbit segments $\GGG \subset X\times [0,\infty)$ \emph{has the specification property at scale $\delta>0$} if there exists $\tau>0$ such that for every $(x_1,t_1), \dots, (x_k, t_k) \in \GGG$, there exist $0=T_1 < T_2 < \cdots < T_k$ and $y\in X$ such that $f^{T_i}(y) \in B_{t_i}(x_i,\delta)$ for all $i$, and moreover, writing $s_i = T_i + t_i$, we have $s_i \leq T_{i+1} \leq s_i + \tau$ for all $i$.

We say that $\GGG$ \emph{has the specification property} if it has the specification property at scale $\delta$ for every $\delta>0$.
\end{definition}

\begin{definition}\label{def:Bowen-flow}
A continuous function $\ph\colon X\to \RR$ has the \emph{Bowen property at scale $\eps>0$} on a collection of orbit segments $\GGG \subset X\times [0,\infty)$ if there is $V>0$ such that for every $(x,t) \in \GGG$ and $y\in B_t(x,\eps)$, we have $|\Phi(y,t) - \Phi(x,t)| \leq V$.

We say that $\ph$ has the \emph{Bowen property on $\GGG$} if there exists $\eps>0$ such that $\ph$ has the Bowen property at scale $\eps$ on $\GGG$.
\end{definition}

An argument following the proof of Proposition \ref{prop:holder-bowen} shows that for uniformly hyperbolic flows, any H\"older continuous function has the Bowen property. More generally, Remark \ref{rmk:bowen-G} applies here as well: if the flow is uniformly hyperbolic along a collection of orbit segments $\GGG \subset X\times [0,\infty)$, then every H\"older $\ph$ has the Bowen property on $\GGG$.

As in Definition \ref{def:decomp-2} for discrete time, a \emph{decomposition for $X\times [0, \infty)$} consists of three collections $\PPP, \GGG, \SSS\subset X\times [0, \infty)$ for which there exist three functions $p, g, s\colon X\times [0, \infty)\rightarrow [0, \infty)$ such that for every $(x,t)\in X\times [0, \infty)$, the values $p=p(x,t)$, $g=g(x,t)$, and $s=s(x,t)$ satisfy $t=p+g+s$, and 
\[
(x,p)\in \PPP,\quad
(f_p(x), g)\in \GGG,\quad
(f_{p+g}(x), s)\in \SSS.
\]
The conditions we are interested in depend only on the collections $(\PPP, \GGG, \SSS)$ rather than the functions $p$, $g$, $s$. However, we work with a fixed choice of $(p,g,s)$ for the proof of the abstract theorem to apply.

One small difference from the discrete-time case is that we need to ``fatten up'' $\PPP$ and $\SSS$ slightly before imposing the smallness condition in the general uniqueness theorem. To this end, for a collection $\DDD \subset X\times [0,\infty)$, we define
\[
[\mathcal{D}]:=
\{(x,k)\in X\times \mathbb{N} : (f_{-s}x, k+s+t)\in \mathcal{D}\textrm{ for some }s, t\in [0,1]\}.
\]

\begin{theorem}[Non-uniform Bowen hypotheses for flows \cite{CT16}] \label{thm:ESflows}
Let $(X, F)$ be a continuous flow on a compact metric space, and $\varphi\colon X\rightarrow \mathbb{R}$ be a continuous potential function.  Suppose that $\Pexp(\varphi)<P(\varphi)$ and $X\times [0, \infty)$ admits a decomposition $(\PPP, \GGG, \SSS)$ with the following properties:
\begin{enumerate}[label=\textup{{(\Roman{*})}}]
\item $\GGG$ has specification;\label{cond:spec}
\item $\varphi$ has the Bowen property on $\GGG$;
\item $P([\PPP]\cup [\SSS], \varphi)<P(\varphi).$\label{cond:P-gap}
\end{enumerate}
Then $(X, F, \varphi)$ has a unique equilibrium state $\mu_\ph$.
\end{theorem}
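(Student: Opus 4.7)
The plan is to adapt the discrete-time proof of Theorem \ref{thm:general-f} to the flow setting, replacing sums over iterates with integrals along orbit segments, replacing $n$-cylinders or Bowen $n$-balls with Bowen $t$-balls $B_t(x,\eps)$, and taking care with the various scales involved. Fix $\eps>40\delta>0$ with $\delta$ a specification scale for each $\GGG^M$ (which exists by \ref{cond:spec} plus a continuity argument, where $\GGG^M = \{(x,t) : p(x,t)\leq M,\ s(x,t)\leq M\}$), with $\ph$ Bowen on $\GGG$ at scale $\eps$, and with $\Pexp(\ph,\eps) < P(\ph)$ and $P([\PPP]\cup[\SSS],\ph,\delta) < P(\ph)$. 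The argument has three steps: (i) uniform upper and lower partition-sum bounds; (ii) construction of a weakly Gibbs ergodic equilibrium measure $\mu$; (iii) uniqueness via a lower Gibbs bound on $\GGG^M$ and adapted partitions.

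For (i), I would first use specification on $\GGG^M$ (analogously to Lemma \ref{lem:counting}, but with an at-most-$C(\tau)$-to-$1$ gluing map and the scale-change trick from \S\ref{sec:general-counting}) to get $\Lambda(\GGG^M,\ph,\eps,t) \leq Q_M e^{tP(\GGG,\ph)}$ and conversely $\Lambda(\GGG^M,\ph,3\delta,t) \geq c_M e^{tP(\GGG^M,\ph,6\delta)}$. Then, using the decomposition $(\PPP,\GGG,\SSS)$, I would prove the flow analogue of Lemma \ref{lem:most-in-G}: for each $r\in(0,1)$ there is $M$ such that $\Lambda(\GGG^M,\ph,\eps,t) \geq r\,\Lambda(X,\ph,\eps,t)$, using the pressure gap \ref{cond:P-gap} to control contributions from long prefixes/suffixes in $[\PPP]\cup[\SSS]$. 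Together with the upper bound, this gives $P(\GGG,\ph) = P(\ph)$ and a uniform two-sided estimate $\Lambda(X,\ph,\eps,t) \asymp e^{tP(\ph)}$.

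For (ii), take maximal $(t,\rho)$-separated sets $E_t\subset X$ for $\rho$ in the admissible range (see Proposition \ref{prop:build-mme-2}) and form the weighted empirical measures $\nu_t = Z_t^{-1}\sum_{x\in E_t} e^{\Phi(x,t)}\delta_x$ and $\mu_t = \frac1t\int_0^t (f_s)_*\nu_t\,ds$. Using the counting bounds from (i), the specification-based counting of separated orbit segments that shadow a given $(x,k)$ (as in Figure \ref{fig:nun-w}), and the Bowen property on $\GGG$ to translate Birkhoff integrals along shadowing orbits, any weak-$\ast$ limit $\mu$ satisfies the upper Gibbs bound $\mu(B_k(x,\rho)) \leq K e^{-kP(\ph)+\Phi(x,k)}$ for all $(x,k)$ and the non-uniform lower bound $\mu(B_k(x,\rho)) \geq K_M^{-1} e^{-kP(\ph)+\Phi(x,k)}$ for $(x,k)\in\GGG^M$. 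The partial mixing estimate \eqref{eqn:partial-mix}, extended to orbit segments in $\GGG^M$ via specification, then yields ergodicity of $\mu$ as in \S\ref{sec:ergodic}.

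For (iii), I would use $\Pexp(\ph,\eps) < P(\ph)$ to ensure that any ergodic $\nu$ with $h_\nu(f_1)+\int\ph\,d\nu \geq P(\ph)$ has $\nu(\NE(\eps,F)) = 0$, so sequences of adapted partitions $\beta_t$ built from maximal $(t,2\rho)$-separated sets (refined along an orbit segment as in \S\ref{sec:adapted}) are generating for both $\mu$ and any competing equilibrium state $\nu$. For such $\nu\perp\mu$, pick a measurable $D$ with $\mu(D)=1$, $\nu(D)=0$ and approximate it by unions of elements of $\beta_t$, decomposed according to whether they meet $\GGG^M$ (for $M$ large so that $\mu(\GGG^M)$ is close to $1$, which follows from the non-uniform Gibbs property as in Remark \ref{rmk:pesin-sets}) or not. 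Running the computation from Proposition \ref{prop:uniqueness-2} on Birkhoff integrals (rather than word measures) and using the lower Gibbs bound on $\GGG^M$ for the pieces that hit $\GGG^M$, plus $\Pexp(\ph)<P(\ph)$ for the remainder, yields $h_\nu(f_1)+\int\ph\,d\nu < P(\ph)$.

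The main obstacle, as in the discrete-time result, is the bookkeeping in step (ii): the Gibbs lower bound is only uniform on each $\GGG^M$, the specification gap depends on $M$, and the scales at which one counts, shadows, and measures Bowen balls must be kept strictly separated (this is the reason for the $\eps>40\delta$ inequality and for the fattened collections $[\PPP]$, $[\SSS]$, which absorb the $\pm 1$ fluctuation between integer and real time). The continuous-time modifications themselves are mostly cosmetic: replace $n$-cylinders by $B_t(x,\rho)$, replace $\alpha_0^{n-1}$ by time-refinements of finite partitions under $f_1$, and use $h_\nu(f_1)$ as the entropy of a flow. The non-uniform expansivity step is also delicate since $\Gamma_\eps(x)$ may be a nontrivial orbit segment even for typical points, but this is handled exactly by the definition of $\NE(\eps,F)$ and $\Pexp$.
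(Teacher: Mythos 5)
Your proposal is correct and follows essentially the same route as the paper (and the underlying reference \cite{CT16}): uniform partition-sum bounds via specification on $\GGG^M$ and a pressure-gap analogue of Lemma \ref{lem:most-in-G}, construction of an ergodic measure with a uniform upper and $M$-dependent lower Gibbs bound, and uniqueness via adapted partitions that are generating (modulo the flow direction) for all measures of pressure above $\Pexp(\ph,\eps)$. You also correctly identify the two genuinely continuous-time issues — the fattened collections $[\PPP],[\SSS]$ absorbing the passage from real to integer time, and the fact that $\Gamma_\eps(x)$ is only required to be an orbit segment — which is exactly where the paper locates the extra work.
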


\begin{remark}
The reason that in general we control the pressure of $[\PPP]\cup[\SSS]$ rather than the collection $\PPP \cup \SSS$ is a consequence of a technical step in the proof of the abstract result in \cite{CT16} that required a passage from continuous to discrete time. This distinction does not matter for the \emph{$\lambda$-decompositions} described in the next section, which cover all the applications we discuss here; see \cite[Lemma 3.5]{CT19}.
\end{remark}

\subsection{Geodesic flows in non-positive curvature}

Now we return to the specific setting of geodesic flow in non-positive curvature. In \S\ref{sec:no-gap} we explain why the outcome from the uniformly hyperbolic situation -- a unique equilibrium state, whose support is all of $X=T^1M$ -- cannot occur unless there is a \emph{pressure gap} $P(\Sing,\ph) < P(\ph)$. In \S\ref{sec:pressure-gap} we formulate the main results on uniqueness given a pressure gap, ergodic properties of the unique equilibrium state, and how often the pressure gap occurs. In \S\ref{sec:geod-per} we describe how the notion of periodic orbit equidistribution from \S\ref{sec:periodic-points} is adapted to this setting. The proof of the uniqueness result uses Theorem \ref{thm:ESflows} and is outlined in \S\ref{sec:bcftproofidea}. The proofs regarding ergodic properties, particularly the Kolmogorov property, are described later in \S\ref{sec:Kproperty}, and the pressure gap itself is discussed in \S\ref{sec:entropygap}.

\subsubsection{Uniqueness can fail without a pressure gap}\label{sec:no-gap}

For uniformly hyperbolic flows and H\"older continuous potentials, there is a unique equilibrium state, and this equilibrium state gives positive weight to every open set; it is \emph{fully supported}. For geodesic flow in nonpositive curvature, this conclusion cannot hold unless there is a \emph{pressure gap}, which we now describe.

Since the singular set $\Sing$ is closed and flow-invariant, we can apply the variational principle to the restriction of the flow to $\Sing$, and obtain
\[
P(\Sing, \varphi)=\sup \Big\{ h_{\mu}(f_1) +\int \varphi \,d\mu: \mu\in \MMM_F(\Sing) \Big\}.
\]
As discussed in Remark \ref{rmk:usc}, the geodesic flow is entropy-expansive and thus the entropy map $\mu \mapsto h_\mu(f_1)$ is upper semi-continuous.
This guarantees that there exists $\nu \in \MMM_F(\Sing)$ with $h_\nu(f_1) + \int \ph\,d\mu = P(\Sing,\ph)$.

If $P(\Sing,\ph) = P(\ph)$, then $\nu$ is an equilibrium state for $(T^1M, F, \ph)$, and even if it happens that
$\nu$ is the unique equilibrium state (which can be arranged, but is not generally expected), it is not fully supported. Thus in order to obtain the classical conclusion of unique equilibrium state and full support, we require a \emph{pressure gap} $P(\Sing,\ph) < P(\ph)$.

To see that the case $P(\Sing,\ph) = P(\ph)$ can actually occur, we observe that there is a natural $(f_t)$-invariant volume measure $\mu_L$ on $X = T^1M$ called the \emph{Liouville measure}. Locally, $\mu_L$ is the product of the Riemannian volume on $M$ and Haar measure on the unit sphere of dimension $n-1$. 
Using the Ruelle--Margulis inequality, the Pesin entropy formula, and the fact that $-\int \varphi^u d\mu$ is the sum of the positive Lyapunov exponents for $\mu$ (where $\ph^u$ is the geometric potential), one can show that $P(\ph^u) = 0$ and that $\mu_L$ is an equilibrium state for $\ph^u$. 

In negative curvature, $\ph^u$ is H\"older and $\mu_L$ is the unique equilibrium state. In non-positive curvature, however, $\mu_L$ often fails to be the unique equilibrium state.\footnote{We mention that $\mu_L(\Reg) >0$ and that $\mu_L|_{\Reg}$ is known to be ergodic. Ergodicity of $\mu_L$, which is a major open problem, is thus equivalent to the question of whether $\mu_L(\Sing)=0$.} For example, in the surface case, it is easily checked that $P(\Sing,\ph^u) = P(\ph^u) = 0$, and any closed geodesic in $\Sing$ defines two equilibrium states for $\ph^u$ (one for each direction of travel around the geodesic). 

Since a general uniqueness result for $\ph^u$ is impossible, we often turn our attention to the one-parameter family of potentials $q \ph^u$, where $q \in \RR$. Equilibrium states for these potentials are geometrically relevant, and a natural question is to identify the range of values for $q$ so that uniqueness holds.

\subsubsection{Uniqueness given a pressure gap}\label{sec:pressure-gap}

Our main result on uniqueness of equilibrium states for geodesic flow in non-positive curvature is the following.

\begin{theorem}[Uniqueness of equilibrium states for rank 1 geodesic flow \cite{BCFT18}]\label{BCFT}
Let $(f_t)$ be the geodesic flow over a closed rank 1 manifold $M$ and let $\varphi\colon T^1M\to \RR$ be $\varphi=q\ph^u$ or be H\"older continuous. If $\ph$ satisfies the \emph{pressure gap}
\begin{equation}\label{eqn:P-gap}
P(\Sing, \varphi)<P(\varphi),
\end{equation}
then $\varphi$ has a unique equilibrium state $\mu$. This equilibrium state 
is hyperbolic, fully supported, and is the weak$^\ast$ limit of weighted regular closed geodesics in the sense of \S\ref{sec:geod-per} below.\end{theorem}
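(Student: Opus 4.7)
The plan is to apply the abstract uniqueness result Theorem \ref{thm:ESflows} via a \emph{$\lambda$-decomposition} of the space of orbit segments, and then to extract the remaining structural properties from the Gibbs-type estimates generated in the proof of that theorem. The expansivity obstruction is controlled by the flat strip theorem, which forces every nontrivial bi-infinite Bowen ball to lie inside a flat strip of parallel geodesics, so $\NE(\eps, F) \subset \Sing$ for small $\eps$ (Remark \ref{rmk:Pexp-sing}). Combined with the pressure gap hypothesis this yields $\Pexp(\varphi) \leq P(\Sing, \varphi) < P(\varphi)$, verifying the first hypothesis of Theorem \ref{thm:ESflows}.

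For the decomposition, fix $\eta > 0$ and introduce a continuous function $\lambda\colon T^1M\to[0,\infty)$ measuring infinitesimal hyperbolicity---concretely, a minimum principal curvature of the (un)stable horosphere at $v$---which vanishes exactly on $\Sing$. Let
\[
\CCC(\eta) := \{(v,t) \in T^1M \times [0,\infty) : \lambda(v) \geq \eta \text{ and } \lambda(f_t v) \geq \eta\},
\]
and for a general segment $(v,t)$ let $p(v,t)$ be the longest initial duration spent in $\{\lambda < \eta\}$ and $s(v,t)$ the analogous terminal quantity; this yields a decomposition $(\PPP, \GGG, \SSS)$ with $\GGG = \CCC(\eta)$. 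Specification for $\GGG$ at every positive scale is obtained by a one-step argument in the spirit of Lemma \ref{lem:one-step-spec}: the $\eta$-bound at the endpoints gives uniform backwards contraction along local unstable leaves on which shadowing orbits are constructed, as described in Remark \ref{rmk:Wcs-spec} and executed in \cite[\S 4]{BCFT18}. The Bowen property on $\GGG$ holds for H\"older $\varphi$ by the telescoping estimate of Proposition \ref{prop:holder-bowen}, while for $\varphi = q\ph^u$ it follows from a bounded-distortion estimate for $\log\det Df_t|_{E^u}$ that is valid on $\CCC(\eta)$.

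The main obstacle is verifying $P([\PPP]\cup[\SSS], \varphi) < P(\varphi)$. Both $\PPP(\eta)$ and $\SSS(\eta)$ are contained in $\{\lambda < \eta\} \times [0,\infty)$. Applying the continuous-time version of Proposition \ref{prop:build-es} to almost-maximizing $(t,\eps)$-separated subsets of these collections yields invariant probability measures $\mu_\eta$ with $h_{\mu_\eta}(f_1) + \int \varphi \, d\mu_\eta \geq P([\PPP(\eta)]\cup[\SSS(\eta)], \varphi) - o(1)$ and supported in $\{\lambda \leq \eta\}$. Any weak$^*$ accumulation point of the $\mu_\eta$ as $\eta \to 0$ is supported in $\Sing$, and upper semi-continuity of entropy (which holds by entropy-expansivity in non-positive curvature) gives
\[
\limsup_{\eta \to 0} P\big([\PPP(\eta)]\cup[\SSS(\eta)], \varphi\big) \leq P(\Sing,\varphi) < P(\varphi),
\]
so the required inequality holds for all sufficiently small $\eta$. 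All hypotheses of Theorem \ref{thm:ESflows} are thus satisfied, yielding the unique equilibrium state $\mu$.

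For the remaining structural properties: the construction of $\mu$ provides a non-uniform Gibbs lower bound on Bowen balls around segments in $\GGG^M$, which combined with minimality of $W^u$ and openness of $\Reg$ forces $\mu$ to charge every open set. Hyperbolicity follows because $\mu(\Sing) = 0$; indeed, if $\mu(\Sing) > 0$, decomposing $\mu$ into its parts on $\Sing$ and $\Reg$ and invoking the variational principle would rearrange to $P(\varphi) \leq P(\Sing,\varphi)$, contradicting the pressure gap. Finally, a strengthened \emph{periodic} version of the specification property on $\CCC(\eta)$, analogous to \S\ref{sec:periodic-points}, produces regular closed geodesics as shadowing periodic orbits, and the Gibbs estimates on $\GGG^M$ then let us compare the weighted periodic orbit measures to $\mu$ as in \eqref{eqn:Pern} to conclude weak$^*$ convergence.
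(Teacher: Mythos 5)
Your overall strategy is the paper's: verify the hypotheses of Theorem \ref{thm:ESflows} using the horosphere-curvature function $\lambda$, control $\Pexp$ via the flat strip theorem, and get the pressure condition \ref{cond:P-gap} by pushing measures supported near $\{\lambda \le \eta\}$ down to $\Sing$ as $\eta\to 0$. But there is a genuine gap in your choice of good collection. You take $\GGG = \CCC(\eta) = \{(v,t) : \lambda(v)\ge\eta,\ \lambda(f_tv)\ge\eta\}$, which constrains $\lambda$ only at the two \emph{endpoints} of an orbit segment. That is enough for specification (the paper indeed proves specification on all of $\CCC(\eta)$), but it is not enough for the Bowen property. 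A segment in $\CCC(\eta)$ may spend an arbitrarily long time $T$ in a flat strip between its regular endpoints; two orbits in the same $(t,\eps)$-Bowen ball then remain a definite distance apart throughout that window, so for a generic H\"older $\varphi$ the Birkhoff integrals differ by an amount of order $\eps^\gamma T$, which is unbounded. The telescoping argument of Proposition \ref{prop:holder-bowen} needs the cumulative contraction estimate \eqref{eqn:ds-lambda} to produce a summable (in fact exponentially decaying) bound at each time, and on $\CCC(\eta)$ the exponent $\int_0^\tau(\lambda(f_sv)-\eta/2)\,ds$ can be negative of order $\tau$. The same problem defeats your bounded-distortion claim for $q\ph^u$.

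The fix is exactly the paper's $\lambda$-decomposition (Definition \ref{def:lambda-decomposition}): the good core is
\[
\GGG(\eta)=\Big\{(v,t) : \tfrac1\rho\textstyle\int_0^\rho\lambda(f_sv)\,ds\ge\eta \text{ and } \tfrac1\rho\textstyle\int_0^\rho\lambda(f_{-s}f_tv)\,ds\ge\eta \text{ for all }\rho\in[0,t]\Big\},
\]
with $\PPP=\SSS=\BBB(\eta)$ defined by the time-average of $\lambda$ being $<\eta$ (your prefix, ``longest initial duration spent pointwise in $\{\lambda<\eta\}$,'' is a different and strictly smaller collection). The running-average condition on $\GGG(\eta)$ feeds into \eqref{eqn:ds-lambda} to give $d^s(f_\tau w,f_\tau w')\le d^s(w,w')e^{-\tau\eta/2}$ for all $\tau\in[0,t]$, which is what makes the Bowen property work for H\"older potentials, and is also the input for the Riccati-equation argument handling $q\ph^u$. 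Since $\GGG(\eta)\subset\CCC(\eta)$, specification for $\GGG(\eta)$ is inherited from the statement you already have. With that substitution (and the corresponding pressure estimate for $[\BBB(\eta)]$, which your $\eta\to0$ accumulation argument handles, noting that $\{\lambda=0\}\setminus\Sing$ is null for every invariant measure), the rest of your outline, including the arguments for full support, hyperbolicity via $\mu(\Sing)=0$, and equidistribution of regular closed geodesics, matches the paper's.
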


\begin{remark}\label{rmk:Knieper}
Knieper used a Patterson--Sullivan type construction on the boundary at infinity to prove uniqueness of the MME (the case $\ph=0$) and deduce the entropy gap $h(\Sing) < h(T^1M)$ from this \cite{gK98}. This construction has recently been extended to manifolds with no focal points by Fei Liu, Fang Wang, and Weisheng Wu \cite{LWW}. We work in the other direction: we need to first establish the gap (see Theorem \ref{thm:gap} below), and then use this to prove uniqueness.
\end{remark}

In \S\ref{sec:Kproperty} we discuss the following result on strengthened ergodic properties for the equilibrium states in Theorem \ref{BCFT}, due to Ben Call and the second-named author.\
 
\begin{theorem}[K and Bernoulli properties \cite{CT19}]\label{thm:K}
Any unique equilibrium state provided by Theorem \ref{BCFT} has the K-property. The unique MME has the Bernoulli property.
\end{theorem}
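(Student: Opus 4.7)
The plan is to prove the K-property via Ledrappier's product-system trick, and then upgrade to Bernoulli in the MME case using non-uniform hyperbolicity and Ornstein--Weiss-type theorems. Both steps combine the uniqueness machinery of Theorem \ref{BCFT} with additional ergodic-theoretic arguments.

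For the K-property, I would consider the product flow $F \times F$ on $T^1M \times T^1M$ with potential $\Phi(x,y) := \varphi(x) + \varphi(y)$, and show that $\mu \times \mu$ is the \emph{unique} equilibrium state for $(T^1M \times T^1M, F\times F, \Phi)$. Granting this, Ledrappier's argument yields K: were the Pinsker $\sigma$-algebra $\Pi$ of $\mu$ nontrivial, disintegrating $\mu = \int \mu_\pi\,d\bar\mu(\pi)$ along $\Pi$ and setting $\tilde\mu := \int \mu_\pi \times \mu_\pi\, d\bar\mu(\pi)$ produces an $(F\times F)$-invariant probability measure on the product with marginals $\mu$, distinct from $\mu \times \mu$ yet satisfying $h_{\tilde\mu}(F\times F) + \int \Phi\, d\tilde\mu = 2\bigl(h_\mu(F) + \int \varphi\, d\mu\bigr) = P(\Phi)$, contradicting uniqueness. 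To establish uniqueness on the product I would apply Theorem \ref{thm:ESflows} with the decomposition inherited from the one on $T^1M$: take $\GGG \times \GGG$ as the good collection, with obstruction pieces concentrated on pairs for which at least one coordinate lies in the original $\Cp$ or $\Cs$. Specification on $\GGG \times \GGG$ is immediate (shadow each coordinate independently with a common transition time), and the Bowen property for $\Phi$ follows from that for $\varphi$ in each coordinate. The main obstacle is the \emph{product pressure gap}: the obstruction set in the product is contained in $([\Cp \cup \Cs] \times T^1M) \cup (T^1M \times [\Cp \cup \Cs])$, and one must show that for any $(F\times F)$-ergodic measure $\nu$ supported on this set, $h_\nu(F\times F) + \int \Phi\, d\nu \leq P([\Cp\cup\Cs],\varphi) + P(\varphi) < 2P(\varphi)$; the analogous bound for obstructions to expansivity reduces to $\Pexp(\varphi) + P(\varphi) < 2P(\varphi)$. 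The subtle point is that an ergodic measure on $T^1M \times T^1M$ is a \emph{joining} of its marginals rather than a product, so these bounds require a joining-entropy inequality of the form $h_\nu(F\times F) \leq h_{\pi_{1,*}\nu}(F) + h_{\pi_{2,*}\nu}(F)$, combined with the variational principle applied to each marginal's pressure separately.

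For the Bernoulli property of the unique MME $\mu_{\mathrm{MME}}$, one combines the K-property just established with the hyperbolicity and full support from Theorem \ref{BCFT}. The plan is to invoke the Ornstein--Weiss theorem reducing Bernoullicity of a K-flow to Bernoullicity of its time-one map, together with a smooth-ergodic-theory Bernoulli criterion for non-uniformly hyperbolic K-automorphisms (Pesin, Chernov--Haskell, Ledrappier) asserting that K plus local product structure in Pesin charts implies Bernoulli. The local product structure for $\mu_{\mathrm{MME}}$ on $\Reg$ would come from the Gibbs-type bounds of \S\ref{sec:build-gibbs} combined with the continuity of the Pesin stable and unstable laminations on compact regular subsets; since $\mu_{\mathrm{MME}}(\Sing) = 0$ by the entropy gap and hyperbolicity, this is available $\mu_{\mathrm{MME}}$-almost everywhere. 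The main difficulty is reconciling the Bowen-ball Gibbs estimates with a product-structure statement for Pesin-chart rectangles: one needs sufficient absolute continuity of the stable/unstable holonomies to convert Bowen-ball mass bounds into the product-of-conditionals description required by the Bernoulli criterion, which must be done carefully because the hyperbolicity constants degenerate as one approaches $\Sing$.
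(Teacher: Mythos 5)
Your overall strategy — Ledrappier's product-system criterion plus uniqueness of the equilibrium state for $\Phi(x,y)=\ph(x)+\ph(y)$ on $(T^1M\times T^1M, F\times F)$ — is exactly the one used in \cite{CT19}, and your rendering of Ledrappier's argument and of the Bernoulli upgrade (K plus Pesin-theoretic local product structure, via Ornstein--Weiss and Chernov--Haskell) matches the intended route. However, there are two genuine gaps in the middle step. First, ``take $\GGG\times\GGG$ as the good collection'' does not by itself produce a decomposition of $(X\times X)\times[0,\infty)$: a decomposition requires functions $p,g,s$ with $(f_p(x,y),g)$ in the good collection, and if you decompose each coordinate separately the prefix/core/suffix windows of the two coordinates do not align in time, so the common core need not lie in $\GGG\times\GGG$. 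This is precisely the obstruction the authors flag (compare the $S$-gap shift example), and their fix is structural: the decompositions used for rank~1 geodesic flow are $\lambda$-decompositions, and the class of $\lambda$-decompositions is closed under products via $\tilde\lambda(x,y)=\lambda(x)\lambda(y)$; one then checks $\tilde\GGG\subset\GGG\times\GGG$ (for a suitably adjusted $\eta$) and estimates $P(\tilde\PPP\cup\tilde\SSS,\Phi)\approx P(\ph)+P(\PPP\cup\SSS,\ph)<2P(\ph)$. Your obstruction bound is the right shape, but without the $\lambda$-decomposition formalism you have not actually exhibited a decomposition to which Theorem~\ref{thm:ESflows} applies.

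Second, and more seriously, you treat the expansivity hypothesis as a routine pressure estimate ($\Pexp(\ph)+P(\ph)<2P(\ph)$), but Theorem~\ref{thm:ESflows} cannot be applied off the shelf to the product flow at all: the product of two flows has a two-dimensional center, so $\Gamma_\eps(x,y)$ always contains $f_{[-s,s]}x\times f_{[-s,s]}y$ and is never contained in a single orbit segment of the diagonal flow. Under the definition \eqref{eqn:NE-flow}, $\NE(\eps,F\times F)$ is the \emph{entire} product space and the hypothesis $\Pexp(\Phi)<P(\Phi)$ fails vacuously. One must replace $\NE$ by the set $\operatorname{NE^{\times}}(\eps)$ of \eqref{eqn:NE-prod} and rebuild the abstract theorem; the counting estimates survive, but the adapted partitions now only approximate sets invariant under $f_s\times f_t$ for all $s,t$, whereas the uniqueness/ergodicity argument needs to approximate sets invariant only under the diagonal $f_s\times f_s$. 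In \cite{CT19} this is the hardest point of the proof, resolved by a lower joint Gibbs estimate yielding partial mixing for flowed-out sets and hence weak mixing of $\mu$ (equivalently, ergodicity of $\mu\times\mu$) by a spectral argument. Your proposal does not identify this difficulty, and as written the appeal to Theorem~\ref{thm:ESflows} on the product breaks down at this step.
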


In dimension 2, the Margulis--Ruelle inequality gives $h(\Sing)=0$, from which the pressure gap \eqref{eqn:P-gap} follows when $\sup \ph - \inf \ph < h(X)$, via a soft argument based on the variational principle. In higher dimensions we may have $h(\Sing)>0$ (see the Gromov example in \S\ref{sec:entropygap}), and the entropy gap $h(\Sing) < h(X)$ established by Knieper is nontrivial. In \S\ref{sec:entropygap} we outline a direct proof of this gap that uses the specification property, and that generalizes to some nonzero potentials as follows.

\begin{theorem}[Direct proof of entropy/pressure gap]\label{thm:gap}
For geodesic flow on a closed rank 1 manifold $M$, every continuous potential $\ph$ that is locally constant on a neighbourhood of $\Sing$ satisfies the pressure gap condition \eqref{eqn:P-gap}.
\end{theorem}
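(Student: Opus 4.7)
The plan is to reduce Theorem \ref{thm:gap} to a localized version of Knieper's entropy gap $h(\Sing) < h(T^1M)$. Let $U \supset \Sing$ be an open set on which $\ph$ is locally constant. For any $v\in\Sing$ the orbit $f_tv$ remains in $\Sing\subset U$, so $t\mapsto\ph(f_tv)$ is a continuous locally constant function on $\RR$, hence constant; thus $\ph|_{\Sing}$ is itself flow-invariant. Being also continuous and locally constant on compact $\Sing$, it takes only finitely many values $c_1,\dots,c_\ell$, and the level sets $\Sing_i:=\{v\in\Sing:\ph(v)=c_i\}$ are closed, flow-invariant, and clopen in $\Sing$. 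Any ergodic $\nu\in\MMM_F^e(\Sing)$ is therefore supported in a single $\Sing_i$ and has $\int\ph\,d\nu=c_i$, so the variational principle gives
\[
P(\Sing,\ph) \;=\; \max_i\bigl(h(\Sing_i)+c_i\bigr) \;=\; h(\Sing^*)+c^*
\]
for a maximizing index, with an associated open neighbourhood $U^*=\bigcup\{U_j:\ph|_{U_j}\equiv c^*\}$ of $\Sing^*$ on which $\ph\equiv c^*$. It suffices to exhibit $\mu\in\MMM_F(T^1M)$ with $h_\mu+\int\ph\,d\mu > h(\Sing^*)+c^*$.

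Fix $\eta>0$ small and work with the collection $\CCC(\eta)$ from \S\ref{sec:bcftproofidea}, on which specification holds. By density of regular closed geodesics in rank $1$, pick one whose tangent orbit lies inside $U^*$; uniform hyperbolicity along this compact regular orbit yields, via a standard Anosov-closing / Katok-horseshoe construction, a closed $F$-invariant subset $Z\subset U^*$ of positive topological entropy $\beta$ whose orbit segments lie in $\CCC(\eta)$. Pick a $(T_1,\delta)$-separated set of orbit segments of $Z$ of cardinality $\gtrsim e^{T_1\beta}$, and glue $k$ of them (with gap $\tau$) using specification on $\CCC(\eta)$; the resulting $(T,\epsilon)$-separated collection has $T=k(T_1+\tau)$, cardinality $\gtrsim e^{kT_1\beta}$, and $\Phi(\cdot,T)=Tc^*+O(k\tau)$, the error coming from gap intervals where the shadowing orbit may briefly exit $U^*$. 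Dividing logarithms by $T$ and sending $T_1\to\infty$ gives $P(\ph,\epsilon)\geq\beta+c^*$, so the gap follows provided $\beta>h(\Sing^*)$.

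The crux is this localized entropy inequality $\beta>h(\Sing^*)$. In the surface case it is immediate, since $h(\Sing)=0$ by the Margulis--Ruelle bound and any horseshoe has positive entropy; in higher dimensions, however, $h(\Sing^*)$ can be positive (see the Gromov example of \S\ref{sec:entropygap}). One must then enlarge $Z$ by incorporating additional regular orbit segments contained in $U^*$ and gluing them via specification on $\CCC(\eta)$, until the topological entropy of the resulting subsystem of $U^*$ exceeds $h(\Sing^*)$. This amounts to reproducing the specification-based counting argument underlying Knieper's global gap $h(\Sing)<h(T^1M)$ while restricting all orbit segments to $U^*$; the relevant ingredients --- density of regular periodic orbits in any open set meeting $\Reg$, the $\lambda$-decomposition of BCFT, and specification on $\CCC(\eta)$ --- remain available inside $U^*$, at the cost of a loss in entropy that can be absorbed by shrinking $\eta$. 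This localization step is the main technical obstacle and is where the hypothesis that $\ph$ is locally constant near $\Sing$ is essential, for it is what allows the pressure comparison to be reduced to the entropy comparison taking place entirely inside $U^*$.
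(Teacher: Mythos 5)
Your reduction at the start is sound and in the spirit of what the paper does implicitly: since $\ph$ is locally constant on a neighbourhood $U$ of the compact invariant set $\Sing$, its restriction to $\Sing$ is constant on clopen invariant pieces, and the pressure gap reduces to showing that one can produce, from orbit segments staying near $\Sing$, strictly more ``weighted complexity'' than $P(\Sing,\ph)$. The problem is that you then defer the entire content of the theorem to the ``localized entropy inequality $\beta>h(\Sing^*)$,'' and the route you sketch for it does not work. A horseshoe $Z$ obtained from a closing-lemma construction around a single regular periodic orbit has \emph{some} positive entropy, but in the Gromov example $h(\Sing)$ can be made arbitrarily close to $h(T^1M)$, so there is no reason for $\beta$ to exceed $h(\Sing^*)$; and ``enlarging $Z$ by incorporating additional regular orbit segments contained in $U^*$'' is precisely the step that needs a mechanism, not an assertion. (There is also a smaller issue: density of the union of regular closed geodesics does not give you a closed regular geodesic whose entire orbit lies in a prescribed neighbourhood of $\Sing^*$.)

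The paper's proof supplies exactly the two ingredients your proposal is missing. First, a \emph{regularizing map} $\Pi_t\colon\Sing\to\Reg$ (Theorem \ref{thm:sing-to-reg}): it pushes the start of a singular orbit segment along a stable leaf and its end along an unstable leaf into $\Reg(\eta)$, so that the new segment lies in the collection $\CCC(\eta)$ where specification holds, while remaining in $B(\Sing,\delta)$ --- in fact in the \emph{same connected component} of $B(\Sing,\delta)$ as the original --- for all but a uniformly bounded initial and terminal time $L$. This is what transports the full entropy $h(\Sing)$ (not just the entropy of a hand-built horseshoe) into a collection with specification, and the ``same connected component'' clause together with the locally constant hypothesis is what controls the Birkhoff integrals: $\Phi(\Pi_t v,t)=\Phi(v,t)+O(L\|\ph\|_\infty)$. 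Crucially, $\Pi_t$ has multiplicity bounded by a constant $C$ on $(t,2\eps)$-separated sets, uniformly in $t$. Second, a \emph{sparse surgery} argument: one cuts a long window $[0,nN]$ at a set $J$ of only $\alpha N-1$ boundary points, regularizes each piece, and glues with specification; the excursions into $\Reg(\eta)$ at the cut points serve as markers from which $J$ can be recovered, so the combinatorial gain $\binom{N-1}{\alpha N-1}\gtrsim e^{(-\alpha\log\alpha)N}$ beats the multiplicity loss $C^{\alpha N}$ once $\alpha$ is small, yielding $P(\ph)\geq P(\Sing,\ph)+\frac{\alpha}{n}(-\log\alpha-\log C)>P(\Sing,\ph)$. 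Without these two steps --- or a genuine substitute for them --- your argument establishes only that $P(\ph)\geq \beta+c^*$ for some unquantified $\beta>0$, which is not enough.
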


\begin{remark}
When $\Sing$ is a finite union of periodic orbits, which is the case for real analytic surfaces of non-positive curvature, Theorem \ref{thm:gap} can be used to prove that the pressure gap holds for a $C^0$-open and dense set of potential functions.
\end{remark}

For surfaces, the fact that $\ph^u|_\Sing = 0$ and $h(\Sing)=0$ implies that $P(\Sing,q\ph^u)=0$ for all $q\in \RR$. It is an easy consequence of the Margulis--Ruelle inequality and Pesin's entropy formula that
\[
P(q \varphi^u)> 0 \text{ for } q<1,
\]
and thus $q\varphi^u$ has a unique equilibrium state for all $q<1$. We obtain the classic picture of the pressure function in non-uniform hyperbolicity, shown in Figure \ref{fig:Pqphu}. This is analogous to the familiar picture in the case of non-uniformly expanding interval maps with indifferent fixed points, e.g., the Manneville-Pomeau map \cite{PS92, mU96, oS01}.
 
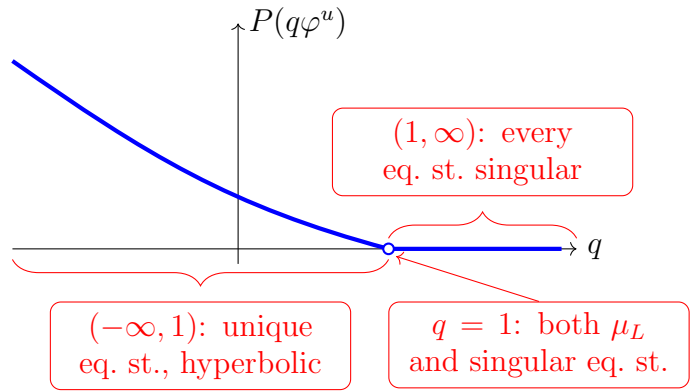
\begin{figure}[htbp]
\begin{tikzpicture}[block/.style= {rectangle,text width=9em, text = red, draw=red,align=center, rounded corners}]
\draw[->] (-3,0)--(4.5,0) node[right]{$q$};
\draw[->] (0,-0.2)--(0,3) node[right]{$P(q\varphi^u)$};
\draw[blue,ultra thick] (2,0)--(4.3,0);
\draw[blue,ultra thick] (2,0) to[out=165, in=-35] (-3,2.5);
\filldraw[fill=white,draw=blue,thick] (2,0) circle (2pt);
\begin{scope}[decoration={brace,amplitude=10,raise=4pt}]
\draw[decorate,red] (2,0)--(-3,0);
\draw[decorate,red] (2,0)--(4.5,0);
\end{scope}
\node[block,below] at (-.5,-.7){$(-\infty,1)$: unique eq.\ st., hyperbolic};
\node[block,above] at (3.25,.7){$(1,\infty)$: every eq.\ st.\ singular};
\draw[<-,red] (2.1,-.1) -- (4, -.7) node[block,below]{$q=1$: both $\mu_L$ and singular eq.\ st.};
\end{tikzpicture}
\caption{Pressure for surfaces with non-positive curvature.}
\label{fig:Pqphu}
\end{figure}

\subsubsection{Pressure and periodic orbits}\label{sec:geod-per}
We describe the sense in which the unique equilibrium state is the limit of periodic orbits, analogously to \S\ref{sec:periodic-points}.
For $a<b$, let $\Per_{R}(a, b]$ denote the set of closed regular geodesics with length in the interval $(a,b]$.\footnote{Here, we are following a notation convention of Katok: when we say a geodesic, we mean oriented geodesic, and we are considering $\gamma$ as a periodic orbit living in $T^1M$.}
For each such geodesic $\gamma$, let
$\Phi(\gamma)$ be the value given by integrating $\ph$ around $\gamma$; that is, $\Phi(\gamma):=\Phi(v, |\gamma|) = \int_0^{|\gamma|} \ph(f_t v)\,dt$, where $v\in T^1M$ is tangent to $\gamma$ and $|\gamma|$ is the length of $\gamma$. 
Given $T,\delta>0$, let
\[
\Lambda^\ast_{\Reg} (\ph, T,\delta) = \sum_{\gamma\in \Per_R(T-\delta,T]} e^{\Phi(\gamma)}.
\]
For a closed geodesic $\gamma$, let $\mu_\gamma$ be the normalized Lebesgue measure around the orbit.   We consider the measures
\[
\mureg_{T,\delta} = \frac 1{\Lambda^\ast_{\Reg} (\ph, T,\delta)} \sum_{\gamma\in \Per_R(T-\delta,T]} e^{\Phi(\gamma)} \mu_\gamma.
\]
We say that
\emph{regular closed geodesics weighted by $\varphi$ equidistribute} to a measure $\mu$ if $\lim_{T\to\infty} \mureg_{T,\delta} = \mu$ in the weak* topology for every $\delta>0$.

\subsubsection{Main ideas of the proof of uniqueness} \label{sec:bcftproofidea}

Theorem \ref{BCFT} is proved using the general result in Theorem \ref{thm:ESflows}. As observed in Remark \ref{rmk:Pexp-sing}, we have $\Pexp(\ph) \leq P(\Sing,\ph)$, so the condition $\Pexp(\ph) < P(\ph)$ follows immediately from the pressure gap assumption \eqref{eqn:P-gap}, and it remains to find a decomposition of the space of orbit segments satisfying \ref{cond:spec}--\ref{cond:P-gap}. We will do this using a function $\lambda\colon X \to [0, \infty)$ that measures `hyperbolicity'. We want this function to be such that:
\begin{enumerate}
\item $\lambda$ vanishes on $\Sing$;
\item $\lambda$ uniformly positive implies uniform hyperbolicity estimates.
\end{enumerate}
There is a convenient geometrically-defined function which has the desired properties, whose definition in dimension $2$ is simple: we let $\lambda(v)$ be the minimum of the curvature of the stable horosphere $H^s(v)$ and the unstable horosphere $H^u(v)$.\footnote{For manifolds $M$ with $\Dim(M)\geq 2$, we define $\lambda\colon T^1M\to [0,\infty)$ as follows.  Let $H^s, H^u$ be the stable and unstable horospheres for $v$. Let $\UUU^s_v \colon T_{\pi v} H^s \to T_{\pi v} H^s$ be the symmetric linear operator defined by $\UUU(v)=\nabla_vN$, where $N$ is the field of unit vectors normal to $H$ on the same side as $v$.  This determines the second fundamental form of the stable horosphere $H^s$. We define $\UUU^u_v \colon T_{\pi v} H^u \to T_{\pi v} H^u$ analogously. Then $\UUU_v^u$ and $\UUU_v^s$ depend continuously on $v$, $\UUU^u$ is positive semidefinite, $\UUU^s$ is negative semidefinite, and $\UUU^u_{-v}=-\UUU^s_v$.  For $v \in T^1M$, let $\lambda^u(v)$ be the minimum eigenvalue of $\UUU^u_v$ and let $\lambda^s(v) = \lambda^u(-v)$. Let $\lambda(v) = \min ( \lambda^u(v), \lambda^s(v))$.

The functions $\lambda^u$, $\lambda^s$, and $\lambda$ are continuous since the map $v\mapsto \UUU^{u,s}_v$ is continuous, and we have $\lambda^{u,s} \geq 0$. When $M$ is a surface, the quantities $\lambda^{u,s}(v)$ are just the curvatures at $\pi v$ of the stable and unstable horocycles, and we recover the definition of $\lambda$ stated above.}

If $v \in \Sing$, then $\lambda(v)=0$ due to the presence of a parallel orthogonal Jacobi field. The set $\{v \in \Reg: \lambda(v)=0\}$ may be non-empty, but it has zero measure for any invariant measure \cite[Corollary 3.6]{BCFT18}.

If $\lambda(v) \geq \eta >0$, then we have various uniform estimates at the point $v$, for example on the angle between $E^u(v)$ and $E^s(v)$, and on the growth of Jacobi fields at $v$.  Thus, the function $\lambda$ serves as a useful  `measure of hyperbolicity'. In particular, we get the following distance estimates:
given $\eta >0$ and $\delta= \delta(\eta)>0$ sufficiently small, $v\in T^1M$, and $w,w'\in W_\delta^s(v)$, we have 
\begin{equation}\label{eqn:ds-lambda}
d^s(f_t w, f_t w') \leq d^s(w,w') e^{-\int_0^t (\lambda(f_\tau v)-\eta/2)\,d\tau}
\text{ for all } t \geq 0,
\end{equation}
where $d^s$ is the distance on $W^s$. We get similar estimates for $w,w'\in W_\delta^u(v)$.

Now we use $\lambda$ to define a decomposition. We give a general definition since the procedure here applies not just to geodesic flows, but to other examples including the partially hyperbolic systems in \S\ref{sec:DA} and \S\ref{sec:ph} (indeed, the decomposition in \S\ref{sec:ph-good} is of this type); see \cite{bC20}.

\begin{definition}\label{def:lambda-decomposition}
Let $X$ be a compact metric space and $F=(f_t)$ a continuous flow on $X$. Let $\lambda \colon X\to [0,\infty)$ be a bounded lower semicontinuous function\footnote{This allows us to use indicator functions of open sets, which is helpful in some applications.} and fix $\eta>0$. The \emph{$\lambda$-decomposition} (with constant $\eta$) of $X\times [0,\infty)$ is given by defining
\begin{align*}
\BBB(\eta) &= \Big\{(x,t)\mid \frac{1}{t}\int_{0}^{t}\lambda(f_s(x))\,ds < \eta \Big\}, \\
\GGG(\eta) &= \Big\{(x,t)\mid \frac{1}{\rho}\int_{0}^{\rho}\lambda(f_s(x))\,ds\geq \eta \\
&\qquad\qquad\qquad \text{ and }\frac{1}{\rho}\int_{0}^{\rho}\lambda(f_{-s}f_t(x))\,ds\geq \eta \text{ for all }\rho \in [0, t] \Big\}
\end{align*}
and then putting $\PPP = \SSS = \BBB(\eta)$ and $\GGG = \GGG(\eta)$.  We decompose an orbit segment $(x,t)$ by taking the longest initial segment in $\PPP$ as the prefix, and the longest terminal segment in $\SSS$ as the suffix\footnote{We could also define the class of \emph{one-sided $\lambda$-decompositions} by taking the longest initial segment in $\BBB(\eta)$, declaring what is left over to be good, and setting $\SSS=\emptyset$, or conversely by putting $\SSS = \BBB(\eta)$ and $\PPP=\emptyset$. This formalism is defined in \cite{bC20}: the decompositions in \S \ref{sec:ph-good} are examples of one-sided $\lambda$-decompositions.}: that is,
\[
p(x,t) = \sup \{ p\geq 0 : (x,p) \in \PPP \}
\quad\text{and}\quad
s(x,t) = \sup \{ s\geq 0 : (f_{t-s} x, s) \in \SSS \}.
\]
The good core is what is left over; see Figure \ref{fig:lambda-decomposition}.
\end{definition}

\begin{figure}[htbp]
\includegraphics[width=.7\textwidth]{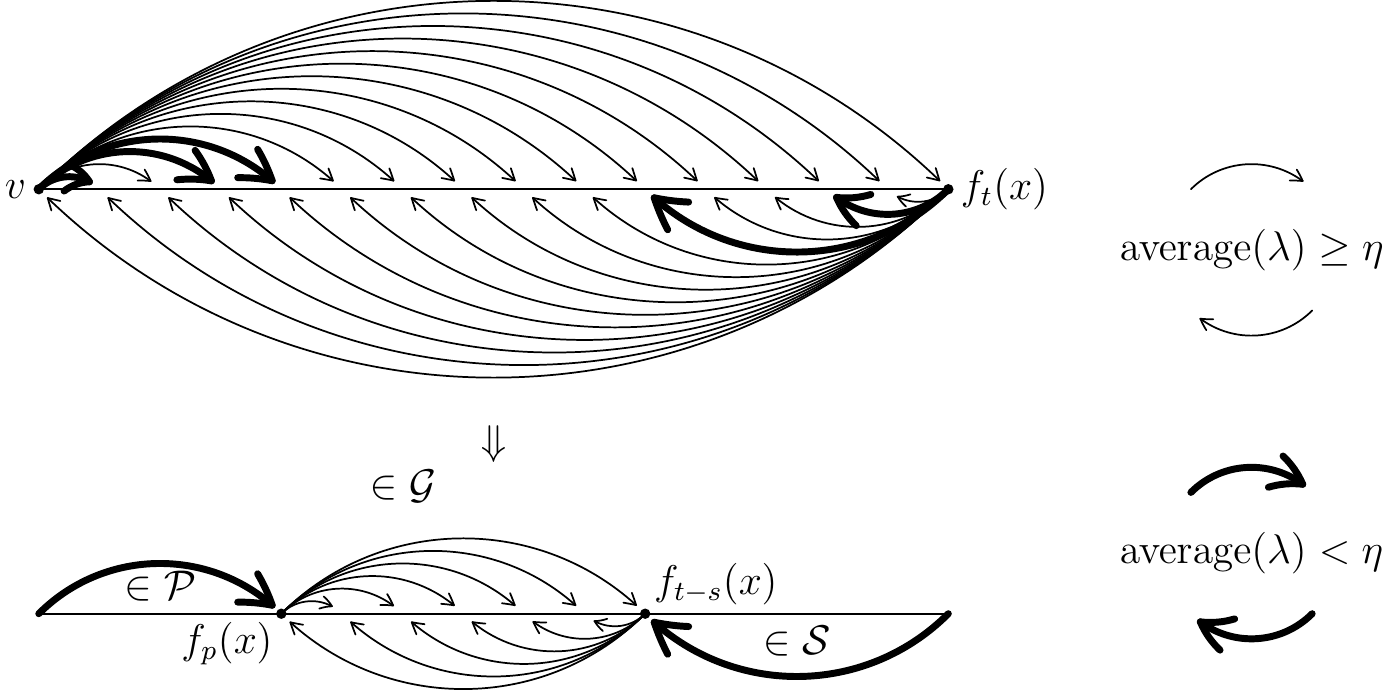}
\caption{A $\lambda$-decomposition.}
\label{fig:lambda-decomposition}
\end{figure}
 
For rank 1 geodesic flow, the decompositions associated to the horosphere curvature function $\lambda$ have the following useful properties:
\begin{enumerate}
\item we can relate $P([\PPP] \cup [\SSS],\ph)$ to $P(\Sing,\ph)$;
\item the specification and Bowen properties hold for $\GGG$ and $\varphi$.
\end{enumerate}

For the first of these, one can show that when $\eta>0$ is small, $P(\PPP \cup \SSS, \ph)$ is close to the pressure of the set of orbit segments along which the integral of $\lambda$ vanishes; this in turn can be shown to equal $P(\Sing, \ph)$. Thus the pressure gap assumption \eqref{eqn:P-gap} gives us $P([\PPP] \cup [\SSS], \ph)< P(X, \ph)$ for sufficiently small $\eta$, which is \ref{cond:P-gap} in Theorem \ref{thm:ESflows}.

For the second of these, one can in fact prove the specification property for the larger collection
\begin{equation}\label{eqn:C-eta}
\CCC(\eta)= \{ (v,t) : \lambda(v)> \eta, \lambda(f_tv) > \eta\};
\end{equation}
this will be useful in \S\ref{sec:entropygap}. Observe that $\GGG(\eta) \subset \CCC(\eta)$. The proof of the specification property is essentially the one from the uniformly hyperbolic case, as described in \S\ref{sec:specification}. See particularly Remark \ref{rmk:Wcs-spec}, and we refer to \cite[\S 4]{BCFT18} for the full proof. The key ingredient is uniformity of the local product structure at the end points of the orbit segments. This is provided by the condition that $\lambda$ is uniformly positive at these points. Then we use uniform density of unstable leaves to transition between orbit segments. We additionally need some definite expansion along the unstable of each orbit segment, which follows from the uniformity of $\lambda$ at the endpoints.

\begin{remark}
In fact, $\CCC(\eta)$ satisfies a stronger version of specification than the one formulated in Definition \ref{def:spec-flow}: one can replace the conclusion that the shadowing can be accomplished
\[
\text{for \emph{some} $0 = T_1 < T_2 < \cdots < T_k$ satisfying $T_{i+1} - T_i - t_i \in [0,\tau]$}
\] 
with the stronger conclusion that it can be accomplished
\[
\text{for \emph{every} $T_1 < T_2 < \cdots < T_k$ satisfying $T_{i+1} \geq T_i + \tau$}.
\]
That is, we are able to take all the transition times to be exactly $\tau$, or any length at least $\tau$ that we choose. This stronger conclusion is important in both the $K$-property result in \S \ref{sec:Kproperty} and the entropy gap result discussed in \S\ref{sec:entropygap}.
\end{remark}

Finally, for the Bowen property, the key is to use the distance estimate \eqref{eqn:ds-lambda} to deduce that for every $(v,t) \in \GGG(\eta)$ and $w,w' \in W_\delta^s(v)$, we have
\[
d^s(f_\tau w, f_\tau w') \leq d^s(w,w') e^{-\tau\eta/2}
\text{ for all } \tau \in [0,t],
\]
with a similar estimate along the unstables (going backwards from the end of the orbit segment).  Together with the local product structure, this allows the Bowen property on $\GGG$ for H\"older continuous potentials to be deduced from the same argument used in Proposition \ref{prop:holder-bowen}.

\begin{remark}
Since it is not known whether the geometric potential $\ph^u$ is H\"older continuous, an alternate proof is required to show that it satisfies the Bowen property on $\GGG$. This is one of the hardest parts of the analysis of \cite{BCFT18}, and relies on detailed estimates involving the Riccati equation.
\end{remark}

Combining the ideas described above verifies the hypotheses of the abstract result in Theorem \ref{thm:ESflows}, so that the pressure gap \eqref{eqn:P-gap} yields a unique equilibrium state.

\subsection{Unique MMEs for surfaces without conjugate points}\label{sec:NCP}

When $M$ is merely assumed to have no conjugate points, life is substantially harder because many of the geometric tools used in the previous section are no long available, such as convexity of horospheres, monotonicity of the distance function, and continuity of the stable and unstable foliations of $T^1M$ (cf.\ the ``dinosaur'' example of Ballmann, Brin, and Burns \cite{BBB87}).

Under the additional (strong) assumption that the flow is expansive, uniqueness of the MME was proved by Aur\'elien Bosch\'e, a student of Knieper, in his Ph.D.\ thesis \cite{aB18}.  The following result says that at least in dimension 2, we can remove the assumption of expansivity.

\begin{theorem}[{\cite{CKW}}]
\label{thm:CKW}
Let $M$ be a closed manifold of dimension 2, with genus $\geq 2$, equipped with a smooth Riemannian metric without conjugate points.  Then the geodesic flow on $T^1M$ has a unique measure of maximal entropy.
\end{theorem}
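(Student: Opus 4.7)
The plan is to apply Theorem \ref{thm:ESflows} with $\ph \equiv 0$, so that a unique MME follows once we exhibit a decomposition of $T^1M \times [0,\infty)$ satisfying specification and a pressure-gap condition, and verify that $\Pexp(0) < h(T^1M)$. Since expansivity is not assumed (and in general fails without conjugate points), the whole argument hinges on identifying a good surrogate for the singular set and controlling both the obstructions it generates and the failure of the classical geometric tools (convexity of horospheres, continuity of the invariant splitting, Jacobi-field calculus) that underpinned the proof of Theorem \ref{BCFT}.

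First I would set up the geometric tools that remain available. Stable and unstable horospheres in the universal cover $\wM$ can still be defined via Busemann functions, yielding measurable foliations $W^s, W^u$ on $T^1M$; in dimension $2$ with genus $\geq 2$, a Morse-type correspondence between $\wM$ and the hyperbolic plane gives a well-defined boundary at infinity, density of the leaves of $W^s$ and $W^u$ (the analogue of Ballmann's minimality result in non-positive curvature), and enough topological control to compensate for the absence of horosphere convexity. Using these, define a bounded lower semicontinuous ``hyperbolicity'' function $\lambda\colon T^1M \to [0,\infty)$ that vanishes precisely on a singular set $\Sing$ of vectors exhibiting no hyperbolic behaviour (for instance, vectors whose lifts bound a flat strip in the Morse sense, equivalently those for which $W^s(v)\cap W^u(v)$ is nontrivial), and that yields uniform local-product-structure and expansion estimates whenever orbit averages of $\lambda$ stay above a threshold. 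Then form the $\lambda$-decomposition $(\PPP,\GGG,\SSS) = (\BBB(\eta),\GGG(\eta),\BBB(\eta))$ of Definition \ref{def:lambda-decomposition} with $\eta>0$ small.

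Next I would verify the hypotheses of Theorem \ref{thm:ESflows}. The specification property on $\GGG(\eta)$ (indeed on the larger class $\CCC(\eta)$ of \eqref{eqn:C-eta}) follows by adapting the argument outlined in \S\ref{sec:bcftproofidea} and Remark \ref{rmk:Wcs-spec}: uniform positivity of $\lambda$ at the endpoints of each orbit segment provides the local product structure needed to glue orbits together, and density of $W^s$ and $W^u$ provides the required bounded-time transitions. The Bowen property is vacuous for $\ph\equiv 0$. For the pressure-gap conditions, both $P([\PPP]\cup[\SSS],0)$ and $\Pexp(0)$ are bounded above, for $\eta$ small, by $h(\Sing)$ (using for $\Pexp$ the surface analogue of the flat-strip argument behind Remark \ref{rmk:Pexp-sing}), so it suffices to establish the entropy gap
\[
h(\Sing) < h(T^1M).
\]
For surfaces, the Margulis--Ruelle inequality yields $h(\Sing)=0$ once one shows that every ergodic invariant measure supported on $\Sing$ has vanishing Lyapunov exponents, while $h(T^1M)>0$ follows from genus $\geq 2$ via Morse comparison with a hyperbolic metric of the same topological type (which gives a semiconjugacy and hence $h(T^1M) \geq h_{\mathrm{hyp}}(T^1M_0) > 0$).

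The main obstacle will be the first two steps: defining $\lambda$ and, more importantly, extracting the uniform geometric estimates it must support. In non-positive curvature these estimates flow cleanly from Jacobi-field calculus and horosphere convexity, neither of which is available here; instead one must lean on the Morse correspondence and on dimension-$2$ features -- in particular that $W^s$ and $W^u$ are one-dimensional and their lifts can be compared with the horocycle foliations of the hyperbolic model up to bounded error -- to obtain workable replacements. Controlling what happens when orbits approach $\Sing$, where the invariant foliations can fail to be continuous (cf.\ the Ballmann--Brin--Burns ``dinosaur'' \cite{BBB87} in higher dimensions), is the technical heart of the matter and is precisely what confines the present argument to surfaces.
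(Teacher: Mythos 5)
Your plan — build a $\lambda$-decomposition, get specification at all small scales on $\GGG(\eta)$, and feed this into Theorem \ref{thm:ESflows} — is not the route the paper takes, and the reason is that its first two steps are exactly what break down without conjugate points. The uniform estimates you need from $\lambda$ (local product structure at endpoints, definite contraction/expansion along $W^{s,u}$ as in \eqref{eqn:ds-lambda}, hence specification at \emph{arbitrarily small} scales on $\CCC(\eta)$) all come from convexity of horospheres and the Riccati/Jacobi-field calculus in non-positive curvature; in the no-conjugate-points setting the distance between asymptotic geodesics need not be monotone and the invariant ``foliations'' need not even be continuous, so no workable $\lambda$ with these properties is known. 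The actual proof sidesteps the decomposition machinery entirely: it applies the coarse-scale Theorem \ref{thm:coarse-flows} with specification for the \emph{whole} system ($\GGG = X\times[0,\infty)$, no $\PPP,\SSS$, no singular set), obtained at one fixed scale $\delta = 100A^3R$ via the Morse Lemma comparing $g$-geodesics to geodesics of a background negatively curved metric $g_0$. Because that scale typically exceeds the diameter of $M$, one must then pass to a finite cover $N$ with injectivity radius $>3\eps$ (using residual finiteness of $\pi_1(M)$) for the statement to have content — a step your proposal has no analogue of, precisely because you are assuming small scales are accessible.

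The expansivity estimate is also handled differently from what you sketch. There is no entropy gap $h(\Sing)<h(T^1M)$ to prove; instead one shows directly that $\hexp(\eps)=0$ on the cover: if $\mu$ is ergodic with $h_\mu>0$, it is hyperbolic by Margulis--Ruelle, so Pesin theory gives transverse stable/unstable horospheres at a.e.\ $v$, forcing the geodesic through $v$ to be the \emph{unique} geodesic between its endpoints on $\partial\wM$; meanwhile non-expansive points (below the injectivity radius) correspond to distinct geodesics with the same ideal endpoints. Hence every positive-entropy ergodic measure is almost expansive. Your version of this step (zero exponents on a suitably defined $\Sing$) is morally the contrapositive and could likely be salvaged, but it sits inside a framework — specification at small scales on a good core — that is not available here and is explicitly identified in \S\ref{sec:NCP} as the obstruction that forces the coarse-scale approach.
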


\begin{remark}
A higher-dimensional version of Theorem \ref{thm:CKW} is available \cite{CKW}, but requires additional assumptions on $M$: existence of a `background' metric with negative curvature; the divergence property; residually finite fundamental group; and a certain `entropy gap' condition.  All of these can be verified for every metric without conjugate points on a surface of genus 2.
\end{remark}

Theorem \ref{thm:CKW} is proved using a coarse-scale expansivity and specification result. Issues of coarse scale did not arise in our non-positive curvature result, where we obtained the specification property at arbitrarily small scales. This removed a great deal of technicality from the analysis. We will not discuss the general coarse-scale analogue of Theorem \ref{thm:ESflows}, since we do not use it. Instead, we state the special case where $\ph=0$ and $\GGG = X\times [0,\infty)$, which suffices for Theorem \ref{thm:CKW}. This is the continuous-time analogue of Theorem \ref{thm:hexp}.

\begin{theorem}[\cite{CT16}]\label{thm:coarse-flows}
Let $X$ be a compact metric space and $(f_t) \colon X\to X$ a continuous flow.  Suppose that $\eps>40\delta>0$ are such that $\hexp(X,(f_t),\eps) < h(X,(f_t))$, and that the system has the specification property at scale $\delta$.
Then $(X,(f_t))$ has a unique measure of maximal entropy.
\end{theorem}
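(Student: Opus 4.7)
The plan is to adapt the discrete-time proof of Theorem \ref{thm:hexp} (outlined in \S\ref{sec:revisit} and extended via the approach to non-uniform expansivity in \S\ref{sec:weakexp}) to the continuous-time setting, replacing $n$-step Bowen balls by time-$t$ Bowen balls and Birkhoff sums by integrals. Since $\ph = 0$ the Bowen property is automatic, so the only hypotheses to exploit are specification at scale $\delta$ and the smallness of $\hexp(X,F,\eps)$; the condition $\eps > 40\delta$ is what guarantees that the relevant scale-changes (between $\delta$, $3\delta$, $6\delta$, and $\eps$, reflecting the non-ultrametric defect described in Remark \ref{rmk:scales}) go through.

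First I would show that $\hexp(X,F,\eps) < h(X,F)$ forces $h(X,F,\eps) = h(X,F)$ (the flow analogue of Proposition \ref{prop:appeared}). For any ergodic $\nu$ with $h_\nu(f_1) > \hexp(X,F,\eps)$, almost expansivity at scale $\eps$ holds, so one can produce a partition for $f_1$ whose atoms are contained in time-$1$ Bowen balls (transverse to the flow) and which is generating on a full-measure set; this gives $h_\nu(f_1) \leq h(X,F,\eps)$, and taking the supremum yields the equality. Next I would prove the counting bounds via specification, namely
\begin{equation*}
e^{t\, h(X,F,6\delta)} \leq \Lambda(X,F,3\delta,t) \leq Q\, e^{t\, h(X,F,\delta)}
\end{equation*}
for a constant $Q$ depending only on the gap time $\tau$ and $h(X,F)$. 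The upper bound mimics \eqref{eqn:counting*} via a gluing map from pairs of $\delta$-separated sets into $3\delta$-separated sets with bounded multiplicity; the lower bound uses a chopping injection from $(t_1+t_2,6\delta)$-separated sets into products, where scales must be doubled because of the non-ultrametric defect. Combined with the first step, this yields $\Lambda(X,F,3\delta,t) \asymp e^{t\, h(X,F)}$.

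Third, I would construct a Gibbs measure following Proposition \ref{prop:build-mme-2}. Picking maximal $(t,3\delta)$-separated sets $E_t$, the empirical measures
\begin{equation*}
\mu_t := \frac{1}{\#E_t} \sum_{x\in E_t} \frac{1}{t} \int_0^t \delta_{f_s x}\, ds
\end{equation*}
converge along a subsequence to a flow-invariant $\mu$, which (by the counting bounds applied to Bowen balls $B_t(x,\rho)$ for $\rho$ in a window of the form $(5\delta,\eps/8]$) satisfies the uniform Gibbs property
\begin{equation*}
K^{-1} e^{-t\, h(X,F)} \leq \mu(B_t(x,\rho)) \leq K e^{-t\, h(X,F)} \quad \text{for all } x\in X,\ t>0.
\end{equation*}
Specification then upgrades this to a partial mixing estimate $\mu(B_t(x,\rho) \cap f_{-\sigma}B_s(y,\rho)) \geq c\, \mu(B_t(x,\rho))\mu(B_s(y,\rho))$ for $\sigma$ in a bounded window above $t$. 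Using almost expansivity of $\mu$ (a direct consequence of $\hexp(X,F,\eps) < h(X,F)$ since $\mu$ is an MME), adapted partitions whose atoms fit between $B_t(x,\rho)$ and $B_t(x,2\rho)$ are generating for $f_1$, which promotes partial mixing on Bowen balls to ergodicity of $\mu$. Finally I would prove uniqueness by the argument of Proposition \ref{prop:uniqueness-2}: given any ergodic $\nu \perp \mu$ with $h_\nu(f_1) = h(X,F)$, almost expansivity forces $\nu$ to be concentrated off $\NE(\eps,F)$, a Borel set $D$ with $\mu(D)=1$ and $\nu(D)=0$ is approximable by adapted partition atoms, and splitting the Shannon sum for $H_\nu(\beta_{t})$ between atoms meeting $D$ and those that do not, the lower Gibbs bound for $\mu$ yields $h_\nu(f_1) < h(X,F)$, a contradiction.

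The hard part will be handling the flow direction in the uniqueness step. Unlike the discrete case, $\Gamma_\eps(x)$ always contains the orbit segment $f_{[-s,s]}(x)$, so bi-infinite Bowen balls never shrink to points and adapted partitions cannot have arbitrarily small diameter in the flow direction. One must instead build adapted partitions with small diameter \emph{transverse} to the flow, using the definition of $\NE(\eps,F)$ in \eqref{eqn:NE-flow} to ensure that for $\mu$-a.e.\ $x$ the intersection of $\Gamma_\eps(x)$ with any small transversal is the singleton $\{x\}$. This, together with the careful scale-matching forced by $\eps > 40\delta$, lets the approximation and Shannon-sum-splitting argument go through. These technical adjustments are carried out in detail in \cite{CT16}.
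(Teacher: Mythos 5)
Your proposal is correct and follows essentially the same route as the paper: the survey defers the proof to \cite{CT16}, but the strategy there is exactly the continuous-time adaptation of the discrete outline in \S\ref{sec:revisit}--\S\ref{sec:weakexp} that you describe (scale-shifted counting bounds, Gibbs construction, partial mixing plus adapted partitions for ergodicity, and the Shannon-sum splitting for uniqueness). You have also correctly isolated the one genuinely new difficulty in the flow setting --- that $\Gamma_\eps(x)$ always contains an orbit arc, so adapted partition elements are ``cigars'' controlling only the transverse direction --- which is precisely the point the paper flags in \S\ref{sec:flow-unique} and \S\ref{sec:Kproperty}.
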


Note that Theorem \ref{thm:coarse-flows} is stated using the hypothesis of specification for the entire system, without passing to a subcollection of orbit segments.  The key tool in proving this fact for surfaces without conjugate points is the \emph{Morse Lemma}, which states that if $g,g_0$ are two metrics on $M$ such that $g$ has no conjugate points and $g_0$ has negative curvature, then there is a constant $R>0$ such that if $c,\alpha$ are geodesic segments w.r.t.\ $g,g_0$, respectively, in the universal cover $\wM$ that agree at their endpoints, then they remain within a distance $R$ for along their entire length.  

Since $M$ is a surface of genus $\geq 2$, it admits a metric of negative curvature.  Given an orbit segment $(v,t) \in T^1M\times (0,\infty)$ for the $g$-geodesic flow, let $p,q$ be the start and end points of some lift of the corresponding $g$-geodesic segment to the universal cover.  Let $w \in T^1M\times (0,\infty)$ lift to the unique unit tangent vector that begins a $g_0$-geodesic segment starting at $p$ and ending at $q$, and let $s$ be the $g_0$-length of this segment.  Then $E\colon (v,t) \mapsto (w,s)$ defines a map from the space of $g$-orbit segments to the space of $g_0$-orbit segments with the property that $(v,t)$ and $E(v,t)$ remain within $R$ for their entire lengths.

Using this correspondence, one can take a finite sequence of $g$-orbit segments $(v_1,t_1),\dots, (v_k,t_k)$, find $g_0$-orbit segments $E(v_i,t_i)$ that remain within $R$, and use the specification property for the (Anosov) $g_0$-geodesic flow to shadow these (w.r.t.\ $g_0$) by a single orbit segment $(y,T)$.  Then $E^{-1}(y,T)$ is a shadowing orbit (w.r.t.\ $g$) for the original segments $(x_i,t_i)$, for which the transition times are uniformly bounded.  

Writing down the details of the scales involved, one finds that the geodesic flow for $g$, has specification at scale\footnote{In fact one can improve this estimate, but the formula is more complicated \cite{CKW}.}
$\delta = 100 A^3 R$, where $A\geq 1$ is such that $A^{-1} \leq \|v\|_g / \|v\|_{g_0} \leq A$ for all $v\in TM$. (Existence of $A$ follows from compactness.)

To apply Theorem \ref{thm:coarse-flows}, it remains to prove that obstructions to expansivity at some scale $\eps > 40\delta$ have small entropy.  The problem with this is that $R$ itself, and especially $40\delta = 4000 A^3 R$, is likely much larger than the diameter of $M$.  So at this point, it looks like the previous paragraph is completely vacuous -- \emph{any} orbit segment of the appropriate length shadows the $(v_i,t_i)$ segments to within $\delta$.

The solution is to pass to a finite cover.  By gluing together enough copies of a fundamental domain for $M$,\footnote{Formally, one needs to take a finite index subgroup of $\pi_1(M)$ that avoids all non-identity elements corresponding to a large ball in $\wM$; this is possible because $\pi_1(M)$ is \emph{residually finite}.} one can find a finite covering manifold $N$ whose injectivity radius is $>3\eps$.  Observe that
\begin{itemize}
\item the geodesic flow on $T^1M$ is a finite-to-1 factor of the geodesic flow on $T^1N$, so there is an entropy-preserving bijection between their spaces of invariant measures, and in particular there is a unique MME for the geodesic flow over $M$ if and only if there is a unique MME over $N$;
\item the argument for specification that we gave above still works for the geodesic flow on $N$, with the same scale, because this scale comes from the Morse Lemma and is given at the level of the universal cover.
\end{itemize}
So it only remains to argue that $\hexp(\eps)<\htop$ for the geodesic flow on $N$.  This is done by observing that if $d(f_tv,f_t w) < \eps$ for all $t\in\RR$ but $w$ does not lie on the orbit of $v$, then lifting to geodesics on $\wM$ and using the fact that we are below the injectivity radius of $N$ allows us to conclude that the lifts of $v,w$ are tangent to distinct geodesics between the same pair of points on the ideal boundary $\partial\wM$.  Thus if $\mu$ is any ergodic invariant measure that is \emph{not} almost expansive at scale $\eps$, then $\mu$ gives full weight to the set of vectors tangent to such ``non-unique geodesics''.

On the other hand, if $h_\mu > 0$, then $\mu$ is a hyperbolic measure by the Margulis--Ruelle inequality, and thus by Pesin theory, $\mu$-a.e.\ $v$ has transverse stable and unstable leaves.  These leaves are the normal vector fields to the stable and unstable horospheres, and thus these horospheres meet at a single point, meaning that the geodesic through $v$ is the \emph{unique} geodesic between its endpoints on the ideal boundary.  By the previous paragraph, this means that $\mu$ is almost expansive.  It follows that $\hexp(\eps) = 0 < \htop$, and so there is a unique MME by the coarse-scale result Theorem \ref{thm:coarse-flows}.

We remark that the proof technique sketched here does not extend to non-zero potentials, and a theory of equilibrium states for surfaces with no conjugate points beyond the MME case is currently not available.

\subsection{Geodesic flows on metric spaces} \label{s.cat} 
Another natural direction to extend the classical case of geodesic flow on a negative curvature manifold is to generalize beyond the Riemannian case.  The geodesic flow on a compact locally $\CAT(-1)$ metric space is one such generalization. Here, a geodesic is a curve that locally minimizes distance, and the flow acts on the space of bi-infinite geodesics parametrized with unit speed. In the Riemannian case this space is naturally identified with $T^1M$.
The $\CAT(-1)$ property is a negative curvature condition which roughly says that a geodesic triangle is thinner than a comparison geodesic triangle in the model hyperbolic space with curvature $-1$. While one expects these flows to exhibit similar behavior to the classical case, branching phenomena
and the lack of smooth structure are obstructions to some of the usual techniques.

More generally, one can study geodesic flow on a compact locally $\CAT(0)$ metric space, in which geodesic triangles are thinner than Euclidean triangles. This is a generalization of geodesic flow in Riemannian non-positive curvature.

We survey some recent results in this direction. In the $\CAT(-1)$ case (allowing cusps), the MME has been well-studied using the boundary at infinity approach, see \cite{tR03}.
Constantine, Lafont and the second-named author studied the compact locally $\CAT(-1)$ case using the specification approach \cite{CLT20a}, and later using a symbolic dynamics approach \cite{CLT20b}, proving that every H\"older continuous potential has a unique equilibrium state, and obtaining many of the strong stochastic properties one expects from the classical case (e.g., Central Limit Theorem, Bernoullicity, Large Deviations).  Broise-Alamichel, Paulin and Parkonnen \cite{BPP19} have extended the equilibrium state constructions and results of Paulin, Pollicott and Schapira \cite{PPS} to the $\CAT(-1)$ case for a restricted class of potentials which includes the locally constant ones. (See \S2.4 and \S3.2 of \cite{BPP19} for a description of this class -- in the compact case treated in \cite{CLT20a}, no such restrictions are required, as described in the introduction of \cite{CLT20a}.) The results of \cite{BPP19} give detailed information in the MME case for non-compact $\CAT(-1)$ spaces, and particularly for trees, which is the focus of their work.

The $\CAT(0)$ case has seen substantial recent advances in the MME case, notably by Ricks \cite{rR19}, who has proved uniqueness of the MME by extending Knieper's construction. A theory of equilibrium states for translation surfaces, which is an important class of $\CAT(0)$ examples, is currently being developed by Call, Constantine, Erchenko, Sawyer and Work. A theory of equilibrium states for the general $\CAT(0)$ setting is currently open.

\section{Kolmogorov property for equilibrium states} \label{sec:Kproperty}

\subsection{Moving up the mixing hierarchy}

We describe results of Ben Call and the second-named author on the Kolmogorov and Bernoulli properties \cite{CT19}.

A flow-invariant measure $\mu$ is said to have the \emph{Kolmogorov property}, or \emph{$K$-property}, if every time-$t$ map has positive entropy with respect to any non-trivial partition $\xi$: that is, for every partition $\xi$ that does not contain a set of full measure, and for every $t\neq 0$, we have $h_\mu(f_t, \xi) > 0$.\footnote{This can also be formulated in terms of the \emph{Pinsker $\sigma$-algebra} for $\mu$, which can be thought of as the biggest $\sigma$-algebra with entropy $0$: the measure $\mu$ has the $K$-property if and only if the Pinsker $\sigma$-algebra for $\mu$ is trivial.}

\begin{theorem} \label{Kproperty}
Let $F = (f_t)$ be the geodesic flow over a closed rank 1 manifold $M$ and let $\varphi\colon T^1M\to \RR$ be $\varphi=q \varphi^u$ or be H\"older continuous. If $P(\Sing, \varphi)<P(\varphi)$, then the unique equilibrium state $\mu_\varphi$ has the Kolmogorov property.
\end{theorem}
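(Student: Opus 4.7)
The plan is to use Ledrappier's product criterion for the K-property: the unique equilibrium state $\mu_\varphi$ (equivalently, its time-$1$ map $f_1$) has the K-property provided that for every $N \geq 2$, the $N$-fold product measure $\mu_\varphi^{\otimes N}$ is the unique equilibrium state for the product flow $F^{\times N}$ on $Y_N := (T^1M)^N$ with potential $\psi_N := \sum_{i=1}^N \varphi \circ \pi_i$, where $\pi_i$ is the $i$th coordinate projection. I will verify uniqueness of $\mu_\varphi^{\otimes N}$ as the equilibrium state by applying Theorem \ref{thm:ESflows} to the product system $(Y_N, F^{\times N}, \psi_N)$. The variational principle immediately gives $P(\psi_N) = N \cdot P(\varphi)$ with $\mu_\varphi^{\otimes N}$ as an equilibrium state, so the task is uniqueness.

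For the decomposition of $Y_N \times [0, \infty)$, I use the $\lambda$-decomposition from \S\ref{sec:bcftproofidea} coordinatewise: the product good core will consist of orbit segments whose every coordinate lies in the good collection $\CCC(\eta) \subset X \times [0,\infty)$ from \eqref{eqn:C-eta}. The Bowen property of $\psi_N$ on this product good collection is immediate from the coordinatewise Bowen property for $\varphi$, since $\psi_N$-Birkhoff sums split additively. The pressure gap for the product obstruction collection reduces via sub-additivity of entropy with respect to marginals (for any $F^{\times N}$-invariant measure $m$ on $Y_N$ with marginals $m_i$, one has $h_m(f_1^{\times N}) \leq \sum_i h_{m_i}(f_1)$) to the single-coordinate estimates: an obstruction in $Y_N$ is characterized by an obstruction in at least one coordinate, so its pressure is bounded by $P([\PPP\cup\SSS], \varphi) + (N-1) P(\varphi) < N P(\varphi) = P(\psi_N)$ using condition \ref{cond:P-gap} from the single-factor argument. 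A parallel estimate establishes $\Pexp(\psi_N) < P(\psi_N)$, since by Remark \ref{rmk:Pexp-sing} the non-expansive set in $Y_N$ is contained in the union of sets where at least one coordinate is singular, and the pressure gap assumption gives $\Pexp(\varphi) \leq P(\Sing, \varphi) < P(\varphi)$.

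The main obstacle is specification on the product good collection. Naively applying specification in each coordinate yields shadowing with transition times that may differ across coordinates, ruining the product shadowing. This is precisely resolved by the strengthened specification property for $\CCC(\eta)$ emphasized in the remark following \eqref{eqn:C-eta}: shadowing can be achieved with \emph{any} transition times $T_{i+1} \geq T_i + \tau$, not merely some existing choice. Given product orbit segments $((v_1^j, \dots, v_N^j), t_j)$ with each $(v_i^j, t_j) \in \CCC(\eta)$, I fix a single sequence $T_1 < T_2 < \cdots$ of transition times and invoke the strengthened specification in each coordinate with that same sequence to obtain shadowing points $y_1, \dots, y_N \in T^1M$; the point $(y_1, \dots, y_N) \in Y_N$ is then the desired shadowing orbit. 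This synchronization across coordinates is the crucial ingredient that the ordinary form of specification cannot provide.

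Having verified the hypotheses of Theorem \ref{thm:ESflows} for the product system $(Y_N, F^{\times N}, \psi_N)$ for each $N \geq 2$, uniqueness of $\mu_\varphi^{\otimes N}$ as equilibrium state follows, and Ledrappier's criterion then yields the K-property for $\mu_\varphi$. For the Bernoulli property of the MME (the second assertion of Theorem \ref{thm:K}), the product approach by itself is insufficient; one would additionally need to verify a very weak Bernoulli condition on a generating partition, or exploit a symbolic factor coming from the uniform structure on $\CCC(\eta)$, which lies beyond the scope of this plan.
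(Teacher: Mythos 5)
Your overall strategy---Ledrappier's product criterion combined with a uniqueness argument for the product flow via a product decomposition, with the strengthened (synchronized transition time) specification on $\CCC(\eta)$ doing the work of gluing all coordinates simultaneously---is exactly the route taken in \cite{CT19} and sketched in \S\ref{sec:Kproperty}, and you have correctly identified the synchronization point as essential. However, there is a genuine gap at the expansivity step. You propose to verify $\Pexp(\psi_N)<P(\psi_N)$ and then invoke Theorem \ref{thm:ESflows} directly for the product flow. This cannot work: for the product flow $F\times F$, the bi-infinite Bowen ball $\Gamma_\eps(x,y)$ always contains the two-parameter set $f_{[-s,s]}x\times f_{[-s,s]}y$, which is never contained in the one-parameter orbit segment $f_{[-s,s]}(x,y)$. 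Hence, with the definition \eqref{eqn:NE-flow}, $\NE(\eps,F\times F)$ is \emph{all} of $X\times X$, so $\Pexp(\psi_N,\eps)=P(\psi_N)$ identically and the hypothesis of Theorem \ref{thm:ESflows} fails for every product of flows. The actual proof must replace $\NE$ by the smaller set \eqref{eqn:NE-prod} and rebuild the abstract uniqueness theorem around it. This is where the real difficulty lies: adapted partition elements for a product of flows approximate sets invariant under $f_s\times f_t$ for \emph{all} $s,t\in\RR$, whereas the ergodicity argument requires approximating sets invariant only under the diagonal $f_s\times f_s$. In \cite{CT19} this is overcome by proving a joint lower Gibbs estimate that yields partial mixing for sets flowed out by a small time interval, and then deducing weak mixing of $\mu_\ph$ (equivalently, ergodicity of $\mu_\ph\times\mu_\ph$) by a spectral argument. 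Your proposal is silent on all of this, and it is the hardest part of the theorem.

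A secondary gap: a ``coordinatewise'' decomposition is not automatically a decomposition of $Y_N\times[0,\infty)$, since the prefix and suffix lengths produced in different coordinates need not agree, and forcing a common length can push individual coordinates out of the good collection. This is precisely the obstruction flagged at the start of \S\ref{sec:proving-K}; it is resolved there by working with the $\tilde\lambda$-decomposition for $\tilde\lambda(x,y)=\lambda(x)\lambda(y)$, whose good core is then checked to lie inside $\GGG\times\GGG$, after which your specification and Bowen arguments do go through. Finally, Ledrappier's criterion as used here requires only the two-fold product, so the quantification over all $N\geq 2$ is unnecessary (though harmless).
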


In the case $\ph=0$, the mixing property for the unique MME was known due to work of Babillot \cite{Bab}. Theorem \ref{Kproperty} strengthens this. We recall the hierarchy of mixing properties (this is an ``express train'' version of the hierarchy):
\[ 
\text{Bernoulli $\Rightarrow$ K $\Rightarrow$ mixing of all orders $\Rightarrow$ mixing $\Rightarrow$ weak mixing $\Rightarrow$ ergodic}.
\]
When $\dim(M)=2$, it was shown by Ledrappier, Lima, and Sarig \cite{LLS} that equilibrium states are Bernoulli; their proof uses countable-state symbolic dynamics for 3-dimensional flows. In higher dimensions, Theorem \ref{Kproperty} gives the strongest known results.

The implications in the mixing hierarchy are not ``if and only if''s in general.
However, in smooth settings with some hyperbolicity, a classic strategy for proving the Bernoulli property is to move \emph{up} the hierarchy, establishing $K$, and then proving that $K$ implies Bernoulli. This approach was notably carried out by Ornstein and Weiss \cite{OW73, OW98}, Pesin \cite{yP77}, and Chernov and Haskell \cite{CH96}.
In particular, a major success of Pesin theory is his proof that the Liouville measure restricted to the regular set is Bernoulli. We refer to the recent book of Ponce and Var\~{a}o \cite{PV19} for more details on this process. 
Here we simply mention that this approach can be carried out for the unique MME of rank 1 geodesic flow, and this is done in \cite{CT19}.

\begin{theorem}[Bernoulli property \cite{CT19}]
Let $(f_t)$ be the geodesic flow over a closed rank 1 manifold $M$. The unique measure of maximal entropy is Bernoulli.
\end{theorem}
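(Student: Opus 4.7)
The plan is to follow the strategy mentioned in the paragraph preceding the statement: first establish the $K$-property via Theorem \ref{Kproperty}, then upgrade $K$ to Bernoulli using the smooth-ergodic-theory machinery of Pesin, Ornstein--Weiss, and Chernov--Haskell.

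First, I apply Theorem \ref{Kproperty} with $\varphi \equiv 0$. This requires the pressure gap $P(\Sing, 0) = h(\Sing) < h(F) = P(0)$, which is Knieper's entropy gap and is also recovered by Theorem \ref{thm:gap}. Together with Theorem \ref{BCFT}, this yields that the unique MME $\mu$ has the $K$-property, is hyperbolic, and is fully supported. In particular, $(T^1M, f_1, \mu)$ is a non-uniformly hyperbolic $C^\infty$ diffeomorphism preserving a hyperbolic $K$-measure, which is the setting in which the Pesin--Ornstein--Weiss scheme applies.

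Second, I upgrade $K$ to Bernoulli for the time-one map $f_1$ by verifying Ornstein's \emph{very weak Bernoulli} criterion relative to a sequence of generating partitions adapted to Pesin blocks. The essential ingredients are: (i) the non-uniform Gibbs property on the good orbit segments $\GGG(\eta)$ from \S\ref{sec:bcftproofidea}, which produces a local product structure for $\mu$ on Pesin blocks; (ii) the strong specification property on $\CCC(\eta) \supset \GGG(\eta)$, which supplies uniform mixing rates needed to compare conditional distributions along unstable plaques after large iterates; and (iii) the existence and absolute continuity of local stable/unstable manifolds from Pesin theory, which is what allows us to factor measures along the hyperbolic directions. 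The combinatorial scheme assembling these into very weak Bernoulli follows Chernov--Haskell, adapted here to a specification-based framework rather than the Markov-partition setting originally used.

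The main obstacle is that in non-uniform hyperbolicity the Gibbs and hyperbolicity estimates on $\GGG^M$ carry constants that deteriorate as $M \to \infty$, whereas very weak Bernoulli asks for a genuinely uniform comparison of conditional measures. Resolving this requires a stratification by Pesin level-set: one fixes a large $M$ so that $\mu(\GGG^M)$ is close to $1$, verifies the required distributional comparison on points whose forward and backward orbits track $\GGG^M$ with high frequency (using the uniform Gibbs constants on $\GGG^M$ and the specification gap size for that level), and absorbs the contribution of the exceptional set into the $\epsilon$ tolerance in the very weak Bernoulli definition. Once $(f_1,\mu)$ is shown to be Bernoulli, the Bernoulli property for the flow $(F,\mu)$ follows from the classical theorem of Ornstein that a $K$-flow with a Bernoulli time-one map is itself Bernoulli.
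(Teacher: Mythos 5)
Your overall two-step strategy is exactly the one the paper indicates: apply Theorem \ref{Kproperty} with $\ph\equiv 0$ (using the entropy gap $h(\Sing)<h(X)$) to get the $K$-property, and then run the Ornstein--Weiss / Pesin / Chernov--Haskell ``$K$ plus hyperbolicity implies Bernoulli'' machinery on the time-one map, finishing with Ornstein's theorem that a flow whose time-one map is Bernoulli is a Bernoulli flow. The first step and the last step are fine.

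The genuine gap is in your ingredient (i). The Chernov--Haskell scheme needs a \emph{local product structure} for the conditional measures of $\mu$ on rectangles built from local stable and unstable plaques, and you assert that this follows from the non-uniform Gibbs property on $\GGG(\eta)$. That derivation is not available: the Gibbs bounds control $\mu(B_t(x,\eps))$ for good orbit segments, but they do not by themselves show that $\mu$ disintegrates (up to bounded Jacobians) as a product of leaf measures; extracting product structure from the Bowen-style construction (weak* limits of spread-out separated sets) is an open problem in this non-uniform setting. This is precisely why \cite{CT19} proves the Bernoulli property only for the MME while the $K$-property is proved for the whole class of potentials in Theorem \ref{BCFT}: for $\ph=0$ the unique MME coincides with Knieper's measure, whose local product structure is known from the independent Patterson--Sullivan construction on the boundary at infinity, and that external input is what feeds the Chernov--Haskell argument. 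If your step (i) worked as stated, the same argument would give Bernoullicity of every equilibrium state covered by Theorem \ref{Kproperty}, which is strictly stronger than what is currently known. To repair the proof you should replace (i) by an appeal to Knieper's product-structure description of the MME (or otherwise supply a genuine proof of product structure), rather than to the Gibbs estimates; relatedly, your ingredient (ii) --- that specification ``supplies uniform mixing rates'' for comparing conditional measures --- is not how the $\bar d$-estimates in the very weak Bernoulli verification are obtained, and would need to be replaced by the absolute continuity of holonomies together with the product structure just described.
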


\subsection{Ledrappier's approach}

The main tool in the proof of Theorem \ref{Kproperty} is a fantastic result of Ledrappier \cite{L}, which 
deserves to be more widely known. Ledrappier's proof is about one page long, and gives criteria for the $K$-property in terms of thermodynamic formalism. The original result is for discrete-time systems. We state here a version of it for flows; the proof is  given in \cite{CT19}, and in more detail in  \cite{bC20}.

Given a flow $F = (f_t)$ on a compact metric space $X$, the idea is to consider the product flow $(X\times X,F \times F)$, i.e., the flow $(f_s  \times  f_s)_{s \in \RR}$ given by 
\begin{equation}\label{eqn:prod-flow}
(f_s\times f_s)(x,y) = (f_sx, f_sy) \text{ for }s \in \RR.
\end{equation}

\begin{theorem}[Criteria for $K$-property]
Let $(X,F)$ be a flow such that $f_t$ is asymptotically entropy expansive for all $t\neq 0$, and let $\ph$ be a continuous function on $X$. Let $(X \times  X,F \times F)$ be the product flow \eqref{eqn:prod-flow}, and define $\Phi\colon X \times  X \to \RR$ by $\Phi(x_1,x_2) = \ph(x_1) + \ph(x_2)$. 

If $\Phi$ has a unique equilibrium measure in $\MMM_{F\times F}(X \times  X)$, then the unique equilibrium state for $\ph$ in $\MMM_F(X)$ has the Kolmogorov property.
\end{theorem}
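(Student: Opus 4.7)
The plan is to run Ledrappier's contradiction argument in the flow setting: assume the unique equilibrium state $\mu$ for $\ph$ fails to have the $K$-property for $f_1$, and build two distinct equilibrium states for $\Phi$ on $X\times X$, contradicting uniqueness. The two measures are canonical: one is the independent product $\mu\times\mu$, and the other is the Pinsker relative product $\mu\times_{\Pi}\mu$, where $\Pi=\Pi(\mu,f_1)$ is the Pinsker $\sigma$-algebra of the time-one map. Failure of the $K$-property is exactly the statement that $\Pi$ is non-trivial modulo $\mu$, and this non-triviality is precisely what will separate the two measures.

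First I would verify the two soft statements that $P_{F\times F}(\Phi)=2P_F(\ph)$ and that $\mu\times\mu$ is an equilibrium state for $\Phi$. The inequality $P_{F\times F}(\Phi)\geq 2P_F(\ph)$ comes from testing against $\mu\times\mu$, using that entropy is additive on independent products and that $\int\Phi\,d(\mu\times\mu)=2\int\ph\,d\mu$. For the reverse inequality, any $(F\times F)$-invariant $\nu$ with marginals $\nu_1,\nu_2\in\MMM_F(X)$ satisfies $h_\nu(f_1\times f_1)\leq h_{\nu_1}(f_1)+h_{\nu_2}(f_1)$ and $\int\Phi\,d\nu=\int\ph\,d\nu_1+\int\ph\,d\nu_2$, so $h_\nu+\int\Phi\,d\nu\leq 2P_F(\ph)$. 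The role of asymptotic entropy expansivity is to pass to the product system and guarantee upper semicontinuity of $\nu\mapsto h_\nu(f_1\times f_1)$, so that equilibrium states for $\Phi$ exist and uniqueness is a meaningful hypothesis.

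Next I would construct $\mu\times_\Pi\mu$. Let $\bar\mu$ denote the factor measure on the Pinsker factor and disintegrate $\mu=\int\mu_z\,d\bar\mu(z)$ with respect to $\Pi$. Define
\[
\mu\times_\Pi\mu := \int \mu_z\times\mu_z \,d\bar\mu(z).
\]
It is $(f_1\times f_1)$-invariant since $\Pi$ is $f_1$-invariant, and the standard fact that the Pinsker $\sigma$-algebras of $f_s$ and $f_t$ coincide for all $s,t\neq 0$ upgrades this to $(F\times F)$-invariance. The central computation is the entropy identity
\[
h_{\mu\times_\Pi\mu}(f_1\times f_1) = 2\,h_\mu(f_1\mid \Pi) + h_{\bar\mu}(f_1|_\Pi) = 2\,h_\mu(f_1),
\]
via the Abramov--Rokhlin formula for the relative product, together with the defining property $h_{\bar\mu}(f_1|_\Pi)=0$ of the Pinsker factor (which makes the conditional entropy $h_\mu(f_1\mid\Pi)$ equal to the full entropy $h_\mu(f_1)$). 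Combined with $\int\Phi\,d(\mu\times_\Pi\mu)=2\int\ph\,d\mu$, this shows $\mu\times_\Pi\mu$ attains pressure $2P_F(\ph)=P_{F\times F}(\Phi)$ and is thus an equilibrium state for $\Phi$. To separate it from $\mu\times\mu$, pick $A\in\Pi$ with $0<\mu(A)<1$; then $\mu_z(A)\in\{0,1\}$ for $\bar\mu$-a.e.\ $z$, so $(\mu\times_\Pi\mu)(A\times A^c)=0$ while $(\mu\times\mu)(A\times A^c)=\mu(A)(1-\mu(A))>0$. This contradicts uniqueness for $\Phi$, so $\mu$ must be $K$.

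The main obstacle is the entropy computation for $\mu\times_\Pi\mu$ in the flow setting. In the discrete-time original, the identity follows cleanly from Abramov--Rokhlin applied to the skew product structure of the relative product over the Pinsker factor; for flows one must additionally confirm that the Pinsker factors of all $f_t$ ($t\neq 0$) agree (so that ``being $K$'' does not depend on which time is chosen), and that the relative product is genuinely $(F\times F)$-invariant rather than merely invariant under the time-one map. These are standard ergodic-theoretic facts but deserve careful bookkeeping; once they are in hand, the rest of the proof is essentially the one-page argument of Ledrappier with the Pinsker $\sigma$-algebra playing the role of the non-trivial zero-entropy factor that produces the second equilibrium state.
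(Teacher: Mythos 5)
Your proposal is correct and follows essentially the same route as the paper: it reproduces the content of Lemma \ref{ProductES} (additivity of entropy and of $\int\Phi$ over the independent product, giving $P(\Phi)=2P(\ph)$ and that $\mu\times\mu$ is an equilibrium state), and then carries out exactly the Ledrappier contradiction the paper sketches, with the relatively independent joining $\mu\times_\Pi\mu$ over the Pinsker factor serving as the second equilibrium state, separated from $\mu\times\mu$ on a set $A\times A^c$ with $A\in\Pi$ non-trivial. The points you flag for bookkeeping --- the entropy identity $h_{\mu\times_\Pi\mu}(f_1\times f_1)=2h_\mu(f_1)$, the coincidence of the Pinsker algebras of all $f_t$ with $t\neq 0$, and the role of asymptotic entropy expansivity in guaranteeing upper semicontinuity of the entropy map on the product --- are precisely the ingredients the paper defers to \cite{CT19,bC20}.
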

	
The fact that $(X,F,\ph)$ has a unique equilibrium state when $(X\times X, F\times F, \Phi)$ does is a consequence of the following simple lemma.

\begin{lemma}\label{ProductES}
Let $\mu$ be an equilibrium state for $(X,F,\ph)$. Then $\mu \times \mu$ is an equilibrium state for $(X \times  X,F \times F,\Phi)$.
\end{lemma}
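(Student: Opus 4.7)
The plan is to verify directly that $\mu \times \mu$ attains the supremum in the variational principle for $(X \times X, F \times F, \Phi)$. This reduces to three standard ingredients: invariance under the product flow, additivity of entropy under products, and additivity of topological pressure under products.

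First, $F \times F$-invariance of $\mu \times \mu$ is immediate from $F$-invariance of $\mu$. Second, by the classical Kolmogorov--Sinai theorem on product systems, $h_{\mu \times \mu}(f_1 \times f_1) = 2 h_\mu(f_1)$, and by Fubini together with the definition of $\Phi$ we have $\int \Phi\, d(\mu \times \mu) = 2 \int \ph\, d\mu$. Using the assumption that $\mu$ is an equilibrium state for $(X, F, \ph)$, this gives
\[
h_{\mu \times \mu}(f_1 \times f_1) + \int \Phi\, d(\mu \times \mu)
= 2\Big(h_\mu(f_1) + \int \ph\, d\mu\Big)
= 2\, P(X, F, \ph).
\]

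The remaining step is to identify $P(X \times X, F \times F, \Phi)$ with $2\, P(X, F, \ph)$. For the lower bound, taking arbitrary product measures $\nu_1 \times \nu_2$ (with $\nu_i \in \Mf(X)$) as test measures in the variational principle and applying the additivity facts above yields $P(X \times X, F \times F, \Phi) \geq 2\, P(X, F, \ph)$. For the upper bound, given any $\nu \in \mathcal{M}_{F \times F}(X \times X)$ with marginals $\nu_1, \nu_2 \in \Mf(X)$, a standard partition argument (using $H_\nu(\alpha \times \beta) \leq H_{\nu_1}(\alpha) + H_{\nu_2}(\beta)$ for any finite partitions $\alpha, \beta$ of $X$, and a generating choice via asymptotic entropy expansivity or otherwise) gives $h_\nu(f_1 \times f_1) \leq h_{\nu_1}(f_1) + h_{\nu_2}(f_1)$, while $\int \Phi\, d\nu = \int \ph\, d\nu_1 + \int \ph\, d\nu_2$. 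Bounding each of the two summands by the variational principle on $X$ gives $h_\nu(f_1 \times f_1) + \int \Phi\, d\nu \leq 2\, P(X, F, \ph)$, whence the upper bound.

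Combining the displayed equality with the computation of $P(X \times X, F \times F, \Phi)$ shows that $\mu \times \mu$ achieves the supremum in the variational principle for $\Phi$, i.e.\ it is an equilibrium state. I do not anticipate any serious obstacle; each ingredient is classical. The only mild point to handle carefully is the entropy-subadditivity estimate for joinings $\nu$ with prescribed marginals, but this is a standard consequence of subadditivity of static entropy under refinement and the observation that the product of partitions on each factor is generating for the product system when the individual partitions are generating.
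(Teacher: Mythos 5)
Your proof is correct and follows essentially the same route as the paper: compute $h_{\mu\times\mu}(f_1\times f_1)=2h_\mu(f_1)$ and $\int\Phi\,d(\mu\times\mu)=2\int\ph\,d\mu$, then invoke $P(X\times X,F\times F,\Phi)=2P(X,F,\ph)$. The only difference is that you justify this last identity (via product test measures for the lower bound and entropy subadditivity for joinings for the upper bound), whereas the paper asserts it without proof; your justification is the standard one and is sound.
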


\begin{proof}
	Observe that
	$$h_{\mu \times \mu}(f_1 \times  f_1) = h_\mu(f_1) + h_\mu(f_1)$$
	and
	$$\int \Phi\,d(\mu \times \mu) = \int\ph\,d\mu + \int\ph\,d\mu.$$
	Therefore,
	$h_{\mu \times \mu}(f_1 \times  f_1) + \int\Phi\,d(\mu \times \mu) = 2P(X,F,\ph) = P(X \times  X,F \times F,\Phi).$
\end{proof}

From Lemma \ref{ProductES} we see that if $\mu, \nu$ are distinct equilibrium states for $(X,\FFF,\ph)$, then $\mu  \times  \mu$ and $\nu  \times  \nu$ are both equilibrium states for $\Phi$. If $\Phi$ has a unique equilibrium state, then this means that  $\mu  \times  \mu = \nu  \times  \nu$ and hence $\mu= \nu$; thus, we get uniqueness of the equilibrium state downstairs, and we see that if $\Phi$ has a unique equilibrium state, it must have the form $\mu  \times  \mu$ where $\mu$ is the unique equilibrium state for $\ph$. 

Now the main idea of Ledrappier's argument can be stated quite quickly:
\emph{By the argument above, if $\Phi$ has a unique equilibrium state, then so does $\ph$. Write $\mu$ for this measure; then $\mu  \times  \mu$ is the unique equilibrium state for $\Phi$. Now assume that $\mu$ is not $K$. Then $\mu$ has a non-trivial Pinsker $\sigma$-algebra. This can be used to define another equilibrium state for $\Phi$. Contradiction.}

\subsection{Decompositions for products}\label{sec:proving-K}

Given Ledrappier's result, our strategy for proving the $K$ property in Theorem \ref{Kproperty} is now clear. We want to show that the product system of two copies of the geodesic flow has a unique equilibrium state for the class of potentials under consideration.

So let's find a decomposition for the product system.

{\bf Problem:} Lifting decompositions to products in general does not work well. One fact we do have in our favor is that if $\GGG$ has good properties, then so does $\GGG  \times  \GGG$. However, we need $\GGG  \times  \GGG$ to arise in a decomposition for $(X  \times  X, F  \times  F)$. In general this does not look at all promising: for example, the reader may try to do it for the $S$-gap shifts as studied in \cite{CT12}, and will quickly see the issue.

{\bf Idea:} Work with a nice class of decompositions that \emph{does} behave well under products. We claim that the $\lambda$-decompositions from Definition \ref{def:lambda-decomposition} form such a class. To see this, suppose we have a $\lambda$-decomposition $(\PPP,\GGG,\SSS)$ for a flow $(X,F)$, and define $\tilde \lambda \colon X  \times  X \to [0, \infty)$ by
\begin{equation}\label{eqn:tilde-lambda}
\tilde{\lambda}(x,y) = \lambda(x)\lambda(y).
\end{equation}
This function inherits lower semicontinuity from $\lambda$, and we can consider the $\tilde\lambda$-decomposition $(\tilde\PPP,\tilde\GGG,\tilde\SSS)$ for $(X\times X, F\times F)$. 

Given $((x,t),(y,t)) \in \tilde\GGG$, it follows from \eqref{eqn:tilde-lambda} and boundedness of $\lambda$ that we have $(x,t), (y,t) \in \GGG$ (with an appropriate choice of $\eta$), and thus $\tilde \GGG \subset \GGG \times \GGG$. This means that specification and the Bowen property for $\tilde\GGG$ can be deduced from the corresponding properties for $\GGG$.

But how big are $\tilde \PPP$ and $\tilde \SSS$? If $\lambda=0$ on one of the coordinates, then anything is allowed on the other. Roughly, we can show that:
\[
P(\tilde \PPP \cup \tilde \SSS, \Phi) \approx P(\ph) + P(\PPP \cup \SSS, \ph).
\]
Recall that $P(\Phi)=2P(\ph)$. Thus, if we have $P(\PPP \cup \SSS, \ph)< P(\ph)$, then we expect to be able to obtain the estimate $P(\tilde \PPP \cup \tilde \SSS, \Phi) < P(\Phi)$. This is the strategy carried out in \cite{CT19,bC20}.

\subsection{Expansivity issues}

Specification and regularity are not the whole story; in fact, dealing with continuous time and related expansivity issues is the most difficult point in our analysis.

Recall from \eqref{eqn:NE-flow} that for flows we define
\[
\NE(\epsilon, F):=\{ x\in X \mid \Gamma_\epsilon(x)\not\subset  f_{[-s,s]}(x) \text{ for any }s>0 \}.
\]
For a product flow as in \eqref{eqn:prod-flow}, the set $\Gamma_\epsilon(x, y)$ always contains $f_{[-s,s]}x  \times  f_{[-s,s]}y$. That is, we are considering a flow with a 2-dimensional center. The theory in \S\ref{sec:flow-unique} does not apply directly because $\NE(\eps,F\times F)$ as defined for a flow is the whole space! We have to build a new theory that uses information about
\begin{equation}\label{eqn:NE-prod}
\operatorname{NE^{\times} }(\epsilon) := \{(x,y)\in X \times  X\mid \Gamma_\epsilon(x,y)\not\subset f_{[-s,s]}(x) \times  f_{[-s,s]}(y)\text{ for any }s > 0\}.
\end{equation}

There are no new difficulties with counting estimates, but serious issues arise when we build adapted partitions. In the discrete time case, our adapted partition elements look like pixels and can be used to approximate sets. In the flow case, our adapted partition elements approach a small piece of orbit, so look like thin cigars. Collections of partition elements can thus be used to approximate flow-invariant sets. In the `product of flows' case, the best we can do is approximate sets invariant under $f_s  \times  f_t$ for \emph{all} $s, t \in \RR$. This creates new technical obstacles that must be overcome in our uniqueness proof. In particular, to run our ergodicity proof, we need to be able to approximate sets which are invariant only under $f_s  \times  f_s$ for all $s \in \RR$. This disconnect is a fundamental  additional difficulty.

\
In \cite{CT19}, this difficulty is overcome by proving weak mixing for $\mu$ using a lower joint Gibbs estimate  which gives a kind of partial mixing for sets that are flowed out by a small time interval. This can be used to prove weak mixing of $\mu$ by a spectral argument. This is equivalent to the desired ergodicity of $\mu  \times  \mu$.

\section{Knieper's entropy gap} \label{sec:entropygap}

\subsection{Entropy in the singular set}\label{sec:sing-entropy}

For the geodesic flow on a rank 1 non-positive curvature manifold, we have stated and discussed our main results on uniqueness of equilibrium states, and the K property for these equilibrium states. Our results hold under the hypothesis of the pressure gap $P(\Sing, \ph)< P(\ph)$. Thus, being able to verify the pressure gap is of central importance for our results. In this section we outline the proof that the gap holds for $\ph=0$, when it reduces to the \emph{entropy gap} $h(\Sing) < h(X)$. The argument extends easily to potentials that are locally constant on a neighbourhood of $\Sing$, as claimed in Theorem \ref{thm:gap}.

Our introduction of rank 1 manifolds in \S\ref{sec:surfaces} focused on examples where $\Sing$ contains only periodic orbits and has $0$ entropy, and indeed for any surface of nonpositive curvature, one can observe that every $\mu\in \MMM^e_F(\Sing)$ has $h_\mu(f_1) \leq \lambda^+(\mu) = \int -\ph^u\,d\mu = 0$ by the Margulis--Ruelle inequality, where the last equality uses the fact that $\ph^u|_{\Sing} \equiv 0$ for surfaces. Then the variational principle give $h(\Sing)=0$, and since $h(X)>0$ for all surfaces of genus at least $2$, the entropy gap holds.

In higher dimensions, however, $\Sing$ can be more complicated\footnote{In dimension 2, it is in fact an open problem whether $\Sing$ can contain non-periodic orbits \cite{BM19}, but this does not affect the argument that $h(\Sing)=0$.}
and it is not at all clear a priori that the entropy gap should always hold. The Gromov example described in \cite[\S6]{gK98} 
demonstrates that starting in dimension $3$, we may have $h(\Sing)>0$. To construct this example, let $M_0$ be a surface of constant negative curvature with one infinite cusp.  Now cut off the cusp and flatten the end so that it is isometric to a flat cylinder with radius $r$.  Take the product $M_1=M_0\times S$, where $S$ is the circle of radius $r$.  This defines a non-positive curvature 3-manifold with boundary, where the boundary is a flat torus $\partial M_1 = \partial M_0 \times S$.  Now let $M_2=S\times M_0$ so that $\partial M_2 = S \times \partial M_0$. Glue $M_1$ and $M_2$ along the boundaries (note that the order of the factors is reversed) to obtain a 3-manifold $M$. 

One can show that the regular set in $T^1M$ consists of all vectors in $T^1M$ whose geodesic enters the non-flat part of both $M_1$ and $M_2$. The singular set is then the set of vectors whose geodesics stay entirely on one side (or in the flat cylinder).  It is not hard to see that $h(\Sing)> 0$.
In fact, by defining $M_0$ using a cut arbitrarily high up the cusp, one can make $h(X) - h(\Sing)$ arbitrarily close to $0$, and indeed it is not immediately obvious that this difference is non-zero. Why should there be an entropy gap at all?

Knieper's work in \cite{gK98} proved that there is a unique MME for rank 1 geodesic flow, and that this measure is fully supported on $T^1M$. This in turn implies the entropy gap, as explained in \S\ref{sec:no-gap}. 

Our argument in this section differs from Knieper's by being constructive, suitable for generalization, and (hopefully) shedding light on the mechanism that drives the `entropy gap' phenomenon. In \S\ref{swarmup} we present the basic idea behind using the specification property to produce entropy in the symbolic setting, and then in \S\ref{sec:geod-flow-gap} we discuss how this approach can be extended to geodesic flow in non-positive curvature. Full details of the argument are in \cite{BCFT18}.

\subsection{Warm-up: shifts with specification} \label{swarmup}

The basic mechanism for using specification to produce entropy is simply to construct exponentially many orbit segments ``by hand''. This idea can be seen in its simplest form in the following result, which has been known since the 70's, see \cite{DGS76}.

\begin{theorem}\label{thm:spec-entropy}
Let $(X, \sigma)$ be a shift space with the following \emph{strong specification} property:
there is $\tau\in \NN$ such that for all $v,w\in\LLL=\LLL(X)$, there is $u\in \LLL_\tau$ such that $vuw\in \LLL$.
If $X$ has more than one point, then the strong specification property has positive entropy.
\end{theorem}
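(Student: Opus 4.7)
The plan is to produce exponentially many distinct words in $\LLL$ by concatenating choices from a two-element palette, using the strong specification property to glue everything together legally.

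First I would observe that since $X$ contains at least two distinct points $x \neq y$, there is some $n \in \NN$ for which $x_{[1,n]} \neq y_{[1,n]}$, and hence $\LLL_n$ contains at least two distinct words $v_0, v_1$ of equal length $n$. Call these the ``palette.'' The role of equal length is crucial: it ensures that different sequences of palette choices end up in the same total-length regime after the specification gluing, so I can count them inside a single $\LLL_N$.

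Next, for each $k \in \NN$ and each binary string $\epsilon = (\epsilon_1, \dots, \epsilon_k) \in \{0,1\}^k$, strong specification supplies gluing words $u_1^\epsilon, \dots, u_{k-1}^\epsilon \in \LLL_\tau$ such that
\[
w(\epsilon) := v_{\epsilon_1} u_1^\epsilon v_{\epsilon_2} u_2^\epsilon \cdots u_{k-1}^\epsilon v_{\epsilon_k} \in \LLL_{N_k},
\qquad N_k := kn + (k-1)\tau.
\]
The assignment $\epsilon \mapsto w(\epsilon)$ is injective: the palette blocks $v_{\epsilon_i}$ sit in the fixed positions $[(i-1)(n+\tau)+1,\, (i-1)(n+\tau)+n]$ inside $w(\epsilon)$, so from $w(\epsilon)$ one can read off each $v_{\epsilon_i}$ and hence each $\epsilon_i$ (using $v_0 \neq v_1$). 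Therefore $\#\LLL_{N_k} \geq 2^k$.

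Taking logarithms and dividing,
\[
\frac{1}{N_k} \log \#\LLL_{N_k} \;\geq\; \frac{k \log 2}{kn + (k-1)\tau},
\]
which converges to $\frac{\log 2}{n+\tau} > 0$ as $k \to \infty$. Since $\htop(X) = \lim_{m\to\infty} \frac{1}{m}\log\#\LLL_m$ exists (subadditivity gives the limit along the subsequence $N_k$ as well), this yields $\htop(X) \geq \frac{\log 2}{n+\tau} > 0$.

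There is no real obstacle here; the only subtlety is extracting two distinct equal-length words from the hypothesis $|X|>1$, and then being careful that the palette positions inside each $w(\epsilon)$ really are forced to be $v_0$ or $v_1$ (not merely that there exist such choices), which is why we need to specify an injection $\epsilon \mapsto w(\epsilon)$ rather than counting concatenations abstractly. The argument also shows quantitatively that one can think of strong specification as encoding a copy of the full shift $\{0,1\}^\NN$ at a slower time-scale governed by $n+\tau$, which is the intuition that will later be transported to the geometric setting via the collection $\CCC(\eta)$ of $\lambda$-hyperbolic orbit segments.
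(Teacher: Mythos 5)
Your proposal is correct and follows essentially the same argument as the paper: extract two distinct equal-length words from $|X|>1$, glue all $2^k$ palette sequences with strong specification, use the fixed block positions (equal palette length $n$, equal gap length $\tau$) to get injectivity, and conclude $h(X)\geq \frac{\log 2}{n+\tau}>0$. The only cosmetic difference is that the paper appends a final length-$\tau$ word so that the glued words live in $\LLL_{k(n+\tau)}$ rather than $\LLL_{kn+(k-1)\tau}$, which changes nothing.
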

\begin{proof}
Fix $n\in \NN$ such that there are $w^1, w^2 \in \LLL_n$ with $w^1 \neq w^2$. For each $k \geq 1$, define a map $\Phi\colon \{1,2\}^k \to \LLL_{k(n+\tau)}$ by
\[
\Phi(\underline i) = w^{i_1} v^1 w^{i_2} v^2 \cdots v^{k-1} w^{i_k} v^k,
\]
where all the $v^j$ have length $\tau$ and the expression on the right hand side is chosen to be in the language of $X$. The existence of such a word is guaranteed by the strong specification property.

Since $w^1 \neq w^2$, we can see that $\Phi$ is injective on $\{1,2\}^k$, so $\# \LLL_{k(n+\tau)}(X) \geq 2^k$. Taking logs, dividing by $k(n+\tau)$, and sending $k\to\infty$ gives
\[
h(X) \geq \lim_{k \to \infty} \frac{1}{k(n+\tau)} \log 2^k = \frac 1 {n+\tau} \log 2 >0.\qedhere
\]
\end{proof}

We take this basic idea further, and sketch a proof of the following result about shifts with specification. The interest here is not so much in the statement, but rather in the fact that the proof contains the main entropy production idea that we will use for geodesic flow in the next section.

\begin{theorem}\label{thm:hYhX}
Consider a shift space $(X, \sigma)$ with the strong specification property. Let $Y \subset X$ be a compact invariant proper subset. Then $h(Y)< h(X)$.
\end{theorem}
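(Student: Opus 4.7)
The plan is to combine an entropy-production mechanism based on specification with the uniqueness of the measure of maximal entropy for $(X,\sigma)$, so that this symbolic case serves as the prototype for the geodesic-flow argument of the next section.

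First I would identify the ``forbidden word'' that drives the argument. Since $Y$ is compact and $X \setminus Y$ is open and nonempty, there is a cylinder $[w^*]$ contained in $X\setminus Y$; by shift-invariance of $Y$, the word $w^*$ cannot appear as a subword of any $y \in Y$, so $w^* \in \LLL(X)\setminus\LLL(Y)$, with $\ell := |w^*|$. The entropy-production step, in the spirit of Theorem \ref{thm:spec-entropy}, is to use strong specification to glue arbitrary $\LLL(Y)$-prefixes to forced copies of $w^*$: for each $v \in \LLL(Y)_n$ there is a word $v \cdot u(v,w^*)\cdot w^* \in \LLL(X)_{n+\tau+\ell}$ that necessarily contains $w^*$ and so is not in $\LLL(Y)$. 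Iterating this with insertions at several positions produces a large collection of words in $\LLL(X)\setminus\LLL(Y)$, which is the symbolic analogue of the exponential production of regular excursions that will drive the geodesic-flow argument.

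To convert this production into a strict entropy gap, I would next invoke the unique MME $\mu$ for $(X,\sigma)$ from Theorem \ref{thm:bowen-shift}. The Gibbs property of Proposition \ref{prop:build-mme} implies that $\mu$ is fully supported and ergodic, so $\mu[w^*]>0$ gives $\mu(X\setminus Y)>0$, and then ergodicity together with shift-invariance of $Y$ forces $\mu(Y) = 0$. Since shift spaces are expansive, the entropy map $\nu \mapsto h_\nu(\sigma)$ is upper semi-continuous on the weak$^*$-compact set $\MMM_\sigma(Y) = \{\nu \in \MMM_\sigma(X) : \nu(X\setminus Y)=0\}$, so the supremum $h(Y) = \sup_{\nu\in\MMM_\sigma(Y)} h_\nu(\sigma)$ is attained at some $\nu^* \in \MMM_\sigma(Y)$. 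Because $\nu^*(Y)=1$ while $\mu(Y)=0$, we have $\nu^*\neq\mu$, and uniqueness of the MME then yields $h(Y) = h_{\nu^*}(\sigma) < h_\mu(\sigma) = h(X)$.

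The main obstacle is conceptual rather than technical: the direct combinatorial inequalities produced in the first step are only additive, of the form $\#\LLL(X)_N - \#\LLL(Y)_N \geq \#\LLL(Y)_{N-\tau-\ell}$ (or its multi-insertion strengthenings), and these do not by themselves translate into the multiplicative growth-rate separation $h(Y) < h(X)$. The passage from ``plenty of words avoiding $Y$'' to a true exponential gap is exactly where uniqueness of the MME plus upper semi-continuity of entropy intervene, by excluding any ergodic measure on $Y$ that attains the top entropy of $X$. This is also the template for the geodesic-flow proof of Knieper's entropy gap: the constructive insertion of regular orbit segments using specification on $\CCC(\eta)$ is the engine of entropy production, and the strict gap $P(\Sing,\varphi) < P(\varphi)$ is obtained once this production is combined with appropriate counting or rigidity tools adapted to orbit segments.
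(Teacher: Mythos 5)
Your proof is correct, but it is a genuinely different argument from the one in the paper. You route the strict inequality through the unique MME: the lower Gibbs bound gives $\mu[w^*]>0$, hence $\mu(Y)<1$, so any measure $\nu^*$ attaining $h(Y)$ (which exists since the subshift $Y$ is expansive, so its entropy map is upper semi-continuous) differs from $\mu$, and Proposition \ref{prop:uniqueness} then gives $h(Y)=h_{\nu^*}(\sigma)<h_\mu(\sigma)=h(X)$. That is a valid and economical proof of the statement as stated. The paper instead completes the combinatorial step that you correctly diagnose as insufficient in its naive single-insertion form: it performs the surgery on only an $\alpha$-proportion of the $N$ windows of length $n$, so that the $\binom{N-1}{\alpha N-1}\gtrsim e^{(-\alpha\log\alpha)N}$ choices of insertion sites beat the multiplicity loss $C^{\alpha N-1}$ coming from the $t+2\tau$ erased symbols per surgery, yielding the explicit bound $h(X)\geq h(Y)+\frac{\alpha}{n}(-\log\alpha-\log C)$ for small $\alpha$. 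What each approach buys: yours is shorter and softer, but it presupposes the full uniqueness-plus-Gibbs machinery; the paper's is self-contained and, crucially, is the version that transfers to the rank 1 geodesic flow, where (as Remark \ref{rmk:Knieper} emphasizes) the pressure gap must be established \emph{before} uniqueness is known, so an argument that invokes the unique fully supported MME would be circular there. For that reason your closing claim that this argument is the template for the geodesic-flow entropy gap is not quite right -- the window-surgery counting is the template, and the uniqueness detour is precisely what is unavailable in that application.
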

\begin{proof}
We use the specification property, words in $\LLL(Y)$ and a single word $w \notin \LLL(Y)$ to construct at least  $e^{n(h(Y)+\eps)}$ words in $\LLL_n(X)$ for large $n$, giving the desired result.

Since $Y \neq X$, we can fix $w \notin \LLL(Y)$. Let $t$ be the length of $w$, and $\tau$ the gap size in the strong specification property. We fix a ``window size'' $n> t + 2\tau$; given $N\in\NN$, we divide the indices $\{1,2,\dots, nN\}$ into $N$ ``windows'' of the form $\{kn+1, kn+2, \dots, (k+1)n\}$ for $1\leq k\leq N$. In particular, given $y\in \LLL_{nN}(Y)$, we consider the subwords of $y$ that appear in each window, which have the form $u^k := y_{[kn+1,(k+1)n]}$ for $1\leq k\leq n$.

Within each window, we can perform the following `surgery' to replace $u^k$ with a word that is in $\LLL_n(X)$ but not $\LLL(Y)$:
\[
u^k \mapsto u^k_{[1, n-t-2\tau]} v^1wv^2,
\]
where the words $v^1, v^2$ of length $\tau$ are chosen as needed for the specification property.

In each of the $N$ windows of length $n$, we can decide whether to do surgery or not.  Given this choice, we use the specification property to create a new word of length $nN$; as long as we performed at least $1$ surgery, this new word lies in $\LLL(X)$ but not in $\LLL(Y)$. In this way, from a single word $y_{[1, nN]}$, we can create $2^N - 1$ new words of length $nN$ in $\LLL(X) \setminus \LLL(Y)$ by varying over all the possible choices of windows for doing this surgery procedure. Note that these words are all distinct because within each window, we can determine whether or not we did surgery by checking whether the word $w$ appears.

This looks promising; however, it is too naive: we have to be careful as we vary over $y_{[1, nN]} \in \LLL(Y)$. In any window we selected for surgery, we are losing all the information on the last $t+2\tau$ entries in the window. This means that up to $\# \LLL_{t+2\tau}$ distinct words could be mapped to the same word for \emph{each} window we select for surgery. If we select too many windows, the gain in new words is far outweighed by the loss coming from this multiplicity estimate.

{\bf Fix:} \emph{Carry out surgery on a small proportion of the windows, and argue that the number of new words created beats the loss of multiplicity.}

More precisely, fix $\alpha>0$ small. Each surgery takes place at the boundary between two windows, so we consider the $N-1$ internal boundary points of the $N$ windows, i.e., the set
\[
A = \{n, 2n, 3n, \ldots, (N-1)n\}.
\]
Assuming for convenience that $\alpha N \in \NN$, we declare $\alpha N - 1$ of the points in $A$ to be ``on'',\footnote{The idea is that we want to split a word $y_{[1,nN]}$ into $\alpha N$ subwords and perform surgeries near the points where it was split; these are the ``on'' points in $A$.} and denote the set of ``on'' points by $J$.
Let $\underline J^\alpha_N$ be the set of all such $J$, that is:
\[
\underline J^\alpha_N = \{ J \subset A : \# J= \alpha N-1\}.
\] 
Note that since $\frac{N-k}{\alpha N-k} \geq \frac 1\alpha$ for all $1\leq k < \alpha N$, we have
\[
\# \underline J^\alpha_N = \binom {N-1}{\alpha N-1} 
= \prod_{k=1}^{\alpha N-1} \frac{N-k}{\alpha N-k} \geq \Big( \frac 1\alpha \Big)^{\alpha N-1}
= \alpha e^{(-\alpha \log \alpha)N}.
\] 
Fix $y=y_{[1, nN]} \in \LLL_{nN}(Y)$. Given $J \in \underline J^\alpha_N$, we carry out our surgery procedure on the windows whose boundaries are determined by $J$.\footnote{Each such window determined by the set $J$ has length some multiple of $n$. The surgery procedure is to remove the last $t+2\tau$ symbols from each window and replace with a word of the form $v^1wv^2$ where the words $v^j$ are provided by the specification property to ensure that this procedure creates a word in $\LLL_{nN}(X)$.}
We obtain a new word $\Phi_J(y) \in \LLL_{nN}(X)$ which is definitely not in $\LLL(Y)$.

The set $\{ \Phi_J(y) : J \in \underline J^\alpha_N\}$ is disjoint because we can recover $J$ from $\Phi_J(y)$ by looking at which windows contain the ``marker'' $w$. Given $J$, the maximum number of words $y\in \LLL_{nN}(Y)$ that can have the same image $\Phi_J(y)$ is $C^{\alpha N-1}$, where $C = \#\LLL_{t+2\tau}(Y)$ is independent of $\alpha$ and $N$.
Thus if we carry out this procedure for each word in $\# \LLL_{nN}(Y)$ and each $J \in \underline J^\alpha_N$, we obtain
\[
\# \Big ( \bigcup_{ y_{[1, nN]} \in \LLL_{nN}(Y)} \bigcup_J \Phi_J(y) \Big) \geq (C^{-1})^{\alpha N-1} \binom {N-1}{\alpha N-1} \# \LLL_{nN}(Y),
\]
which gives
\[
\# \LLL_{nN}(X) \geq \alpha e^{(-\alpha \log \alpha)N}e^{-\alpha N \log C}  \# \LLL_{nN}(Y).
\]
Taking logs, dividing by $N$, and sending $N\to\infty$, we see that
\[
h(X) \geq h(Y) + \frac{\alpha}{n} (-\log \alpha - \log C).
\]
If $\alpha>0$ is chosen small enough, the quantity in brackets is positive, and thus $h(X) > h(Y)$.
\end{proof}

\subsection{Entropy gap for geodesic flow}\label{sec:geod-flow-gap}

Now we return our attention to the geodesic flow on $X=T^1M$ for a closed rank 1 non-positive curvature manifold $M$ and outline the proof of the entropy gap $h(X)> h(\Sing)$.

We follow the same entropy production strategy described in the previous section. The singular set $\Sing \subset X$ is a compact invariant proper subset. But how should we construct orbits? We do not expect that orbit segments contained in $\Sing$ will have the specification property. For example, orbit segments which are contained in the interior of a flat strip definitely do not have the specification property because of the flat geometry. If we stay $\epsilon$-close inside the flat strip on the time interval $[0,t]$, the amount of additional time needed to escape the flat strip grows with $t$.

So we want to use a specification argument on orbit segments without specification, which does not immediately look promising. Let us recall what kind of orbits \emph{do} have specification: it suffices to know that both the start and end of the orbit segment are `uniformly' in the regular set. 

More precisely, for any $\eta>0$, we have the specification property on the collection
\[
\CCC(\eta) = \{ (x,t): x, f_tx \in \Reg(\eta) \},
\]
where $\Reg(\eta)= \{ x : \lambda (v) \geq \eta\}$. See \S \ref{sec:bcftproofidea} for the definition of $\lambda$ and discussion of why the specification property holds on $\CCC(\eta)$. 

In order to make use of this fact, we require a reasonable way to approximate orbit segments in $\Sing$ by orbit segments in $\CCC(\eta)$. This will be given by a map $\Pi_t\colon \Sing\to\Reg$, which can be roughly summarized by the following slogan (which doesn't make sense as a rigorous statement):
\begin{quote}
\emph{Move the start of $(v,t)$ along its stable into $\Reg(\eta)$. Move the end along an unstable into $\Reg(\eta)$.}
\end{quote}
We now explain the construction that makes this idea precise. In our approximation of $(v,t)$, we ask that:
\begin{enumerate}
\item $\Pi_t(v), \Pi_t(f_tv) \in \Reg(\eta)$.
\item there exists $L$ so $f_s(\Pi_t v)$ and $\Sing$ are close for $s \in [L, t-L]$.
\end{enumerate}
In the second property, one might hope to find $L$ so $f_s(\Pi_tv)$ and $f_s v$ are close for $s \in [L, t-L]$; however, this is too much to ask for. We can see the issue if $(v, t)$ is in the middle of a flat strip; the best we can hope for  is that the orbit of $\Pi_t(v)$ approaches the \emph{edge} of the flat strip; see Figure \ref{fig:regularize}, which also illustrates the following ``regularizing'' procedure.

We fix $\eta_0$ so $\Reg(\eta_0)$ has nonempty interior. Then using density of stable and unstable leaves, together with a compactness argument, we show the following:
There exists $R>0$ such that for every $v\in T^1M$ we have both $W_R^s(v) \cap \Reg(\eta_0)\neq\emptyset$ and $W_R^u(v)\cap \Reg(\eta_0)\neq\emptyset$. 

Using this fact, given $v \in \Sing$, choose $v' \in W_R^s(v) \cap \Reg(\eta_0)$. Then for $f_t(v')$, choose $f_t(w) \in W_R^u(f_t v')\cap \Reg(\eta_0)$. Define $\Pi_t(v):=w$. 

By continuity of $\lambda$, we have $\lambda(w) \geq \eta$ for an $\eta$ slightly smaller than $\eta_0$. We can argue that the function $\lambda^u(f_tw)$ is small along all of the orbit segment except for an initial and terminal run of uniformly bounded length. This in turn implies that $d(f_tw, \Sing)$ is small, giving us condition (2).   The reason $\lambda^u(f_tw)$ must be small away from the ends of the orbit segment is that otherwise small local stable and unstable manifolds centered here would get big too fast, contradicting that the endpoints of the orbit segment are in stable and unstable manifolds of size $R$. This is made precise by Proposition 3.13 of \cite{BCFT18}, which tells us that on a compact part of the regular set, for fixed $\eps$ and $R$, an $\eps$-stable/unstable manifold grows in a uniform amount of time to cover a $R$-stable/unstable manifold. 
\begin{figure}[htbp]
\includegraphics[width=\textwidth]{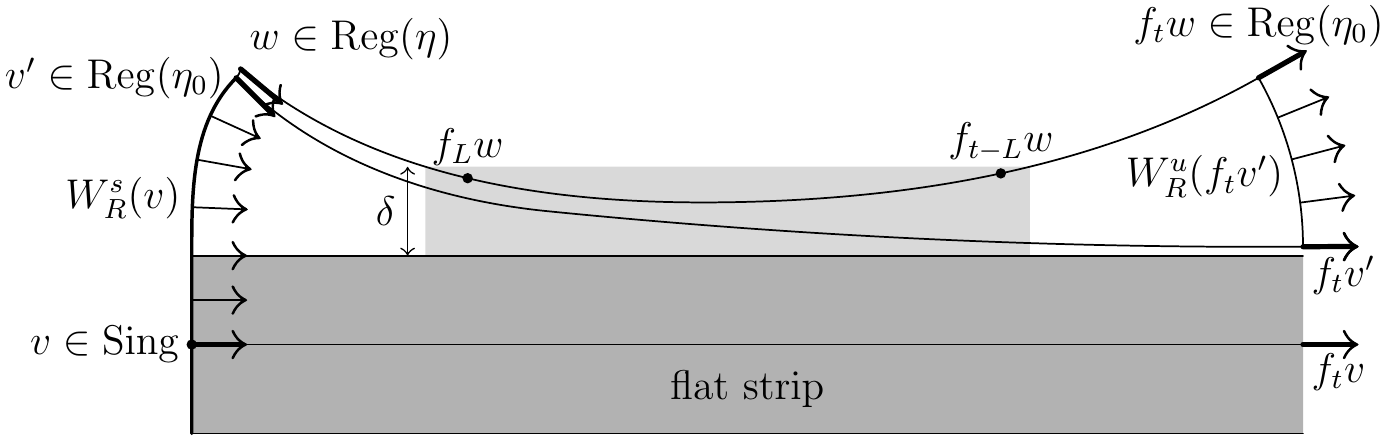}
\caption{The regularizing function $\Pi_t \colon v \mapsto w$.}
\label{fig:regularize}
\end{figure}

In conclusion, we obtain the  following properties:

\begin{theorem}\label{thm:sing-to-reg}
For every $\delta>0$ and $\eta \in (0,\eta_0)$, there exists $L>0$ such that for every $v\in \Sing$ and $t\geq 2L$, the image $w = \Pi_t(v)$ has the following properties:
\begin{enumerate}[label=\textup{(\arabic{*})}]
\item\label{wfw}
 $w, f_t(w) \in \Reg(\eta)$;
\item\label{nearSing}
$d(f_s(w),\Sing) < \delta$ for all $s\in [L,t-L]$;
\item \label{component}
for every $s\in [L,t-L]$, $f_s(w)$ and $v$ lie in the same connected component of $B(\Sing,\delta) := \{w\in T^1M : d(w,\Sing)<\delta)\}$.
\end{enumerate}
\end{theorem}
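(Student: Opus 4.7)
The plan is to verify each of the three conclusions using the construction $v \mapsto v' \mapsto w$ outlined in the text, together with Proposition 3.13 of \cite{BCFT18}, which guarantees that local stable and unstable manifolds based at points of $\Reg(\eta_0)$ expand (respectively contract) to macroscopic size within a time depending only on $\eta_0$, $R$, and the initial radius. Compactness of $T^1M$ will let us extract a single threshold $L = L(\delta, \eta, \eta_0, R)$ working uniformly in $v \in \Sing$ and $t \geq 2L$.

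Property \ref{wfw} is immediate for $f_t(w)$ from the construction. For $w$ itself, note that $w$ lies in $f_{-t}(W_R^u(f_t(v')))$; Proposition 3.13 controls the backward contraction rate of unstable leaves based at the regular point $v' \in \Reg(\eta_0)$, so for any prescribed $\epsilon > 0$ we may take $t$ large enough to force $d(w, v') < \epsilon$. Since $\lambda$ is continuous at $v'$ and $\lambda(v') \geq \eta_0 > \eta$, choosing $\epsilon$ small and requiring $2L$ to exceed the resulting threshold gives $\lambda(w) \geq \eta$.

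Property \ref{nearSing} is the heart of the argument. The key geometric fact is that $\lambda^u$ vanishes on $\Sing$, so by lower semicontinuity and compactness, for every $\delta > 0$ there is $\epsilon > 0$ such that $\lambda^u(z) < \epsilon$ implies $d(z, \Sing) < \delta$. It thus suffices to prove $\lambda^u(f_s(w)) < \epsilon$ for $s \in [L, t-L]$. Assume for contradiction $\lambda^u(f_{s_0}(w)) \geq \epsilon$ for some $s_0$ in this range. By continuity and compactness, there is a uniform lower bound $\tau_0 > 0$ on the length of the interval around $s_0$ on which $\lambda \geq \epsilon/2$; Proposition 3.13 then forces a small unstable disk at $f_{s_0}(w)$ to grow under $f_{t-s_0}$ to one of unstable radius strictly exceeding $R$, contradicting $f_t(w) \in W_R^u(f_t(v'))$. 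A symmetric backward-time argument at the other end pins down the required $L$.

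For property \ref{component}, the curve $s \mapsto f_s(w)$ on $[L, t-L]$ is continuous and lies in $B(\Sing, \delta)$ by (2), hence is contained in a single connected component $U_1$. Likewise the arc $s \mapsto f_s(v)$ on $[0, t]$ lies entirely in $\Sing \subset B(\Sing, \delta)$, so is contained in a single component $U_2$. Equality $U_1 = U_2$ follows by joining $f_L(w)$ to $f_L(v)$ within $B(\Sing, \delta)$: the point $f_L(w)$ is close to $f_L(v')$ via the unstable-contraction estimate used in (2), and $f_L(v')$ is close to $f_L(v)$ by the non-increasing behavior of distance along stable leaves in non-positive curvature. The principal obstacle is producing a single $L$ that is uniform over $v$ and $t$ for (2), which requires delicate coordination of forward and backward hyperbolicity estimates anchored at the two regular endpoints, since no hyperbolic contraction is available in the near-singular middle of the orbit.
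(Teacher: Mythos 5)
Your overall strategy matches the paper's (regularize via $\Pi_t$, use the growth lemma for stable/unstable leaves, show $\lambda^u$ is small in the middle of the orbit, conclude proximity to $\Sing$), and your argument for \ref{wfw} is essentially the paper's. But the proof of \ref{nearSing} has two genuine gaps. First, your ``key geometric fact'' --- that $\lambda^u(z)<\eps$ at a single point implies $d(z,\Sing)<\delta$ --- is false: the zero set of $\lambda$ is in general strictly larger than $\Sing$ (the survey notes explicitly that $\{v\in\Reg:\lambda(v)=0\}$ may be nonempty). The correct criterion (Proposition 3.4 of \cite{BCFT18}) requires $\lambda^u(f_sz)\leq\eta'$ for all $|s|\leq T_1$, i.e.\ smallness along a whole time window of the orbit; this is repairable by shrinking the interval, but it changes what you must prove. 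Second, and more seriously, your contradiction is not one: if $\lambda^u(f_{s_0}w)\geq\eps$, Proposition 3.13 gives $f_{t-s_0}W^u_{\eps_1}(f_{s_0}w)\supset W^u_R(f_tw)$, i.e.\ it \emph{pulls the endpoint back}: $f_{s_0}(v')\in W^u_{\eps_1}(f_{s_0}w)$. This does not contradict $f_t(w)\in W^u_R(f_tv')$ --- growing a disk past radius $R$ is perfectly compatible with two specific points being $R$-close. The actual contradiction requires a two-step transfer that you omit entirely: (a) since $v\in\Sing$ has $\lambda^u(f_sv)=0$ for \emph{all} $s$, the stable connection $v'\in W^s_R(v)$ forces $\lambda^u(f_sv')<\eta'/2$ for all $s\geq T_2$ (otherwise Prop.\ 3.13 on the stable side would put $f_sv$ in $W^s_{\eps_1}(f_sv')$ and force $\lambda^u(f_sv)>0$); (b) only then does the unstable pull-back yield $\lambda^u(f_sw)\leq\lambda^u(f_sv')+\eta'/2<\eta'$. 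The anchor of the whole argument is that $\lambda^u$ vanishes identically along the orbit of $v$, and your proof never uses it.

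The proof of \ref{component} also fails as written. You claim $f_L(v')$ is close to $f_L(v)$ ``by the non-increasing behavior of distance along stable leaves,'' but $v'\in W^s_R(v)$ only gives $d(f_sv',f_sv)\lesssim R$ for $s\geq 0$, and $R$ is a fixed macroscopic constant (the radius needed for stable/unstable leaves to meet $\Reg(\eta_0)$), not something below $\delta$; in a flat strip there is no contraction at all, so this distance need not shrink with $L$. Hence you cannot join $f_L(w)$ to $f_L(v)$ by a short arc inside $B(\Sing,\delta)$. The paper instead takes the explicit connecting path $u(r)$ running along $W^s_R(v)$ from $v$ to $v'$ and then along $f_{-t}(W^u_R(f_tv'))$ from $v'$ to $w$, and observes that the \emph{same} $\lambda^u$ estimates from part \ref{nearSing} apply verbatim to every point $u(r)$ on this path, so that $d(f_su(r),\Sing)<\delta$ for all $s\in[L,t-L]$ and all $r$; connectedness then follows because the flowed path stays in $B(\Sing,\delta)$, not because its endpoints are $\delta$-close.
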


This result is found in \cite[Theorem 8.1]{BCFT18}, where the proof of \ref{nearSing} contains some typos: we take this opportunity to correct these typos by providing a complete proof here. (Most of this proof is word-for-word identical to the one in \cite{BCFT18}.)

\begin{proof}[Proof of Theorem \ref{thm:sing-to-reg}]
Let $\delta,\eta,\eta_0$ be as in the statement of the theorem. For property \ref{wfw}, it is immediate from the definition of $\Pi_t$ that $\lambda(f_tw) \geq \eta$.  By uniform continuity of $\lambda$, we can take $\eps_0$ sufficiently small such that if $v_2 \in W_{\eps_0}^u(v_1)$ and $\lambda(v_1) \geq \eta_0$, then $\lambda(v_2) \geq \eta$.  By \cite[Corollary 3.14]{BCFT18},
there exists $T_0>0$ such that if $t \geq T_0$ and $f_{t}(w) \in W_R^u(f_{t}v')$, then $w\in W_{\eps_0}^u(v')$. Thus, if $\lambda(v')\geq \eta_0$, then $\lambda(w) \geq \eta$. Thus, item \ref{wfw} of the theorem holds for any $t\geq T_0$.

We turn our attention to item \ref{nearSing}. \cite[Proposition 3.4]{BCFT18} tells us that there are $\eta',T_1>0$ such that
\begin{equation}\label{eqn:T1}
\text{if }\lambda^u(f_s v)\leq \eta' \text{ for all } |s|\leq T_1, \text{ then } d(v,\Sing) < \delta.
\end{equation}
Given $v\in \Sing$, we have $\Pi^s(v) = v' \in W_R^s(v)$, and $\lambda(f_s v) = 0$ for all $s$.  

By continuity of $\lambda^u$, we can take $\eps_1$ sufficiently small such that if $v_2 \in W_{\eps_1}^s(v_1)$, then $|\lambda^u(v_1)-\lambda^u(v_2)| <\eta'/2$. 
Applying \cite[Proposition 3.13]{BCFT18} to the compact set
$\{v : \lambda^u(v) \geq \eta'/2\} \subset \Reg$ gives $T_2>0$ such that if $\lambda^u(v_1) \geq \eta'/2$ and $\tau\geq T_2$, then $f_{-\tau} W_{\eps_1}^s(v_1) \supset W_R^s(f_{-\tau} v_1)$
and $f_\tau W^u_{\epsilon_1}(v_1) \supset W_R^u(f_\tau v_1)$.

Suppose for a contradiction that $\lambda^u(f_s v') \geq \eta'/2$ for some $s \geq T_2$.  Applying the previous paragraph with $v_1 = f_s v'$ gives $f_s v \in f_s W_R^s(f_s v') \subset W_{\eps_1}^s(f_s v')$.  By our choice of $\eps_1$, this gives $\lambda^u(f_s v)>0$, contradicting the fact that $v\in \Sing$, and we conclude that $\lambda^u(f_s v') < \eta'/2$ for  $s\geq T_2$.

Similarly, if there is $s\in [T_2,t-T_2]$ such that $\lambda^u(f_s w) \geq \eta'$, then the same argument with $v_1 = f_s w$ and $\tau=t-s$ gives $f_sv' \in f_{-(t-s)} W_R^u(f_t w) \subset W_{\eps_1}^u(f_s w)$, and our choice of $\eps_1$ gives $\lambda^u(f_s v') \geq \lambda^u(f_s w) - \eta'/2 \geq \eta'/2$, a contradiction since $\lambda^u(f_s v') < \eta'/2$ for all $s\geq T_2$.  Thus $\lambda^u(f_s w) < \eta'$ for all $s\in [T_2,t-T_2]$.

Applying \eqref{eqn:T1} gives $d(f_sw,\Sing) < \delta$ for all $s\in [T_2 + T_1, t-T_2 - T_1]$. Thus, taking $L = \max(T_0, T_1 + T_2)$, assertions \ref{wfw} and \ref{nearSing} follow for $s\geq 2L$.

For item \ref{component} of the theorem, we observe that $v$ and $w$ can be connected by a path $u(r)$ that follows first $W_R^s(v)$, then $f_{-t}(W_R^u(f_tv'))$ (see Figure), and that the arguments giving $d(f_sw,\Sing)<\delta$ also give $d(f_su(r),\Sing)<\delta$ for every $s\in [L,t-L]$ and every $r$.  We conclude that $f_sv$ and $f_sw$ lie in the same connected component of $B(\Sing,\delta)$ for every such $s$.  
\end{proof}

The collection $\{ (\Pi_t(v), t) : v \in \Sing\}$ has the specification property. This is because an orbit segment $(\Pi_t(v), t)$ both starts and ends in $\Reg(\eta)$. As discussed, the collection $\CCC(\eta)$ of such orbit segments has the specification property.

We certainly do not expect the map $\Pi_t$ to preserve separation of orbits. For example, in Figure \ref{fig:regularize}, we would expect a $v_2 \in \Sing$ defining a geodesic parallel to $\gamma_v$ (for example the arrow just above $v$ in the picture) to be mapped to the same (or similar) point. However, using estimates in the universal cover, which we omit here, we can argue that $\Pi_t$ has bounded multiplicity on a $(t, \epsilon)$ separated set, independent of $t$, in the following sense.

\begin{proposition}
For every $\eps>0$, there exists $C>0$ such that if $E_t \subset \Sing$ is a $(t,2\eps)$-separated set for some $t>0$, then for every $w\in T^1M$, we have $\#\{v\in E_t \mid d_t(w,\Pi_t v) < \eps\} \leq C$.
\end{proposition}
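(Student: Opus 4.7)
The plan is to show that $\Pi_t$ distorts the dynamical metric $d_t$ by a bounded amount uniformly in $v$ and $t$, so that the preimage under $\Pi_t$ of any small $d_t$-ball sits inside a tube of uniformly bounded $d_t$-radius; a packing argument in the universal cover, exploiting that all the points in question lie in $\Sing$, then bounds the number of $(t,2\eps)$-separated vectors in that tube.

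\emph{Step 1 (shadowing bound for $\Pi_t$).} Given $v\in\Sing$, the definition of $\Pi_t$ picks $v'\in W_R^s(v)\cap\Reg(\eta_0)$ and sets $w=\Pi_t(v)$ so that $f_tw\in W_R^u(f_tv')$. In non-positive curvature the distance between any two geodesics in $\wM$ is a convex function of time. For the stable-related pair $(v,v')$ this function tends to $0$ as $s\to+\infty$, hence is nonincreasing on $[0,\infty)$ and bounded by $d(v,v')\le R$; for the unstable-related pair $(f_tw,f_tv')$ the same reasoning applied in backward time gives $d(f_sw,f_sv')\le R$ for every $s\le t$. The triangle inequality yields $d(f_sv,f_sw)\le 2R$ for all $s\in[0,t]$, so
\begin{equation*}
d_t(v,\Pi_tv)\le 2R\quad\text{uniformly in }v\in\Sing\text{ and }t>0.
\end{equation*}

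\emph{Step 2 (concentration of the preimage).} If $v_1,v_2\in E_t$ satisfy $d_t(w,\Pi_tv_i)<\eps$, Step 1 and the triangle inequality in $d_t$ give
\begin{equation*}
d_t(v_1,v_2)\le d_t(v_1,\Pi_tv_1)+d_t(\Pi_tv_1,w)+d_t(w,\Pi_tv_2)+d_t(\Pi_tv_2,v_2)<4R+2\eps.
\end{equation*}
Thus every $v$ with $d_t(w,\Pi_tv)<\eps$ lies in the Bowen tube $B_t(v_1,4R+2\eps)\cap\Sing$.

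\emph{Step 3 (bounded multiplicity in the tube).} It remains to show that a $(t,2\eps)$-separated subset of $\Sing$ contained in such a Bowen tube has cardinality bounded by a constant $C=C(\eps,R,M)$ independent of $t$. Lifting to the universal cover, one chooses lifts $\tilde v_i$ of the $v_i$ so that each $\tilde\gamma_{\tilde v_i}$ remains within distance $4R+2\eps$ of a fixed lift $\tilde v_1$ throughout $[0,t]$. Convexity of the distance function between the singular geodesics $\tilde\gamma_{\tilde v_i}$ in $\wM$ makes this bounded tube essentially rigid: the lifts $\tilde v_i$ are confined to a compact subset of $T^1\wM$ whose diameter depends only on $R$ and $\eps$, and on this compact set the dynamical pseudometric $d_t$ is comparable to the Sasaki metric because singular orbits do not spread exponentially. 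A ball-packing estimate in this compact set then bounds the number of $v_i$ that can be mutually $(t,2\eps)$-separated by a constant $C$ depending only on $\eps$, $R$, and the geometry of $M$.

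The main obstacle is Step 3: for a typical orbit segment a $d_t$-ball of fixed radius can accommodate exponentially many $(t,2\eps)$-separated points because of hyperbolic expansion, and this is exactly what one has to rule out here. The saving feature is that every $v_i$ lies in $\Sing$, where convexity of the distance function in $\wM$ (and, more globally, the flat strip theorem) prevents exponential spreading and forces the relevant Bowen tube to be comparable to a fixed-size metric ball, on which a uniform packing bound applies.
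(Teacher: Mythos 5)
The survey itself omits the proof of this proposition (it is deferred to \cite{BCFT18} with the remark that it uses ``estimates in the universal cover''), so I will assess your argument on its own terms. Steps 1 and 2 are sound in outline (one small slip: asymptotic geodesics in non-positive curvature need not converge as $s\to+\infty$ --- think of a flat strip --- but boundedness plus convexity still gives monotonicity, which is all you use). The genuine gap is in Step 3, precisely the step you identify as the main obstacle. Your resolution rests on the claim that, because the $v_i$ lie in $\Sing$, ``singular orbits do not spread exponentially'' and hence $d_t$ is uniformly comparable to the static metric on the relevant tube. This is false. A singular vector admits \emph{one} nonzero parallel orthogonal Jacobi field; in the remaining directions the flow may expand exponentially. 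The Gromov example of \S\ref{sec:entropygap} has $h(\Sing)>0$ (indeed $h(\Sing)$ can be made arbitrarily close to $h(X)$), and if $d_t|_{\Sing}$ were comparable to $d$ uniformly in $t$, then every $(t,2\eps)$-separated subset of $\Sing$ would have cardinality bounded by a static packing number, forcing $h(\Sing)=0$. So membership in $\Sing$ cannot be the mechanism, and the packing argument as you justify it does not close.

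The mechanism that actually works --- and the reason the text speaks of estimates in the universal cover --- is convexity of the distance function between \emph{arbitrary} geodesics in $\wM$, applied on $[0,t]$ with both endpoints controlled. Using the coherent lifts supplied by the construction of $\Pi_t$ itself (this matters: $D=4R+2\eps$ typically exceeds the injectivity radius, so one cannot lift the $d_t$-bound of your Step 2 after the fact), all the lifted geodesics $\tilde\gamma_{\tilde v_i}$ have time-$0$ footpoints in one $D$-ball and time-$t$ footpoints in another. Since $s\mapsto d_{\wM}(\tilde\gamma_{\tilde v_i}(s),\tilde\gamma_{\tilde v_j}(s))$ is convex, its supremum over $[0,t]$ is attained at $s=0$ or $s=t$; hence $(t,2\eps)$-separation of $v_i,v_j$ --- after converting angular separation at an interior time into footpoint separation a bounded time later, again by comparison with the flat case --- forces the endpoint pairs $(\tilde\gamma_{\tilde v_i}(0),\tilde\gamma_{\tilde v_i}(t))$ to be $c\eps$-separated in $\wM\times\wM$. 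A volume-packing bound in the product of two $D$-balls then yields $C=C(\eps,R,M)$ independent of $t$. Note that this argument nowhere uses $E_t\subset\Sing$: it is convexity in $\wM$, not any rigidity of the singular set, that converts separation at intermediate times into separation at the endpoints.
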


Now let us return to our entropy production argument.  It is basically the argument we saw in \S \ref{swarmup}, except that we need to apply the regularizing map $\Pi_t$ before applying the specification property, as shown in Figure \ref{fig:glue-sing}.

\begin{figure}[htbp]

\begin{tikzpicture}
\def\h{.3}
\draw (0,0)--(8.5,0) (9.5,0)--(12,0);
\draw (9,0) node {$\dots$};
\draw (0,0)--(0,\h) node[above] {$0$};
\draw (1,0)--(1,\h) node[above] {$n$};
\draw (2,0)--(2,\h) node[above] {$2n$};
\draw (3,0)--(3,\h) node[above] {$3n$};
\draw (4,0)--(4,\h) node[above] {$4n$};
\draw (5,0)--(5,\h) node[above] {$5n$};
\draw (6,0)--(6,\h) node[above] {$6n$};
\draw (7,0)--(7,\h) node[above] {$7n$};
\draw (8,0)--(8,\h) node[above] {$8n$};
\draw (10,0)--(10,\h) node[above] {$(N-2)n$};
\draw (11,0) node[below] {$(N-1)n$} --(11,\h);
\draw (12,0)--(12,\h) node[above] {$Nn$};
\draw[thick,blue, ->] (5,-.2) to[out=-50,in=50] (5,-.8);
\node[blue,left] at (5,-.5) {cut at elements of $J$ (circled)};
\draw[thick,red] (2,\h) ellipse (.25 and 2*\h);
\draw[thick,red] (7,\h) ellipse (.25 and 2*\h);
\draw[thick,red] (8,\h) ellipse (.25 and 2*\h);
\def\T{.2};
\draw (0,-1)--(2-\T,-1) (2,-1)--(7-\T,-1) (7,-1)--(8-\T,-1) (8,-1)--(8.5,-1) (9.5,-1)--(12,-1);
\draw (9,-1) node {$\dots$};
\draw[thick,blue,->] (0,-1.1) -- (0,-1.9);
\node[blue,right] at (0,-1.5) {$\Pi_{\ell_1-T}$};
\draw[thick,blue,->] (2,-1.1) -- (2,-1.9);
\node[blue,right] at (2,-1.5) {$\Pi_{\ell_2-T}$};
\draw[thick,blue,->] (7,-1.1) -- (7,-1.9);
\draw[thick,blue,->] (8,-1.1) -- (8,-1.9);
\node[blue] at (5,-1.5) {regularize};
\node[red,right] at (12,-2) {$B(\mathrm{Sing},\delta)$};
\draw[red,dashed] (-.1,-2.2) -- (13,-2.2);
\draw[red,dotted] (-.1,-2.3) -- (13,-2.3);
\node[red,right] at (12,-2.5) {$\mathrm{Reg}(\eta)$};
\draw (0,-2.5) to[out=80,in=180] (.2,-2) -- (2-\T-.2,-2) to[out=0,in=100] (2-\T,-2.5);
\draw (2,-2.5) to[out=80,in=180] (2.2,-2) -- (7-\T-.2,-2) to[out=0,in=100] (7-\T,-2.5);
\draw (7,-2.5) to[out=80,in=180] (7.2,-2) -- (8-\T-.2,-2) to[out=0,in=100] (8-\T,-2.5);
\draw (8,-2.5) to[out=80,in=180] (8.2,-2) -- (8.5,-2);
\draw (9.5,-2) -- (12-\T-.2,-2) to[out=0,in=100] (12-\T,-2.5);
\draw[thick,blue, ->] (5,-2.5) to[out=-50,in=50] (5,-3.3);
\node[blue,left] at (5,-2.9) {``glue'' with specification};
\begin{scope}[shift={(0,-1.5)}]
\node[red,right] at (12,-2) {$B(\mathrm{Sing},\delta)$};
\draw[red,dashed] (-.1,-2.2) -- (13,-2.2);
\draw[red,dotted] (-.1,-2.3) -- (13,-2.3);
\node[red,right] at (12,-2.5) {$\mathrm{Reg}(\eta)$};
\draw (0,-2.5) to[out=80,in=180] (.2,-2) -- (2-\T-.2,-2) to[out=0,in=100] (2-\T,-2.5);
\draw (2,-2.5) to[out=80,in=180] (2.2,-2) -- (7-\T-.2,-2) to[out=0,in=100] (7-\T,-2.5);
\draw (7,-2.5) to[out=80,in=180] (7.2,-2) -- (8-\T-.2,-2) to[out=0,in=100] (8-\T,-2.5);
\draw (8,-2.5) to[out=80,in=180] (8.2,-2) -- (8.5,-2);
\draw (9.5,-2) -- (12-\T-.2,-2) to[out=0,in=100] (12-\T,-2.5);
\draw[blue,thick] 
(0,-2.6) to[out=0,in=180] (.2,-2.1) -- (2-\T-.2,-2.1) to[out=0,in=180] (2-\T,-2.6)
--(2,-2.6) to[out=0,in=180] (2.2,-2.1) -- (7-\T-.2,-2.1) to[out=0,in=180] (7-\T,-2.6)
--(7,-2.6) to[out=0,in=180] (7.2,-2.1) -- (8-\T-.2,-2.1) to[out=0,in=180] (8-\T,-2.6)
--(8,-2.6) to[out=0,in=180] (8.2,-2.1) -- (8.5,-2.1);
\node[blue] at (9,-2.1) {$\dots$};
\draw[blue,thick]
(9.5,-2.1) -- (12-\T-.2,-2.1) to[out=0,in=180] (12-\T,-2.6);
\node[red,below] at (5,-3) {recover $J$};
\draw[red,thick,->] (4,-3) -- (2.2,-2.6);
\draw[red,thick,->] (5.5,-3) -- (6.7,-2.7);
\draw[red,thick,->] (6,-3) -- (7.7,-2.7);
\end{scope}
\end{tikzpicture}

\caption{Gluing singular orbits.}
\label{fig:glue-sing}
\end{figure}
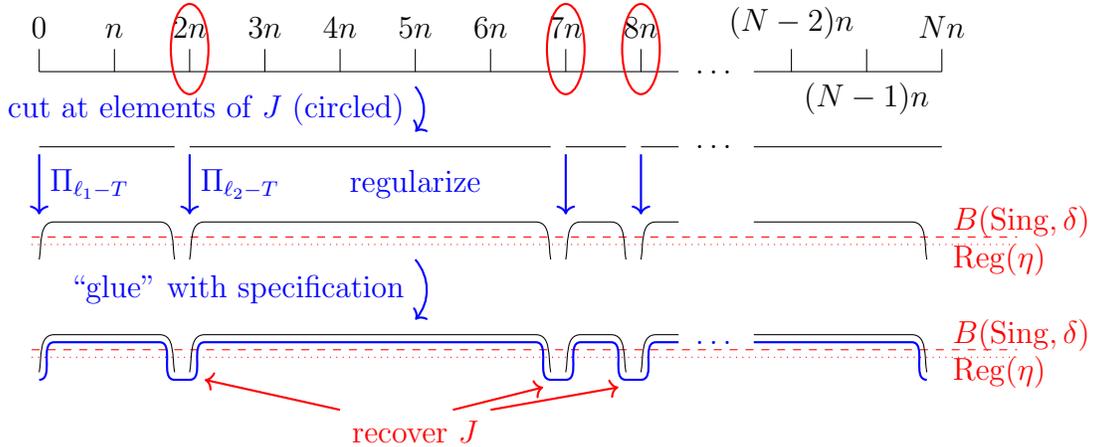

As before, consider a time window $[0, nN]$. Given a subset $J$ of $\alpha N-1$ elements from the set $\{ n, 2n, 3n, \ldots, (N-1)n \}$, we write $\ell_1, \ell_2, \dots, \ell_{\alpha N}$ for the lengths of the intervals (in order) whose endpoints are determined by $J$.

For $(v_1, v_2, \ldots, v_{\alpha N}) \in \Sing^{\alpha N}$, we apply the map $\Pi_{\ell_i-T}$ to each coordinate and glue the resulting orbit segments in $\CCC(\eta)$ using specification (where $T$ is the transition time in the specification property at a suitable scale).

Run this construction over $(g_{\ell_i-T}, \epsilon)$-separated sets for $\Sing$ in each coordinate, and for each choice of $J$, we construct exponentially more orbits than there are in $\Sing$. The argument is analogous to our previous entropy production argument: for $\alpha>0$ small, the growth from the $\binom {N-1}{\alpha N-1}$ term beats the loss coming from multiplicity in the construction.
In particular, we conclude that $h(X)> h(\Sing)$.

\subsection{Other applications of pressure production}

The argument for entropy and pressure production described above is quite flexible, and can be used in many other contexts. For example, in \cite{CT13} we used a variation on this argument to show that for a continuous potential $\varphi$ with the Bowen property on the $\beta$-shift $\Sigma_\beta$,
\[
\ulim_{n\to\infty} \frac 1 n \sum_{i=1}^{n-1} \ph(\sigma^i w^\beta) < P(\Sigma_\beta, \varphi),
\]
where $w^\beta$ is the lexicographically maximal sequence in $\Sigma_\beta$; this in turn established a pressure gap condition leading to a uniqueness result, similar to the procedure described above for geodesic flow.

Another variation of the argument can be used to prove that a unique equilibrium state $\mu_{\varphi}$ coming from Bowen's original theorem (i.e., from the assumptions of expansivity, specification and the Bowen property) satisfies
\[
P(\varphi)> \sup_{\mu \in \MMM_f(X)} \int \varphi \,d \mu,
\]
and thus that the entropy of $\mu_{\ph}$ is positive.\footnote{\url{https://vaughnclimenhaga.wordpress.com/2017/01/26/entropy-bounds-for-equilibrium-states/}} Such a potential is often called \emph{hyperbolic}. This idea was extended recently in the symbolic setting in \cite{CC}.

\bibliographystyle{amsalpha}
\bibliography{marseille}


\end{document}